\newtheorem{thm}{Theorem}[section]
\newtheorem*{thm*}{Theorem}
\newtheorem{lem}[thm]{Lemma}
\newtheorem{kor}[thm]{Corollary}
\theoremstyle{definition}
\newtheorem{dfn}[thm]{Definition}
\newtheorem{rem}[thm]{Remark}
\renewcommand{\varangle}{\sphericalangle}
\renewcommand{\theequation}{\arabic{section}.\arabic{equation}}
\newcommand{\claim}[1]{\noindent Claim #1: }
\newcommand{\proofofclaim}[1]{\noindent \textit{Proof of Claim #1.} }
\newcommand{\proofofclaimend}[1]{\noindent \textit{End of Proof of Claim #1.}\\ }
\newcommand{\diam}{\operatorname{diam}}
\newcommand{\supp}{\operatorname{supp}}
\newcommand{\sspan}{\operatorname{span}}
\newcommand{\R}{\mathbb{R}}
\newcommand{\cH}{\mathcal{H}}
\newcommand{\K}{\mathcal{K}}
\newcommand{\M}{\mathcal{M}}
\newcommand{\dd}{\mathrm{d}}
\newcommand{\p}{p}
\newcommand{\N}{n}
\newcommand{\NN}{m}
\newcommand{\n}{{N}}
\newcommand{\height}[1]{\mathfrak{h}_{#1}}
\newcommand{\face}[1]{\mathfrak{fc}_{#1} }
\newcommand{\volume}{\mathfrak{v}} 
\newcommand{\vol}{\omega_{\N}}
\newcommand{\vo}[1]{\omega_{#1}}
\newcommand{\aff}{\textnormal{aff}}
\newcommand{\Lip}{\operatorname{Lip}}
\newcommand{\Avg}[1]{
\ifthenelse{\equal{#1}{}}
{
\operatorname{Avg}
}
{
\underset{#1}{\operatorname{Avg}}
}
}
\begin{document}
\title[Integral Menger Curvature and Rectifiability]{Integral Menger Curvature and Rectifiability of $n$-dimensional Borel sets in Euclidean $N$-space}
\author{Martin Meurer}
\date{\today}
\address{RWTH Aachen University, Institut f\"ur Mathematik, Templergraben 55, D-52062 Aachen, Germany}
\email{meurer@instmath.rwth-aachen.de}
\thanks{The author would like to thank Heiko von der Mosel for all his helpful advice. Furthermore he thanks 
Armin Schikorra and Sebastian Scholtes for many fruitful discussions.}

\begin{abstract}
In this work we show that an $n$-dimensional Borel set in Euclidean $N$-space with finite integral Menger curvature
is $n$-rectifiable, meaning that it can be covered by countably many images of Lipschitz
continuous functions up to a null set in the sense of Hausdorff measure.
This generalises L\'{e}ger's \cite{Leger} rectifiability result for one-dimensional sets to arbitrary dimension
and co-dimension.
In addition, we characterise possible integrands and discuss examples known from the literature.

Intermediate results of independent interest include upper bounds of different versions of P. Jones's $\beta$-numbers
in terms of integral Menger curvature without assuming lower Ahlfors regularity, in contrast to the 
results of Lerman and Whitehouse \cite{MR2558685}.
\end{abstract}

\maketitle

\section{Introduction}
For three points $x,y,z \in \R^{\n}$, we denote by $c(x,y,z)$ the inverse of 
the radius of the circumcircle determined by these three points.
This expression is called \textit{Menger curvature} of $x,y,z$. 
For a Borel set $E \subset \R^{\n}$, we define by
\[\M_{2}(E):=\int_E \int_E \int_E c^2(x,y,z) \ \dd \cH^{1}(x) \dd \cH^{1}(y) \dd \cH^{1} (z)\]
the \textit{total Menger curvature} of $E$, where $\cH^{1}$ denotes the one-dimensional Hausdorff measure.
In 1999, J.C. L\'{e}ger proved the following theorem. 

\begin{thm*}[\cite{Leger}]
	If $E\subset \R^{\n}$ is some Borel set with $0< \cH^{1}(E)< \infty$ and $\M_{2}(E)< \infty$, then
	$E$ is $1$-rectifiable, i.e., there exists a countable 
	family of Lipschitz functions $f_i:\R \to \R^{\n}$ such that $\cH^{1}(E \setminus \bigcup_i f_{i}(\R)) =0$.
\end{thm*}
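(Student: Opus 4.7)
The plan is to reduce the global rectifiability statement to a local ``big pieces of Lipschitz graphs'' assertion: it suffices to show that whenever $F \subset \R^{\n}$ is Borel with $0 < \cH^{1}(F) < \infty$ and $\M_{2}(F) < \infty$, there exist a ball $B$ and a Lipschitz graph $\Gamma$ with $\cH^{1}(F \cap B \cap \Gamma) \geq \eta\,\cH^{1}(F \cap B)$ for some absolute $\eta > 0$. A standard exhaustion based on Besicovitch-type density theorems, followed by iteration on the leftover non-rectifiable part, then upgrades this local statement to full rectifiability of $E$.

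To produce such a graph I would fix $x_{0} \in F$ of positive upper and lower density and select a scale $r_{0}$ at which the restriction of $\cH^{1}$ to $B(x_{0},r_{0})$ is nearly linear and the triple integral of $c^{2}$ over $(F \cap B(x_{0},r_{0}))^{3}$ is small. Using a Chebyshev-type argument together with the pointwise inequality relating flatness to Menger curvature---three well-separated points all far from a best-approximating line must generate a small circumcircle---I would extract, at most points $x$ of a large subset of $F \cap B(x_{0},r_{0})$, a direction $L_{x}$ along which $F$ is well approximated at every scale. This is where the upper bound on Jones's $\beta$-numbers by integral Menger curvature (advertised in the abstract) does the heavy lifting.

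The core of the argument is a Reifenberg-like stopping-time construction: starting from $B(x_{0},r_{0})$, one selects sub-balls on which either a $\beta$-type flatness or the normalized density fails to be small, and argues that the union of these bad balls carries only a small fraction of $\cH^{1}(F \cap B(x_{0},r_{0}))$, thanks to the finiteness of $\M_{2}(F)$ and the estimate $\beta^{2} \lesssim c^{2}$. On the complement, the directions $L_{x}$ vary slowly from scale to scale, so they can be glued into a single Lipschitz function defined over a fixed line through $x_{0}$ whose graph captures a definite proportion of $F \cap B(x_{0},r_{0})$.

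The main obstacle, and the heart of L\'{e}ger's proof, will be verifying that the approximating directions vary in a sufficiently controlled manner so that a genuine Lipschitz graph---not merely a H\"{o}lder one---arises, and that the bookkeeping retains a uniform proportion $\eta$ of $F$ outside the stopping set. This hinges on integrating the Menger curvature density against two free points rather than three, yielding a square-function-type inequality that controls the tilt of $L_{x}$ across dyadic scales. Handling the interaction between the stopping criterion, the slow rotation of $L_{x}$, and the preservation of positive measure is the technically delicate part that dominates the remainder of the argument.
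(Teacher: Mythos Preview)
Your outline is correct and matches the strategy the paper follows (which is L\'eger's original approach, generalised here to higher dimensions): reduce to a compact piece with small normalised curvature (Lemma~\ref{satz1.1}), then prove a ``big Lipschitz graph'' theorem (Theorem~\ref{16.10.2013.1}) via a stopping-time partition $F=\mathcal{Z}\cup F_1\cup F_2\cup F_3$ according to density, $\beta$-number, and tilt criteria, build the graph over $\pi(\mathcal{Z})$ by a Whitney extension, and control the tilt set $F_3$ through a square-function estimate (the $\gamma$-functions and Calder\'on's formula in Section~\ref{gamma}). The only cosmetic difference is that the paper derives rectifiability by contradiction on the purely unrectifiable part rather than by your exhaustion/iteration, but the two reductions are interchangeable.
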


This result is an important step in the proof of Vitushkin's conjecture (for more details see 
\cite{MR3154530,MR2676222}), which states that a compact 
set with finite one-dimensional Hausdorff measure is removable for bounded analytic functions 
if and only if it is purely $1$-unrectifiable, which means that every $1$-rectifiable subset of this set has
Hausdorff measure zero. A higher dimensional analogue of Vitushkin's conjecture is proven in \cite{MR3264510} but
without using a higher dimensional version of L\'{e}ger's theorem since in the higher dimensional setting there
seems to be no connection between the $\N$-dimensional Riesz transform and curvature (cf.~introduction 
of \cite{MR3264510}).

There exist several generalisations of L{\'e}ger's result. Hahlomaa proved in \cite{MR2456269, MR2297880, MR2163108} 
that if $X$ is a metric space and $\M_{2}(X)< \infty$, then $X$ is 1-rectifiable.
Another version of this theorem dealing with sets of fractional Hausdorff dimension equal or less than $\frac{1}{2}$
is given by Lin and Mattila in \cite{MR1814107}. 

In the present work, we generalise the result of L{\'e}ger to arbitrary dimension and co-dimension, i.e.,
for $\N$-dimensional subsets of $\R^{\n}$ 
where $\N \in \mathbb{N}$ satisfies $\N < \n$. In the case $\N=\n$ every $E\subset \R^{\n}$ is $\N$-rectifiable. 
On the one hand, it is quite clear which conclusion we want to obtain, namely that the set $E$ is $\N$-rectifiable,
which means that there exists a countable family of Lipschitz functions $f_i:\R^{\N} \to \R^{\n}$ such that 
$\cH^{\N}(E \setminus \bigcup_i f_{i}(\R^{\N})) =0$. 
On the other hand, it is by no means clear how to define integral Menger curvature for $\N$-dimensional sets.
L\'{e}ger himself suggested an expression  
which turns out to be improper for our proof\footnote{Hence, we agree with a remark made by 
Lerman and Whitehouse at the end of the introduction of \cite{MR2558685}.} (cf.~section \ref{22.10.2014.2}).
We characterise possible integrands for our result in Definition \ref{4.10.12.1},
but for now let us start with an explicit example:
\[\K(x_{0},\dots,x_{\N+1})= 
	\frac{\cH^{\N+1}(\Delta(x_{0},\dots,x_{\N+1}))}{\Pi_{0 \le i < j \le \N+1}d(x_{i},x_{j})},\]
where the numerator denotes the $(\N+1)$-dimensional volume of the simplex ($\Delta(x_{0},\dots,x_{\N+1})$)
 spanned by the vertices
$x_{0}, \dots,x_{\N+1}$, and
$d(x_{i},x_{j})$ is the distance between $x_i$ and $x_j$.
Using the law of sines, we obtain for $\N=1$
\[\K(x_{0},x_{1},x_{2})= \frac{\cH^{2}(\Delta(x_{0},x_{1},x_{2}))}{d(x_{0},x_{1})d(x_{0},x_{2})d(x_{1},x_{2})}
	=\frac{1}{4}c(x_{0},x_{1},x_{2}).\]
Hence, $\K$ can be regarded as a generalisation of the original Menger curvature for higher dimensions.
We set 
\begin{align}\label{27.08.2015.1}
	\M_{\K^{2}}(E)&:=\int_E \dots \int_E \K^{2}(x_{0},\dots,x_{\N+1}) \ \dd \cH^{\N}(x_{0}) \dots \dd \cH^{\N} (x_{\N+1}).
\end{align}
Now we can state our main theorem for this specific integrand (see Theorem \ref{maintheorem} for the general version).
\begin{thm}\label{13.11.2014.1}
	If $E\subset \R^{\n}$ is some Borel set with  $\M_{\K^{2}}(E)< \infty$, then
	$E$ is $\N$-rectifiable.
\end{thm}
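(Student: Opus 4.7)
The plan is to adapt L\'{e}ger's stopping-time scheme to arbitrary dimension $\N$ with integrand $\K$. By a standard countable decomposition (splitting $E$ according to where the upper density $\Theta^{*\N}(E,x)$ is zero, positive and finite, or infinite) together with a density/Vitali argument, it suffices to prove the following local version: for $\cH^{\N}$-a.e.\ $x_{0}\in E$ there is a radius $r_{0}>0$ and an affine $\N$-plane $L_{0}$ such that a definite fraction of $\cH^{\N}(E\cap B(x_{0},r_{0}))$ lies on a Lipschitz graph over $L_{0}$. Iterating this local conclusion via Vitali coverings delivers the global $\N$-rectifiability.

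The bridge between the curvature hypothesis and the geometry is the estimate of Jones's $\beta$-numbers by $\K^{2}$ advertised in the abstract: a direct geometric computation shows that small $\K(x_{0},\dots,x_{\N+1})$ forces the vertices to lie close to a common affine $\N$-plane, and averaging over $(\N+2)$-tuples from $E$ in a ball yields inequalities of the schematic form
\[
\beta_{E,2}(B(x,r))^{2}\, r^{\N}\ \lesssim\ \int_{(E\cap cB)^{\N+2}} \K^{2}\,\dd\cH^{\N}(x_{0})\cdots \dd\cH^{\N}(x_{\N+1})\ +\ (\text{lower-order terms}),
\]
valid \emph{without} any lower Ahlfors regularity assumption, which is the key improvement over Lerman--Whitehouse.

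With these $\beta$-estimates in hand, I would run a corona/stopping-time construction at a good base point $x_{0}$: fix $B_{0}=B(x_{0},r_{0})$ where the $\N$-dimensional density is bounded below and the localised curvature is small, pick a best approximating plane $L_{0}$, and build a tree of sub-balls, stopping at any ball where the $\beta$-number is too large, the density drops too far, or the best approximating plane tilts too much away from $L_{0}$. A Carleson-type packing estimate driven by the $\beta$-curvature inequality bounds the mass inside stopping balls, while on the complement $E$ is trapped in a thin graph-like neighbourhood of $L_{0}$; a Whitney-type extension then produces the Lipschitz graph capturing the required fraction of $\cH^{\N}(E\cap B_{0})$.

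The main obstacle is this stopping-time construction itself. In L\'{e}ger's one-dimensional proof the natural ordering of points along a curve is used crucially to compare successive approximating lines; in dimension $\N\ge 2$ no such ordering exists, so the tilt between best approximating planes at parent and child scales must be controlled by hand through angle estimates in terms of $\beta$-numbers. Moreover, the $\beta$-inequality has to be used in the precise variant that survives the absence of a lower density bound, and the Carleson packing must be carried out with only upper Ahlfors regularity at hand (after the preliminary truncation above). Harmonising the correct $\beta$-to-$\K^{2}$ relation, the higher-dimensional plane-tilt control, and the one-sided regularity is, I expect, the technical heart of the theorem.
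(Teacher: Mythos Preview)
Your overall strategy is correct and matches the paper's: reduce to an upper-regular compact piece with small localised curvature, partition the support into a core $\mathcal{Z}$ plus three exceptional sets (low density, large $\beta$, large tilt), cover $\mathcal{Z}$ by a Lipschitz graph built via a Whitney decomposition of $P_0\setminus\pi(\mathcal{Z})$, and bound the exceptional sets. The $\beta$--$\K^2$ inequality you sketch is exactly Theorem~\ref{lem2.5}, and the Carleson packing is Theorem~\ref{13.11.2014.4}; these handle the large-$\beta$ piece directly. One small point: the preliminary reduction is not purely standard, since you must first show that $\M_{\K^2}(E)<\infty$ forces $\cH^{\N}(E\cap B)<\infty$ for every ball (the paper's Lemma~\ref{19.04.2013.1}); without this the density decomposition does not start.

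The genuine gap is your treatment of the tilt set $F_3$. Angle comparisons between approximating planes at nearby scales (Lemma~\ref{lem2.6}) are available and do follow from the $\beta$-estimates, but they do not by themselves rule out a slow drift of the plane across many scales accumulating to a large total angle while every individual $\beta$ stays small. The paper controls $F_3$ by a different mechanism: it introduces an auxiliary $\gamma$-function measuring affine approximability of the \emph{constructed} Lipschitz map $A$ itself, proves $\int_{U_{10}}\int_0^2 \gamma_A(q,t)^2\,\frac{\dd t}{t}\,\dd\cH^{\N}(q)\lesssim \M_{\K^2}(\mu)$ (Theorem~\ref{thm4.1}), and then applies a Calder\'on reproducing formula together with a Littlewood--Paley square-function estimate (Theorem~\ref{2.9.2014.1}) to conclude that, off a set of small $\cH^{\N}$-measure, $A$ is uniformly well approximated by affine maps at all small scales; this forces the planes $P_{(x,t)}$ to stay close to $P_0$, contradicting membership in $F_3$. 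This harmonic-analysis step is where the exponent $p=2$ enters essentially, and it is the missing idea in your sketch. (Incidentally, L\'eger's one-dimensional argument does not use any ordering of points along a curve; it already goes through Calder\'on's formula in the same way.)
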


Let us briefly overview a couple of results for 
the higher dimensional case. There exist well-known equivalent characterisations of $\N$-rectifiability,
for example, in terms of approximating tangent planes \cite[Thm. 15.19]{MR1333890}, 
orthogonal projections \cite[Thm. 18.1, Besicovitch-Federer projection theorem]{MR1333890},
and in terms of densities \cite[Thm. 17.6 and Thm. 17.8 (Preiss's theorem)]{MR1333890}.
Recently Tolsa and Azzam proved in \cite{2015arXiv150101569T} and \cite{TolsaAzzam} a characterisation of $\N$-rectifiability using the so called
$\beta$-numbers\footnote{Introduced by P. W. Jones in \cite{MR1069238} and \cite{MR1103619}.} 
defined for $k > 1$, 
$x \in \mathbb{R}^{\n}$, $t>0$, $\p\ge 1$ by
\[ \beta_{\p;k;\mu} (x,t) := \inf_{P \in \mathcal{P}(\n,\N)} \left( \frac{1}{t^{\N}} \int_{B(x,kt)} 
	\left( \frac{d(y,P)}{t} \right)^{\p} \dd  \mu (y) \right)^{\frac{1}{\p}}, \]
where $ \mathcal{P}(\n,\N) $ denotes the set of all $\N$-dimensional planes in $\mathbb{R}^{\n}$,
$d(y,P)$ is the distance of $y$ to the $\N$-dimensional plane $P$
and $\mu$ is a Borel measure on $\R^{\n}$.
They showed in particular that an $\cH^{\N}$-measurable set $E \subset \R^{\n}$ with $\cH^{\N}(E)<\infty$ is
$\N$-rectifiable \textit{if and only if}
\begin{align}\label{28.08.2015.1}
	\int_{0}^{1} \beta_{2;1;\cH^{\N}|_{E}}(x,r)^{2}\frac{\dd r}{r} &< \infty \quad \quad \quad \text{ for } \cH^{\N}-a.e. x \in E.
\end{align}
This result is remarkable in relation to our result since the $\beta$-numbers and even 
an expression similar to \eqref{28.08.2015.1}
play an important role in our proof. Nevertheless at the moment, we do not see how Tolsa's result could be used to
shorten our proof of Theorem \ref{13.11.2014.1}.
There are further characterisations of rectifiability by Tolsa and Toro in \cite{1402.2799} and \cite{1408.6979}.

Now we present some of our own intermediate results 
that finally lead to the proof of Theorem \ref{13.11.2014.1}, but that might also be of independent interest itself.
There is a connection between those $\beta$-numbers and integral Menger curvature \eqref{27.08.2015.1}. In section \ref{1.11.2014.1},
we prove the following theorem (see Theorem \ref{lem2.5} for a more general version):

\begin{thm}\label{13.11.2014.2}
	Let $\mu$ be some arbitrary Borel measure on $\R^{\n}$ with compact support such that there is a
	constant $C \ge 1$ with $\mu(B) \le C (\diam B)^{\N}$ for all balls $B \subset \R^{\n}$, 
	where $\diam B$ denotes the diameter of the ball $B$.
	Let $B(x,t)$ be a fixed ball with $\mu(B(x,t)) \ge \lambda t^{\N}$ for some $\lambda > 0$ and let $k >2$.
	Then there exist some constants $k_{1}> 1$ and $C \ge 1$ such that 
	\[ \beta_{2;k}(x,t)^{2} \le \frac{C}{t^{\N}}
		\int_{B(x,k_{1}t)} \dots \int_{B(x,k_{1}t)} \chi_{D}(x_{0},\dots,x_{\N})
		\K^{2}(x_{0},\dots,x_{\N+1}) \ \dd \mu(x_{0}) \dots \dd \mu (x_{\N+1}),\]
	where 
	$D=\{(x_{0},\dots,x_{\N+1})\in B(x,k_{1}t)^{\N+2} | d(x_{i},x_{j})\ge \frac{t}{k_{1}}, i\neq j\}$.
\end{thm}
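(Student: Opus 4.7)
The plan rests on the elementary volume identity
\[
d(y, P(x_0,\ldots,x_\N))^2 \;=\; (\N+1)^2\, \K^2(x_0,\ldots,x_\N,y)\, \frac{\prod_{0\le i<j\le \N+1} d(x_i,x_j)^2\big|_{x_{\N+1}=y}}{\cH^\N(\Delta(x_0,\ldots,x_\N))^2},
\]
which follows from $\cH^{\N+1}(\Delta(x_0,\ldots,x_\N,y)) = \tfrac{1}{\N+1}\cH^\N(\Delta(x_0,\ldots,x_\N))\,d(y,P)$ together with the definition of $\K$. If all $\N+2$ points lie in $B(x,k_1 t)$, the numerator is bounded by $(2k_1 t)^{(\N+1)(\N+2)}$, so the estimate reduces to exhibiting a quantitatively large reservoir of \emph{fat} base simplices $(x_0,\ldots,x_\N)$ satisfying $\cH^\N(\Delta(x_0,\ldots,x_\N))\gtrsim t^\N$.

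I would construct these fat simplices iteratively, combining the one-point lower regularity $\mu(B(x,t))\ge\lambda t^\N$ with the upper Ahlfors bound $\mu(B)\le C(\diam B)^\N$. Given $x_0,\ldots,x_{k-1}$, the $(t/k_1)$-neighborhood of $\aff(x_0,\ldots,x_{k-1})$ intersected with $B(x,k_1 t)$ can be covered by $\lesssim k_1^{2(k-1)}$ balls of radius $t/k_1$ and so has $\mu$-measure $\lesssim k_1^{2(k-1)-\N}\,t^\N$. For $k_1$ large enough (depending on $\lambda$, $C$, $\N$, $k$) this is strictly below $\mu(B(x,t))/2$, so the admissible choices for $x_k$ form a set of $\mu$-measure $\gtrsim t^\N$. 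Iterating $k=0,\ldots,\N$, the set
\[
G:=\bigl\{(x_0,\ldots,x_\N)\in B(x,k_1 t)^{\N+1}\,:\,d(x_k,\aff(x_0,\ldots,x_{k-1}))\ge t/k_1 \text{ for every }k\bigr\}
\]
satisfies $\mu^{\N+1}(G)\gtrsim t^{\N(\N+1)}$; its defining conditions force $d(x_i,x_j)\ge t/k_1$ for all $i\ne j$, and a Gram–Schmidt calculation gives $\cH^\N(\Delta(x_0,\ldots,x_\N))\ge(t/k_1)^\N/\N!$ throughout $G$.

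For every $(x_0,\ldots,x_\N)\in G$ the plane $P:=\aff(x_0,\ldots,x_\N)$ is admissible in the infimum defining $\beta_{2;k}(x,t)$, and the identity above combined with the fat-simplex bound yields $d(y,P)^2 \le C\,t^{\N^2+\N+2}\,\K^2(x_0,\ldots,x_\N,y)$ for every $y\in B(x,k_1 t)$. Hence
\[
\beta_{2;k}(x,t)^2\;\le\;\frac{1}{t^{\N+2}}\int_{B(x,kt)}d(y,P)^2\,\dd\mu(y)\;\le\;Ct^{\N^2}\int_{B(x,k_1 t)}\K^2(x_0,\ldots,x_\N,y)\,\dd\mu(y).
\]
The left-hand side is independent of the chosen tuple, so integrating both sides over $G$ with respect to $\mu^{\N+1}$ and dividing by $\mu^{\N+1}(G)\gtrsim t^{\N(\N+1)}$ converts the prefactor $Ct^{\N^2}$ into $Ct^{-\N}$ and produces
\[
\beta_{2;k}(x,t)^2\;\le\;\frac{C}{t^\N}\int_{B(x,k_1 t)^{\N+2}}\chi_G(x_0,\ldots,x_\N)\,\K^2\,\dd\mu^{\N+2},
\]
which is the desired bound up to $G\subset D$: the extra condition in $D$ requiring $d(x_{\N+1},x_i)\ge t/k_1$ is enforced by splitting the $y$-integration, with the complementary part inside $\bigcup_i B(x_i,t/k_1)$ handled by the trivial bound $d(y,P)\le |y-x_i|$ and upper regularity, yielding a negligible correction.

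The principal difficulty is the middle step: the \emph{quantitative} lower bound $\mu^{\N+1}(G)\gtrsim t^{\N(\N+1)}$, as opposed to the mere existence of one non-degenerate simplex, forces a careful balancing of the one-point lower mass hypothesis against the upper bound on each "bad" tubular neighborhood, and pins down the value of $k_1$. Once this reservoir is secured, the remaining ingredients — the volume identity, the resulting pointwise $\K^2$-bound, and the averaging — are essentially mechanical, and the exponents $\N^2$ (from $d^2/t^{\N+2}$) and $\N(\N+1)$ (from $\mu^{\N+1}(G)$) conspire exactly to produce the claimed $C/t^\N$ prefactor.
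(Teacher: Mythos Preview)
Your averaging strategy is a legitimate alternative to the paper's Chebyshev-selection argument, but there are two problems, one minor and one genuine.

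\textbf{Minor.} In the iterative construction of $G$ you cover the tube around $\aff(x_0,\dots,x_{k-1})$ inside $B(x,k_1t)$ and obtain a bad set of measure $\lesssim k_1^{2(k-1)-\N}t^{\N}$. For the last step $k=\N$ this is $k_1^{\N-2}t^{\N}$, which does \emph{not} become small as $k_1\to\infty$ once $\N\ge 2$. The fix is immediate: since you are subtracting from $\mu(B(x,t))$, you only need to remove the tube inside $B(x,t)$, giving covering number $\lesssim k_1^{k-1}$ and bad measure $\lesssim k_1^{k-1-\N}t^{\N}\le k_1^{-1}t^{\N}$ for all $k\le \N$.

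\textbf{Genuine gap.} The final paragraph is where the argument breaks. After averaging you have
\[
\beta_{2;k}(x,t)^2 \;\le\; \frac{C}{t^{\N}}\int \chi_G(x_0,\dots,x_{\N})\,\K^2\,\dd\mu^{\N+2},
\]
with \emph{no} separation condition on $x_{\N+1}=y$. To pass to $\chi_D$ you split off the region $y\in\bigcup_i B(x_i,t/k_1)$ and bound its contribution to $\beta^2$ via $d(y,P)\le |y-x_i|$ and upper regularity. But that bound is
\[
\frac{1}{t^{\N+2}}\int_{\bigcup_i B(x_i,t/k_1)} d(y,P)^2\,\dd\mu(y)\;\lesssim\;\frac{(t/k_1)^2}{t^{\N+2}}\cdot (\N+1)\,C\Bigl(\frac{2t}{k_1}\Bigr)^{\N}\;\asymp\; k_1^{-\N-2},
\]
which is a fixed \emph{positive constant}, not something dominated by $\frac{C}{t^\N}\int_D\K^2$. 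When the measure is nearly planar, $\int_D\K^2$ can be arbitrarily small while your additive constant stays put, so the inequality fails.

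The paper's proof of the general version (Theorem~\ref{lem2.5}) is designed precisely around this obstacle. Instead of averaging over $(\N+1)$-point bases, it first uses Lemma~\ref{lem2.3} to locate $\N+2$ balls $B_0,\dots,B_{\N+1}$ such that \emph{every face} of the resulting simplex is a good $\N$-simplex, then performs an iterated Chebyshev selection to pin down specific points $z_0,\dots,z_{\N+1}$ for which both the one-variable integrals \eqref{24.08.12.1} and the height $d(z_{\N+1},P_{\N+1})$ in \eqref{24.08.12.2} are controlled by $\mathcal M_{\K^2;k_1}(x,t)$. For $w$ in the dangerous ball $2B_j$ one then uses the alternative plane $P_j=\aff(\{z_0,\dots,z_{\N+1}\}\setminus\{z_j\})$, and the passage from $P_j$ back to $P_{\N+1}$ is governed (via Lemma~\ref{20.2.2012.4}) by exactly the quantity $d(z_{\N+1},P_{\N+1})$, which is already bounded by the curvature integral. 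This is how the ``near-$x_i$'' contribution gets absorbed into $\int_D\K^2$ rather than surviving as a constant.

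If you want to salvage the averaging route, you would need an analogous mechanism---for instance, constructing a reservoir of fat $(\N+1)$-simplices (so $\N+2$ vertices) and, for each $y$ near a vertex, switching to the opposite face. Without that extra vertex, the constant term is unavoidable.
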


A measure $\mu$ is said to be $\N$-dimensional Ahlfors regular if and only if there exists
some constant $C\ge 1$ so that $\frac{1}{C} (\diam B)^{\N} \le \mu(B) \le C (\diam B)^{\N}$
for all balls $B$ with centre on the support of $\mu$.
We mention that we do not have to assume for this theorem that the measure $\mu$ is $\N$-dimensional Ahlfors regular.
We only need the upper bound on $\mu(B)$ for each ball $B$ and the condition $\mu(B(x,t)) \ge \lambda t^{\N}$ 
for \textit{one} specific ball $B(x,t)$.

Lerman and Whitehouse obtain a comparable result in \cite[Thm.~1.1]{MR2558685}. The main differences are that,
on the one hand, they have to use an $\N$-dimensional Ahlfors regular measure, but, on the other hand,
they work in a real separable Hilbert space of possibly infinite dimension instead of $\R^{\n}$.
The higher dimensional Menger curvatures they used (see \cite[introduction and section 6]{MR2558685}) 
are examples of integrands that also 
fit in our more general setting\footnote{A characterisation of all possible integrands for our result 
can be found at the beginning 
of section \ref{1.11.2014.2}. In section \ref{22.10.2014.2}, we discuss one of the integrands of Lerman and Whitehouse.}.
This means that all of our results are valid if one uses their integrands instead of the initial $\K$
presented as an example above.

In addition to rectifiability, there is the notion of \textit{uniform rectifiability}, which implies rectifiability. 
A set is uniformly rectifiable if it is
Ahlfors regular\footnote{A set $E$ is $\N$-dimensional Ahlfors regular if and only if the restricted Hausdorff 
measure $\cH^{\N} \textsf{L} E$ is $\N$-dimensional Ahlfors regular.} and if it fulfils a second condition 
in terms of $\beta$-numbers (cf. \cite[Thm.~1.57, (1.59)]{MR1251061}). 
In \cite{MR2558685} and \cite{MR2848529}, Lerman and Whitehouse give an alternative characterisation of 
uniform rectifiability 
by proving that for an Ahlfors regular set this $\beta$-number term is comparable to a term expressed 
with integral Menger curvature. One of the two inequalities needed is given in
in \cite[Thm. 1.3]{MR2558685}, and is similar to our following theorem, which is
a consequence of Theorem \ref{13.11.2014.2} in connection with Fubini's theorem 
(see Theorem \ref{13.11.2014.4} for a more general version).
We emphasise again that in our 
case the measure $\mu$ does not have to be Ahlfors regular.

\begin{thm}\label{13.11.2014.3}
	Let $\mu,\lambda$ and $k$ be as in the previous theorem. There exists a constant $C \ge 1$ such that
	\[\int  \int_{0}^{\infty} \beta_{2;k}(x,t)^{2}\Eins_{\left\{ \mu(B(x,t)) \ge \lambda t^{\N} \right\}} \frac{\dd t}{t} 
		 \dd \mu(x) \le C\mathcal{M}_{\K^{2}}(\mu).\]
\end{thm}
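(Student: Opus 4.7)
The plan is to apply Theorem~\ref{13.11.2014.2} pointwise in $(x,t)$ and then exchange the order of integration by Fubini's theorem. Whenever $\mu(B(x,t))\ge \lambda t^{\N}$, Theorem~\ref{13.11.2014.2} gives
\[
\beta_{2;k}(x,t)^{2}\le \frac{C}{t^{\N}}\int_{B(x,k_{1}t)^{\N+2}} \chi_{D(x,t)}(x_{0},\dots,x_{\N+1}) \K^{2}(x_{0},\dots,x_{\N+1})\,\dd\mu(x_{0})\dots \dd\mu(x_{\N+1}),
\]
where $D(x,t)$ encodes the constraints $x_{i}\in B(x,k_{1}t)$ and $d(x_{i},x_{j})\ge t/k_{1}$. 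After multiplying by $\dd t/t$ and integrating against $\dd\mu(x)$, the factor $\Eins_{\{\mu(B(x,t))\ge \lambda t^{\N}\}}$ on the left-hand side of the theorem is exactly what licenses the pointwise estimate.

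Swapping the order of integration turns the resulting upper bound into
\[
C\int\dots\int \K^{2}(x_{0},\dots,x_{\N+1})\, F(x_{0},\dots,x_{\N+1})\,\dd\mu(x_{0})\dots \dd\mu(x_{\N+1}),
\]
where
\[
F(x_{0},\dots,x_{\N+1}):=\int\int_{0}^{\infty}\frac{1}{t^{\N+1}}\,\chi_{D(x,t)}(x_{0},\dots,x_{\N+1})\Eins_{\{\mu(B(x,t))\ge \lambda t^{\N}\}}\,\dd t\,\dd\mu(x).
\]
Everything reduces to showing that $F$ is bounded by a universal constant. For a fixed tuple $(x_{0},\dots,x_{\N+1})$, let $D_{\min}$ and $D_{\max}$ denote the smallest and largest pairwise distances in the tuple. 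The condition $d(x_{i},x_{j})\ge t/k_{1}$ forces $t\le k_{1}D_{\min}$, while $x_{i}\in B(x,k_{1}t)$ forces $d(x_{i},x_{j})\le 2k_{1}t$ and hence $t\ge D_{\max}/(2k_{1})$. Thus the admissible $t$-interval is either empty or has logarithmic length at most $\log(2k_{1}^{2})$. Moreover, $x_{0}\in B(x,k_{1}t)$ confines $x$ to $B(x_{0},k_{1}t)$, whose $\mu$-measure is bounded by $C t^{\N}$ thanks to the upper regularity assumption on $\mu$. Integrating first in $x$ and then in $t$ gives
\[
F(x_{0},\dots,x_{\N+1})\le \int_{D_{\max}/(2k_{1})}^{k_{1}D_{\min}}\frac{C t^{\N}}{t^{\N+1}}\,\dd t\le C\log(2k_{1}^{2}),
\]
a constant depending only on $k_{1}$ and the upper regularity constant.

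The main, and essentially only, obstacle is this logarithmic estimate on $F$; once it is in hand, substituting it back produces the desired inequality with right-hand side $C\log(2k_{1}^{2})\cdot \M_{\K^{2}}(\mu)$. It is worth noting that the lower regularity hypothesis $\mu(B(x,t))\ge \lambda t^{\N}$ is used only to justify the pointwise application of Theorem~\ref{13.11.2014.2} and plays no role in the estimate of $F$, which is why the final constant does not depend on $\lambda$.
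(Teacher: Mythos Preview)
Your proof is correct and follows essentially the same route as the paper: apply Theorem~\ref{13.11.2014.2} pointwise, then use Fubini's theorem and observe that the constraints $x_i\in B(x,k_1 t)$ and $d(x_i,x_j)\ge t/k_1$ confine $t$ to an interval of bounded logarithmic length while the upper Ahlfors regularity controls the $\mu$-integral in $x$ (the paper does this in Theorem~\ref{13.11.2014.4}, intermediate results II and III, using only the single distance $d(u_0,u_1)$ rather than your $D_{\min}$ and $D_{\max}$, which is a cosmetic difference).

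One small correction to your closing remark: the final constant \emph{does} depend on $\lambda$, because both $k_1$ and the constant $C$ in Theorem~\ref{13.11.2014.2} depend on $\lambda$ (cf.\ the explicit dependence in Theorem~\ref{lem2.5} and Lemma~\ref{lem2.3}); your observation that $\lambda$ plays no role in bounding $F$ is right, but the $\lambda$-dependence enters earlier through the pointwise estimate.
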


In the last years, there occurred several papers working with integral Menger curvatures. 
Some deal with (one-dimensional) space curves and get higher regularity ($C^{1,\alpha}$) of the arc length 
parametrisation if the integral Menger curvature is finite, e.g \cite{MR2489022,MR2668877}.
Others handle higher dimensional objects in \cite{1205.4112,MR3061777,SvdMsurface} occasionally 
using versions of integral Menger curvatures 
similar to ours\footnote{Our main result does not work with their integrands, but most of the partial results 
are valid, cf.~section \ref{22.10.2014.2}.}. 
Remarkable are the results of Blatt and Kolasinski \cite{MR2921162,MR3021472}. They proved among other things that
for $p > \N(\N+1)$ and some compact $\N$-dimensional $C^{1}$ manifold $\Sigma$ 
\[\int_{\Sigma} \dots \int_{\Sigma}
	\left(\frac{\cH^{\N+1}(\Delta(x_{0},\dots,x_{\N+1}))}{\diam(\Delta(x_{0},\dots,x_{\N+1}))^{\N+2}}\right)^{p} 
	\dd \cH^{\N}(x_{0}) \dots, \dd \cH^{\N}(x_{\N+1}) < \infty\]
is \textit{equivalent} to having a local representation of $\sigma$ as the graph of a function belonging to the Sobolev Slobodeckij space
$W^{2-\frac{\N(\N+1)}{\p},p}$.
Finally, we mention that in \cite{MR3091327,MR3105400} Menger curvature energies are recently used as knot energies 
in geometric knot theory to avoid some of the drawbacks of self-repulsive potentials like the M\"obius energy
\cite{MR1098918,MR1259363}.

\textbf{Organisation of this work.}
In section 3, we give the precise formulation of our main result and
discuss some examples of integrands known from several papers working with integral Menger 
curvatures.
In section \ref{beta}, we present some results for a Borel measure including the general versions of
Theorems \ref{13.11.2014.2} and \ref{13.11.2014.3}, namely Theorem \ref{lem2.5} and \ref{13.11.2014.4}.
The following sections \ref{27.10.2014.2} to \ref{notanullset} give the proof of our main result.
We remark that all statements in section \ref{construction}, \ref{gamma} and \ref{notanullset},
except section \ref{27.10.2014.3}, depend on the construction given in chapter \ref{construction}.

\section{Preliminaries}\label{27.10.2014.1}
\subsection{Basic notation and linear algebra facts}
Let $\N,\NN,\n \in \mathbb{N}$ with $1\le \N < \n$ and $1 \le \NN < \n$.
If $E \subset \R^{\n}$ is some subset of $\R^{\n}$, we write $\overline{E}$ for its closure and $\mathring{E}$
for its interior.\index{$\overline{E}$}\index{$\mathring{E}$}
We set $d(x,y):=|x-y|$ \index{$d(x,y)$} where $x,y \in \R^{\n}$ and $|\cdot|$ is the usual Euclidean norm.
Furthermore, for $x \in \R^{\n}$ and $E_{1},E_{2} \subset \R^{\n}$, we set $d(x,E_{2})=\inf_{y \in E_{2}}d(x,y)$,
$d(E_{1},E_{2})=\inf_{z \in E_{1}}d(z,E_{2})$
and $\# E$ means the number of elements of $E$.\index{$\#E$}
By $B(x,r)$ we denote the closed ball in $\R^{\n}$ with centre $x$ and radius $r$,
and we define by $\vol$ \index{$\vol$} the $\N$-dimensional volume of the $\N$-dimensional unit ball.
Let $G(\n,\NN)$ be the Grassmannian, the space of all $\NN $-dimensional linear subspaces of $\R^\n$ and
$\mathcal{P}(\n,\NN )$ the set of all $\NN $-dimensional affine subspaces of $\R^\n$.
For $P \in \mathcal{P}(\n,\NN )$, we define $\pi_P$ as the orthogonal projection on $P$.
If $P \in \mathcal{P}(\n,\NN )$, we have that $P - \pi_{P}(0) \in G(\n,\NN )$, hence $P - \pi_{P}(0)$
is the linear subspace parallel to $P$.
Furthermore, we set $\pi_P^{\perp}:=\pi_{P-\pi_{P}(0)}^{\perp} := \pi_{(P-\pi_{P}(0))^{\perp}}$
where $\pi_{(P-\pi_{P}(0))^{\perp}}$ is the orthogonal projection on the orthogonal complement of $P-\pi_{P}(0)$.
This implies that $\pi_P^{\perp}=\pi_{\tilde P}^{\perp}$ and $\pi_P \neq \pi_{\tilde P}$
whenever $P$ is parallel but not equal to $\tilde P$.

Furthermore, for $A \subset \R^{\n}$ and $x \in \R^{\n}$, we set $A+x:=\{y \in \R^{\N}|y-x \in A\}$. \index{$A+x$}
By $\sspan(A)$, we denote the linear subspace of $\R^{\n}$ spanned by the elements of $A$. If 
$A=\{o_{1},\dots,o_{\NN}\}$ or $A=A_{1} \cup A_{2}$, we may 
write $\sspan(o_{1},\dots,o_{\NN})$ resp. $\sspan(A_{1},A_{2})$ instead of $\sspan(A)$.

\begin{rem}\label{24.04.2013.1}
	Let $P \in \mathcal{P}(\n,\NN)$ and $a,x \in \R^{\n}$.
	We have $ \pi_{P}(a) = \pi_{P-x}(a-x)+x$.
\end{rem}

\begin{rem}\label{2.11.11.1}\label{12.4.11}
	Let $b,a,a_i \in \R^\n$, $\alpha_i \in \R$ for $i=1,..l$, $l \in \mathbb{N}$ with 
	$b = a + \sum_{i =1}^l \alpha_i(a_i-a)$
	and $P \in \mathcal{P}(\n,m)$.
	Then we have
	$\pi_{P}(b)= \pi_{P}\left(a\right) +\sum_{i =1}^l \alpha_i \big[\pi_{P} (a_i) - \pi_{P}(a)\big]$ and
	$ d(b,P) \le d(a,P) + \sum_{i =1}^l |\alpha_i|\left(d(a_i,P) + d(a,P) \right)$.
\end{rem}

\begin{figure}[h]
	\begin{center}
	\begin{tikzpicture}[scale=0.49]
		\draw[-] (21,4) -- (2,4) node [below] {$P_{2}$} ;
		\draw[-] (21 ,4)  -- (27,8);
		\draw[-] (8,8)  -- (17.2,8);
		\draw[dotted] (17.2,8)  -- (20.5,8);
		\draw[-] (20.5,8)  -- (27,8);
		\draw[-] (2,4)  -- (8,8);

		\draw[-] (11.2,4)  -- (17.2,8);

		\draw[-] (19,7) -- (6,2) node [below] {$P_{1}$};
		\draw[-] (19,7)  -- (25,11);
		\draw[dotted] (12,6)  -- (17.2,8);
		\draw[-] (17.2,8)  -- (25,11);
		\draw[-] (6,2)  -- (9,4);
		\draw[dotted] (9,4)  -- (12,6);
		
		\draw[dashed] (12.7,5)  -- (17.5,5) node [right] {$\pi_{P_{2}}(a_{1})$};
		\draw (17.5,5) circle (0.03);
		\draw[dashed] (17.5,6.85) -- (12.7,5);
		\draw (12.7,5) circle (0.04) ;
		\draw[-] (12.6,5.04)  -- (10.6,5.5) node [left] {$\pi_{P_1\cap P_2}(a_{1})$};
		\draw[dashed] (17.5,5)  -- (17.5,6.85);
		\draw (17.5,6.85) circle (0.03);
		\draw[-] (17.6,6.8) -- (18.5,6.4) node [right] {$a_{1}$};

		\draw[dashed] (15.7,7)  -- (22,7) node [right] {$\pi_{P_{2}}(a_{2})$};
		\draw (22,7) circle (0.03);
		\draw[dashed]  (22,9.43) -- (15.7,7);
		\draw (15.7,7) circle (0.04) ;
		\draw[-] (15.6,7.04)  -- (13.6,7.5) node [left] {$\pi_{P_1\cap P_2}(a_{1})$};
		\draw[dashed] (22,9.43)  -- (22,7);
		\draw (22,9.43) circle (0.03) ;
		\draw[-] (21.88,9.5) -- (21,10) node [left] {$a_{2}$};
		
		\draw[-] (11.8,4.4)  -- (13,3) node [right] {$P_{1}\cap P_{2}$};
	\end{tikzpicture}
	\end{center}
	\vspace{-6mm}
	\caption[...tba]{Illustration of Lemma \ref{20.2.2012.4}: $\frac{|a_1- \pi_{P_2}(a_1)|}{|a_1- \pi_{P_1 \cap P_2}(a_1)|}= \frac{|a_2- \pi_{P_2}(a_2)|}{|a_2- \pi_{P_1 \cap P_2}(a_2)|}$}
\end{figure}
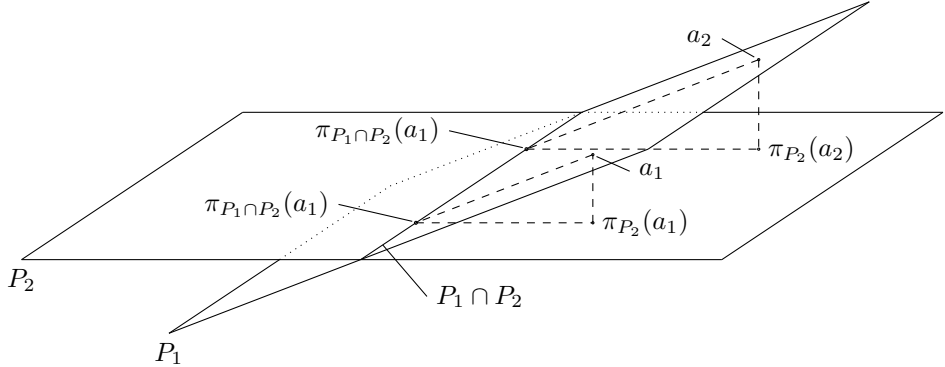

\begin{lem} \label{20.2.2012.4}
	Let $P_1, P_2 \in \mathcal{P}(\n,m)$ with $ \dim P_1 = \dim P_2 = \NN < \n$ and $\dim (P_1 \cap P_2)= \NN -1$.
	For $a_1,a_2 \in P_1 \setminus P_2$, we have
	$\frac{|a_1- \pi_{P_2}(a_1)|}{|a_1- \pi_{P_1 \cap P_2}(a_1)|}
		= \frac{|a_2- \pi_{P_2}(a_2)|}{|a_2- \pi_{P_1 \cap P_2}(a_2)|}$.
\end{lem}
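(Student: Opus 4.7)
The plan is to reduce this to a statement about a single unit vector orthogonal to the intersection, and read off that the claimed ratio is simply $|u - \pi_{P_2}(u)|$, independent of $a_1, a_2$.

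First I would translate to a linear-subspace situation. Since $L := P_1 \cap P_2$ is a nonempty affine subspace, pick any $p \in L$ and replace each set/point by its translate by $-p$. By Remark \ref{24.04.2013.1}, distances of the form $|a - \pi_{P}(a)|$ are invariant under this translation, so we may assume that $L$, $P_1$, and $P_2$ are linear subspaces of dimensions $m-1$, $m$, $m$ respectively.

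Next, because $L$ is a codimension-one linear subspace of $P_1$, there is a unit vector $u \in P_1$ with $u \perp L$, and every vector of $P_1$ has a unique decomposition $\ell + \alpha u$ with $\ell \in L$ and $\alpha \in \R$. Write $a_j = \ell_j + \alpha_j u$ for $j=1,2$. The hypothesis $a_j \in P_1 \setminus P_2$ forces $\alpha_j \neq 0$, because $\alpha_j = 0$ would place $a_j$ in $L \subset P_2$. Clearly $\pi_{L}(a_j) = \ell_j$, so
\[
|a_j - \pi_{L}(a_j)| = |\alpha_j|.
\]

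Now I would use linearity of $\pi_{P_2}$ and the fact that $L \subset P_2$ (so $\pi_{P_2}(\ell_j) = \ell_j$) to compute
\[
\pi_{P_2}(a_j) = \ell_j + \alpha_j \pi_{P_2}(u), \qquad a_j - \pi_{P_2}(a_j) = \alpha_j\bigl(u - \pi_{P_2}(u)\bigr),
\]
hence $|a_j - \pi_{P_2}(a_j)| = |\alpha_j|\,|u - \pi_{P_2}(u)|$. Dividing the two computed norms cancels $|\alpha_j|$ and gives
\[
\frac{|a_j - \pi_{P_2}(a_j)|}{|a_j - \pi_{L}(a_j)|} = |u - \pi_{P_2}(u)|,
\]
a quantity that depends only on $P_1$ and $P_2$ (it is the sine of the dihedral angle between them along $L$), not on $j$. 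Comparing $j=1$ and $j=2$ finishes the proof.

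There is no real obstacle here: the statement is a short exercise in affine linear algebra. The only mild care needed is the initial reduction to linear subspaces via Remark \ref{24.04.2013.1} and verifying that $\alpha_j \neq 0$ so the denominators in the ratios are nonzero; both are immediate from the hypotheses $\dim(P_1 \cap P_2) = m-1$ and $a_j \in P_1 \setminus P_2$.
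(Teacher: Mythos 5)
Your proof is correct and follows essentially the same route as the paper's sketch: translate so that $P_1$, $P_2$ become linear subspaces, then express points of $P_1$ in a basis adapted to $P_1 \cap P_2$ and observe that the ratio is the fixed quantity $|u - \pi_{P_2}(u)|$. Your write-up is simply a more explicit version of what the paper leaves as a one-line computation.
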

\begin{proof}
	Translate the whole setting so that $P_{1},P_{2}$ are linear subspaces. Then express $a_{1}$ by an orthonormal
	base of $P_{1}$ and compute that $\frac{|a_1- \pi_{P_2}(a_1)|}{|a_1- \pi_{P_1 \cap P_2}(a_1)|}$
	is independent of $a_{1}$.
\end{proof}

\begin{rem} \label{26.08.12.1}
	Let $A,B$ be affine subspaces of $\R^{\n}$ with $A \subset B$ and let $a \in \R^{\n}$. We have
	$\pi_{A}(\pi_{B}(a))=\pi_{A}(a)=\pi_{B}(\pi_{A}(a))$.
\end{rem}

\subsection{Simplices}
\begin{dfn}\label{24.09.13.1}
	Let $x_i \in \R^{\n}$ for $i=0,1,\dots,\NN$. We define 
	$\Delta(x_0,\dots,x_\NN)=\Delta(\{x_0,\dots,x_\NN\})$ \index{$\Delta(x_0,\dots,x_\NN)$} as the convex hull of the 
	set $\{x_0,\dots,x_\NN \}$ and call it \textit{simplex} or $\NN$\textit{-simplex}
	if $\NN$ 
	is the Hausdorff dimension of $\Delta(x_0,\dots,x_\NN)$.
	If the vertices of $T=\Delta(x_0, \dots, x_{\NN})$ are in some set $G \subset \R^{\n}$, i.e., 
	$x_0, \dots, x_{\NN} \in G$,
	we write $T=\Delta(x_0, \dots, x_{\NN}) \in G$.
\end{dfn}

With $\aff(E)$ we denote the smallest affine subspace of $\R^{\n}$ that contains the set $E \subset \R^{\n}$.
\index{\aff}
If $E=\{x_{0}\}$, we set $\aff(E)=\{x_{0}\}$.
\begin{dfn}
	Let $T=\Delta(x_0,\dots,x_\NN) \in \R^{\n}$. For $i,j \in \{0,1, \dots ,\NN \}$ we set
	\begin{align*}
		\face{i}T = \face{x_i}T& = \Delta(\{x_0,\dots ,x_\NN\} \setminus \{x_i\}), \index{$\face{i}T$} \\
		\face{i,j}T = \face{x_i,x_j}T& = \Delta(\{x_0,\dots ,x_\NN\} \setminus \{x_i,x_j\}), \\
		\height{i}T = \height{x_i}T& = d\big(x_i,\aff(\{x_0,\dots,x_\NN\}\setminus \{x_i\})\big).\index{$\height{i}T$}
	\end{align*}
\end{dfn}

\begin{dfn}
	Let $T=\Delta(x_0,\dots,x_\NN)$ be an $\NN$-simplex in $\R^{\n}$. 
	If $\height{i}T \ge \sigma$ for all $i=0,1,\dots,\NN$, we call
	$T$ an $(\NN,\sigma)$\textit{-simplex}.
\end{dfn}

\begin{rem} \label{25.09.2013.10}
	Let $T=\Delta(x_0,\dots,x_\NN)$ an $(\NN,\sigma)$-simplex. For all $i \in \{0,\dots, \NN \}$, we have 
	$ d(x_i,\aff(A_i)) \ge \height{i}T \ge \sigma$
	for every $\emptyset \neq A_i \subset \{x_0,\dots,x_{\NN} \} \setminus \{x_i\}.$
\end{rem}

\begin{dfn} \label{30.09.2013.2}
	Let $T=\Delta(x_0,\dots,x_\NN)$ be an $\NN$-simplex in $\R^{\n}$. 
	By $\cH^{m}(T)$ we denote the volume of $T$ and 
	we define the \textit{normalized volume}
	$\volume(T):= \NN!\ \mathcal{H}^{\NN}(T)$
	which is the volume of the parallelotope spanned by the simplex $T$ (cf. \cite{Stein}).
	We also have a characterisation of $\volume(T)$ by the Gram determinant
	$\volume(T)=\sqrt{\text{Gram}(x_{1}-x_{0},\dots,x_{\NN}-x_{0})}$,
	where the Gram determinant of vectors $v_1, \dots, v_{\NN} \in \R^{\n}$ is defined by
	$\text{Gram}(v_1,\dots,v_{\NN}):=\det\left( (v_{1},\dots,v_{\NN})^{T}(v_1,\dots,v_{\NN}) \right)$.
\end{dfn}

\begin{rem} \label{23.08.12.2}
	Let $T=\Delta(x_0,\dots,x_\NN)$ be an $\NN$-simplex.
	The volume of the parallelotope, spanned by $T$, fulfils
	$\volume(T)= \height{i}T \ \volume(\face{i}T)$
	which implies
	$\mathcal{H}^{\NN}(T)= \frac{1}{\NN} \height{i}T \ \mathcal{H}^{\NN-1}(\face{i}T)$
	for the volume of a simplex.
\end{rem}

\begin{lem} \label{25.01.2012.1}
	Let $T=\Delta(x_0,\dots,x_\NN)$ be an $\NN$-simplex. 
	We have
	$\frac{\height{i}T}{\height{i}\face{j}T} = \frac{\height{j}T}{\height{j}\face{i}T}$.
\end{lem}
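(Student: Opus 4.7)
The plan is to reduce both ratios to the same symmetric expression in normalized volumes by applying Remark \ref{23.08.12.2} at two different levels of the simplex. Throughout I assume $i \neq j$ (otherwise $\height{i}\face{j}T$ is undefined).

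First, since $T$ is an $\NN$-simplex, Remark \ref{23.08.12.2} gives
\[
\volume(T) = \height{i}T \cdot \volume(\face{i}T) = \height{j}T \cdot \volume(\face{j}T),
\]
so $\height{i}T = \volume(T)/\volume(\face{i}T)$ and $\height{j}T = \volume(T)/\volume(\face{j}T)$.

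Next, I would apply the same identity one dimension lower. The face $\face{j}T = \Delta(\{x_0,\dots,x_\NN\} \setminus \{x_j\})$ is an $(\NN-1)$-simplex (its vertices are affinely independent because those of $T$ are), and $x_i$ is one of its vertices. Remark \ref{23.08.12.2} applied to $\face{j}T$ yields
\[
\volume(\face{j}T) = \height{i}\face{j}T \cdot \volume\bigl(\face{i}\face{j}T\bigr) = \height{i}\face{j}T \cdot \volume(\face{i,j}T),
\]
using that $\face{i}\face{j}T = \face{i,j}T$ directly from the definition. Hence $\height{i}\face{j}T = \volume(\face{j}T)/\volume(\face{i,j}T)$, and by the same argument with $i,j$ swapped, $\height{j}\face{i}T = \volume(\face{i}T)/\volume(\face{i,j}T)$.

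Combining these,
\[
\frac{\height{i}T}{\height{i}\face{j}T}
= \frac{\volume(T)\cdot\volume(\face{i,j}T)}{\volume(\face{i}T)\cdot\volume(\face{j}T)}
= \frac{\height{j}T}{\height{j}\face{i}T},
\]
which is the desired identity. There is no real obstacle here — the proof is essentially a two-step application of the height/volume formula — but it is worth recording that we use the affine independence of the vertices to ensure that $\face{j}T$ is genuinely an $(\NN-1)$-simplex, so that Remark \ref{23.08.12.2} may be invoked a second time.
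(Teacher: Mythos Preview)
Your proof is correct and follows essentially the same approach as the paper: both arguments repeatedly apply Remark~\ref{23.08.12.2} to express the four heights as ratios of normalized volumes, reducing each side to the symmetric quantity $\volume(T)\,\volume(\face{i,j}T)/\bigl(\volume(\face{i}T)\,\volume(\face{j}T)\bigr)$. The paper presents this as a single chain of equalities, while you spell out each height-to-volume conversion separately, but the content is identical.
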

\begin{proof}
	We have
	$\frac{\height{i}(T)}{\height{i}(\face{j}T)} 
		 = \frac{\volume(T)}{\height{i}(\face{j}T) \ \volume(\face{i}T)}
		 = \frac{\height{j}(T) \ \volume(\face{j}T)}{\height{i}(\face{j}T) \ \height{j}(\face{i}T) \ \volume(\face{i,j}T)}
		 = \frac{\height{j}(T) \ \volume(\face{j}T)}{\height{j}(\face{i}T) \ \volume(\face{j}T)}
		 = \frac{\height{j}(T)}{\height{j}(\face{i}T)}.$
\end{proof}

\begin{lem} \label{17.11.11.2}
	Let $0 < h < H$, $1 \le \NN \le \n+1$ and $y_0, x_i \in \R^\n$, $i=0,1,\dots, \NN$. 
	If $T_x=\Delta(x_0,\dots,x_\NN)$ is an $(\NN,H)$-simplex and $d(y_0,x_0) \le h$, then $T_y=\Delta(y_0,x_1,\dots,x_\NN)$ 
	is an $(\NN,H-h)$-simplex.
\end{lem}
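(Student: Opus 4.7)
There are $\NN+1$ heights of $T_y$ to verify. The base case $\NN=1$ is immediate, since both heights of $\Delta(y_0,x_1)$ equal $|y_0-x_1|\ge |x_0-x_1|-h\ge H-h$; assume henceforth $\NN\ge 2$. The height at $y_0$ follows from the triangle inequality for distance to an affine subspace:
\[
\height{y_0}(T_y) \;=\; d(y_0,\aff(x_1,\ldots,x_\NN)) \;\ge\; d(x_0,\aff(x_1,\ldots,x_\NN)) - d(x_0,y_0) \;\ge\; H - h.
\]

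For each $i\in\{1,\ldots,\NN\}$, the first step is to derive a closed-form expression for $\height{x_i}(T_y)$. Applying Remark \ref{23.08.12.2} to $T_y$ at the vertex $y_0$ gives $\NN\,\cH^{\NN}(T_y) = \height{y_0}(T_y)\,\cH^{\NN-1}(\Delta(x_1,\ldots,x_\NN))$ (since $\face{0}T_y=\Delta(x_1,\ldots,x_\NN)$); the same remark applied to $\face{i}T_y$ at $y_0$ gives $(\NN-1)\,\cH^{\NN-1}(\face{i}T_y) = d(y_0,Q_i)\,\cH^{\NN-2}(\Delta(x_1,\ldots,\hat{x}_i,\ldots,x_\NN))$, where $Q_i:=\aff(x_1,\ldots,\hat{x}_i,\ldots,x_\NN)$. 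Expanding $\cH^{\NN-1}(\Delta(x_1,\ldots,x_\NN))$ analogously in terms of the vertex $x_i$ and substituting into $\height{x_i}(T_y)=\NN\,\cH^{\NN}(T_y)/\cH^{\NN-1}(\face{i}T_y)$, the $\cH^{\NN-2}(\Delta(x_1,\ldots,\hat{x}_i,\ldots,x_\NN))$ factor and the $(\NN-1)$ cancel, leaving the identity
\[
\height{x_i}(T_y) \;=\; d(x_i,Q_i)\cdot \frac{d(y_0,\aff(x_1,\ldots,x_\NN))}{d(y_0,Q_i)}.
\]

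The main obstacle is a sufficiently sharp lower bound on this product. One has $d(x_i,Q_i)\ge H$ by Remark \ref{25.09.2013.10}, and by the triangle inequality $d(y_0,\aff(x_1,\ldots,x_\NN))\ge A-h$ and $d(y_0,Q_i)\le D+h$, where $A:=d(x_0,\aff(x_1,\ldots,x_\NN))\ge H$ and $D:=d(x_0,Q_i)\ge H$. The naive combination yields only $H(A-h)/(D+h)$, which falls below $H-h$ when $A$ and $D$ are both close to $H$, so the two triangle inequalities must be used jointly: the directions of $\delta:=y_0-x_0$ making each sharp are inconsistent, the first requiring $\delta$ along the perpendicular from $x_0$ to $\aff(x_1,\ldots,x_\NN)$ and the second along the perpendicular from $x_0$ to $Q_i$, while these two perpendiculars meet at a fixed angle $\psi$ satisfying $\sin\psi = A/D$. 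I plan to choose an orthonormal basis of the orthogonal complement of the linear subspace parallel to $Q_i$ with one axis $u$ along the perpendicular from $Q_i$ to $x_0$, decompose $\delta = \alpha u + \beta w + \delta''$ under the constraint $\alpha^2+\beta^2+|\delta''|^2\le h^2$, express $d(y_0,\aff(x_1,\ldots,x_\NN))^2$ and $d(y_0,Q_i)^2$ as explicit quadratic functions of $(\alpha,\beta,|\delta''|)$ and $\psi$, and verify the inequality $d(x_i,Q_i)^2\cdot d(y_0,\aff(x_1,\ldots,x_\NN))^2\ge (H-h)^2\,d(y_0,Q_i)^2$ by a direct algebraic computation. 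After squaring and applying the identity $H^2-(H-h)^2 = h(2H-h)$, the required inequality reduces to $H^2\ge 0$, which is trivial.
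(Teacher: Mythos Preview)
Your identity
\[
\height{x_i}(T_y) \;=\; d(x_i,Q_i)\cdot\frac{d(y_0,\aff(x_1,\dots,x_\NN))}{d(y_0,Q_i)}
\]
is correct (it is exactly Lemma~\ref{25.01.2012.1} applied to $T_y$), and the observation that the two naive triangle inequalities are not jointly sharp is also correct. The gap is in the final step: you do not actually carry out the promised computation, and the sketch you give cannot work as stated.

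First, a minor point: the angle between the two perpendiculars satisfies $\cos\psi=A/D$, not $\sin\psi=A/D$ (the foot on $Q_i$ and the foot on $\aff(x_1,\dots,x_\NN)$ differ by a vector inside $\aff(x_1,\dots,x_\NN)$, which is orthogonal to $x_0-p$). More seriously, your list of constraints $E:=d(x_i,Q_i)\ge H$, $A\ge H$, $D\ge H$ is insufficient: with $E=A=H$, $D=2H$, $\delta=0$ one gets $\height{x_i}(T_y)=EA/D=H/2<H-h$ for small $h$. The missing constraint is precisely $\height{x_i}(T_x)=EA/D\ge H$, which you never invoke. Even granting this extra relation, the assertion that everything ``reduces to $H^2\ge 0$'' is not borne out by the algebra: in the simplest case $E=A=D=H$ the squared inequality becomes $h(2H-h)\bigl((H+\alpha)^2+\beta^2+|\delta''|^2\bigr)\ge H^2\beta^2$, which is true but does not collapse to a triviality; in the general case one must genuinely use $EA\ge HD$, and the resulting computation is nontrivial.

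By contrast, the paper avoids this head-on optimisation entirely. It introduces the auxiliary point $z_0:=\pi_{\aff(\face{1}T_y)}(x_0)$, notes that $d(x_0,z_0)\le d(x_0,y_0)\le h$, and shows via Lemma~\ref{25.01.2012.1} that
\[
\height{1}T_y \;\ge\; \height{0}T_z\cdot\frac{\height{1}(\face{0}T_x)}{\height{0}(\face{1}T_x)}\;\ge\;\bigl(\height{0}T_x-h\bigr)\cdot\frac{\height{1}(\face{0}T_x)}{\height{0}(\face{1}T_x)}.
\]
A simple case split on whether the ratio $\height{1}(\face{0}T_x)/\height{0}(\face{1}T_x)$ exceeds $1$ then gives $\height{1}T_y\ge H-h$ in both cases, using Lemma~\ref{25.01.2012.1} once more for $T_x$ in the second case. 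This bypasses the coordinate computation completely.
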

\begin{proof}
	We have
	$\height{0}T_y \ge \height{0}T_x - d(x_0,y_0) \ge H-h$.
	Now, we show that $\height{1}T_y \ge H-h$. 
	If $m=1$, we have $\height{1}T_{y} = d(y_0,x_1)= \height{0}T_{y}$. So we can assume that $m\ge 2$ for the rest of this 
	proof.
	We set $ z_0 := \pi_{\aff(\face{1}T_y)}(x_0),$ 
	$T_z:=\Delta(z_0,x_1,\dots,x_\NN)$
	and start with some intermediate results:\\
	I.\, 	Due to $\height{0}T_y \ge H-h>0$, $T_y$ is an $\NN$-simplex.\\
	II.\,	We have $d(x_0,z_0) = d(x_0,\aff(\face{1}T_y))\le d(x_0,y_0) \le h$.\\
	III.\, 	We have $z_0=x_2 +r_0(y_0 - x_2) +  \sum_{j=3}^{\NN} r_j(x_j-x_2)$ for some
		$r_i \in \R$, $i=0,3,\dots,\NN$ because $z_0 \in \aff(\face{1}T_y)$.\\
	IV.\,	With III., Remark \ref{2.11.11.1} and because of $\pi_{\aff(\face{0}T_x)}(x_i)=x_i$ for $i=2,\dots \NN$
		we get
		\begin{align*}
			\height{0}T_z &= |z_0 - \pi_{\aff(\face{0}T_x)}(z_0) |
			= | r_0 y_0 - r_0 \pi_{\aff(\face{0}T_x)}(y_0)|
			= r_0 \height{0}(T_y)
		\end{align*}
		and analogously $\height{0}(\face{1}T_z) = r_0 \height{0}(\face{1}T_y)$.\\
	V.\,	With Remark \ref{26.08.12.1}, we get $\pi_{\aff(\face{0,1}T_x)}(z_0)=\pi_{\aff(\face{0,1}T_x)}(x_0)$ and 
		hence we obtain
		\begin{align*}
			\height{0}(\face{1}T_z)
			&= d(\pi_{\aff(\face{1}T_y)}(x_0),\pi_{\aff(\face{0,1}T_x)}(z_0))
			= d(\pi_{\aff(\face{1}T_y)}(x_0),\pi_{\aff(\face{1}T_y)}(\pi_{\aff(\face{0,1}T_x)}(z_0)))\\
			& \le d(x_0,\pi_{\aff(\face{0,1}T_x)}(z_0))
			 = \height{0}(\face{1}T_x).
		\end{align*}
	
	Now, with Lemma \ref{25.01.2012.1} ($i=1$, $j=0$, $T=T_{y}$), IV and V we deduce
	\begin{align*}
		\height{1}T_y 
		& \ge \height{0}T_z \frac{\height{1}(\face{0}T_x)}{\height{0}(\face{1}T_x)}
		 \ge \left( \height{0}T_x - d(x_0,z_0) \right) \frac{\height{1}(\face{0}T_x)}{\height{0}(\face{1}T_x)}.
	\end{align*}
	If $\frac{\height{1}(\face{0}T_x)}{\height{0}(\face{1}T_x)} \ge 1$ this gives us directly $\height{1}T_y \ge H - h$.
	In the other case, use Lemma \ref{25.01.2012.1} and II to obtain
	$\height{1}T_y > \height{1}T_x - d(x_0,z_0) \ge H - h$.
	Since, for $i=2,\dots, \NN$, the points $x_i$ fulfil the same requirements as $x_1$, we are able to prove
	$\height{i}T_y\ge H-h$ for all $i=1,\dots,\NN$ in the same way.
	So, $T_y$ is an $(\NN,H-h)$-simplex.
\end{proof}

\begin{lem} \label{29.02.2012.1}
	Let $C>0$, $1 \le \NN \le \n$ and let $G \subset \R^{\n}$
	be a finite set so that for all $(\NN+1)$-simplices $S=\Delta(x_0, \dots , x_{\NN+1}) \in G$, 
	there exists some $i \in \{0, \dots, \NN +1 \}$ so that $\face{i}(S)$
	is no $(\NN,C)$-simplex.

	Then there exists some $\NN$-simplex $T_z=\Delta(z_0, \dots, z_{\NN}) \in G$ so that for all $a \in G$,
	there exists some $i\in \{0,\dots, \NN \}$ with $d(a, \aff(\face{i}(T_z)) < 2 C$.
\end{lem}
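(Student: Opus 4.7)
The plan is to pick $T_z$ to be an $m$-simplex in $G$ of maximum normalised volume $\volume$ and to argue by contradiction. Suppose there exists $a \in G$ with $d(a, \aff(\face{i}(T_z))) \ge 2C$ for every $i \in \{0, \ldots, \NN\}$; the goal is to contradict the hypothesis by producing an $(\NN+1)$-simplex $S \in G$ all of whose faces are $(\NN,C)$-simplices.

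First I would use maximality of $\volume(T_z)$ to show that $T_z$ is automatically an $(\NN, 2C)$-simplex. For each $i$ the vertices of $T_i' := \Delta(\face{i}(T_z), a)$ lie in $G$, and Remark \ref{23.08.12.2} gives $\volume(T_i') = d(a, \aff(\face{i}(T_z))) \, \volume(\face{i}(T_z))$ together with $\volume(T_z) = \height{i}(T_z) \, \volume(\face{i}(T_z))$, so maximality $\volume(T_i') \le \volume(T_z)$ forces $\height{i}(T_z) \ge d(a, \aff(\face{i}(T_z))) \ge 2C$. Assuming $a \notin \aff(T_z)$ (the case $a \in \aff(T_z)$ can be reduced to this by iteratively replacing $T_z$ with the still-maximum-volume simplex $\Delta(\face{k}(T_z), a)$, or by a direct barycentric-coordinate argument), the simplex $S := \Delta(z_0, \ldots, z_\NN, a)$ is a genuine $(\NN+1)$-simplex with vertices in $G$.

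Every face of $S$ is now to be shown an $(\NN, C)$-simplex. The face $\face{a}(S) = T_z$ is $(\NN, 2C)$, hence $(\NN, C)$. For each face $\face{z_k}(S) = \Delta(z_0, \ldots, \hat{z_k}, \ldots, z_\NN, a)$, the height at $a$ equals $d(a, \aff(\face{k}(T_z))) \ge 2C \ge C$. The remaining heights $\height{z_j}(\face{z_k}(S))$, $j \neq k$, I would compute by applying Lemma \ref{25.01.2012.1} to $\face{z_k}(S)$ with the vertex pair $z_j$ and $a$, combined with Remark \ref{25.09.2013.10} to bound $\height{z_j}(\face{k}(T_z)) = d(z_j, \aff(\face{k,j}(T_z))) \ge \height{j}(T_z) \ge 2C$, obtaining
\[
\height{z_j}(\face{z_k}(S)) = \height{z_j}(\face{k}(T_z)) \cdot \frac{d(a, \aff(\face{k}(T_z)))}{d(a, \aff(\face{k,j}(T_z)))} \ge 2C \cdot \frac{\alpha_k}{\beta_{k,j}},
\]
where $\alpha_k := d(a, \aff(\face{k}(T_z)))$ and $\beta_{k,j} := d(a, \aff(\face{k,j}(T_z)))$.

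The main obstacle is to bound $\alpha_k/\beta_{k,j} \ge 1/2$, which then forces $\height{z_j}(\face{z_k}(S)) \ge C$ and completes the contradiction. I expect this to follow from the Pythagorean decomposition $\beta_{k,j}^2 = \alpha_k^2 + \gamma_{k,j}^2$, valid because $\aff(\face{k,j}(T_z)) \subset \aff(\face{k}(T_z))$ (here $\gamma_{k,j}$ is the in-plane distance from $\pi_{\aff(\face{k}(T_z))}(a)$ to $\aff(\face{k,j}(T_z))$), together with bounds on the barycentric coordinates of $\pi_{\aff(\face{k}(T_z))}(a)$ inside $\face{k}(T_z)$. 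These barycentric bounds should in turn come from applying maximality of $\volume(T_z)$ to the replacement simplices $\Delta(\face{l}(T_z), a)$ for the various $l$, possibly in combination with Lemma \ref{17.11.11.2}. Once the ratio estimate is secured, every face of $S$ is an $(\NN, C)$-simplex, contradicting the hypothesis of the lemma.
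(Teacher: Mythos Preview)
Your overall strategy—pick $T_z$ of maximal normalised volume, assume some $a\in G$ has $d(a,\aff(\face{i}(T_z)))\ge 2C$ for every $i$, and contradict the hypothesis by showing every face of $S=\Delta(z_0,\dots,z_{\NN},a)$ is an $(\NN,C)$-simplex—is exactly the contrapositive of the paper's argument, so as a plan it is fine. The real problem is the estimate you flag as ``the main obstacle'': the inequality $\alpha_k/\beta_{k,j}\ge\tfrac12$ is \emph{false}, and no Pythagorean or barycentric manipulation will produce it. Take $\NN=2$, $z_0=0$, $z_1=(L,0,0)$, $z_2=(0,L,0)$ and $a=(L/2,L/2,2C)$ with $L\gg C$. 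Then $T_z$ has maximal volume in $G=\{z_0,z_1,z_2,a\}$ and every $\alpha_i\ge 2C$, yet $\alpha_0=2C$ while $\beta_{0,1}=d(a,z_2)=\sqrt{L^2/2+4C^2}$, so $\alpha_0/\beta_{0,1}$ is of order $C/L$. In this same example one still gets $\height{z_1}(\face{z_0}(S))\approx 4C\ge C$, so the \emph{conclusion} you need survives; what fails is your grouping of the two factors $\height{z_j}(\face{k}(T_z))\ge 2C$ and $\alpha_k/\beta_{k,j}$—the first factor can be of order $L$ and must be allowed to compensate a ratio of order $C/L$.

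The paper does not try to lower-bound $\height{z_j}(\face{z_k}(S))$ directly. It works forward: for arbitrary $y_0\in G$ the hypothesis forces some face $T_y=\face{z_k}(S)$ to fail to be $(\NN,C)$, and if the short height sits at some $z_j$ (the case where it sits at $y_0$ is immediate) then Lemma~\ref{20.2.2012.4}, applied to the two $(\NN{-}1)$-planes $\aff(\face{z_j}(T_y))$ and $\aff(\face{j}(T_z))$ meeting along $\aff(\face{k,j}(T_z))$, together with maximality of $\volume(T_z)$ and $\height{j}(T_z)\ge 2C>\height{z_j}(T_y)$, yields $\beta_{k,j}<2\,\height{z_j}(\face{k}(T_z))$ and hence $\alpha_k<2C$. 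If you insist on the contradiction framing, the fix is therefore to regroup your identity as
\[
\height{z_j}(\face{z_k}(S))=\alpha_k\cdot\frac{\height{z_j}(\face{k}(T_z))}{\beta_{k,j}}
\]
and bound the second factor from below by $\tfrac12$; that bound is exactly the content of the paper's Lemma~\ref{20.2.2012.4} step (used under the temporary assumption $\height{z_j}(\face{z_k}(S))<C$), not a barycentric estimate, and there is no shortcut around it.
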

\begin{proof}
	Since $G$ is finite, we are able to choose $T_z=\Delta(z_0, \dots, z_{\NN}) \in G$ so that
	\begin{align} \label{20.2.2012.2}
		\volume(T_z) &= \max_{w_0,\dots, w_{\NN}\in G}\volume(\Delta(w_0, \dots, w_{\NN})).
	\end{align}
	We can assume that $T_z$ is an $(\NN,2C)$-simplex, otherwise there would exist some $i\in \{0,\dots,\NN\}$ with
	$\height{i}(T_z) < 2C$ and so for all $a \in G$ with \eqref{20.2.2012.2} we would obtain
	$d(a,\aff(\face{i}(T_z))) <2C.$

	Now, choose an arbitrary $y_0\in G$. Set $S:=\Delta(y_0,z_0, \dots , z_{\NN})$. The properties of $G$ imply
	that one face of $S$ is no $(\NN,C)$-simplex. Without loss of generality we assume that 
	$T_y:=\face{z_{0}}(S)$ is not an $(\NN,C)$-simplex (but an $\NN$-simplex). So there exists 
	some $i \in \{0,\dots, \NN \}$ with $\height{i}(T_{y})<C$.
	If $i=0$, we are done. So let $i \neq 0$.
	We set $h:= \pi_{\aff(\face{i}T_y)}(z_i)$ and using Remark \ref{26.08.12.1}, we get
	$\pi_{\aff(\face{0,i}T_y)}(h)=\pi_{\aff(\face{i}T_y)}[\pi_{\aff(\face{0,i}T_y)}(z_i)]$. This implies
	\begin{align}
		d(h,\aff(\face{0,i}T_y)) 
		& = d(\pi_{\aff(\face{i}T_y)}(z_i),\pi_{\aff(\face{i}T_y)}[\pi_{\aff(\face{0,i}T_y)}(z_i)])
		 \le \height{i}(\face{0}T_{y}). \label{20.2.2012.3}
	\end{align}
	Now, we use Lemma \ref{20.2.2012.4}, with
	$a_1=y_0$, $a_2=h \in P_1:= \aff(\face{i}(T_y))$, $P_2:=\aff(\face{i}(T_z))$, $P_1 \cap P_2= \aff(\face{0,i}(T_y))$ 
	and \eqref{20.2.2012.3} to obtain 
	\begin{align*}
		\height{0}(\face{i}T_y) 
		& \le \height{i}(\face{0}T_y) \frac{  d(z_i,\aff(\face{i}(T_z)))}{d(z_i,\aff(\face{i}(T_z))) - d( z_i,h)}.
	\end{align*}
	Now use \eqref{20.2.2012.2} to get $d(y_0,\aff(\face{i}(T_z))) \le d(z_i,\aff(\face{i}(T_z)))$ and deduce
	with $d(z_i,\aff(\face{i}(T_z)))=\height{i}T_z \ge 2C$ and $d( z_i,h) =\height{i}(T_y) < C$ that
	$\height{0}(\face{i}T_y) < 2\height{i}(\face{0}T_y)$.
	Finally, with Lemma \ref{25.01.2012.1}, we have
	$d(y_0,\aff(\face{0}(T_z)))= \height{0}(T_y)
		= \height{i}(T_y) \frac{\height{0}(\face{i}T_y)}{\height{i}(\face{0}T_y)} 
		< 2C$.
\end{proof}

\begin{lem}\label{18.11.11.1}
	Let $H>0$, $1 \le \NN \le \n$ and $D\subset \R^{\n}$ be a bounded set. 
	Assume that every simplex $S=\Delta(y_0,\dots,y_{\NN}) \in D$ is not an $(\NN,H)$-simplex.
	Then there exists some $l \in \mathbb{N} \cup \{0\}$, $l \le \NN -1$ and $x_0,\dots,x_l \in \overline{D}$ so that
	$\overline{D} \subset U_{H}(\aff(x_0,\dots,x_l))=\{x \in \R^{\n} | d(x,\aff(x_0,\dots,x_l)\le H\}$.
	\index{$U_{H}$}
\end{lem}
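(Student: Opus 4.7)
The plan is to analyse a maximum-volume $\NN$-simplex with vertices in $\overline{D}$. Since $D$ is bounded, $\overline{D}$ is compact; $\volume$ is continuous on $\overline{D}^{\NN+1}$ (being the square root of a Gram determinant, cf.\ Definition \ref{30.09.2013.2}), so it attains its maximum at some $T^{*}=\Delta(x_{0}^{*},\dots,x_{\NN}^{*})$ with $x_{i}^{*}\in\overline{D}$. If $\volume(T^{*})=0$, every choice of $\NN+1$ points in $\overline{D}$ is affinely dependent, so $\dim\aff(\overline{D})\le \NN-1$, and picking affinely independent $x_{0},\dots,x_{l}\in\overline{D}$ spanning $\aff(\overline{D})$ gives $l\le \NN-1$ with $\overline{D}\subset \aff(x_{0},\dots,x_{l})\subset U_{H}(\aff(x_{0},\dots,x_{l}))$; this settles the degenerate case.

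Assume now $\volume(T^{*})>0$. The first step is to produce some index $j^{*}$ with $\height{j^{*}}T^{*}\le H$. If instead every height of $T^{*}$ strictly exceeds $H$, then $T^{*}$ is an $(\NN,H+\eta)$-simplex for some $\eta>0$. By density of $D$ in $\overline{D}$, I choose $\tilde{x}_{i}\in D$ with $d(\tilde{x}_{i},x_{i}^{*})<\eta/(2(\NN+1))$; iterating Lemma \ref{17.11.11.2} once per vertex (relabelling so that the currently perturbed vertex plays the role of $x_{0}$ at each step) yields that $\Delta(\tilde{x}_{0},\dots,\tilde{x}_{\NN})\in D$ is at least an $(\NN,H+\eta/2)$-simplex, hence in particular an $(\NN,H)$-simplex in $D$, contradicting the hypothesis.

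With such $j^{*}$ fixed, set $L:=\aff(\face{j^{*}}T^{*})$, which has dimension $\NN-1$ (the $\NN$ vertices of $\face{j^{*}}T^{*}$ are affinely independent, since $T^{*}$ is non-degenerate). Maximality of $T^{*}$ forces $\overline{D}\subset U_{H}(L)$: if some $y\in\overline{D}$ had $d(y,L)>H$, then $T':=\Delta(\face{j^{*}}T^{*},y)$ would be an $\NN$-simplex in $\overline{D}$ satisfying $\volume(T')=d(y,L)\cdot\volume(\face{j^{*}}T^{*})>H\cdot\volume(\face{j^{*}}T^{*})\ge \height{j^{*}}T^{*}\cdot\volume(\face{j^{*}}T^{*})=\volume(T^{*})$, contradicting maximality. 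Taking the $\NN$ vertices of $\face{j^{*}}T^{*}$ as $x_{0},\dots,x_{\NN-1}$ then finishes the proof with $l=\NN-1$.

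The main obstacle is the perturbation argument establishing $\height{j^{*}}T^{*}\le H$: the hypothesis concerns simplices with vertices in $D$ while the maximiser $T^{*}$ and the desired $x_{0},\dots,x_{l}$ live a priori only in $\overline{D}$, so one must carefully iterate Lemma \ref{17.11.11.2} across all $\NN+1$ vertices, using the strict gap $\eta$ to absorb the accumulated perturbation error $\sum_{i}d(\tilde{x}_{i},x_{i}^{*})<\eta/2$ into the $(\NN,H)$-simplex property.
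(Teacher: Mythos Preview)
Your proof is correct and uses the same core ingredients as the paper---a maximum-volume simplex with vertices in $\overline{D}$ together with the height--volume relation $\volume(T)=\height{i}T\cdot\volume(\face{i}T)$---but the organization differs. The paper first introduces an auxiliary index $l$ (the largest dimension for which an $(l,H)$-simplex exists in $D$), then maximizes the volume of an $l$-simplex and shows that any further point $x_{l+1}\in\overline{D}$ lies within distance $H$ of $\aff(x_0,\dots,x_l)$ by comparing $\height{l+1}$ to the short height guaranteed by maximality of $l$. You instead work directly at dimension $\NN$: maximize the $\NN$-simplex volume, locate one short height $\height{j^*}T^*\le H$ via perturbation to $D$, and then use volume-maximality to trap $\overline{D}$ near the opposite face. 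Your route always outputs $l=\NN-1$ in the non-degenerate case, whereas the paper may output a smaller $l$; since the statement only asks for $l\le \NN-1$, this is immaterial. Your iteration of Lemma~\ref{17.11.11.2} across all vertices with the slack $\eta$ is a cleaner and more explicit treatment of the passage from $D$ to $\overline{D}$ than the paper's one-line citation of that lemma.
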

\begin{proof}
	We assume $\# D \ge 2$, otherwise the statement is trivial.
	Let $l \in \{0,\dots,\NN-1\}$ be the largest value such that there exists an $(l, H)$-simplex in $D$.
	If $l=0$, we have $\overline{D} \subset U_H(\aff(x_0))=B(x_0,H)$ for an arbitrary $x_0 \in D$.

	Now suppose $ l \ge 1$. Since $D$ is bounded, there exists $x_{0},\dots,x_{l} \in \bar D$ such that
	the volume $K:=\volume(\triangle(x_0,\dots,x_l))$ is maximal. For some arbitrary $x_{l+1} \in \bar D$
	the definition of $l$ and Lemma \ref{17.11.11.2} imply that $\triangle(x_0,\dots,x_l)$ is not an
	$l+1,H$-simplex.
	Hence there exists some $\tilde l \in \{0,\dots,l+1 \}$ so that $\height{\tilde l}(T)<H$.
	Furthermore we have $\volume(\face{\tilde l}(T)) \le K$ and $\volume(\face{l+1}(T))=K$.
	With Remark \ref{23.08.12.2} we obtain $\height{l+1}(T)=\le H \frac{K}{K}$.
	It follows that $\overline{D} \subset U_H(\aff(x_0,\dots,x_l))$ because $x_{l+1} \in \overline D$  
	was arbitrarily chosen.
\end{proof}

\begin{lem}\label{16.04.2013.1}
	Let $1\le \NN \le \n-1$, $B$ be a closed ball in $\R^{\n}$ and $F \subset B$ be 
	a $\cH^{\NN}$-measurable set with $\cH^{\NN}(F)=\infty$.
	There exists a small constant $0<\sigma=\sigma(F,B)\le \frac{\diam B}{2}$ and 
	some $(\NN+1,(\NN+3)\sigma)$-simplex $T=\Delta(x_0, \dots, x_{\NN+1}) \in B$ with
	$\cH^{\NN}(B(x_0,\sigma) \cap F)= \infty$ and $\cH^{\NN}(B(x_i,\sigma) \cap F) > 0$ for all 
	$i \in \{1, \dots, \NN+1 \}$.
\end{lem}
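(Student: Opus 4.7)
The plan is to argue by contradiction, combining Lemma~\ref{18.11.11.1} with the observation that a $\cH^{\NN}$-set of infinite measure cannot be contained, up to a null set, in a bounded piece of any affine subspace of dimension at most $\NN$.

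For $\sigma > 0$ introduce the sets $F_\sigma := \{x \in B : \cH^{\NN}(B(x,\sigma) \cap F) > 0\}$ and $G_\sigma := \{x \in B : \cH^{\NN}(B(x,\sigma) \cap F) = \infty\}$. Covering the compact ball $\overline B$ by finitely many closed $\sigma$-balls together with $\cH^{\NN}(F) = \infty$ forces at least one of these balls to absorb infinite $F$-mass, so $G_\sigma \neq \emptyset$. By separability of $\R^\n$, the set $F \setminus F_\sigma$ is covered by countably many balls $B(z_i,\sigma)$ centered in $F \setminus F_\sigma$ each with $F$-null intersection; summing gives $\cH^{\NN}(F \setminus F_\sigma) = 0$ and $\cH^{\NN}(F_\sigma) = \infty$. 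A diagonal extraction (pick $z_k \in G_{1/k}$ and pass to a convergent subsequence in $\overline B$) then yields a mass-accumulation point $x_0$ with $\cH^{\NN}(B(x_0,r) \cap F) = \infty$ for every $r > 0$; in particular $x_0 \in G_\sigma$ for every $\sigma > 0$.

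Next I show that for some small $\sigma$ there exist $y_1,\dots,y_{\NN+1} \in F_\sigma$ such that $\Delta(x_0, y_1, \dots, y_{\NN+1})$ is an $(\NN+1, (\NN+3)\sigma)$-simplex. Suppose this fails for every small $\sigma$. Apply Lemma~\ref{18.11.11.1} with its dimension parameter set to $\NN+1$, $H = (\NN+3)\sigma$, and $D = F_\sigma$ (bounded, as $F_\sigma \subset B$): one obtains $\overline{F_\sigma} \subset U_{(\NN+3)\sigma}(A_\sigma)$ for some affine subspace $A_\sigma$ of dimension $\le \NN$, hence $\cH^{\NN}(F \setminus U_{(\NN+3)\sigma}(A_\sigma)) = 0$. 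Pick $\sigma_k \to 0$; after passing to a subsequence (using compactness of $\overline B$ for a base point of $A_{\sigma_k}$ and compactness of the Grassmannian for its direction) one has $A_{\sigma_k} \to A_\infty$ with $\dim A_\infty \le \NN$. For every $\epsilon > 0$, eventually $U_{(\NN+3)\sigma_k}(A_{\sigma_k}) \cap B \subset U_\epsilon(A_\infty)$, so $\cH^{\NN}(F \setminus U_\epsilon(A_\infty)) = 0$; letting $\epsilon \to 0$ yields $\cH^{\NN}(F \setminus A_\infty) = 0$ and thus $\cH^{\NN}(F) \le \cH^{\NN}(A_\infty \cap B) < \infty$, contradicting $\cH^{\NN}(F) = \infty$. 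The mass conditions $\cH^{\NN}(B(x_0,\sigma) \cap F) = \infty$ and $\cH^{\NN}(B(y_i,\sigma) \cap F) > 0$ for $i\ge 1$ then hold because $x_0 \in G_\sigma$ and $y_i \in F_\sigma$, respectively.

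The main obstacle is ensuring that the fat simplex produced by Lemma~\ref{18.11.11.1} genuinely contains the prescribed point $x_0 \in G_\sigma$ as a vertex, since naively applying the lemma to $D = F_\sigma$ gives vertices in arbitrary positions and a priori none of them need lie in $G_\sigma$. I address this by carrying out the argument above in the form of a volume-maximization over tuples $(y_1,\dots,y_{\NN+1}) \in F_\sigma^{\NN+1}$ with $x_0$ fixed as the $(\NN+2)$th vertex: the max-volume choice either is already a fat $(\NN+1,(\NN+3)\sigma)$-simplex (since any vertex whose height in the simplex is too small could be replaced by a further-away point of $F_\sigma$ to increase the volume, and the absence of such a replacement forces $F_\sigma$ into a tube around an affine subspace of dimension $\le \NN$ through the remaining vertices), or else such a tube obstruction arises and the plane-limit argument of the preceding paragraph delivers the contradiction $\cH^{\NN}(F) < \infty$. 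Lemma~\ref{17.11.11.2} is invoked to control small perturbations when transitioning from a slightly-over-fat simplex back to fatness exactly $(\NN+3)\sigma$.
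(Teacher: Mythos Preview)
Your approach via contradiction and Lemma~\ref{18.11.11.1} is genuinely different from the paper's, which gives a direct inductive construction. But your argument has a real gap, precisely at the point you flag as the ``main obstacle.''

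In the volume-maximization with $x_0$ fixed, your dichotomy fails. Suppose the maximal $T=\Delta(x_0,y_1,\dots,y_{\NN+1})$ is not $(\NN+1,H)$-fat and the \emph{only} vertex with height below $H$ is $x_0$ itself. Your parenthetical reasoning (``any vertex whose height is too small could be replaced by a further-away point of $F_\sigma$ to increase the volume'') does not apply, since $x_0$ is not allowed to move. Maximality over the $y_i$ only yields $d(z,\aff(\face{y_i}T))\le \height{y_i}(T)$ for $z\in F_\sigma$, and these bounds are $\ge H$ in this case, so you do \emph{not} obtain $F_\sigma\subset U_H(A_\sigma)$ for any $\NN$-plane $A_\sigma$. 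Consequently the plane-limit argument never gets off the ground along such a sequence of $\sigma$'s, and nothing you wrote excludes this scenario. The vague invocation of Lemma~\ref{17.11.11.2} at the end does not address this; that lemma perturbs vertices, but there is no ``slightly-over-fat'' simplex in hand to perturb.

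The paper avoids all of this by a short direct construction. Having found $x_0$ with $\cH^{\NN}(B(x_0,r)\cap F)=\infty$ for all $r>0$, it builds $x_1,\dots,x_{\NN+1}$ one at a time: given $x_0,\dots,x_l$ with $l\le \NN$, the affine hull $\aff(x_0,\dots,x_l)$ has dimension $\le \NN$, so $\cH^{\NN}(\aff(x_0,\dots,x_l)\cap B)<\infty$; since $\cH^{\NN}(B(x_0,r)\cap F)=\infty$, there is positive $F$-mass in $B(x_0,r)$ outside any tube $U_{c}(\aff(x_0,\dots,x_l))$ for small $c$, and Lemma~\ref{30.08.11.1} produces $x_{l+1}$ there with positive mass at all scales. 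After $\NN+1$ steps one has an $(\NN+1)$-simplex with all heights positive, hence $(\NN+1,c)$-fat for some $c>0$, and $\sigma:=c/(\NN+3)$ finishes. This uses the infinite mass at $x_0$ at every step, which is exactly the information your fixed-vertex maximization fails to exploit. If you want to rescue your route, one clean option is to localize: apply Lemma~\ref{18.11.11.1} to $D=F_\sigma\cap B(x_0,r)$, use that $\cH^{\NN}(D)=\infty$ (since $\cH^{\NN}(F\setminus F_\sigma)=0$) together with your limit argument to produce a fat simplex inside $B(x_0,r)$, and only \emph{then} invoke Lemma~\ref{17.11.11.2} to swap one nearby vertex for $x_0$; but you must arrange $r$ small relative to the fatness you obtain, which requires an extra iteration not present in your write-up.
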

\begin{proof}
	We set $\mu := \cH^{\NN} \textsf{ L } F$. Since $\mu(B)=\infty$
	there exists some $x_0 \in B$ with $\mu(B(x_0,h)) = \infty$ for all $h>0$. 

	There exists some $c_1 > 0$ with $\mu(B\setminus \mathring{B}(x_0,c_1))>0$. With Lemma \ref{30.08.11.1}, there
	exists some $x_1 \in B \setminus \mathring{B}(x_0,c_1) $ with $\mu(B(x_1,h))>0$ for all $h>0$ and
	the simplex $T_1$ fulfils $\height{1}(T_1) = d(x_0,x_1) \ge c_1$.

	Now we assume that we already have $c_{l} >0$ and a simplex 
	$T_{l}=\Delta(x_0, \dots, x_l) \in \R^{\n}$ with $\height{l}(T_l) \ge c_l$ and $\mu(B(x_i,h))>0$ 
	for all $i \in \{0,\dots, l \}$ and $h>0$ where $ l \le \NN$.
	So there exists some $0<c_{l+1}<\frac{c_l}{2}$ with
	$\mu\left(\left(F \cap B\left(x_0,\frac{c_{l}}{2}\right)\right)\setminus \mathring{U}_{c_{l+1}}(\aff( x_0,\dots,x_{l}))\right)>0$
	and, with Lemma \ref{30.08.11.1}, there exists some $x_{l+1} \in F \subset B$ so that
	$T_{l+1}:=\Delta(x_0, \dots, x_{l+1})$ fulfils $\height{l+1}(T_{l+1}) \ge c_{l+1}$
	and $\mu(B(x_{l+1},h))>0$ for all $h>0$.

	Since $\height{i}(T_{i}) \ge C_{i}> 0$ for all $i \in \{1,\dots,m+1\}$ we obtain $\volume(T)>0$ and hence
	there exists some constant $c>0$ so that $T:=T_{\NN+1}$ is an $(\NN+1,c)$-simplex. 

	To conclude the proof set $\sigma := \frac{c}{\NN+3}$.
\end{proof}

\subsection{Angles between affine subspaces}\label{23.04.2013.1}
\begin{dfn}
	For $G_1,G_2 \in G(\n,\NN )$, we define $\varangle(G_1,G_2):=\|\pi_{G_1} - \pi_{G_2} \|$,
	where the right hand side is the usual norm of the linear map $\pi_{G_1} - \pi_{G_2}$.
	For $P_1,P_2 \in \mathcal{P}(\n,\NN )$, we define
	$\varangle(P_1,P_2):=\varangle(P_1-\pi_{P_1}(0),P_2-\pi_{P_2}(0))$.
\end{dfn}

\begin{rem}\label{11.09.12.2}\label{27.04.12.1}
	For $P_{1},P_{2}, P_{3}  \in \mathcal{P}(\n,\NN )$  and $w \in \R^\n$, we have
	$\varangle(P_1,P_2)= \varangle(P_1,P_2+w)$ and $\varangle(P_1,P_3) \le \varangle(P_1,P_2) + \varangle(P_2,P_3)$.
	The angle $\varangle$ is a metric on the Grassmannian $G(\n,\NN )$ but not on $\mathcal{P}(\n,\NN )$
	because for $P \in \mathcal{P}(\n,\NN )$, there exists some $w \in \R^{\n}$ so that 
	$\varangle(P,P-w)=0$, but $P \neq P-w$.
\end{rem}

\begin{lem} \label{11.09.12.1}
	Let $U \in G(\n,\NN )$ and $v \in \R^{\n}$ with $|v|=|\pi_U(v)|$. Then we have $v = \pi_U(v)$.
\end{lem}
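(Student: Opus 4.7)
The plan is to use the orthogonal decomposition of $v$ with respect to the subspace $U$ and exploit the Pythagorean theorem.

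First, I would write $v = \pi_U(v) + \pi_{U^\perp}(v)$, where $\pi_{U^\perp}$ denotes the orthogonal projection onto the orthogonal complement $U^\perp$. This is the standard orthogonal decomposition, valid because $U \in G(n,m)$ is a linear subspace and hence $\R^n = U \oplus U^\perp$. The two summands $\pi_U(v)$ and $\pi_{U^\perp}(v)$ are orthogonal by construction.

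Next, I would apply the Pythagorean theorem to obtain
\[
|v|^2 = |\pi_U(v)|^2 + |\pi_{U^\perp}(v)|^2.
\]
Combining this with the hypothesis $|v| = |\pi_U(v)|$, the term $|\pi_{U^\perp}(v)|^2$ must vanish, so $\pi_{U^\perp}(v) = 0$. Substituting back into the decomposition yields $v = \pi_U(v)$, which is the claim.

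There is essentially no obstacle here; the lemma is a direct application of the Pythagorean identity to the orthogonal splitting induced by $U$. The only thing to be slightly careful about is that the statement uses the shorthand $\pi_U$ for the orthogonal projection onto a linear subspace (as fixed earlier in the preliminaries), so that the decomposition $v = \pi_U(v) + \pi_{U^\perp}(v)$ is automatic and does not require any translation of base point.
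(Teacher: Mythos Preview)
Your proof is correct and essentially identical to the paper's own argument: both use the orthogonal decomposition $v = \pi_U(v) + \pi_U^\perp(v)$, apply the Pythagorean identity, and conclude $\pi_U^\perp(v)=0$.
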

\begin{proof}
	We have $|\pi_U(v)|^2 = |v|^2 = |\pi_U(v) + \pi_U^{\perp}(v)|^2 = |\pi_U(v)|^2 + |\pi_U^{\perp}(v)|^2$
	and so $\pi_U^{\perp}(v)=0$ which implies $v=\pi_U(v)+\pi_U^{\perp}(v)=\pi_U(v)$.
\end{proof}

\begin{lem} \label{12.7.11.1}\label{6.9.11.1}
	Let $P_1, P_2 \in \mathcal{P}(\n,\NN )$ with $\varangle(P_1,P_2) <1$ and $x,y \in P_1$. We have
	\[ d(x,y) \le {\textstyle \frac{d(\pi_{P_{2}}(x),\pi_{P_{2}}(y))}{1-\varangle(P_{1},P_{2})}} \ \
		\text{ and } \ \ d(\pi_{P_{2}}^\perp(x),\pi_{P_{2}}^\perp(y)) 
		\le {\textstyle \frac{\varangle(P_{1},P_{2})}{1-\varangle(P_{1},P_{2})}} d(\pi_{P_{2}}(x),\pi_{P_{2}}(y)). \]
\end{lem}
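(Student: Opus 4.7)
The plan is to reduce both estimates to the parallel linear subspaces $G_i := P_i - \pi_{P_i}(0)$, $i=1,2$, where the angle $\varangle(P_1,P_2) = \|\pi_{G_1} - \pi_{G_2}\|$ is a genuine operator norm that can be applied to any vector.

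First I would collect three elementary facts. Since $x,y \in P_1$, the vector $x-y$ lies in $G_1$, so $\pi_{G_1}(x-y) = x-y$. Next, because $\pi_{P_2}$ is an affine orthogonal projection (equivalently, $\pi_{P_2}(a) = \pi_{P_2}(0) + \pi_{G_2}(a - q)$ for any fixed $q \in P_2$), the affine part cancels in differences, giving $\pi_{P_2}(x)-\pi_{P_2}(y) = \pi_{G_2}(x-y)$. Finally, $\pi_{P_2}^\perp$ is the linear projection onto $G_2^\perp$ by definition, hence $\pi_{P_2}^\perp(x)-\pi_{P_2}^\perp(y) = \pi_{G_2^\perp}(x-y) = (x-y) - \pi_{G_2}(x-y)$.

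For the first inequality, I would write
\[
 d(x,y) = |x-y| = |\pi_{G_1}(x-y)| \le |\pi_{G_2}(x-y)| + |(\pi_{G_1}-\pi_{G_2})(x-y)|
\le |\pi_{G_2}(x-y)| + \varangle(P_1,P_2)\,|x-y|,
\]
rearrange using $\varangle(P_1,P_2) < 1$, and then replace $|\pi_{G_2}(x-y)|$ by $d(\pi_{P_2}(x),\pi_{P_2}(y))$ thanks to the identity from the previous paragraph.

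For the second inequality, I would combine the same identities: using $(x-y) = \pi_{G_1}(x-y)$ together with $\pi_{G_2^\perp}(x-y) = \pi_{G_1}(x-y) - \pi_{G_2}(x-y)$, I obtain $d(\pi_{P_2}^\perp(x),\pi_{P_2}^\perp(y)) \le \varangle(P_1,P_2)\,|x-y|$, and then insert the first inequality to replace $|x-y|$ by $d(\pi_{P_2}(x),\pi_{P_2}(y))/(1-\varangle(P_1,P_2))$. There is no real obstacle here; the only subtlety is to keep the distinction between $\pi_{P_2}$ (affine projection onto $P_2$) and $\pi_{P_2}^\perp$ (linear projection onto $G_2^\perp$) straight so that the difference $x-y$ really does produce the decomposition $\pi_{G_2}(x-y) + \pi_{G_2^\perp}(x-y)$ without any stray translation terms.
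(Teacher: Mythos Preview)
Your argument is correct and essentially identical to the paper's: both use that $x-y\in G_1$ implies $\pi_{G_2^\perp}(x-y)=(\pi_{G_1}-\pi_{G_2})(x-y)$, bound this by $\varangle(P_1,P_2)\,|x-y|$, and combine the two resulting estimates. The only cosmetic difference is that the paper first reduces to $P_1,P_2\in G(N,m)$ and then translates back, whereas you handle the affine case directly by cancelling the translation in differences (note a harmless typo in your aside: the correct formula is $\pi_{P_2}(a)=q+\pi_{G_2}(a-q)$ for $q\in P_2$, not $\pi_{P_2}(0)+\pi_{G_2}(a-q)$, but the conclusion $\pi_{P_2}(x)-\pi_{P_2}(y)=\pi_{G_2}(x-y)$ is unaffected).
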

\begin{proof}
	First assume that $P_{1},P_{2} \in G(\n,\NN )$.
	With $z:= \frac{x-y}{|x-y|} \in P_{1}$ and $\pi_{P_{2}}^{\perp}(z) + \pi_{P_{2}}(z)=z=\pi_{P_{1}}(z)$
	we get
	$| \pi_{P_{2}}^{\perp}(x)-\pi_{P_{2}}^{\perp}(y)| 
		 = |x-y | |\pi_{P_{2}}^{\perp}(z) + \pi_{P_{2}}(z) - \pi_{P_{2}}(z)|
		 \le |x-y | \varangle(P_{1},P_{2})$,
	This implies $d(x,y) \le d(\pi_{P_{2}}(x),\pi_{P_{2}}(y)) + d(x,y) \varangle(P_{1},P_{2})$.
	These two estimates give the assertion in the case $P_{1},P_{2} \in G(\n,\NN )$.
	Now choose $t_1 \in P_1$, $t_2 \in P_2$ such that $P_1-t_1, P_2-t_2 \in G(\n,\NN )$ and
	use Lemma \ref{12.7.11.1}, Remark \ref{24.04.2013.1} and Remark \ref{11.09.12.2}
	to get the whole result.
\end{proof}

\begin{kor} \label{24.04.2012.1}
	Let $P \in \mathcal{P}(\n,\NN)$, $G \in G(\n,\NN)$ and $\varangle(P,G)<1$. There exists some affine map 
	$a :G \to G^{\perp}$ with $G(a)=P$, where $G(a)$ is the graph of the map $a$,
	and $a$ is Lipschitz continuous with Lipschitz constant $\frac{\varangle(P,G)}{1-\varangle(P,G)}$.
\end{kor}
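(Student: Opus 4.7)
My plan is to build the map $a$ explicitly by inverting the orthogonal projection $\pi_G$ restricted to $P$, and then read off both the graph property and the Lipschitz constant from Lemma \ref{12.7.11.1}.

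First I would apply Lemma \ref{12.7.11.1} with $P_1=P$ and $P_2=G$. Since $\varangle(P,G)<1$, the first inequality gives, for all $x,y\in P$,
\[ d(x,y)\le \tfrac{1}{1-\varangle(P,G)}\, d(\pi_G(x),\pi_G(y)), \]
so $\pi_G$ restricted to $P$ is injective. Because $\pi_G\colon\R^{\n}\to G$ is affine (in fact linear) and $P$ is an affine subspace of dimension $\NN$, the image $\pi_G(P)$ is an affine subspace of $G$ of dimension $\NN=\dim G$, hence $\pi_G(P)=G$. Thus $\pi_G|_P\colon P\to G$ is an affine bijection, and its inverse $(\pi_G|_P)^{-1}\colon G\to P$ is affine as well.

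Next I would set
\[ a:=\pi_G^{\perp}\circ (\pi_G|_P)^{-1}\colon G\to G^{\perp}. \]
This is affine as a composition of an affine and a linear map. Since every $y\in P$ decomposes as $y=\pi_G(y)+\pi_G^{\perp}(y)$, we have for every $g\in G$, with $y:=(\pi_G|_P)^{-1}(g)\in P$, that $g+a(g)=\pi_G(y)+\pi_G^{\perp}(y)=y\in P$; conversely every $y\in P$ arises in this way. Hence the graph $\{g+a(g):g\in G\}$ equals $P$, which is the graph condition $G(a)=P$.

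Finally, to control the Lipschitz constant I would invoke the second inequality of Lemma \ref{12.7.11.1}. For $g_1,g_2\in G$, set $y_i:=(\pi_G|_P)^{-1}(g_i)\in P$, so that $\pi_G(y_i)=g_i$ and $\pi_G^{\perp}(y_i)=a(g_i)$. Then
\[ |a(g_1)-a(g_2)|=d(\pi_G^{\perp}(y_1),\pi_G^{\perp}(y_2))\le \tfrac{\varangle(P,G)}{1-\varangle(P,G)}\, d(\pi_G(y_1),\pi_G(y_2))=\tfrac{\varangle(P,G)}{1-\varangle(P,G)}\,|g_1-g_2|, \]
giving the claimed Lipschitz constant. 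No step is really the main obstacle here; the only place that requires a moment of care is recognising that $\pi_G(P)=G$ follows from injectivity plus the equality of dimensions, and that $(\pi_G|_P)^{-1}$ is automatically affine.
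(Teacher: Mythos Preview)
Your argument is correct and is exactly the approach of the paper: the paper's one-line proof sets $a(y)=\pi_{G}^{\perp}\big((\pi_G|_P)^{-1}(y)\big)$ and invokes Lemma~\ref{6.9.11.1}, and you have simply spelled out the details behind that invocation (injectivity and surjectivity of $\pi_G|_P$, the graph identity, and the Lipschitz bound from the second inequality).
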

\begin{proof}
	Set $a(y)= \pi_{P_2}^{\perp}(\pi_{P_2}^{-1}{}\big|_{P_1}(y))$ and use Lemma \ref{6.9.11.1}.
\end{proof}

\begin{kor}\label{24.10.11.1}
	Let $G_1,G_2 \in G(\n,\NN)$ and
	$o_1,\dots, o_{\NN}$ be an orthonormal basis of $G_1$. If
	$d(o_i,G_2) \le \tilde \sigma \le \tilde \sigma_1:=10^{-1}(10^{\NN}+1)^{-1}$, then 
	$\varangle(G_1,G_2)\le 4\NN (10^{\NN} + 1) \tilde \sigma$.
\end{kor}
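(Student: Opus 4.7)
The plan is to construct an orthonormal basis $e_1,\dots,e_\NN$ of $G_2$ that stays close to the given basis $o_1,\dots,o_\NN$ of $G_1$, and then to bound $\|\pi_{G_1}-\pi_{G_2}\|$ by comparing the expansions of $\pi_{G_1}v$ and $\pi_{G_2}v$ in the two bases. By hypothesis the projected vectors $p_i := \pi_{G_2}(o_i)$ satisfy $|p_i-o_i| = d(o_i,G_2) \le \tilde\sigma$, so the $p_i$ form a small perturbation of an orthonormal system; I apply Gram--Schmidt to them, setting $e_1 := p_1/|p_1|$ and, for $i\ge 2$,
\begin{equation*}
q_i := p_i - \sum_{j<i}\langle p_i,e_j\rangle e_j,\qquad e_i := q_i/|q_i|.
\end{equation*}
Each $e_i$ lies in $G_2$ and they form an orthonormal system of $\NN$ vectors in the $\NN$-dimensional space $G_2$, hence an orthonormal basis of $G_2$.

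Next I prove by induction on $i$ that $|e_i-o_i|\le c_i\tilde\sigma$ for some integer sequence $c_i$. The key step uses $\langle o_i,o_j\rangle = 0$ for $j<i$ to write
\begin{equation*}
|\langle p_i,e_j\rangle| = |\langle p_i-o_i,e_j\rangle + \langle o_i,e_j-o_j\rangle| \le (1+c_j)\tilde\sigma.
\end{equation*}
Combined with $|p_i-o_i|\le\tilde\sigma$ and the reverse triangle inequality $|e_i-q_i|\le|q_i-o_i|$ (which uses $|o_i|=1$), this gives the recursion $c_i = 2\bigl(1+\sum_{j<i}(1+c_j)\bigr)$ with $c_1 = 2$, equivalently $c_i = 3c_{i-1}+2$, whose closed form is $c_i = 3^i-1$. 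The smallness hypothesis $\tilde\sigma\le\tilde\sigma_1 = 10^{-1}(10^\NN+1)^{-1}$ keeps $c_i\tilde\sigma < 1$ and hence $|q_i| > 0$ throughout, so the construction is well-defined.

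To conclude, for $v\in\R^\n$ with $|v|=1$ I expand in the two orthonormal bases:
\begin{equation*}
(\pi_{G_1}-\pi_{G_2})(v) = \sum_{i=1}^\NN \langle v,o_i\rangle(o_i-e_i) + \sum_{i=1}^\NN\langle v,o_i-e_i\rangle e_i,
\end{equation*}
so $\|\pi_{G_1}-\pi_{G_2}\|\le 2\sum_{i=1}^\NN|o_i-e_i| \le 2\sum_{i=1}^\NN(3^i-1)\tilde\sigma \le 3\cdot 3^\NN\tilde\sigma \le 4\NN(10^\NN+1)\tilde\sigma$, as claimed. The main obstacle is tracking the constants in the Gram--Schmidt recursion---each step roughly triples the accumulated error, which is precisely why the statement features an exponential factor $10^\NN$---together with verifying that $\tilde\sigma\le\tilde\sigma_1$ is small enough to keep every $|q_i|$ bounded away from zero.
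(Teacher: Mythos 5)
Your proof is correct, and it supplies the argument that the paper outsources: the paper's proof consists of setting $h_i := \pi_{G_2}(o_i)$ and citing Lemma 2.3 of \cite{MR3078345}, whereas you carry out the Gram--Schmidt orthonormalisation of the projected vectors yourself and track the error propagation explicitly. The set-up is the same (compare the orthonormal basis of $G_1$ to its projection onto $G_2$), but your version is self-contained, and the bookkeeping is clean: the recursion $c_i = 3c_{i-1}+2$ with $c_1=2$ does give $c_i = 3^i-1$, the smallness hypothesis $\tilde\sigma \le \tilde\sigma_1$ keeps $|q_i| \ge 1 - \frac{1}{2}c_i\tilde\sigma > 0$ so Gram--Schmidt never degenerates, and the final decomposition of $(\pi_{G_1}-\pi_{G_2})v$ in the two bases correctly gives $\|\pi_{G_1}-\pi_{G_2}\| \le 2\sum_i|o_i-e_i|$. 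Your intermediate bound $3\cdot 3^\NN\tilde\sigma$ is in fact sharper than the stated $4\NN(10^\NN+1)\tilde\sigma$, so the last inequality is a harmless relaxation to match the corollary. This version has the advantage of removing the external dependency; the one thing worth stating explicitly, which you implicitly use, is that having $\NN$ orthonormal vectors $e_1,\dots,e_\NN$ inside the $\NN$-dimensional space $G_2$ forces them to be a basis of $G_2$, so that $\pi_{G_2}v = \sum_i\langle v,e_i\rangle e_i$.
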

\begin{proof}
	For $i=1,\dots, \NN$, set $h_i:=\pi_{P_2}(o_i)$ and use Lemma 2.3 from \cite{MR3078345}.
\end{proof}

For $x,y \in \R^{\n}$, we set $\langle x,y \rangle$ to be the usual scalar product in $\R^{\n}$. 
\index{$\langle x,y \rangle$}
\begin{lem}\label{30.05.12.1}
	Let $C, \hat C\ge 1$, $t > 0$ and 
	$S=\Delta(y_0,\dots,y_{\NN })$ an $(\NN ,\frac{t}{C})$-simplex with
	$S \subset B(x,\hat C t)$, $x \in \R^\n$. 
	There exists an orthonormal basis $(o_{1},\dots,o_{\NN})$ of $\sspan(y_1-y_0, \dots, y_{\NN}  - y_0) $ and $\gamma_{l,r} \in \R$ so that
	for all $1 \le l \le \NN $ and $1 \le r \le l$ we have
	\[o_l := \sum_{r=1}^{l} \gamma_{l,r} (y_r-y_0) \ \ \ \ \text{ and } \ \ \ \
		|\gamma_{l,r}| \le (2lC \hat C)^{l} \frac{C}{t} \le  (2\NN  C \hat C)^{\NN } \frac{C}{t}.\]
\end{lem}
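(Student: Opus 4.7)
The plan is to produce the basis $(o_1,\dots,o_\NN)$ by running Gram--Schmidt on the difference vectors $v_r := y_r - y_0$ for $r=1,\dots,\NN$ and then to control the coefficients by induction on $l$. Two a priori bounds drive everything: since $S\subset B(x,\hat C t)$ we get the diameter bound $|v_r| \le 2\hat C t$, and since $S$ is an $(\NN,t/C)$-simplex, Remark \ref{25.09.2013.10} (applied with $A_l=\{y_0,\dots,y_{l-1}\}$) yields
\[
d(y_l,\aff(y_0,\dots,y_{l-1})) \ge \height{l}T \ge \tfrac{t}{C} \quad \text{for every } 1\le l\le \NN.
\]
In particular the $v_r$ are linearly independent, so $\sspan(v_1,\dots,v_\NN)$ has dimension $\NN$ and Gram--Schmidt produces an orthonormal basis.

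Concretely I set $u_l := v_l - \sum_{j=1}^{l-1}\langle v_l,o_j\rangle o_j$ and $o_l := u_l/|u_l|$. The key observation is that $|u_l|$ equals $d(y_l,\aff(y_0,\dots,y_{l-1}))$, which is bounded below by $t/C$ via the height estimate above, giving $1/|u_l|\le C/t$. Substituting the inductive expansions $o_j=\sum_{r=1}^{j}\gamma_{j,r}v_r$ into the Gram--Schmidt formula and collecting terms in $v_r$ yields
\[
\gamma_{l,l}=\tfrac{1}{|u_l|},\qquad \gamma_{l,r}=-\tfrac{1}{|u_l|}\sum_{j=r}^{l-1}\langle v_l,o_j\rangle\,\gamma_{j,r} \quad (1\le r<l).
\]

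I then prove $|\gamma_{l,r}|\le (2lC\hat C)^l\tfrac{C}{t}$ by induction on $l$. The base case $l=1$ gives $|\gamma_{1,1}|=1/|v_1|\le C/t\le 2C\hat C\cdot C/t$. For the inductive step, using $|\langle v_l,o_j\rangle|\le |v_l|\le 2\hat C t$, $1/|u_l|\le C/t$ and the inductive bound $|\gamma_{j,r}|\le (2jC\hat C)^j C/t\le (2(l-1)C\hat C)^{l-1} C/t$ for $j<l$, I obtain
\[
|\gamma_{l,r}|\le \tfrac{C}{t}\cdot 2\hat C t\cdot (l-1)\cdot (2(l-1)C\hat C)^{l-1}\tfrac{C}{t}=(2C\hat C)^l(l-1)^l\tfrac{C}{t}\le (2lC\hat C)^l\tfrac{C}{t},
\]
and the diagonal case $|\gamma_{l,l}|=1/|u_l|\le C/t$ is absorbed into the same bound since $(2lC\hat C)^l\ge 1$. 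The final inequality in the statement, $(2lC\hat C)^l\le (2\NN C\hat C)^\NN$, is immediate from $l\le \NN$ and $2lC\hat C\ge 1$.

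There is no genuine obstacle here; the only mildly delicate point is identifying $|u_l|$ with $d(y_l,\aff(y_0,\dots,y_{l-1}))$ so that the simplex condition can be invoked, and then choosing the factor $(2l)^l$ loose enough that the induction closes without tracking sharper constants.
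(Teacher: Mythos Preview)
Your proof is correct and follows essentially the same route as the paper: Gram--Schmidt on the difference vectors $v_r=y_r-y_0$, the identification of $|u_l|$ with the distance $d(y_l,\aff(y_0,\dots,y_{l-1}))\ge t/C$, the recursive coefficient formula, and the induction on $l$ with the bounds $|v_l|\le 2\hat C t$ and $1/|u_l|\le C/t$. The paper's argument is identical in structure and estimates.
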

\begin{proof}
	We set $z_i:=y_i - y_{0}$ for all $i=0,\dots, \NN$, and $R:=\Delta(z_0,\dots,z_{\NN})= S - y_{0}$. 
	We obtain for all $i\in \{1, \dots,\NN \}$ ($S$ is an $(\NN,\frac{t}{C})$-simplex)
	\begin{align}\label{30.05.12.3}
		d(z_i,\aff(z_0,\dots,z_{i-1})) \ge \height{i}(R) & = \height{i}(S) \ge {\textstyle \frac{t}{C} }.
	\end{align}
	Due to $\height{i}(R) \ge \frac{t}{C}>0$, we have that $(z_1,\dots,z_m)$ are linearly independent. 
	So with the Gram-Schmidt process
	we are able to define some orthonormal basis of the $\NN$-dimensional linear subspace $\sspan(z_1,\dots,z_{\NN} )$
	\begin{align*}
		o_1 & :=\gamma_{1,1} z_1,
		& o_{l+1} := \gamma_{l+1,l+1} z_{l+1}- \gamma_{l+1,l+1}\displaystyle{\sum_{i=1}^{l} \langle z_{l+1},o_i \rangle o_i},
	\end{align*}
	where $\gamma_{1,1}:=\frac{1}{|z_{1}|}$ and $\gamma_{l+1,l+1}:=\frac{1}{d(z_{l+1},\aff(z_0,\dots,z_{l}))}$.
	Furthermore we define recursively 
	\[\gamma_{l+1,r} := -\sum_{i=r}^{l} 
		\gamma_{l+1,l+1} \langle z_{l+1},o_i \rangle  \gamma_{i,r}\]
	for $r\in \{1,\dots,l\}$.
	Now we prove by induction that $\gamma_{l,r}$ fulfil the desired properties.
	We have $o_1=\gamma_{1,1} (y_1-y_0)$ and \eqref{30.05.12.3} implies $|\gamma_{1,1}| \le \frac{C}{t}$.
	Now let $1 \le l \le \NN $. We assume that, for all $i \in \{1,\dots,l \}$, $j \in \{1,\dots,i\}$, we have
	$o_i = \sum_{r=1}^{i} \gamma_{i,r} z_r$ and $|\gamma_{i,j}| \le (2lC \hat C)^{l} \frac{C}{t}$.
	We obtain
	\begin{align*}
		o_{l+1} & = \gamma_{l+1,l+1} z_{l+1} - \sum_{i=1}^{l}  \sum_{r=1}^{i} 
			\gamma_{l+1,l+1}\langle z_{l+1},o_i \rangle  \gamma_{i,r} z_r
		 = \sum_{r=1}^{l+1} \gamma_{l+1,r} z_r.
	\end{align*}
	If $r=l+1$, \eqref{30.05.12.3} implies $|\gamma_{l+1,r}| \le \frac{C}{t}$ and if $1 \le r \le l$, we get
	with $|z_{l+1}| \le 2 \hat C t$ 
	\begin{align*}
		|\gamma_{l+1,r}| & \stackrel{\eqref{30.05.12.3}}{\le} \sum_{i=r}^{l}
			\frac{C}{t}|z_{l+1}|  (2lC \hat C)^l \frac{C}{t}
		 < (2(l+1)C \hat C)^{l+1} \frac{C}{t}.
	\end{align*}
\end{proof}

\begin{lem}\label{21.11.11.2}
	Let $C, \hat C \ge 1$, $t > 0$,
	$0<\sigma \le \left( 10(10^{\NN}+1) \NN C (2\NN C \hat C)^{\NN} \right)^{-1}$, 
	$P_1,P_2 \in \mathcal{P}(\n,\NN)$ and $S=\Delta(y_0,\dots,y_{\NN}) \subset P_1$ 
	an $(\NN,\frac{t}{C})$-simplex with $S \subset B(x,\hat C t)$, $x \in \R^{\n}$
	and $d(y_i,P_2) \le t \sigma$ for all $i \in  \{0,\dots,\NN\}$. 
	It follows that 
	\[\varangle(P_1,P_2)\le 4\NN (10^\NN + 1)\left(2\NN  C (2\NN  C\hat C)^{\NN } \right)\sigma.\] 
\end{lem}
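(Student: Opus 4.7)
The plan is to combine Lemma \ref{30.05.12.1} (which yields an orthonormal basis of the linear span of $\{y_i-y_0\}_{i=1}^{\NN}$ with controlled coefficients) with Corollary \ref{24.10.11.1} (which converts a pointwise distance bound on an orthonormal basis into an angle bound). All the geometric work is already packaged in those two tools; the lemma is essentially an arithmetic composition.

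First, I would pass to linear subspaces. By Definition and Remark \ref{11.09.12.2} we have $\varangle(P_1,P_2)=\varangle(G_1,G_2)$ where $G_i$ denotes the linear subspace parallel to $P_i$. Since $y_0\in P_1$ and $S$ spans an $\NN$-dimensional simplex, $G_1=\sspan(y_1-y_0,\dots,y_{\NN}-y_0)$. Applying Lemma \ref{30.05.12.1} (with $S$ an $(\NN,\frac{t}{C})$-simplex in $B(x,\hat Ct)$) produces an orthonormal basis $o_1,\dots,o_{\NN}$ of $G_1$ together with coefficients $\gamma_{l,r}\in\R$ satisfying $o_l=\sum_{r=1}^{l}\gamma_{l,r}(y_r-y_0)$ and $|\gamma_{l,r}|\le (2\NN C\hat C)^{\NN}\frac{C}{t}$.

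Next, I would estimate $d(o_l,G_2)=|\pi_{G_2}^{\perp}(o_l)|$ by linearity. For each $r$, since $\pi_{P_2}(y_r),\pi_{P_2}(y_0)\in P_2$ their difference lies in $G_2$ and is killed by $\pi_{G_2}^{\perp}$; hence
\[
|\pi_{G_2}^{\perp}(y_r-y_0)|\le |y_r-\pi_{P_2}(y_r)|+|y_0-\pi_{P_2}(y_0)|=d(y_r,P_2)+d(y_0,P_2)\le 2t\sigma.
\]
Summing over $r=1,\dots,l\le\NN$ then gives
\[
d(o_l,G_2)\le \sum_{r=1}^{l}|\gamma_{l,r}|\cdot 2t\sigma\le \NN\cdot(2\NN C\hat C)^{\NN}\tfrac{C}{t}\cdot 2t\sigma = 2\NN C(2\NN C\hat C)^{\NN}\sigma=:\tilde\sigma.
\]

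Finally, the hypothesis on $\sigma$ is calibrated precisely so that $\tilde\sigma\le \tilde\sigma_1=10^{-1}(10^{\NN}+1)^{-1}$, which puts us in the range of applicability of Corollary \ref{24.10.11.1}. That corollary then gives $\varangle(G_1,G_2)\le 4\NN(10^{\NN}+1)\tilde\sigma=4\NN(10^{\NN}+1)(2\NN C(2\NN C\hat C)^{\NN})\sigma$, which is the claim. No step is conceptually hard; the only potential pitfall is the bookkeeping of constants and the identification $\varangle(P_1,P_2)=\varangle(G_1,G_2)$ between the affine and linear settings, so I would be careful there.
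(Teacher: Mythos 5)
Your proof is correct and takes essentially the same route as the paper: apply Lemma \ref{30.05.12.1} to produce the controlled orthonormal basis, then bound $d(o_l,G_2)$ by linearity and the distance hypothesis, and close with Corollary \ref{24.10.11.1}. The paper phrases the intermediate step as $d(o_l,P_2-\hat y_0)\le\sum_r|\gamma_{l,r}|(d(y_r,P_2)+d(y_0,P_2))$ with $\hat y_0=\pi_{P_2}(y_0)$, which is exactly your $\pi_{G_2}^{\perp}$ computation since $P_2-\hat y_0=G_2$; your write-up is slightly more explicit about why the $\pi_{P_2}(y_r)-\pi_{P_2}(y_0)$ term drops out, but the argument is identical.
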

\begin{proof}
	Use Lemma \ref{30.05.12.1}, to get some orthonormal basis of 
	\mbox{$\sspan(y_1-y_0, \dots, y_{\NN} - y_0) $} and $\gamma_{l,r} \in \R$.
	We set $\hat y_{0}:=\pi_{P_{2}}(y_{0})$ and we obtain for $1 \le l\le \NN$
	\begin{align*}
		d(o_l,P_2-\hat y_{0})
		& \le \sum_{r=1}^{l} |\gamma_{l,r}| (d(y_r,P_2)+d(y_{0},P_{2}))
		 \le 2\NN  C(2\NN  C\hat C)^{\NN } \sigma.
	\end{align*}
	Setting $\tilde \sigma = 2\NN C(2\NN C \hat C)^{\NN}\sigma \le \frac{1}{10(10^{\NN}+1)} $ 
	the assertion follows with Corollary \ref{24.10.11.1} ($G_{1}=P_{1}-y_{0}$, $G_{2}=P_{2}-\hat y_{0})$.
\end{proof}

\begin{lem} \label{23.03.2012.1}
	Let $\sigma > 0$, $t \ge 0$, $P_1,P_2 \in \mathcal{P}(\n,\NN)$ with $\varangle(P_1,P_2)\le \sigma$ and 
	assume that 
	there exists $p_1 \in P_1$, 
	$p_2 \in P_2$ with $d(p_1,p_2) \le t\sigma$. Then $d(w,P_2) \le \sigma (d(w,p_1)+t)$
	holds for every $w\in P_1$.
\end{lem}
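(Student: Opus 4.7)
The plan is to construct an explicit candidate point $w' \in P_2$ and bound the distance $d(w, w')$, which will dominate $d(w, P_2)$. The natural candidate uses $p_2$ as a base point and transports the displacement $w - p_1$ (which lies in the linear subspace $G_1 := P_1 - \pi_{P_1}(0)$ parallel to $P_1$) into the linear subspace $G_2 := P_2 - \pi_{P_2}(0)$ via the orthogonal projection $\pi_{G_2}$.

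Concretely, I would set $w' := p_2 + \pi_{G_2}(w - p_1)$. Since $p_2 \in P_2$ and $\pi_{G_2}(w - p_1) \in G_2$, we have $w' \in P_2$. Writing $w = p_1 + (w - p_1)$ and using that $w - p_1 \in G_1$ so $\pi_{G_1}(w - p_1) = w - p_1$, the triangle inequality gives
\begin{align*}
 d(w, P_2) \le |w - w'| &= \bigl| (p_1 - p_2) + (w - p_1) - \pi_{G_2}(w - p_1) \bigr| \\
 &\le |p_1 - p_2| + \bigl| \pi_{G_1}(w - p_1) - \pi_{G_2}(w - p_1) \bigr|.
\end{align*}

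The first term is bounded by $t\sigma$ by hypothesis. For the second term, the definition of $\varangle(P_1, P_2) = \|\pi_{G_1} - \pi_{G_2}\|$ as an operator norm yields
\[ \bigl| \pi_{G_1}(w - p_1) - \pi_{G_2}(w - p_1) \bigr| \le \varangle(P_1, P_2) \cdot |w - p_1| \le \sigma \, d(w, p_1). \]
Adding the two estimates gives the desired bound $d(w, P_2) \le \sigma(d(w, p_1) + t)$. There is no real obstacle here; the only subtlety is remembering to replace $w - p_1$ by $\pi_{G_1}(w - p_1)$ so that the angle between the two affine subspaces enters via the definition $\varangle(P_1, P_2) = \|\pi_{G_1} - \pi_{G_2}\|$ rather than the translational part, which is absorbed in the $|p_1 - p_2|$ term.
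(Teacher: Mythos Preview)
Your proof is correct and follows essentially the same approach as the paper: both choose the candidate point $p_2 + \pi_{G_2}(w-p_1) \in P_2$, split the distance via the triangle inequality into the translational part $|p_1-p_2| \le t\sigma$ and the directional part controlled by $\varangle(P_1,P_2)$. The only cosmetic difference is that the paper normalizes $\tilde w = w-p_1$ before applying the operator-norm bound, whereas you apply it directly; your version even handles the trivial case $w=p_1$ more cleanly.
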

\begin{proof}
	For $w\in P_1$, set $\tilde w:= w - p_1 \in P_1 - p_1$.
	We obtain
	\begin{align*}
		d(w,P_2)
		& \le |\tilde w| \Bigl|\frac{\tilde w}{|\tilde w|}
			-\pi_{P_2-p_2}\left(\frac{\tilde w}{|\tilde w|}\right)\Bigr| + d(p_1,p_2)
		 \le |\tilde w| \varangle(P_1-p_1,P_2-p_2) + t\sigma.
	\end{align*}
\end{proof}

\setcounter{equation}{0}
\section{Integral Menger curvature and rectifiability}
\subsection{Main result}\label{1.11.2014.2}
Let $\N,\n \in \mathbb{N}$ with $1 \le \N < \n$. We start with some definitions. 
\newcommand{\proper}{proper}
\begin{dfn}[Proper integrand] \label{4.10.12.1} \label{muproper}	
	Let $\K : \left(\R^{\n}\right)^{\N+2} \to [0,\infty)$ and $p>1$. 
	We say that $\K^{\p}$ is a \textit{\proper{} integrand} if it fulfils the following four conditions:
	\begin{itemize}
		\item $\K$ is $\left(\cH^{\N}\right)^{\N+2}$-measurable, where $\left(\cH^{\N}\right)^{\N+2}$
			denotes the $\N+2$-times product measure of $\cH^{\N}$.
		\item There exists some constants $c=c(\N,\K,\p) \ge 1$ and $l=l(\N,\K,\p) \ge 1$ so that, for all
			$t>0$, $C \ge 1$, $x \in \R^{\n}$ and all $(\N,\frac{t}{C})$-simplices $\Delta(x_{0},\dots,x_{\N}) \subset B(x,Ct)$, 
			we have
		\[\left(\frac{d(w,\aff(x_0,\dots,x_{\N}))}{t}\right)^{\p} \le c C^{l} t^{\N(\N+1)} \K^{\p}(x_0,\dots,x_{\N},w)\]
		for all $w \in B(x,Ct)$.
		\item For all $t > 0$, we have
		$t^{\N(\N+1)}\K^{\p}(tx_{0},\dots,tx_{\N+1})=\K^{\p}(x_{0},\dots,x_{\N+1})$.
		\item For every $b \in \R^{\n}$, we have $\K(x_{0}+b,\dots,x_{\N+1}+b)=\K(x_{0},\dots,x_{\N+1})$.
	\end{itemize}

\end{dfn}

\begin{rem}
	If instead of the first condition, we have that $\mathcal{K}$ is $\left(\mu\right)^{\N+2}$-measurable for some
	Borel measure $\mu$ on $\R^{\n}$ we call $\mathcal{K}$ \textit{$\mu$-\proper{}}.
\end{rem}

\begin{dfn}
	(i) We call a Borel set $E \subset \mathbb{R}^{\n}$ \textit{purely $\N$-unrectifiable} if for every 
		Lipschitz continuous \index{purely $\N$-unrectifiable}
		function $\gamma :  \mathbb{R}^{\N} \rightarrow  \mathbb{R}^{\n}$, we have
		$\mathcal{H}^{\N}(E \cap \gamma( \mathbb{R}^{\N})) = 0$.\\
	(ii)    A Borel set $E \subset \mathbb{R}^{\n}$ is \textit{$\N$-rectifiable} if there exists some countable
		family of Lipschitz continuous functions
		\index{$\N$-rectifiable}
		$\gamma_{i} :  \mathbb{R}^{\N} \rightarrow  \mathbb{R}^{\n}$ so that
		$ \mathcal{H}^{\N}(E \setminus \bigcup_{i=1}^{\infty} \gamma_{i}( \mathbb{R}^{\N})) = 0$.
\end{dfn}

\begin{dfn}[Integral Menger curvature]
	Let $E \subset \R^{\n}$ be a Borel set and $\mu$ be a Borel measure on $\R^{\n}$. 
	We define the \textit{integral Menger curvature} of $E$ and $\mu$
	with integrand $\K^{\p}$ by 
	$\M_{\K^{\p}}(E):= \M_{\K^{\p}}(\cH^{\n}\big|_{E})$
	and
	\[\M_{\K^{\p}}(\mu):=\int \dots \int \K^{\p}(x_{0},\dots,x_{\N+1}) \ \dd \mu(x_{0}) \dots \dd \mu(x_{\N+1}).\]
\end{dfn}

Now we can state our main result.

\begin{thm} \label{maintheorem}
	Let $E \subset \mathbb{R}^{\n}$ be a borel set with 
	$\mathcal{M}_{\K^{2}}(E) < \infty$, where $\K^{2}$ is some \proper{} integrand.
	Then $E$ is $\N$-rectifiable.
\end{thm}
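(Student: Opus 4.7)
The plan is to follow the overall architecture of L\'eger's proof for $\N=1$, but to use the higher-dimensional simplex geometry of Section~\ref{27.10.2014.1} together with the $\beta$-number/curvature estimate from Theorem~\ref{13.11.2014.3} as the central tool. First I would reduce to a contradictory local statement: writing $E = E_r \cup E_u$ with $E_r$ the (countable) union of all $\N$-rectifiable subsets and $E_u$ purely $\N$-unrectifiable, I would assume $\cH^{\N}(E_u)>0$. Restricting to a compact subset where the upper and lower $\N$-densities are comparable and the integral Menger curvature remains finite (via the sub-additivity of $\M_{\K^2}$ in the measure and a Chebyshev-type argument), I would reduce to showing that such a set $E' \subset E_u$ with $0 < \cH^{\N}(E') < \infty$ and $\M_{\K^2}(E') < \infty$ must contain an $\N$-rectifiable piece of positive measure --- contradicting pure $\N$-unrectifiability.

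Next, I would fix a ball $B$ in which $E'$ concentrates and apply Lemma~\ref{16.04.2013.1} to produce a reference $(\N+1,(\N+3)\sigma)$-simplex $T=\Delta(x_0,\dots,x_{\N+1})\subset B$ whose vertices all carry positive $\mu$-mass with $\mu := \cH^{\N}\textsf{L}E'$; the fact that $\mu(B(x_0,\sigma))$ is large while $\mu(B(x_i,\sigma))>0$ on the other vertices serves the role of L\'eger's ``big projection'' assumption, and ensures genuinely $\N$-dimensional local behaviour. Using standard density and covering arguments I would arrange an $\N$-upper-density bound of the form $\mu(B)\le C(\diam B)^{\N}$, so that Theorem~\ref{13.11.2014.3} applies to $\mu$. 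Since $\M_{\K^{2}}(\mu)<\infty$, Fubini yields that for $\mu$-a.e.~$x$,
\[ \int_0^\infty \beta_{2;k;\mu}(x,t)^{2}\,\mathbf{1}_{\{\mu(B(x,t))\ge\lambda t^{\N}\}}\,\frac{\dd t}{t}<\infty. \]
By Egorov/Chebyshev I would pass to a further compact subset $F\subset E'$ of positive $\mu$-measure on which this tail integral is uniformly bounded and on which lower $\N$-density remains bounded below at all scales $t\le t_0$.

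On $F$ I would carry out a stopping-time/corona construction analogous to L\'eger's. Fix the plane $P_0$ through the reference simplex $T$ as the initial tangent candidate. At each scale $t$ and base point $x\in F$, choose a near-minimizing plane $P_{x,t}$ in the definition of $\beta_{2;k;\mu}(x,t)$. Using Lemma~\ref{21.11.11.2} (comparing a non-degenerate simplex's supporting plane to $P_{x,t}$), Lemma~\ref{23.03.2012.1} (propagating planes to nearby points at cost proportional to the angle), Lemma~\ref{17.11.11.2} (stability of $(\N,\sigma)$-simplices under perturbation), and Lemma~\ref{18.11.11.1} (ruling out flat configurations on $F$), I would show that the angles $\varangle(P_{x,t},P_{x,t'})$ between planes at adjacent scales are controlled by $\beta_{2;k;\mu}(x,t)$ on non-degenerate scales. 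Summing the $L^2$ tail integral above then bounds the total rotation and produces, on a large subset $F_0\subset F$, a single plane $P_0$ with $\varangle(P_{x,t},P_0)<\eta$ for all $x\in F_0$ and $t\le t_0$. Applying Corollary~\ref{24.04.2012.1} to the approximating planes gives a Lipschitz function $A:P_0\to P_0^\perp$ whose graph $\Gamma$ covers $F_0$; since $\cH^{\N}(F_0)>0$ and $\Gamma$ is $\N$-rectifiable, this contradicts $F\subset E_u$ and closes the proof.

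The principal obstacle is the coupling in the stopping-time step between the $L^2$-control on $\beta$-numbers (which only sees scales where $\mu(B(x,t))\ge \lambda t^{\N}$) and the requirement that the approximating planes at neighbouring points and scales can actually be compared via simplices that remain $(\N,\sigma)$-non-degenerate. Because $\K$ vanishes on degenerate $(\N+1)$-tuples, the input bound from Theorem~\ref{13.11.2014.3} is weak whenever nearby simplices collapse; avoiding this collapse on $F$ --- so that Lemma~\ref{21.11.11.2} actually provides a uniform angle bound --- is exactly what the reference simplex of Lemma~\ref{16.04.2013.1} combined with Lemma~\ref{29.02.2012.1} is designed to enforce, and getting these pieces to cooperate across all scales is the delicate part of the argument.
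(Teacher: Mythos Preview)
Your overall architecture---reduce to a compact piece of a purely unrectifiable part, arrange an upper density bound, feed the curvature into a $\beta$-number estimate, and then try to produce a Lipschitz graph via a stopping-time---matches the paper's strategy and L\'eger's template. However, there are two genuine gaps.

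First, a minor point: Lemma~\ref{16.04.2013.1} is stated for sets $F$ with $\cH^{\N}(F)=\infty$; in the paper it is used only to prove Lemma~\ref{19.04.2013.1}, i.e.\ that $\M_{\K^{2}}(E)<\infty$ forces $\cH^{\N}(E\cap B)<\infty$. It cannot be invoked on your finite-measure set $E'$. The correct tool for producing a non-degenerate simplex with mass at each vertex is Lemma~\ref{lem2.3}/Corollary~\ref{04.09.12.1}, which only needs a lower bound on $\delta(B)$ for one ball. Relatedly, you assume you can pass to a subset on which ``lower $\N$-density remains bounded below at all scales $t\le t_0$''. Nothing in the hypotheses gives this: for a general set with $\cH^{\N}(E)<\infty$ only the upper density is controlled a.e., and the scales at which density drops are exactly the $F_1$-part of the paper's partition, which has to be shown to carry little measure by a separate covering argument rather than assumed away.

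The main gap is your claim that ``summing the $L^2$ tail integral above then bounds the total rotation'' of the planes $P_{x,t}$. Lemma~\ref{lem2.6} gives $\varangle(P_{x,t},P_{x,t/2})\le C\,\beta_{1;k}(x,t)$ at good scales, so the total rotation down to scale $0$ is controlled by $\sum_j \beta(x,2^{-j})$, an $\ell^1$ sum. Theorem~\ref{13.11.2014.3} only supplies $\ell^2$ control, $\int_0^\infty \beta(x,t)^2\frac{\dd t}{t}<\infty$, and $\ell^2$ finiteness does not bound $\ell^1$. This is precisely the reason the paper cannot close the argument at this point: the set $F_3$ (where the angle $\varangle(P_{(x,t)},P_0)$ escapes) is handled not by summing rotations but by the $\gamma$-function machinery of Section~\ref{gamma}---Theorem~\ref{thm4.1} converts the curvature into $\int\!\!\int \gamma_A(q,t)^2\frac{\dd t}{t}\dd\cH^{\N}(q)\le C\varepsilon^2$, and Theorem~\ref{2.9.2014.1} (Calder\'on's reproducing formula plus a continuous Littlewood--Paley estimate) then gives affine approximation of the already-constructed Lipschitz function $A$ on a set $H_\theta$ of nearly full measure, which is what forces $\mu(F_3)$ small. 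Your sketch skips this entire $L^2\to L^\infty$ mechanism, and without it the angle cannot be kept below any fixed threshold on a set of positive measure. Correspondingly, the final step---``applying Corollary~\ref{24.04.2012.1} to the approximating planes gives a Lipschitz function''---is not how the graph is built: the paper constructs $A$ via a Whitney decomposition of $P_0\setminus\pi(\mathcal Z)$ and a partition of unity glueing the affine pieces $A_i$ (Section~\ref{17.05.2013.1}), and the Lipschitz bound comes from Lemma~\ref{lem3.9}, not from a single application of Corollary~\ref{24.04.2012.1}.
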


\subsection{Examples of admissible integrands}\label{22.10.2014.2}
We start with flat simplices. 
\begin{dfn} \label{30.09.2013.1}
	We define the $(\cH^{\N})^{\N+2}$-measurable set
	\[X_{0}:=\left\{(x_{0},\dots,x_{\N+1}) \in (\R^{\n})^{\N+2} \big|
		\text{Gram}(x_1 - x_0, \dots, x_{\N+1}-x_0)=0\right\}\]
	(the Gram determinant is defined in Definition \ref{30.09.2013.2})
	which is the set of all simplices with $\N+2$ vertices
	in $\R^{\n}$ which 
	span at most an $\N$-dimensional affine subspace.
\end{dfn}

The following lemma is helpful to prove that a given integrand fulfils the second condition of a \proper{} integrand.
\begin{lem} \label{25.09.2013}
	Let $t>0$, $C \ge 1$, $x \in \R^{\n}$, $w \in B(x,Ct)$ and let
	$S=\Delta(x_{0},\dots,x_{\N}) \subset B(x,Ct)$ be some
	$(\N,\frac{t}{C})$-simplex. 
	Setting $S_{w}=\Delta(x_0,\dots,x_{\N},w)$,
	$A(S_{w})$ as the surface area of the simplex $S_{w}$ 
	and choosing
	$i,j \in \{0, \dots, \N \}$ with $j \neq i$ we have the following statements:
	\begin{itemize}
		\item $\frac{t}{C} \le d(x_i,x_{j}) \le \diam(S_{w}) \le 2Ct$,
		\item $d(x_{i},w) \le 2Ct$,
		\item $\frac{t^{\N}}{C^{\N} \N!} \le \cH^{\N}(S) \le \frac{(2C)^{\N}}{\N!}t^{\N}$,
		\item $\cH^{\N}(S) \le A(S_{w}) \le [(\N +1 )2C^2+1]\cH^{\N}(S)$,
		\item $d(w,\aff(x_{0},\dots,x_{\N})) =\N \frac{\cH^{\N+1}(S_{w})}{\cH^{\N}(S)}$.
	\end{itemize}
\end{lem}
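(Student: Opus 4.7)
The plan is to check each of the five bullets in turn. All are consequences of two tools from the preliminaries: the volume/height identity $\volume(T) = \height{i}T \cdot \volume(\face{i}T)$, equivalently $\cH^{\NN}(T) = \frac{1}{\NN}\,\height{i}T \cdot \cH^{\NN-1}(\face{i}T)$, of Remark \ref{23.08.12.2}, and the observation of Remark \ref{25.09.2013.10} that in an $(\N, t/C)$-simplex the height $\height{i}$ dominates the distance from $x_i$ to the affine span of any nonempty subset of the remaining vertices. Combined with the diameter bound $\diam(S_w) \le 2Ct$ forced by $S_w \subset B(x, Ct)$, these will suffice.

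Bullets one and two are immediate: the lower bound $d(x_i, x_j) \ge t/C$ is Remark \ref{25.09.2013.10} applied with $A_i = \{x_j\}$, and all the upper bounds come from $S_w \subset B(x, Ct)$. Bullet three follows by iterating Remark \ref{23.08.12.2} to get $\cH^\N(S) = \volume(S)/\N!$, then invoking the height lower bounds (each descending face still has all heights $\ge t/C$ by Remark \ref{25.09.2013.10}) for the lower estimate on $\volume(S)$ and Hadamard's inequality applied to the edge vectors $x_i - x_0$ of length $\le 2Ct$ for the upper estimate. Bullet five is a one-line application of Remark \ref{23.08.12.2} to the $(\N+1)$-simplex $S_w$ with the singled-out vertex taken to be $w$, so that $\cH^{\N+1}(S_w) = \frac{1}{\N+1}\,d(w, \aff(x_0, \ldots, x_\N)) \cdot \cH^\N(S)$, which rearranges to the stated identity.

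The only step requiring genuine attention is the upper bound in bullet four. I would decompose the surface area as $A(S_w) = \cH^\N(S) + \sum_{i=0}^{\N}\cH^\N(\face{x_i}S_w)$, which makes the lower bound $A(S_w) \ge \cH^\N(S)$ obvious and leaves estimating each of the $\N+1$ faces that contain $w$. For a fixed $i$, I would view $\face{x_i}S_w$ as an $\N$-simplex with base $\face{i}S$ and apex $w$, apply Remark \ref{23.08.12.2} to express its volume as $\frac{1}{\N}\,d(w, \aff(\face{i}S)) \cdot \cH^{\N-1}(\face{i}S)$, bound the height factor by $2Ct$ (since $w$ and all vertices of $\face{i}S$ sit in $B(x, Ct)$), and rewrite the base area via $\cH^{\N-1}(\face{i}S) = \N\cH^\N(S)/\height{i}S \le \N C\cH^\N(S)/t$. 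The product collapses to $2C^2\cH^\N(S)$ per face, and summing over the $\N+1$ faces containing $w$ and adding back $\cH^\N(S)$ gives the claimed constant $(\N+1)2C^2 + 1$.
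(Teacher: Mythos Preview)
Your argument matches the paper's proof essentially line by line: the same height/volume factorization from Remark~\ref{23.08.12.2} and Remark~\ref{25.09.2013.10} is invoked at the same places, and your treatment of the fourth bullet (bounding each $w$-containing face by $2C^2\cH^{\N}(S)$ via the base $\face{i}S$ and height $\le 2Ct$) is identical to the paper's. One small remark on the fifth bullet: your application of Remark~\ref{23.08.12.2} correctly yields $\cH^{\N+1}(S_w)=\tfrac{1}{\N+1}\,\height{w}(S_w)\,\cH^{\N}(S)$, which rearranges to coefficient $\N+1$, not the $\N$ printed in the statement; this is a typo in the lemma (carried over into the paper's own proof), not a flaw in your reasoning.
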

\begin{proof}
	Since $S$ is an $(\N,\frac{t}{C})$-simplex, we have
	\begin{align} \label{18.10.12.1}
		\frac{t}{C} 
		&\le \height{i}(S) 
		\le  d(x_{i},x_{j}) \le \diam(S_{w}) 
		=    \max_{l,m \in \{0,\dots,\N\}} \left\{d(x_l,x_m),d(x_l,w) \right\}  
		\le  2Ct
	\end{align}
	and because of $x_i,w \in B(x,Ct)$, we get
	$d(x_{i},w) \le 2Ct$.
	Now, with Remark \ref{23.08.12.2}, we conclude that 
	$ \cH^{\N}(S)=\frac{1}{\N!} \prod_{l=0}^{\N-1} d(x_{l},\aff(x_{l+1},\dots,x_{n}))$ 
	which implies with Remark \ref{25.09.2013.10}
	\begin{align*}
		\frac{t^{\N}}{C^{\N} \N!} 
		\stackrel{\eqref{18.10.12.1}}{\le} \frac{1}{\N!} \prod_{l=0}^{\N-1} \height{l}(S)
		\le \cH^{\N}(S) 
		\le \frac{1}{\N!} \prod_{l=0}^{\N-1} d(x_{l},x_{n}))
		\stackrel{\eqref{18.10.12.1}}{\le} \frac{(2C)^{\N}}{\N!}t^{\N}.
	\end{align*}
	Using Remark \ref{23.08.12.2} and $\height{w}(\face{i}(S_w)) \le d(w,x_j) \le 2C t$, we obtain 
	\begin{align*}
		\cH^{\N}(\face{i}(S_w)) 
		& \stackrel{\ref{23.08.12.2}}{=} \frac{1}{\N} \height{w}(\face{i}(S_w)) \cH^{\N-1}(\face{i,w}(S_w))
		\stackrel{\substack{\hphantom{\ref{23.08.12.2}} \\ \eqref{18.10.12.1}}}{\le} \frac{1}{\N} 2C^2 \height{i}(S) \cH^{\N-1}(\face{i}(S))
		\stackrel{\ref{23.08.12.2}}{=} 2C^2 \cH^{\N}(S),
	\end{align*}
		so that with 
		$A(S_{w})=\sum_{i=0}^{\N} \cH^{\N}(\face{i}S_w) + \cH^{\N}(\face{w}S_w)$
		and $\face{w}(S_w)=S$, we get
	\begin{align*}
		\cH^{\N}(S) \le A(S_w) & \le [(\N +1 )2C^2+1] \cH^{\N}(S).
	\end{align*}
	Finally, with Remark \ref{23.08.12.2} and using that $S=\face{w}(S_w)$, we deduce 
	\begin{align*}
		d(w,\aff(x_0,\dots,x_{\N})) &= \height{w}(S_{w})
		= \frac{\height{w}(S_w) \cdot \cH^{\N}(\face{w}(S_w))}{\cH^{\N}(S)}
		=  \frac{\N \cH^{\N+1}(S_{w})}{\cH^{\N}(S)}.
	\end{align*}
\end{proof}

Now we can state some examples of \proper{} integrands. 
Use the previous lemma to verify the second condition.
We define all following examples to be $0$ on $X_{0}$ and will only give
an explicit definition on $(\R^{\n})^{\N+2} \setminus X_{0}$.
We mention that our main result is only valid for all
integrands which are \proper{} for integrability exponent $\p=2$.

\subsubsection*{Proper Integrands with exponent $2$}
We start with the one used in the introduction of this work.
Let $x_{0}, \dots, x_{\N+1} \in (\R^{\n})^{\N+2} \setminus X_{0} $ and set
\[ \K_{1}(x_{0},\dots,x_{\N+1}):= 
	\displaystyle{\frac{\cH^{\N+1}(\Delta(x_{0},\dots,x_{\N+1}))}{\Pi_{0 \le i < j \le \N+1}d(x_{i},x_{j})}}, \]
then $\K_{1}^{2}$ is \proper{}.
The next \proper{} integrand is used by Lerman and Whitehouse in \cite{MR2848529,MR2558685},
\[ \K_{2}^{2}(x_{0},\dots,x_{\N+1})
	:= \frac{1}{\N+2} \cdot \frac{\textnormal{Vol}_{\N+1}(\Delta(x_{0},\dots,x_{\N+1}))^{2}}{\diam(\Delta(x_{0},\dots,x_{\N+1}))^{\N(\N+1)}}
	\sum_{i=0}^{\N+1} \frac{1}{\prod_{\genfrac{}{}{0pt}{}{j=0}{j\neq i}}^{\N+1}|x_{j}-x_{i} |^{2}},\]
where $\textnormal{Vol}_{\N+1}$ is $(\N+1)!$ times the volume of the simplex
$\Delta(x_{0}, \dots,x_{\N+1})$, which is equal to the volume of the parallelotope spanned by 
this simplex, cf. Definition \ref{30.09.2013.2}.
The following proper integrand, $\K_{3}^{2}$, is mentioned among others in \cite[section 6]{MR2558685}:
\[ \K_{3}(x_{0},\dots,x_{\N+1}):= 
	\displaystyle{\frac{\cH^{\N+1}(\Delta(x_{0},\dots,x_{\N+1}))}{\diam \Delta(x_{0},\dots,x_{\N+1})^{\frac{(\N+1)(\N+2)}{2}}}}.\]

\subsubsection*{Proper Integrands with exponents different from $2$}

Now we present some integrands for integral Menger curvature used in several papers, where the scaling behaviour
implies that our main result can not be applied. Nevertheless, most of our partial results
are valid also for these integrands.
The first integrand we consider was introduced for $\N=2, \n=3$ in \cite{SvdMsurface},
\[ \K_{4}(x_{0},\dots,x_{\N+1}):= 
	\displaystyle{\frac{V(T)}{A(T) (\diam T)^2}},\]
where $V(T)$ is the volume of the simplex $T=\Delta(x_0,\dots,x_{\N+1})$ and $A(T)$ 
is the surface area of $T$. $\K_{4}^{\p}$ is a \proper{} integrand with $p=\N(\N+1)$.
The next one, $\K_{5}^{p}$, is a \proper{} integrand with $p=\N(\N+1)$ and is used, for example, in \cite{MR2921162,MR3061777},
\[ \K_{5}(x_{0},\dots,x_{\N+1}):= 
	\displaystyle{\frac{\cH^{\N+1}(\Delta(x_{0}, \dots, x_{\N+1}))}{\diam (\Delta(x_{0}, \dots, x_{\N+1}))^{\N+2}}}.\]
Finally, L\'{e}ger suggested the following integrand in \cite{Leger} for a higher dimensional analogue of 
his theorem. Unfortunately, we can not confirm his suggestion. This one, $\K_{6}^{p}$, is a \proper{} integrand with $p=(\N+1)$
where
\[ \K_{6}(x_{0},\dots,x_{\N+1}):= 
	\displaystyle{\frac{d(x_{\N+1},\aff(x_0,\dots,x_{\N}))}{d(x_{\N+1},x_0) \dots d(x_{\N+1},x_{\N})}}.\]
Hence our main result does \textit{not} apply for $n \neq 1$.
For $\N=1$ up to a factor of $2$, this integrand gives the inverse of the circumcircle of 
the three points $x_{0}, x_{1}, x_{2}$.

\setcounter{equation}{0}
\section{\texorpdfstring{$\beta$-numbers}{{\ss}-numbers}}\label{beta}
In this chapter, let $C_0 \ge 10$ and $\mu$ a Borel measure on $\R^{\n}$ with compact 
support $F$ that is upper Ahlfors regular, i.e., 
\begin{enumerate} \label{GrundeigenschaftenBetaBeta}
	\renewcommand{\labelenumi}{(\Alph{enumi})}
	\setcounter{enumi}{1}
	\item for every ball $B$ we have $\mu(B) \le C_{0} (\diam B)^{\N}$.
\end{enumerate}
If $B=B(x,r)$ is some ball in $\R^{\n}$ with centre $x$ and radius $r$ and $t \in (0,\infty)$, then
we set $tB:=B(x,tr)$. Distinguish this notation from the case $t\Upsilon=\{tz|z \in \Upsilon\}$ 
where $\Upsilon \subset \R^{\n}$ is some arbitrary set.
Furthermore, in this and the following chapters, we assume that every ball is closed. We need this to apply
Vitali's and Besicovitch's covering theorems. By $C$, we denote a generic constant with a fixed value which
may change from line to line.

\subsection{Measure quotient}
\begin{dfn}[Measure quotient]\label{Definitionvondeltaschlange}
	For a ball $B=B(x,t)$ with centre $x \in \mathbb{R}^{\n}$, radius $t > 0$ and 
	a $\mu$-measurable set $\Upsilon \subset \mathbb{R}^{\n}$, we define the \textit{measure quotient}
	\begin{align*}
		\delta(B \cap \Upsilon) = \delta_{\mu}(B \cap \Upsilon) &:= \frac{\mu(B(x,t) \cap \Upsilon)}{t^{\N}}.
	\end{align*}
	In most instances, we will use the special case $\Upsilon = \R^{\n}$ and write $\delta(B)$ instead of
	$\delta(B \cap \R^{\n})$.
\end{dfn}
This measure quotient compares the amount of the support $F$ contained in a ball with the size of this ball.
The following lemma states that if we have a lower control on the measure quotient of some ball, then we can find
a not too flat simplex contained in this ball, where at each vertex we have a small ball with a lower control 
on its quotient measure.

\begin{lem} \label{lem2.3}
	Let $0 < \lambda \le 2^{\N}$ and $N_{0}=N_{0}(\n)$ be the constant from 
	Besicovitch's covering theorem \cite[1.5.2, Thm. 2]{Evans} depending only on the dimension $\n$. There exist constants 
	$C_{1} := \frac{4\cdot120^\N \N^{\N+1} N_{0}C_{0}}{\lambda}>3$
	and $ C_{2} := \frac{2^{\N+2}N_{0}C_{1}^\N}{\lambda}>1$
	so that for a given 
	ball $B(x,t)$ and some $\mu$-measureable set $\Upsilon$ 
	with $ \delta(B(x,t)\cap \Upsilon) \ge \lambda$, there exists 
	some $T=\Delta(x_0,\dots,x_{\N+1}) \in F \cap B(x,t) \cap \Upsilon$  
	so that 
	$\face{i}(T)$ is an $(\N,10\N \frac{t}{C_1})$-simplex and
	$\mu\left( B\left(x_i,\frac{t}{C_1}\right) \cap B(x,t) \cap \Upsilon\right) \ge  \frac{t^{\N}}{C_{2}}$
	for all $i \in \{0,\dots,\N+1 \}$.
\end{lem}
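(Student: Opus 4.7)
The strategy is a ``good set plus tube exclusion'' argument. Set $\eta := t/C_1$ and $H := 10\N\eta$, and define the good set of centers
\[ G := \{y \in F \cap B(x, t) \cap \Upsilon : \mu(B(y, \eta) \cap B(x, t) \cap \Upsilon) \ge t^\N/C_2\}. \]
The first step is to show $\mu(G) \ge \tfrac{1}{2}\lambda t^\N$. To this end, cover $(B(x,t) \cap \Upsilon) \setminus G$ by the balls $B(y,\eta)$ with $y$ in that set and apply Besicovitch's covering theorem to extract $N_0$ disjoint subfamilies whose union covers; each ball in such a subfamily has $\mu(B \cap B(x,t) \cap \Upsilon) < t^\N/C_2$ by definition of $G^c$. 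A disjoint-packing count inside $B(x,2t)$ controlled by the upper Ahlfors bound $(A)$, together with the specific choice $C_2 = 2^{\N+2} N_0 C_1^\N/\lambda$, then forces $\mu\bigl((B(x,t)\cap \Upsilon) \setminus G\bigr) \le \tfrac{1}{2}\lambda t^\N$, and hence $\mu(G) \ge \tfrac{1}{2}\lambda t^\N$.

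The second step produces the simplex $T$ with vertices in $G$ by contradiction: suppose every choice of $\N+2$ points in $G$ yields some face that fails to be an $(\N,H)$-simplex. Applying Lemma \ref{29.02.2012.1} (with $C=H$) to finite subsets exhausting $G$ and using compactness of $\overline{G} \subset B(x,t)$ to pass to a limit, one obtains an $\N$-simplex $T_z = \Delta(z_0,\dots,z_\N)$ in $\overline{G}$ such that
\[ \overline{G} \subset \bigcup_{i=0}^{\N} U_{2H}\!\bigl(\aff(\face{i}(T_z))\bigr); \]
that is, $\overline{G}$ lies in the union of $\N+1$ tubular $20\N\eta$-neighborhoods of $(\N-1)$-dimensional affine subspaces of $\R^{\n}$.

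The contradiction is then produced by upper-bounding $\mu$ on each such tube via $(A)$. Covering a single tube intersected with $B(x,2t)$ by Besicovitch using balls of radius $20\N\eta$ centred on the $(\N-1)$-dimensional subspace yields on the order of $N_0(t/\eta)^{\N-1}$ balls, each of $\mu$-mass at most $C_0(40\N\eta)^\N$; the total tube mass is thus at most $C\cdot N_0\cdot \N^{\N}\cdot 120^\N C_0\, t^\N/C_1$. Summing over the $\N+1$ tubes and invoking the specific value $C_1 = 4\cdot 120^\N \N^{\N+1} N_0 C_0/\lambda$ makes the sum strictly less than $\tfrac{1}{2}\lambda t^\N$, contradicting Step~1; so a suitable $T$ exists, and membership of its vertices in $G$ supplies the required mass condition at each vertex. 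The main obstacle is the constants bookkeeping: the Besicovitch count in Step~1 must be controlled at the level $C_1^\N$ (matching $C_2$), and the tube-covering constants in Step~3 must fit inside the prefactor $120^\N \N^{\N+1}$ of $C_1$, so that both halves of the argument simultaneously beat $\tfrac{1}{2}\lambda t^\N$.
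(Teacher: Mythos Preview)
Your approach uses the same ingredients as the paper --- a Besicovitch covering, a good/bad split according to the mass threshold $t^{\N}/C_2$, and tube exclusion via Lemma~\ref{29.02.2012.1} --- but the organization differs in one important respect. The paper applies Besicovitch \emph{first} to all of $F\cap B(x,t)\cap\Upsilon$, picks one disjoint family $\mathcal{B}_m$ with total mass at least $\lambda t^{\N}/N_0$, and only then separates $\mathcal{B}_m$ into good balls $\mathcal{G}$ and bad balls. Because $\mathcal{B}_m$ consists of disjoint balls of fixed radius $t/C_1$ inside $B(x,2t)$, the set $\mathcal{G}$ of good centers is automatically \emph{finite}, so Lemma~\ref{29.02.2012.1} applies directly with no limiting argument needed; the tube estimate then bounds $\sum_{\mathcal G}$ and the cardinality bound controls $\sum_{\mathcal B_m\setminus\mathcal G}$, contradicting the lower bound on $\sum_{\mathcal B_m}$.

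Your route defines the good set $G$ first and must then feed a possibly uncountable set into Lemma~\ref{29.02.2012.1}, which is where your compactness step enters. That step has a genuine gap you should close: if the limit simplex $T_z=\lim T_z^{(k)}$ degenerates (has zero $\N$-volume), the affine hulls $\aff(\face{i}T_z^{(k)})$ can drop dimension in the limit and the function $(z_0,\dots,z_{\N})\mapsto d\bigl(a,\aff(\face{i}\Delta(z_0,\dots,z_{\N}))\bigr)$ is only \emph{lower} semicontinuous, so the inequality $d(a,\cdot)<2H$ need not survive. One fix is to note that in the proof of Lemma~\ref{29.02.2012.1} the simplex $T_z^{(k)}$ maximizes volume over $G_k$; since $\mu(G)\ge\tfrac12\lambda t^{\N}>0$ while the upper Ahlfors bound forces $\mu$ to vanish on every $(\N{-}1)$-plane, $G$ contains a nondegenerate $\N$-simplex, giving a uniform positive lower bound on $\volume(T_z^{(k)})$ and hence a nondegenerate limit. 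Alternatively, just adopt the paper's ordering (Besicovitch first, good/bad split second), which sidesteps the issue entirely. Also note that in Step~1 the packing count comes from the volumetric Lemma~\ref{22.2.2012.1} in $\R^{\n}$, not from the upper Ahlfors bound (B) per se.
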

\begin{proof}
	Let $B(x,t)$ be the ball with $\delta(B(x,t) \cap \Upsilon) \ge \lambda $ and
	$\mathcal{F} := \{ B(y,\frac{t}{C_{1}}) | y \in F \cap B(x,t) \cap \Upsilon \}$.
	With Besicovitch's covering theorem \cite[1.5.2, Thm. 2]{Evans} we get $N_{0}=N_{0}(\N)$ families 
	$\mathcal{B}_{m} \subset \mathcal{F}$, $m=1,...,N_{0}$ of disjoint balls so that
	$ F \cap B(x,t) \cap \Upsilon \subset \bigcup_{m=1}^{N_{0}} \ \dot{\bigcup_{B \in \mathcal{B}_{m}}} B$.
	We have
	\begin{align*}
		\lambda & \le \frac{1}{t^{\N}} \mu \left(\bigcup_{m=1}^{N_{0}}
			 \bigcup_{B \in \mathcal{B}_{m}} (B \cap B(x,t) \cap \Upsilon)\right) 
		 \le \frac{1}{t^{\N}} \sum_{m=1}^{N_{0}} \sum_{B\in \mathcal{B}_{m}} \mu(B \cap B(x,t) \cap \Upsilon)
	\end{align*}
	and hence there exists a family $\mathcal{B}_{m}$ with
	\begin{eqnarray} \label{sterneinss}
	 	\sum_{B \in \mathcal{B}_{m}} \mu(B \cap B(x,t) \cap \Upsilon) \ge \frac{\lambda t^{\N}}{N_{0}}.
	\end{eqnarray}
	We assume that for every $S=\Delta(y_0,\dots,y_{\N +1}) \in F \cap B(x,t) \cap \Upsilon$, 
	there exists some $i \in \{0,\dots,\N +1 \}$ so that either
	$\face{i}(S)$ is no $(\N,10\N\frac{t}{C_1})$-simplex or 
	$\mu( B(y_i,\frac{t}{C_1}) \cap B(x,t) \cap \Upsilon) <  \frac{t^{\N}}{C_{2}}$. 
	We define
	$\mathcal{G} := \left\{ B \in \mathcal{B}_{m} \Big| \mu(B \cap B(x,t)\cap \Upsilon) \ge \frac{t^{\N}}{C_{2}} \right\}$
	and mention that $\mathcal{G}$ is a finite set since Lemma \ref{22.2.2012.1} implies that 
	$\#\mathcal{B}_{m} \le (2C_{1})^{\N}$. 
	With Lemma \ref{29.02.2012.1} (where we set $G$ as the set of centres of balls in $\mathcal{G}$ and $C=10\N\frac{t}{C_1}$),
	we know that there exists some 
	$T_z=\Delta(z_0,\dots,z_\N)$ so that for every ball $B(y,\frac{t}{C_1}) \in \mathcal{G}$, 
	there exists some $i \in \{0,\dots,\N \}$ so that $d(y,\aff(\face{i}(T_z))) \le 20\N\frac{t}{C_1}$.
	We define for $i \in \{0,\dots,\N \}$
	\begin{align*}
		T_i & := \aff(\face{i}(T_z)) \cap B(\pi_{\aff(\face{i}(T_z))}(x),2t), \\
		\mathcal{S}_i & := \left\{ y \in \mathbb{R}^n | d(y,\aff(\face{i}(T_z))) \le {\textstyle \frac{30\N t}{C_1}}, \pi_{\aff(\face{i}(T_z))}(y) \in T_i \right\}
	\end{align*}
	and we know that
	$B \in \mathcal{G}$ implies $B \subset S_i$ for some $i \in \{0,\dots,\N \}$.
	With Lemma \ref{23.2.2012.1} applied to $B(x,r)=T_i$, $s=\frac{4}{C_1}t < 2t=r$ and $m=\N-1$, there exists a family 
	$\mathcal{E} $ of disjoint closed balls
	with $\diam B = \frac{8}{C_1}t$ for all $B \in \mathcal{E}$,
	$T_i \subset \bigcup_{B \in \mathcal{E}} 5B$ and
	$\# \mathcal{E}\le C_1^{\N-1}$.
	Let $y \in S_i$. We have $d(y,\aff(\face{i}(T_z))) \le \frac{30\N}{C_1}t$ and $\pi_{\aff(\face{i}(T_z))}(y) \in T_i$.
	So, there exists some $B=B(z,\frac{4}{C_1}t) \in \mathcal{E}$ with $\pi_{\aff(\face{i}(T))}(y) \in 5B$ and we have
	$d(y,z) \le \frac{30 \N}{C_1}t + 5\frac{4}{C_1}t < \frac{60\N}{C_1}t$.
	This proves $S_i \subset \bigcup_{B \in \mathcal{E}} 15\N B$.
	We therefrom derive with (B) (see page \pageref{GrundeigenschaftenBetaBeta})
	\begin{align} \label{18.2.11.1}
	    \mu(S_i) &\le \sum_{B \in \mathcal{E}} \mu\left(15 \N B\right)
	    \stackrel{\text{(B)}}{\le} \sum_{B \in \mathcal{E}} C_0\left(15\N \diam B\right)^{\N} 
	    \le \# \mathcal{E} C_0 \frac{(120 \N)^\N t^{\N}}{C_1^{\N}} 
	    \le (120 \N)^\N C_0 \frac{t^{\N}}{C_1}.
	\end{align}
	We define for $i \in \{1,\dots, \N \}$
	\begin{align*}
		\mathcal{G}_0 &:= \left\{ B \in \mathcal{G} | B \subset S_0 \right\}, \hspace{9mm} \text{and} \hspace{10mm}
		\mathcal{G}_i := \left\{ B \in \mathcal{G} |  B \subset S_i \text{ and } B \notin {\textstyle \bigcup_{j=0}^{i-1}} \mathcal{G}_i  \right\}
	\end{align*}
	as a partition of $\mathcal{G}$ (compare the remark after the definition of $\mathcal{S}_{i}$).
	Now we have
	\begin{align*}
	 \sum_{B \in  \mathcal{G}} \mu(B \cap B(x,t)\cap \Upsilon) 
	  & \le \sum_{i=0}^{\N}\mu(S_i) \stackrel{\eqref{18.2.11.1}}{\le} \N ( 120 \N)^\N C_0 \frac{t^{\N}}{C_1}.
	 \end{align*}
	 Moreover, we have
	 \[ \sum_{B \in \mathcal{B}_{m} \setminus  \mathcal{G}} \mu(B \cap B(x,t)\cap \Upsilon) < 
		\sum_{B \in \mathcal{B}_{m} \setminus  \mathcal{G}} 
		\frac{t^{\N}}{C_{2}} \stackrel{\#\mathcal{B}_m\le (2C_1)^n}{\le} (2 C_{1})^{n} \frac{t^{\N}}{C_{2}}.\]
	 All in all, we get with (\ref{sterneinss}) and the definition of $C_{1}$ and $C_{2}$
	 \[ \lambda \le N_{0} \frac{1}{t^{\N}}\left(2^{n}t^{\N} \frac{C_{1}^{n}}{C_{2}} + 120^{\N} \N^{\N +1} t^{\N} C_{0} \frac{1}{C_{1}}\right) 
	 	= N_{0}\left(2^{n} \frac{C_{1}^{n}}{C_{2}} 
		+ 120^{\N} \N^{\N +1} C_{0} \frac{1}{C_{1}}\right) \le \frac{\lambda}{2}, \]
	 thus in contradiction to $\lambda > 0$. This completes the proof of Lemma \ref{lem2.3}.
\end{proof}

In most instances, we will use a weaker version of Lemma \ref{lem2.3}:
\begin{kor} \label{04.09.12.1}
	Let $0 < \lambda \le 2^{\N}$. There exist constants $C_{1}=C_{1}(\n,\N,C_{0},\lambda)>3$ and 
	$ C_{2}=C_{2}(\n,\N,C_{0},\lambda)>1$ so that for a given ball $B(x,t)$ and some $\mu$-measurable set 
	$\Upsilon$ with $ \delta(B(x,t)\cap \Upsilon) \ge \lambda$, there exists 
	some $(\N,10\N \frac{t}{C_1})$-simplex $T=\Delta(x_0,\dots,x_{\N}) \in F \cap B(x,t) \cap \Upsilon$  
	so that $\mu\left( B\left(x_i,\frac{t}{C_1}\right) \cap B(x,t) \cap \Upsilon\right) \ge  \frac{t^{\N}}{C_{2}}$
	for all $i \in \{0,\dots, \N \}$.
\end{kor}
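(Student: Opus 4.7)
The plan is to derive the corollary as an immediate consequence of Lemma \ref{lem2.3}. The lemma produces an $(\N+1)$-simplex $T' = \Delta(x_0, \dots, x_{\N+1}) \in F \cap B(x,t) \cap \Upsilon$ whose every face $\face{i}(T')$ is an $(\N, 10\N\frac{t}{C_1})$-simplex and such that for each vertex $x_i$ one has the measure lower bound $\mu(B(x_i, t/C_1) \cap B(x,t) \cap \Upsilon) \ge t^{\N}/C_2$. The corollary only asks for the existence of a single $(\N, 10\N\frac{t}{C_1})$-simplex with $\N+1$ vertices (instead of $\N+2$) carrying the same measure lower bounds, so the lemma already gives us more than we need.

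Concretely, I would apply Lemma \ref{lem2.3} with the given data $\lambda, B(x,t), \Upsilon$, obtaining the constants $C_1, C_2$ and the simplex $T' = \Delta(x_0,\dots,x_{\N+1})$. Then I would simply take $T := \face{\N+1}(T') = \Delta(x_0,\dots,x_\N)$, which by the lemma is an $(\N, 10\N\frac{t}{C_1})$-simplex contained in $F \cap B(x,t) \cap \Upsilon$. Since the measure lower bound $\mu(B(x_i, t/C_1) \cap B(x,t) \cap \Upsilon) \ge t^{\N}/C_2$ is asserted for every index $i \in \{0,\dots,\N+1\}$ in the lemma, it holds in particular for the indices $i \in \{0,\dots,\N\}$ which are the vertices of $T$, giving exactly the conclusion of the corollary with the same constants $C_1, C_2$ depending only on $\n, \N, C_0, \lambda$.

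There is essentially no obstacle here: the statement is a strict logical weakening of Lemma \ref{lem2.3} (dropping one vertex and the face-by-face non-degeneracy requirement), so no additional covering, geometric, or measure-theoretic argument is required. The only thing worth double-checking is that the dependence of $C_1$ and $C_2$ advertised in the corollary (only on $\n,\N,C_0,\lambda$) matches what comes out of the lemma, which is immediate from the explicit formulas $C_1 = \frac{4 \cdot 120^\N \N^{\N+1} N_0 C_0}{\lambda}$ and $C_2 = \frac{2^{\N+2} N_0 C_1^\N}{\lambda}$ together with the fact that $N_0 = N_0(\n)$ comes from Besicovitch's covering theorem.
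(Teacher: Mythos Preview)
Your proposal is correct and matches the paper's approach exactly: the paper presents this corollary explicitly as ``a weaker version of Lemma \ref{lem2.3}'' without giving a separate proof, and your argument (take $T = \face{\N+1}(T')$ from the simplex produced by Lemma \ref{lem2.3}) is precisely the intended one-line derivation.
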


\subsection{\texorpdfstring{$\beta$-numbers and integral Menger curvature}{\ss -numbers and integral Menger curvature}}\label{1.11.2014.1}
\begin{dfn}[$\beta$-numbers]
	Let $k > 1$ be some fixed constant, $x \in \mathbb{R}^{\n}$, $t>0$, $B=B(x,t)$, $\p\ge 1$, $ \mathcal{P}(\n,\N) $ the set of all 
	$\N$-dimensional planes in $\mathbb{R}^{\n}$ 
	and $ P \in \mathcal{P}(\n,\N)$. We define
	\begin{align*}
		 \beta_{\p;k}^{P}(B) = \beta_{\p;k}^{P}(x,t) = \beta_{\p;k;\mu}^{P}(x,t) & := \left( \frac{1}{t^{\N}} \int_{B(x,kt)} \left( \frac{d(y,P)}{t} \right)^{\p} 
		 	\dd  \mu (y) \right)^{\frac{1}{\p}}, \\
		 \beta_{\p;k} (B) = \beta_{\p;k} (x,t) = \beta_{\p;k;\mu} (x,t) & := \inf_{P \in \mathcal{P}(\n,\N)} \beta_{\p;k}^{P}(x,t).
	\end{align*}
\end{dfn}

The $\beta$-numbers measure how well the support of the measure $\mu$ can be approximated by some plane. A small 
$\beta$-number of some ball implies either a good approximation of the support by some plane or a low 
measure quotient $\delta$ (cf. Definition \ref{Definitionvondeltaschlange}).
Hence, since we are interested in good approximations by planes, we will use the $\beta$-numbers mainly for balls 
where we have some lower control on the measure quotient.

\begin{dfn}[Local version of $\mathcal{M}_{\mathcal{K}^{\p}}$]
	For $\kappa >1$, $x \in \R^{\n}$, $t>0$, $\p>0$, we define
	\[ \mathcal{M}_{\mathcal{K}^{\p};\kappa}(x,t):= \idotsint_{\mathcal{O}_{\kappa}(x,t)}\mathcal{K}^{\p}(x_0,\dots,x_{\N+1}) 
		\dd\mu(x_0) \dots \dd \mu(x_{\N+1}),\]
	where $\K^{\p}$ is a $\mu$-\proper{} integrand (cf. Definition \ref{muproper} on page \pageref{muproper}) and
	\[ \mathcal{O}_{\kappa}(x,t) := \left\{ (x_0,\dots,x_{\N+1}) \in (B(x,\kappa t))^{\N+2} 
		\Big|  d(a,b) \ge \frac{t}{\kappa}, \forall \ a ,b \in \{x_0,\dots,x_{\N+1} \},a \neq b \right\}. \]
\end{dfn}

\begin{thm} \label{lem2.5}
	Let $\K^{\p}$ be a symmetric $\mu$-\proper{} integrand and let
	$0<\lambda < 2^{\N}$, $k > 2$, $k_{0} \ge 1$.
	There exist constants $k_{1}=k_{1}(\n,\N,C_{0},k,k_{0},\lambda)> 1$ and 
	$C=C(\n,\N,\K,\p,C_{0},k,k_{0},\lambda) \ge 1$
	such that if $x \in \mathbb{R}^{\n}$ and $t > 0$ with $\delta(B(x,t)) \ge \lambda$ for every 
	$y \in B(x,k_{0}t) $, we have
	\[ \beta_{\p;k}(y,t)^{\p} \le C \frac{\mathcal{M}_{\mathcal{K}^{\p};k_{1}}(x,t)}{t^{\N}} 
		\le C \frac{\mathcal{M}_{\mathcal{K}^{\p};k_{1}+k_{0}}(y,t)}{t^{\N}}.\]
\end{thm}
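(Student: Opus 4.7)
The plan is to construct a concrete approximating plane from a well-structured simplex lying inside $B(x,t)$, then to bound $\beta_{\p;k}(y,t)^{\p}$ by averaging the proper integrand estimate over all admissible choices of the simplex's vertices.

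\textbf{Step 1 (base simplex).} From $\delta(B(x,t))\ge\lambda$, Corollary \ref{04.09.12.1} gives an $(\N,10\N t/C_{1})$-simplex $\Delta(x_{0},\ldots,x_{\N})\subset F\cap B(x,t)$ with $\mu(B_{i}^{*})\ge t^{\N}/C_{2}$, where $B_{i}^{*}:=F\cap B(x_{i},t/C_{1})\cap B(x,t)$. Iterating Lemma \ref{17.11.11.2} across the $\N+1$ vertices, every tuple $\vec{y}=(y_{0},\ldots,y_{\N})\in B_{0}^{*}\times\cdots\times B_{\N}^{*}$ spans an $(\N,(9\N-1)t/C_{1})$-simplex and satisfies $d(y_{i},y_{j})\ge(10\N-2)t/C_{1}$ for $i\ne j$.

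\textbf{Step 2 (pointwise bound).} Fix $y\in B(x,k_{0}t)$, set $P_{\vec{y}}:=\aff(y_{0},\ldots,y_{\N})$, and choose $C^{*}:=\max\{2,\,k+k_{0},\,C_{1}/(9\N-1)\}$. Then the perturbed simplex is $(\N,t/C^{*})$ and contained in $B(x,C^{*}t)$, while every $z\in B(y,kt)\subset B(x,C^{*}t)$ satisfies the hypotheses of Definition \ref{4.10.12.1}. Hence
\[
\Big(\frac{d(z,P_{\vec{y}})}{t}\Big)^{\p}\le c(C^{*})^{l}t^{\N(\N+1)}\K^{\p}(y_{0},\ldots,y_{\N},z).
\]
Using $\beta_{\p;k}(y,t)\le\beta_{\p;k}^{P_{\vec{y}}}(y,t)$ and integrating in $z\in B(y,kt)$,
\[
t^{\N}\beta_{\p;k}(y,t)^{\p}\le c(C^{*})^{l}t^{\N(\N+1)}\int_{B(y,kt)}\K^{\p}(y_{0},\ldots,y_{\N},z)\,d\mu(z)\qquad\text{for every admissible }\vec{y}.
\]

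\textbf{Step 3 (averaging with a separation safeguard).} Since the left-hand side is independent of $\vec{y}$, I would integrate the previous bound in $\vec{y}\in B_{0}^{*}\times\cdots\times B_{\N}^{*}$ and divide by $\prod_{i}\mu(B_{i}^{*})\ge(t^{\N}/C_{2})^{\N+1}$. The resulting $(\N+2)$-fold integral must then be compared with $\mathcal{M}_{\K^{\p};k_{1}}(x,t)$. Among the pairwise distances entering $\mathcal{O}_{k_{1}}(x,t)$, only the pairs $(y_{i},z)$ are not automatically $\ge t/k_{1}$; I handle them by replacing $B_{i}^{*}$ by $G_{i}(z):=B_{i}^{*}\setminus B(z,t/k_{1})$. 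Upper Ahlfors regularity gives $\mu(B_{i}^{*}\setminus G_{i}(z))\le C_{0}(2t/k_{1})^{\N}$, so the choice
\[
k_{1}:=\max\bigl\{\,k+k_{0},\ C_{1}/(10\N-2),\ 2(4C_{0}C_{2})^{1/\N}\,\bigr\}
\]
yields $\mu(G_{i}(z))\ge\tfrac{1}{2}\mu(B_{i}^{*})\ge t^{\N}/(2C_{2})$ uniformly in $z$ and simultaneously enforces every pairwise distance $\ge t/k_{1}$. The restricted product domain $G_{0}(z)\times\cdots\times G_{\N}(z)\times\{z\}$ lies in $\mathcal{O}_{k_{1}}(x,t)$, and by the symmetry of $\K$ the integral over the union with $z\in B(y,kt)$ is bounded above by $\mathcal{M}_{\K^{\p};k_{1}}(x,t)$, giving
\[
\beta_{\p;k}(y,t)^{\p}\le C\,\frac{\mathcal{M}_{\K^{\p};k_{1}}(x,t)}{t^{\N}}.
\]
The second inequality is immediate: $y\in B(x,k_{0}t)$ forces $B(x,k_{1}t)\subset B(y,(k_{1}+k_{0})t)$ and $t/(k_{1}+k_{0})\le t/k_{1}$, so $\mathcal{O}_{k_{1}}(x,t)\subset\mathcal{O}_{k_{1}+k_{0}}(y,t)$ and $\mathcal{M}_{\K^{\p};k_{1}}(x,t)\le\mathcal{M}_{\K^{\p};k_{1}+k_{0}}(y,t)$.

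The main obstacle is Step 3: without the pairwise-separation constraint the averaging would be one line, but $\mathcal{O}_{k_{1}}$ insists $d(y_{i},z)\ge t/k_{1}$ and nothing in the construction of $B_{i}^{*}$ prevents $z$ from coinciding with some $y_{j}$. The excision of $B(z,t/k_{1})$ works only because the lower measure estimate $\mu(B_{i}^{*})\gtrsim t^{\N}$ from Corollary \ref{04.09.12.1} dwarfs the upper estimate $\mu(B(z,t/k_{1}))\lesssim(t/k_{1})^{\N}$ from upper Ahlfors regularity; balancing these two fixes $k_{1}$ and is the only part of the argument that requires genuine care.
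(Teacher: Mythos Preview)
Your Steps 1 and 2 are fine. The gap is precisely in Step 3, and it is more serious than the ``safeguard'' can repair.

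The inequality at the end of Step 2 holds for each \emph{fixed} $\vec y$, with a \emph{fixed} plane $P_{\vec y}$ bounding $\beta_{\p;k}(y,t)$. When you average it over $\vec y\in\prod_i B_i^*$, the right-hand side becomes
\[
\int_{\prod_i B_i^*}\int_{B(y,kt)}\K^{\p}(\vec y,z)\,d\mu(z)\,d\vec y,
\]
whose domain contains tuples with $d(y_i,z)<t/k_1$. Definition~\ref{4.10.12.1} imposes only a \emph{lower} bound on $\K^{\p}$, so for a general proper integrand this integral need not be controlled by $\mathcal M_{\K^{\p};k_1}(x,t)$ (it may even diverge). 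Replacing $B_i^*$ by $G_i(z)$ \emph{shrinks} the right-hand side, which is the wrong direction once the larger domain has already been used to match the constant left-hand side. If instead you restrict before averaging, the $z$-dependence of $G_i(z)$ forces $P_{\vec y}$ to vary with $z$, severing the link to $\beta_{\p;k}(y,t)$, which requires one plane for all $z\in B(y,kt)$. A third attempt---fix one $\vec y^*$, use the curvature bound for $z\notin\bigcup_i B(y_i^*,t/k_1)$, and estimate the remaining $z$ via $d(z,P_{\vec y^*})\le d(z,y_i^*)<t/k_1$---leaves an additive term of order $k_1^{-(\N+\p)}$ that is not dominated by $\mathcal M_{\K^{\p};k_1}(x,t)/t^{\N}$ when the latter is small.

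The paper resolves this by taking one more vertex: it invokes Lemma~\ref{lem2.3} (not its corollary) to obtain $\N{+}2$ points $x_0,\dots,x_{\N+1}$, every face of whose simplex is an $(\N,10\N t/C_1)$-simplex, and then \emph{selects} specific $z_0,\dots,z_{\N+1}\in B_i$ by an inductive Chebyshev argument so that each one-variable integral $\int \Eins_{\mathcal O_{k_1}}\K^{\p}(z_0,\dots,\hat z_l,\dots,z_{\N+1},w)\,d\mu(w)$ and the height $d(z_{\N+1},P_{\N+1})$ are already controlled by $\mathcal M_{\K^{\p};k_1}(x,t)$. For $w\notin\bigcup_{j\le\N}2B_j$ the tuple $(z_0,\dots,z_{\N},w)$ is separated and the bound is immediate; for $w\in 2B_j$ one switches to the plane $P_j=\aff(\{z_0,\dots,z_{\N+1}\}\setminus\{z_j\})$, for which the relevant tuple is again separated, and then compares $d(w,P_{\N+1})$ with $d(w,P_j)$ via Lemma~\ref{20.2.2012.4} and the controlled quantity $d(z_{\N+1},P_{\N+1})$. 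That spare vertex, together with the plane-comparison lemma, is exactly the ingredient your $\N{+}1$-point scheme is missing.
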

\begin{proof}
	With Lemma \ref{lem2.3} for $\Upsilon=\R^{\n}$, 
	there exists some $T=\Delta(x_0,\dots,x_{\N+1}) \in F \cap B(x,t)$  
	so that $\face{i}(T)$ is an $(\N,10\N \frac{t}{C_1 })$-simplex and
	$\mu\left( B\left(x_i,\frac{t}{C_1 }\right) \cap B(x,t) \right) \ge  \frac{t^{\N}}{C_{2} }$
	for all $i \in \{0,\dots,\N+1 \}$
	where $C_1, C_{2}$ are the constants from Lemma \ref{lem2.3} depending on the present constant $\lambda > 0$,
	the constant $C_0$ determined in (B) on page \pageref{GrundeigenschaftenBetaBeta}, as well as $\n$ and $ \N$.
	We set $B_i:=B\left(x_i,\frac{t}{C_1}\right)$, 
	$k_{1} := \mathrm{max}(C_{1},(2+k + k_{0})) > 1$ \label{19.10.12.10} and go on with some intermediate results.

	I. \, Let $z_i \in B_i$ for all $i \in \{0,\dots,\N+1\}$, 
			$w \in B(x,(k+k_0)t) \setminus \bigcup_{\genfrac{}{}{0pt}{}{l=0}{l\neq j}}^{\N+1} 2B_l$
		or $w \in 2B_{j}$
		for some fixed $j \in \{0,\dots, \N+1\}$.
		Since $\face{i}(T)$ is an $(\N,10\N \frac{t}{C_1 })$-simplex we obtain
		$(z_{0},\dots,\hat z_{j}, \dots, z_{\N+1},w) \in \mathcal{O}_{k_{1}}(x,t)$,
		where $(z_{0},\dots,\hat z_{j}, \dots, z_{\N+1},w)$ denotes the $(\N+2)$-tuple 
		$(z_{0},\dots,z_{j-1},z_{j+1}, \dots, z_{\N+1},w)$.
	
	II. \, Let $z_{i} \in B_{i}=B(x_{i},\frac{t}{C_{1}})$ for all $i \in \{0,\dots,\N+1\}$.
		Then Lemma \ref{17.11.11.2} implies that
		$\face{i}(\Delta(z_0,\dots,z_{\N+1}))$ is an $\left(\N,(9\N-1) \frac{t}{C_1}\right)$-simplex for all 
			 $i \in \{0,\dots,\N+1 \}$.
	
	III. \, Let $z_{i} \in B_{i}=B(x_{i},\frac{t}{C_{1}})$ for all $i \in \{0,\dots,\N+1\}$,
		$w \in B(x,(k+k_{0})t)$.
		Since $\K^{\p}$ is a $\mu$-\proper{} integrand with II. there exists some constant 
		$\tilde C=\tilde C(\n,\N,\K,\p,C_{0},k,k_{0},\lambda)$ so that for all $j\in \{0,\dots,\N+1\}$, we have
		\begin{align*}
			\left( \frac{d(w,\aff(z_{0},\dots,\hat z_{j},\dots,z_{\N+1}))}{t} \right)^{\p}
			& \le \tilde C t^{\N(\N+1)}\K^{\p}(z_{0},\dots,\hat z_{j},\dots,z_{\N+1},w).
		\end{align*}
	
	IV. \, There exist some constant $C=C(\n,\N,\K,\p,C_0,k,k_0,\lambda)$ and $z_i \in F \cap B_i \cap B(x,t)$, 
		$i \in \{0,\dots, \N+1\}$, so that for all $l \in \{0,\dots,\N+1 \}$,
		we have
		\begin{equation} \label{24.08.12.1}
			\int \Eins_{\{ (z_0,\dots,\hat z_l, \dots,z_{\N+1},w) \in \mathcal O _{k_{1}}(x,t)\}} 
				\mathcal{K}^{\p}(z_0,\dots,\hat z_l, \dots,z_{\N+1},w) \dd  \mu(w) 
				\le C \ \frac{\mathcal{M}_{\K^{\p};k_{1}}(x,t)}{t^{(\N+1)\N}}
		\end{equation}
		and with $P_{\N+1}:=\aff(z_0,\dots,z_\N)$
		\begin{equation} \label{24.08.12.2}
			\left( \frac{d(z_{\N+1},P_{\N+1})}{t} \right)^{\p} 
			\le C \ \frac{\mathcal{M}_{\K^{\p};k_{1}}(x,t)}{t^{\N}}.
		\end{equation}
	\begin{proof}
	For $E \subset \R^{\n}$ with $\# E=m+1$,  $E=\{e_0,\dots,e_m \}$, $0 \le m \le \N$, we set
	\begin{multline*} 
		\mathcal{R}(E):= \int_{
		F^{\N-m+1}}\Eins_{\{(e_0,\dots,e_m,w_{m+1},\dots,w_{\N+1}) \in \mathcal{O}_{k_1}(x,t)\}} \\
		\mathcal{K}^{\p}(e_0,\dots,e_m,w_{m+1},\dots,w_{\N+1}) 
	\dd \mu(w_{m+1}) \dots \dd \mu(w_{\N+1}).
	\end{multline*}
	The integrand $\K$ is symmetric, hence
	the value $\mathcal{R}(E)$ is well-defined because it does not depend on the
	numbering of the elements of $E$. In the following part, we use the convention that
	$\{0,\dots,-1\}=\emptyset$ and $\{z_{0},\dots,z_{-1}\}=\emptyset$.
	At first, we show by an inductive construction that, for all $m \in \mathbb{N}$ with $0 \le m \le \N+1$,
	there holds:

	For all $j \in \{0,\dots, m\}$ and $ i \in \{j,\dots,\N+1\}$, there 
	exist constants $C^{(j)}>1$,
	sets $Z_{i}^{j} \subset F \cap B_{i} \cap B(x,t)$ 
	and, for all $l \in \{0,\dots,m-1\}$, there exist $z_{l} \in Z_{l}^{l}$
	with 
	\begin{align} \label{28.08.12.1}
		\mu(Z_{i}^{j}) &> \frac{t^{\N}}{2^{j+1}C_{2}},
	\end{align}
	and, for all $u \in \{0,\dots, m\}$, 
	for all $E \subset\{z_{0},\dots,z_{u-1}\}$ and
	$z \in Z_{r}^{u}$, where $r \in \{u,\dots,n+1\}$, we have
	\begin{align} \label{23.08.12.1}
		\mathcal{R}(E \cup \{z\}) \le C^{(u)} \frac{\mathcal{M}_{\K^{\p};k_{1}}(x,t)}{t^{(\#E+1)\N}}.
	\end{align}

	We start with $m=j=0$ and choose the constant $C^{(0)} := 2 C_{2} $, set $\Upsilon_{i}:=F \cap B_i \cap B(x,t)$
	and define for every $i \in \{0,\dots,\N+1 \}$
	\begin{eqnarray} \label{2.5;1}
	Z_{i}^{0} := \left\{ z \in \Upsilon_{i}  \Big| \mathcal{R}(\{z\}) \le C^{(0)} \frac{\mathcal{M}_{\K^{\p};k_{1}}(x,t)}{t^{\N}}  \right\}.
	\end{eqnarray}
	We have 
	$\mu(Z_{i}^{0}) \ge \mu(\Upsilon_{i}) - \mu(\Upsilon_{i}\setminus Z_{i}^{0})> \frac{t^{\N}}{2C_{2}}$
	because
	$\mu(\Upsilon_{i}) \stackrel{\text{(ii)}}{\ge} \frac{t^{\N}}{C_{2} }$,
	and with \eqref{2.5;1}, Chebyshev's inequality and 
	$\int \mathcal{R}(\{z\}) \dd  \mu (z)=\mathcal{M}_{\K^{\p};k_{1}}(x,t)$ we obtain
	$\mu(\Upsilon_{i}\setminus Z_{i}^{0}) < \frac{t^{\N}}{C^{(0)}}$.
	If $u=0$, $E \subset \{z_{0},\dots,z_{-1}\}=\emptyset$ and  
	$z \in Z_{r}^{0}$, where $r \in \{0,\dots,\N+1\}$, the definition \eqref{2.5;1} implies \eqref{23.08.12.1}
	in this case.
	
	Now let $m \in \{0,\dots,\N \}$ and we assume that 
	for all $j \in \{0,\dots, m\}$ and $ i \in \{j,\dots,\N+1\}$, there 
	exist constants $C^{(j)}>1$,
	sets $Z_{i}^{j} \subset F \cap B_{i} \cap B(x,t)$ 
	and for all $l \in \{0,\dots,m-1\}$ there exist $z_{l} \in Z_{l}^{l}$ with 
	\begin{align} \label{16.7.12.1}
		\mu(Z_{i}^{j}) &> \frac{t^{\N}}{2^{j+1}C_{2}},
	\end{align}
	and for all $u \in \{0,\dots, m\}$, 
	for all $E \subset\{z_{0},\dots,z_{u-1}\}$ and
	$z \in Z_{r}^{u}$ where $r \in \{u,\dots,n+1\}$, we have
	\begin{align} \label{6.5.2013.2}
		\mathcal{R}(E \cup \{z\}) \le C^{(u)} \frac{\mathcal{M}_{\K^{\p};k_{1}}(x,t)}{t^{(\#E+1)\N}}.
	\end{align}

	Next we start with the inductive step.
	From the induction hypothesis, we already have the constants $C^{(j)}$ and the sets $Z_{i}^{j}$
	for $j \in \{0,\dots, m\}$ and $ i \in \{j,\dots,\N+1\}$ as well as $z_{l} \in Z_{l}^{l}$ 
	for $l \in \{0,\dots,m-1\}$. Since $\mu(Z_{m}^{m})>0$, we can choose $z_{m} \in Z_{m}^{m}$.
	We define 
	$C^{(m+1)}:= 2^{2m+2} C^{(m)} C_{2}$
	and, for $ i \in \{m+1, \dots, \N+1 \}$, we define
	\begin{align} \label{2.5;3.1}
		Z_{i}^{m+1} 
		& := \bigcap_{\genfrac{}{}{0cm}{}{E \subset \{z_0,\dots,z_m\}}{z_m \in E}} \underbrace{
		\left\{ z \in Z_{i}^{m} \Big| \mathcal{R}(E \cup \{z \}) \le 
		C^{(m+1)} \frac{\mathcal{M}_{\K^{\p};k_{1}}(x,t)}{t^{(\#E+1)\N}} \right\}}_{=:D_{i,E}^{m}}.
	\end{align}
	We have $\mu(Z_i^{m+1}) \ge \mu(Z_{i}^{m}) - \mu \left(  Z_{i}^{m} \setminus Z_{i}^{m+1} \right) \ge \frac{t^{\N}}{2^{m+2}C_{2}}$ 
	for all $i \in \{m+1,\dots,\N+1 \}$ because if
	$E \subset \{z_{0}, \dots, z_{m}\}$ with $z_{m} \in E$, we get, using \eqref{2.5;3.1}, Chebyshev's inequality,
	$\int \mathcal{R}(E \cup \{ z\}) \dd  \mu (z) =\mathcal{R}((E \setminus \{z_{m}\}) \cup \{z_{m}\})$
	and \eqref{6.5.2013.2} that
	\begin{align*}
		\mu \left(  Z_{i}^{m} \setminus D_{i,E}^{m} \right) 
		& < \left( C^{(m+1)} \frac{\mathcal{M}_{\K^{\p};k_{1}}(x,t)}{t^{(\#E+1)\N}} \right)^{-1} 
			\mathcal{R}((E \setminus \{z_{m}\}) \cup \{z_{m}\})
		= \frac{C^{(m)}}{C^{(m+1)}} t^{\N} 
	\end{align*}
	which implies
	\begin{align*}
		\mu(Z_i^m \setminus Z_i^{m+1}) 
		& \le \sum_{\genfrac{}{}{0cm}{}{E \subset \{z_0,\dots,z_m\}}{z_m \in E}} \mu\left( Z_i^m \setminus D_{i,E}^{m} \right)
		< \frac{1}{2^{m+2}C_{2} }t^{\N}.
	\end{align*}
	Now let $u \in \{0,\dots,m+1\}$ and $E \subset \{z_0,\dots,z_{u-1}\}$ 
	and $z \in Z_{r}^{u}$ where $r \in \{u,\dots,\N+1\}$.
	We have to show that \eqref{23.08.12.1} is valid.
	Due to the induction hypothesis and $z \in Z_{r}^{m+1} \subset Z_{r}^{v}$ for all $v \in \{0,\dots,m+1\}$,
	we only have to consider the case $u=m+1$ and $z_{m} \in E$. Then the inequality follows from \eqref{2.5;3.1}. 
	\hfill
	End of induction.

	Now we construct $z_{\N+1}$.	
	We set $P_{\N+1}:=\aff(z_{0},\dots,z_{\N})$, $\hat C^{(\N+1)} := \tilde C \ C^{(\N)} 2^{\N+3} C_{2}$,
	where $\tilde C$ is the constant from III,
	and define
	\begin{eqnarray} \label{2.5;6}
		\hat Z_{\N+1}^{\N+1} := \left\{ z \in Z_{\N+1}^{\N+1} \Big| 
		\left( \frac{d(z,P_{\N+1})}{t} \right)^{\p} \le 
		\hat C^{(\N+1)} \frac{\mathcal{M}_{\K^{\p};k_{1}}(x,t)}{t^{\N}} \right\}.
	\end{eqnarray}
	Next we show $\mu \left( \hat Z_{\N+1}^{\N+1} \right) \ge \frac{t^{\N}}{2^{\N+3}C_{2} } > 0 $.
	Let $u \in Z_{\N+1}^{\N+1} \setminus \hat Z_{\N+1}^{\N+1} \subset B_{\N+1} \subset B(x,(k+k_{0})t)$. 
	With III applied on $w=u$ and $j=\N+1$, we get
	\begin{align}\label{10.2.11.11}
		\left(\frac{d(u,P_{\N+1})}{t}\right)^{\p} 
			\le \tilde C t^{\N(\N+1)} \K^{\p}(z_0,\dots,z_{\N},u).
	\end{align}
	Now we get with \eqref{2.5;6}, Chebyshev's inequality and \eqref{10.2.11.11} that
	\begin{align*}
		\mu\left( Z_{\N+1}^{\N+1} \setminus \hat Z_{\N+1}^{\N+1} \right) 
		& \le  \left( \hat C^{(\N+1)} \frac{\mathcal{M}_{\K^{\p};k_{1}}(x,t)}{t^{\N}} \right) ^{-1} 
			\tilde C t^{\N(\N+1)}
			\int_{Z_{\N+1}^{\N+1} \setminus \hat Z_{\N+1}^{\N+1}}  \mathcal{K}^{\p}(z_{0},\dots,z_\N,u) \dd  \mu (u).
	\end{align*}
	By using I. we see that the integral on the RHS is equal to $\mathcal{R}(\{z_{0},\dots,z_{\N-1}\} \cup \{z_{\N}\})$.
	Hence with \eqref{28.08.12.1} and \eqref{23.08.12.1} we obtain
	\[ \mu(\hat Z_{\N+1}^{\N+1}) \ge \mu(Z_{\N+1}^{\N+1}) - \mu(Z_{\N+1}^{\N+1} \setminus \hat Z_{\N+1}^{\N+1}) > 0,\]
	and we are able to choose $ z_{\N+1} \in \hat Z_{\N+1}^{\N+1} \subset Z_{\N+1}^{\N+1}$.
	Let $l\in \{0,\dots, \N+1\}$ and $E=\{z_{0},\dots,z_{\N+1}\} \setminus \{z_{l}\}$.
	Set $z:=z_{\N}$ if $l=\N+1$ or $z:=z_{\N+1}$ otherwise. Now set $E^{'}:=E \setminus \{z\}$ and use \eqref{23.08.12.1}
	to obtain 
	$\mathcal{R}(E)=\mathcal{R}(E^{'}\cup \{z\}) \le C^{(\N+1)} \frac{\mathcal{M}_{\K^{\p};k_{1}}(x,t)}{t^{(\N+1)\N}}$
	
	All in all, there exists some constant $C=C(\n,\N,\K,\p,C_0,k,k_0,\lambda)$ such that 
	\begin{align*}
		\int \Eins_{\{ (z_0,\dots,\hat z_l, \dots,z_{\N+1},w) \in \mathcal O _{k_{1}}(x,t)\}} 
			\mathcal{K}^{\p}(z_0,\dots,\hat z_l, \dots,z_{\N+1},w) \dd  \mu(w)
		& = \mathcal{R}(E)
		\le C \frac{\mathcal{M}_{\K^{\p};k_{1}}(x,t)}{t^{(\N+1)\N}}
	\end{align*}
	for all $l \in \{0,\dots,\N+1 \}$. This ends the proof of IV.
	\end{proof}

	With IV, there exist some $z_{i} \in F \cap B_{i} \cap B(x,t)$, $i \in \{0,\dots,\N+1\}$
	fulfilling \eqref{24.08.12.1} and \eqref{24.08.12.2}.
	Let $ w \in \left(F \cap B\left(x,(k + k_{0})t \right) \right) \setminus \bigcup_{j=0}^{\N} 2 B_{j} $.
	Hence we get with III ($P_{\N+1}=\aff(z_0,\dots,z_\N)$), I and \eqref{24.08.12.1}
	\begin{align}\label{2.5;13}
		\int_{B(x,(k+k_{0})t) \setminus \bigcup_{j=0}^{\N}2B_j} 
			\left( \frac{d(w,P_{\N+1})}{t} \right)^{\p} \dd  \mu(w) 
		& < C(\n,\N,\K,\p,C_0,k,k_0,\lambda) \mathcal{M}_{\K^{\p};k_{1}}(x,t). 
	\end{align}

	Now we prove this estimate on the ball $2B_{j}$, where $j \in \{0,\dots,\N\}$.
	We define the plain $P_{j}:=\aff(\{z_0,\dots,z_{\N+1} \} \setminus \{z_j\})$ and get analogously with III, I
	and \eqref{24.08.12.1}
	\begin{align}\label{2.5;8}
		\int_{2B_{j}} \left( \frac{d(w,P_{j})}{t} \right)^{\p} \dd  \mu(w) 
		& < C(\n,\N,\K,\p,C_0,k,k_0,\lambda) \mathcal{M}_{\K^{\p};k_{1}}(x,t). 
	\end{align}
	Now we have an estimate on the ball $2B_{j}$ but with plane $P_{j}$ instead of $P_{\N+1}$. 
	If $z_{\N+1} \in P_{\N+1}$, we have $P_{\N+1}=P_{j}$ for all $j \in \{0,\dots,\N+1\}$
	and hence we get estimate \eqref{2.5;8} for $P_{\N+1}$.
	From now on, we assume that $z_{\N+1} \notin P_{\N+1}$. Let $w \in 2B_{j}$,
	set $w':=\pi_{P_j}(w)$, $w^{''}:=\pi_{P_{\N+1}}(w')$
	and deduce by inserting the point $w'$ with triangle inequality
	\begin{align}\label{2.5;9}
		d(w,P_{\N+1})^{\p} & \le d(w,w^{''})^{\p} 
		\le 2^{\p-1} \left (d(w,P_{j})^{\p} +  d(w',P_{\N+1})^{\p} \right). 
	\end{align}
	If $d(w',P_{\N+1}) >0$, i.e., $w' \notin P_{\N+1}$, we gain
	with Lemma \ref{20.2.2012.4}  ($P_{1}=P_{j}$, $P_{2}=P_{\N+1}$, $a_{1}=w'$, $a_{2}=z_{\N+1}$) 
	where $P_{j,\N+1}:=P_j \cap P_{\N+1}$
	\begin{align}\label{10.2.11.3}
		d(w',P_{\N+1}) 
			& = d(z_{\N+1},P_{\N+1})\frac{d(w',P_{j,\N+1})}{d(z_{\N+1},P_{j,\N+1})}. 
	\end{align}
	With $l \in \{0,\dots,\N \}$, $l\neq j$ ($k_1$ is defined on page \pageref{19.10.12.10}), we get 
	\begin{align*}
		d(w',P_{j,\N+1}) & \le d\bigl(w,P_{j,\N+1}\bigr) \le d(w,x)+d(x,x_l)+d(x_l,z_l) \le k_{1}t.
	\end{align*}
	With II. we get that
	$\face{j}(\Delta(z_{0},\dots,z_{\N +1}))$ is an $(\N,(9\N-1) \frac{t}{C_1})$-simplex and we obtain
	\begin{align}\label{2.5;10}
		\left( \frac{d(w',P_{\N+1})}{t} \right)^{\p} 
		& \stackrel{\eqref{10.2.11.3}}{\le} \left( \frac{d(z_{\N+1},P_{\N+1})}{t} \frac{k_{1}tC_{1}}{(9\N-1) t} \right)^{\p}
		\stackrel{\eqref{24.08.12.2}}{\le} C \frac{\mathcal{M}_{\K^{\p};k_{1}}(x,t)}{t^{\N}} 
	\end{align}
	where $C=C(\n,\N,\K,\p,C_0,k,k_0,\lambda)$.
	If $d(w',P_{\N+1})=0$, this inequality is trivially true.

	Finally, applying \eqref{2.5;8}, \eqref{2.5;8}, \eqref{2.5;10} and
	$\mu(2B_{j}) \stackrel{\text{(B)}}{\le} C_0 (\diam(2B_{j}))^\N \le C_0 \left( \frac{4t}{C_1} \right)^\N $
	((B) from page \pageref{GrundeigenschaftenBetaBeta}), we obtain
	\begin{align*}
		\int_{2B_{j}} \left( \frac{d(w,P_{\N+1})}{t} \right)^{\p} \dd  \mu (w)
		& \le C\left(\n,\N,\K,\p,C_0,k,k_0,\lambda \right) \mathcal{M}_{\K^{\p};k_{1}}(x,t).
	\end{align*}
	Given that $B(y,kt) \subset B(x,(k+k_{0})t)$, it follows with \eqref{2.5;13} that
	\begin{align*}
	\beta_{\p;k}(y,t)^{\p} 
	& \le \frac{1}{t^{\N}} \int_{B(x,(k+k_{0})t)} \left( \frac{d(w,P_{\N+1})}{t} \right)^{\p} \dd  \mu (w)
	 \le C(\n,\N,\K,\p,C_0,k,k_0,\lambda) \frac{\mathcal{M}_{\K^{\p};k_{1}}(x,t)}{t^{\N}}.
	\end{align*}
	To obtain the main result of this theorem, the only thing left to show is
	$ \mathcal{O}_{k_{1}}(x,t) \subset \mathcal{O}_{k_{1}+k_{0}}(y,t)$
	Let $(z_{0},\dots,z_{\N+1}) \in  \mathcal{O}_{k_{1}}(x,t)$. It follows that
	$z_{0},\dots,z_{\N+1} \in B(x,k_{1}t) \subset B(y,(k_{0}+k_{1})t)$ and
	$d(z_{i},z_{j}) \ge \frac{t}{k_{1}} \ge \frac{t}{k_{1}+k_{0}}$ with  $i\neq j$ and $ i,j=0,\dots,\N$.
	Thus $(z_{0},\dots,z_{\N+1}) \in  \mathcal{O}_ {k_{1}+k_{0}}(y,t)$.
\end{proof}

\begin{thm}\label{13.11.2014.4}
	Let $ 0 < \lambda < 2^{\N}$, $k > 2$, $k_{0} \ge 1$ and $\K^{\p}$ be some $\mu$-\proper{} symmetric integrand 
	(see Definition \ref{muproper}). There exists a constant
	$C=C(\n,\N,\K,\p,C_0,k,k_0,\lambda) $ such that
	\[\int  \int_{0}^{\infty} \beta_{\p;k}(x,t)^{\p}\Eins_{\left\{ \tilde{\delta}_{k_{0}}(B(x,t)) \ge \lambda \right\}} \frac{\dd t}{t} 
		 \dd \mu(x) \le C\mathcal{M}_{\K^{\p}}(\mu),\]
	where $\tilde{\delta}_{k_{0}}(B(x,t)):= \sup_{y \in B(x,k_{0}t)}\delta( B(y,t))$.
\end{thm}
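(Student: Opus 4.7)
The plan is to apply Theorem \ref{lem2.5} pointwise in $(x,t)$ to replace $\beta_{\p;k}(x,t)^{\p}$ by a multiple of $\mathcal{M}_{\K^{\p};\kappa}(x,t)/t^{\N}$, and then invoke Fubini's theorem to swap the $(x,t)$-integration with the $(\N+2)$-tuple-integration hidden inside $\mathcal{M}_{\K^{\p};\kappa}$. The crux, and main obstacle, is a purely geometric bound in the Fubini step: for a fixed tuple $(x_0,\dots,x_{\N+1})$, the set of $(x,t)$ with $(x_0,\dots,x_{\N+1})\in\mathcal{O}_{\kappa}(x,t)$ must contribute only a bounded ``Jacobian,'' so the apparent logarithmic divergence of $\int \dd t/t$ has to be tamed by the fact that $t$ is squeezed between two scales determined by the tuple.

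\smallskip

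\noindent\emph{Step 1 (pointwise Menger bound).} Whenever $\tilde\delta_{k_0}(B(x,t))\ge\lambda$, there is some $y \in B(x,k_0 t)$ with $\delta(B(y,t))\ge\lambda$. Swapping the roles of $x$ and $y$ in Theorem \ref{lem2.5} (using that $x \in B(y,k_0 t)$ iff $y \in B(x,k_0 t)$) yields constants $k_1=k_1(\n,\N,C_0,k,k_0,\lambda)$ and $C_3=C_3(\n,\N,\K,\p,C_0,k,k_0,\lambda)$, depending only on the stated data, such that with $\kappa:=k_1+k_0$,
\[ \beta_{\p;k}(x,t)^{\p} \le C_3\,\frac{\mathcal{M}_{\K^{\p};\kappa}(x,t)}{t^{\N}}. \]
Substituting into the left-hand side gives
\[ \int \int_0^{\infty} \beta_{\p;k}(x,t)^{\p}\Eins_{\{\tilde\delta_{k_0}(B(x,t))\ge\lambda\}} \frac{\dd t}{t}\dd\mu(x) \le C_3 \int \int_0^{\infty} \frac{\mathcal{M}_{\K^{\p};\kappa}(x,t)}{t^{\N+1}}\, \dd t\,\dd\mu(x). \]

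\smallskip

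\noindent\emph{Step 2 (Fubini and Jacobian bound).} Unfolding $\mathcal{M}_{\K^{\p};\kappa}$ and swapping orders of integration, the right-hand side equals
\[ C_3 \idotsint \K^{\p}(x_0,\dots,x_{\N+1})\, J(x_0,\dots,x_{\N+1})\, \dd\mu(x_0)\cdots\dd\mu(x_{\N+1}), \]
where $J(x_0,\dots,x_{\N+1}) := \int \int_0^{\infty} t^{-(\N+1)} \Eins_{\mathcal{O}_{\kappa}(x,t)}(x_0,\dots,x_{\N+1})\, \dd t \, \dd\mu(x)$. Fix the tuple and set $d_{\min}:=\min_{i\neq j} d(x_i,x_j)$ and $D:=\diam\{x_0,\dots,x_{\N+1}\}$. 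By definition of $\mathcal{O}_{\kappa}$, the indicator can be non-zero only when (i) $t \le \kappa d_{\min}$ (the separation constraint) and (ii) $x \in \bigcap_i B(x_i,\kappa t) \subseteq B(x_0,\kappa t)$; for (ii) to be consistent with the triangle inequality we further need $D \le 2\kappa t$, i.e.\ $t \ge D/(2\kappa)$. Using (B) to estimate $\mu(B(x_0,\kappa t)) \le C_0(2\kappa t)^{\N}$,
\[ J(x_0,\dots,x_{\N+1}) \le \int_{D/(2\kappa)}^{\kappa d_{\min}} \frac{C_0(2\kappa t)^{\N}}{t^{\N+1}}\, \dd t = C_0 (2\kappa)^{\N} \ln\frac{2\kappa^{2}d_{\min}}{D} \le C_0 (2\kappa)^{\N}\ln(2\kappa^{2}), \]
using $d_{\min}\le D$. (If the integration range is empty, then $J=0$ trivially.) This last quantity is a constant $C_4=C_4(\n,\N,C_0,\kappa)$, and combining with Step 1 yields the assertion with $C:=C_3 C_4$.
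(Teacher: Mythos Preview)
Your proof is correct and follows the same route as the paper: apply Theorem~\ref{lem2.5} pointwise to bound $\beta_{\p;k}(x,t)^{\p}$ by $C\,t^{-\N}\mathcal{M}_{\K^{\p};\kappa}(x,t)$, then Fubini, then observe that for a fixed tuple the constraint $(x_0,\dots,x_{\N+1})\in\mathcal{O}_\kappa(x,t)$ pins $t$ between two comparable scales and forces $x\in B(x_0,\kappa t)$, so (B) and $\int \dd t/t$ over a bounded ratio give a constant Jacobian. The only cosmetic difference is that the paper, not wishing to argue that the supremum defining $\tilde\delta_{k_0}$ is attained, passes to $\delta(B(z,t))\ge\lambda/2$ for some $z\in B(x,k_0t)$ before invoking Theorem~\ref{lem2.5}; your version with $\lambda$ is in fact justified here (closed balls, finite measure, upper semicontinuity on a compact set), so no gap.
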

\begin{proof}
	At first, we prove some intermediate results.\\
	I. \, 	Let $x \in F$, $t>0$ and $ \tilde{\delta}_{k_{0}}(B(x,t)) \ge \lambda$. 
		There exists some $z \in B(x,k_{0}t)$ with $\delta(B(z,t)) \ge \frac{\lambda}{2}$. Now with 
		Theorem \ref{lem2.5} there exist some constants $k_{1}$ and $C$
		so that with $k_{2}:=k_{1}+k_{0}$, we obtain
		$\beta_{\p;k}(x,t)^{\p} \le C \frac{\mathcal{M}_{\K^{\p};k_{2}}(x,t)}{t^{\N}}$.\\
	II.\,	Let $(x,t) \in \mathcal{D}_\kappa(u_0,\dots,u_{\N+1}):=\{(y,s)\in F \times (0,\infty)| (u_0,\dots,u_{\N+1})
			\in \mathcal{O}_\kappa(y,s)\}$ where $u_0,\dots,u_{\N+1} \in F$.
		We have $(u_0,\dots,u_{\N+1}) \in \mathcal{O}_\kappa(x,t)$ and so 
		$\frac{d(u_0,u_1)}{2\kappa} \le t \le \kappa d(u_0,u_1)$ as well as $x \in B(u_0,\kappa t)$.\\
	III.\,	With Fubini's theorem \cite[1.4, Thm. 1]{Evans} and condition (B) from page 
		\pageref{GrundeigenschaftenBetaBeta} we get
		\begin{align*}
			\int_{F} \int_{0}^{\infty} \chi_{\mathcal{D}_{k_2}(u_0, \dots,u_{\N+1})}(x,t) \frac{1}{t^{\N}} \frac{\dd t}{t} \dd \mu(x)
			& \stackrel{\hidewidth\text{II}\hidewidth}{\le} \int_{\frac{d(u_0,u_1)}{2k_{2}}}^{k_{2}d(u_0,u_1)} \frac{1}{t^{\N}} 
				\int_{B(u_0,k_{2}t)} 1 \ \dd \mu(x) \frac{\dd t}{t}
			\stackrel{\hidewidth\text{(B)} \hidewidth}{=}  C.
		\end{align*}
	Now we deduce with Fubini's theorem \cite[1.4, \mbox{Thm. 1}]{Evans} 
	\begin{align*}
		& \ \ \ \int_{F}  \int_{0}^{\infty} \beta_{\p;k}(x,t)^{\p}\Eins_{\{ \tilde{\delta}_{k_{0}}(B(x,t)) \ge \lambda \}} 
			\frac{\dd t}{t}  \dd \mu(x)\\
		& \stackrel{\text{I}}{\le} C  \int_{F} \int_{0}^{\infty}  \idotsint_{\mathcal{O}_{k_2}(x,t)}
			 \frac{\K^{\p}(u_0,\dots,u_{\N+1})}{t^{\N}} \dd \mu(u_0) \dots \dd \mu(u_{\N+1}) \frac{\dd t}{t} \dd \mu(x)
		 \stackrel{\text{III}}{\le} C \mathcal{M}_{\K^{\p}}(\mu).
	\end{align*}
\end{proof}

\begin{kor} \label{thm2.4}
	Let $ 0 < \lambda < 2^{\N}$, $k > 2$, $k_{0} \ge 1$ and $\K^{\p}$ be some symmetric 
	$\mu$-\proper{} integrand (see Definition \ref{muproper}). 
	There exists a constant
	$C=C(\n,\N,\K,\p,C_0,k,k_0,\lambda) $ such that
	\[\int  \int_{0}^{\infty} \beta_{1;k}(x,t)^{\p}\Eins_{\left\{ \tilde{\delta}_{k_{0}}(B(x,t)) \ge \lambda \right\}} \frac{\dd t}{t} 
		 \dd \mu(x) \le C\mathcal{M}_{\K^{\p}}(\mu).\]
\end{kor}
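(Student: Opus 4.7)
The plan is to deduce this corollary from Theorem \ref{13.11.2014.4} via a pointwise comparison between $\beta_{1;k}$ and $\beta_{\p;k}$, exploiting only the upper Ahlfors regularity (B) that was assumed at the beginning of Section \ref{beta}.

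First I would fix an arbitrary plane $P \in \mathcal{P}(\n,\N)$ and a ball $B(x,t)$, and apply H\"older's inequality to the product $\frac{d(y,P)}{t} \cdot 1$ on $B(x,kt)$ with exponents $\p$ and $\p/(\p-1)$:
\begin{align*}
	\beta_{1;k}^{P}(x,t)
	&= \frac{1}{t^{\N}} \int_{B(x,kt)} \frac{d(y,P)}{t} \ \dd \mu(y) \\
	&\le \frac{1}{t^{\N}} \left( \int_{B(x,kt)} \left( \frac{d(y,P)}{t} \right)^{\p} \dd \mu(y) \right)^{\frac{1}{\p}}
		\mu(B(x,kt))^{1-\frac{1}{\p}}.
\end{align*}
Rewriting the right hand side, I would obtain
\[ \beta_{1;k}^{P}(x,t) \le \beta_{\p;k}^{P}(x,t) \left( \frac{\mu(B(x,kt))}{t^{\N}} \right)^{1-\frac{1}{\p}}. \]
Since $\mu$ is upper Ahlfors regular (condition (B)), $\mu(B(x,kt)) \le C_{0}(2kt)^{\N}$, so the last factor is bounded by a constant $C = C(\n,\N,C_{0},k,\p)$. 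Taking the infimum over $P$ yields $\beta_{1;k}(x,t) \le C\, \beta_{\p;k}(x,t)$, and raising to the $\p$-th power gives
\[ \beta_{1;k}(x,t)^{\p} \le C^{\p}\, \beta_{\p;k}(x,t)^{\p} \]
at every point $x$ and every scale $t>0$.

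Inserting this pointwise inequality into the integral on the left hand side of the claimed estimate and applying Theorem \ref{13.11.2014.4} (with the same parameters $\lambda$, $k$, $k_{0}$ and integrand $\K^{\p}$) then yields
\[ \int \int_{0}^{\infty} \beta_{1;k}(x,t)^{\p} \Eins_{\{ \tilde{\delta}_{k_{0}}(B(x,t)) \ge \lambda\}} \frac{\dd t}{t} \dd \mu(x)
	\le C^{\p} \int \int_{0}^{\infty} \beta_{\p;k}(x,t)^{\p} \Eins_{\{ \tilde{\delta}_{k_{0}}(B(x,t)) \ge \lambda\}} \frac{\dd t}{t} \dd \mu(x)
	\le C' \mathcal{M}_{\K^{\p}}(\mu), \]
which is the desired bound. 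There is essentially no obstacle here; the only thing to watch is the bookkeeping of exponents in H\"older so that the leftover $\mu$-mass factor really has exponent $1-1/\p$, which is precisely what lets upper Ahlfors regularity absorb it into a constant.
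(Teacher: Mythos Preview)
Your proof is correct and is exactly the argument the paper has in mind: its proof reads ``This is a direct consequence of the previous Theorem and H\"older's inequality,'' and you have simply spelled out the H\"older step (using upper Ahlfors regularity (B) to absorb the mass factor) followed by Theorem \ref{13.11.2014.4}.
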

\begin{proof}
	This is a direct consequence of the previous Theorem and H\"older's inequality.
\end{proof}

\subsection{\texorpdfstring{$\beta$-numbers, approximating planes and angles}{\ss -numbers, approximating planes and angles}}
The following lemma states, that if two balls are close to each other and if 
each part of the  support of $\mu$ contained in those balls is well approximated by
some plane, then these planes have a small angle.
\begin{lem} \label{lem2.6}
	Let $ x,y \in F$, $c\ge1$, $\xi  \ge 1$ and $ t_{x},t_{y}>0$ with $c^{-1}t_{y} \le t_{x} \le c t_{y}$. 
	Furthermore, let $ k \ge 4c$ and $ 0 < \lambda < 2^{\N}$ with
	$ \delta(B(x,t_{x})) \ge \lambda$, $\delta(B(y,t_{y})) \ge \lambda$ and $d(x,y) \le \frac{k}{2c}t_{x}$.
	Then there exist some constants $C_{3}=C_{3}(\n,\N,C_0,\lambda,\xi,c) >1$ 
	and $\varepsilon_{0}=\varepsilon_{0}(\n,\N,C_{0},\lambda, \xi ,c) > 0$ so 
	that for all $\varepsilon < \varepsilon_{0}$ and all planes $P_{1}, P_{2} \in \mathcal{P}(\n,\N)$ with 
	$\beta_{1;k}^{P_{1}}(x,t_{x}) \le \xi\varepsilon$ and $\beta_{1;k}^{P_{2}}(y,t_{y}) \le \xi\varepsilon$
	we get:
	For all $w \in P_{1}$, we have $d(w,P_{2}) \le C_{3}\varepsilon(t_{x} + d(w,x))$, for all 
		$w \in P_{2}$, we have $d(w,P_{1}) \le C_{3} \varepsilon(t_{x} + d(w,x))$ and we have
	$\varangle(P_{1},P_{2}) \le C_{3} \varepsilon$.
\end{lem}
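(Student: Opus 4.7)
\textbf{Proof plan for Lemma \ref{lem2.6}.} The strategy is to exhibit $N+1$ points in a small neighborhood of $x$ that lie simultaneously close to both $P_1$ and $P_2$ and that span a sufficiently non-degenerate $N$-simplex, then feed this data into Lemmas~\ref{21.11.11.2} and~\ref{23.03.2012.1}. First I would invoke Corollary~\ref{04.09.12.1} applied to $B(x,t_x)$ (with $\Upsilon=\R^{\n}$) to obtain an $(\N,10\N t_x/C_1)$-simplex $T_x=\Delta(x_0,\dots,x_\N)\subset F\cap B(x,t_x)$ such that $\mu(B_i\cap B(x,t_x))\ge t_x^{\N}/C_2$ for every $i$, where $B_i:=B(x_i,t_x/C_1)$. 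The key geometric observation is that the hypothesis $k\ge 4c$ together with $d(x,y)\le\frac{k}{2c}t_x$ and $t_y\ge c^{-1}t_x$ yields $B_i\subset B(x,2t_x)\subset B(y,kt_y)$, so the $\beta$-bound around $y$ is available on each $B_i$ as well.

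Next I would select, in each $B_i\cap B(x,t_x)$, a point $z_i$ that is close to both planes. Using $\int_{B(x,kt_x)}d(w,P_1)\,\dd\mu(w)\le\xi\varepsilon\,t_x^{\N+1}$ and $\int_{B(y,kt_y)}d(w,P_2)\,\dd\mu(w)\le\xi\varepsilon\,t_y^{\N+1}\le\xi\varepsilon c^{\N+1}t_x^{\N+1}$, Chebyshev's inequality applied to the two exceptional sets
\[\{w\in B_i\cap B(x,t_x):d(w,P_1)>M\}\quad\text{and}\quad\{w\in B_i\cap B(x,t_x):d(w,P_2)>M\}\]
shows that for $M=C(\n,\N,C_0,\lambda,\xi,c)\,\varepsilon\,t_x$ both sets together have measure less than $t_x^{\N}/C_2$, hence there exists $z_i\in F\cap B_i\cap B(x,t_x)$ with $d(z_i,P_1)\le M$ and $d(z_i,P_2)\le M$. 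Setting $\tilde z_i:=\pi_{P_1}(z_i)\in P_1$, we have $d(\tilde z_i,z_i)\le M$ and $d(\tilde z_i,P_2)\le 2M$. Iterating Lemma~\ref{17.11.11.2} over the $N+1$ vertex perturbations, which each have size $\le t_x/C_1+M$, shows that for $\varepsilon$ smaller than some $\varepsilon_0(\n,\N,C_0,\lambda,\xi,c)$ the simplex $\tilde T:=\Delta(\tilde z_0,\dots,\tilde z_{\N})\subset P_1\cap B(x,2t_x)$ is still an $(\N,t_x/C)$-simplex for a suitable $C$.

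Then Lemma~\ref{21.11.11.2} applied with $t=t_x$, $P_1,P_2$ as given, $S=\tilde T$, $\hat C=2$ and $\sigma=2M/t_x=C\varepsilon$ yields the angle bound $\varangle(P_1,P_2)\le C_3\varepsilon$, once $\varepsilon_0$ is chosen small enough to satisfy the smallness assumption on $\sigma$ in that lemma. Finally, to obtain the distance estimate take $p_1:=\tilde z_0\in P_1$ and $p_2:=\pi_{P_2}(\tilde z_0)\in P_2$; then $d(p_1,p_2)\le 2M\le C\varepsilon\,t_x$, so Lemma~\ref{23.03.2012.1} (with $t$ a constant multiple of $t_x$) gives, for any $w\in P_1$,
\[d(w,P_2)\le C_3\varepsilon\bigl(d(w,p_1)+t_x\bigr)\le C_3\varepsilon\bigl(d(w,x)+t_x\bigr),\]
after absorbing $d(p_1,x)\le t_x+M\lesssim t_x$ into the constant. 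The symmetric statement for $w\in P_2$ follows by interchanging the roles of $P_1$ and $P_2$ in the final application of Lemma~\ref{23.03.2012.1}, using the same points $\tilde z_0$ and $\pi_{P_2}(\tilde z_0)$ and the already established angle bound, which is symmetric in $P_1$ and $P_2$.

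The routine part is the Chebyshev selection; the main obstacle is the constant bookkeeping in Step~2, in particular verifying that $\tilde T$ remains non-degenerate after simultaneously projecting all vertices onto $P_1$, which is precisely what forces the threshold $\varepsilon_0$ and is where the constants $C_1,C_2$ from Corollary~\ref{04.09.12.1} feed into $C_3$ and $\varepsilon_0$.
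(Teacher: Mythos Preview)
Your proposal is correct and follows essentially the same approach as the paper: apply Corollary~\ref{04.09.12.1} to $B(x,t_x)$, use the containment $B_i\subset B(y,kt_y)$ to make both $\beta$-bounds available on each $B_i$, extract via Chebyshev a point $z_i$ close to both planes, project to $P_1$, check non-degeneracy via Lemma~\ref{17.11.11.2}, then conclude with Lemmas~\ref{21.11.11.2} and~\ref{23.03.2012.1}. The only cosmetic difference is that the paper averages $d(z,P_1)+d(z,P_2)$ over $B_i$ in one stroke rather than intersecting two exceptional sets, and for the symmetric distance estimate it uses the pair $(\pi_{P_2}(z_0),y_0)$ in Lemma~\ref{23.03.2012.1}; these are equivalent to what you wrote.
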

\begin{proof}
	Due to $ \delta(B(x,t_{x})) \ge \lambda$ and Corollary \ref{04.09.12.1}, there exist 
	some constants $C_{1}>3$ and $C_{2}$ depending on $\n,\N,C_0,\lambda$, and some
	simplex $T=\Delta(x_0,\dots,x_{\N}) \in F \cap B(x,t_{x})$  
	so that 
	$T$ is an $(\N,10\N \frac{t_x}{C_1})$-simplex and
	$\mu( B(x_i,\frac{t_x}{C_1}) \cap B(x,t_{x})) \ge \frac{t_x^{\N}}{C_{2}}$
	 for all  $i \in \{0,\dots,\N\}$.
	For $B_i:=B(x_i,\frac{t_x}{C_1})$ and $i \in \{0,\dots,\N \}$, we have 
	$ \mu(B_{i}) \ge  \mu(B_{i} \cap B(x,t_{x})) \ge \frac{t_{x}^{\N}}{C_{2}}  \ge \frac{t_{y}^{\N}}{c^{\N}C_{2}}$.
	Since $ B_{i} \cap B(x,t_{x})  \ne \emptyset$ and $k \ge 4c \ge 4$ we obtain
	$B_{i} \subset B(x,kt_{x})$ and $B_{i} \subset B(y,kt_{y})$.
	Now we see for $i \in \{0,\dots,\N\}$ 
	\begin{align*}
	\frac{1}{\mu(B_{i})} \int_{B_{i}} d(z,P_{1})+d(z,P_{2}) \mathrm d \mu(z) 
	&  = C_{2} t_{x} \beta_{1;k}^{P_{1}}(x,t_{x}) + c^{\N} C_{2} t_{y} \beta_{1;k}^{P_{2}}(y,t_{y})
	  \le  2c^{\N+1}C_{2} xi t_{x} \varepsilon .
	\end{align*}
	With Chebyshev's inequality, there exists $z_{i} \in B_{i}$ 
	so that
	\begin{align}\label{lem2.6;d} 
		d(z_{i},P_{j}) \le d(z_{i},P_{1}) + d(z_{i},P_{2}) \le 2c^{\N+1} C_{2} \xi t_{x} \varepsilon 
	\end{align}
	for $ i\in \{0,\dots,\N\}$ and $j = 1,2$. 
	We set $y_{i}:=\pi_{P_1}(z_i)$ and with 
	\[\varepsilon < \varepsilon_{0}:=\frac{1}{2c^{\N+1} C_{2} \xi}\min {\textstyle \left\{\frac{1}{C_{1}},
		\left(10(10^{\N}+1)\frac{C_1}{6} \left(2\frac{C_{1}}{3} \right)^{\N} \right)^{-1} \right\} }\]
	we deduce
	\begin{align*}
		d(y_i,x_i) & \le d(y_i,z_i) +d(z_i,x_i)
		 \le d(z_i,P_1) + { \textstyle \frac{t_x}{C_1} }
		 \le 2c^{\N+1} C_{2} \xi \  t_{x} \ \varepsilon  + { \textstyle \frac{t_x}{C_1} \le 2\frac{t_x}{C_1}},
	\end{align*}
	so, with Lemma \ref{17.11.11.2}, $S:=\Delta(y_0,\dots,y_\N)$ is an $(\N,6\N\frac{t_x}{C_1})$-simplex and
	$S \subset B(x,\frac{2t_{x}}{C_{1}}+t_{x})\subset B(x,2t_x)$.
	Furthermore, with \eqref{lem2.6;d} we have
	$ d(y_i,P_2) \le d(y_i,z_i) + d(z_i,P_2) \le 2c^{\N+1}C_{2} \xi t_{x} \varepsilon$. 
	Now, with Lemma \ref{21.11.11.2} ($C=\frac{C_1}{6\N}$, $\hat C = 2$, $t=t_{x}$,
		$\sigma = 2c^{\N+1} C_{2} \xi \varepsilon$, $m=\N$) we obtain
	\[\varangle(P_1,P_2) \le 4\N (10^\N + 1)2\frac{C_1}{6}\left(2\frac{C_1}{3}\right)^{\N} 
			2c^{\N+1} C_{2} \xi \varepsilon 
		= C(\n,\N,C_0,\lambda,\xi,c)\varepsilon.\]
	Moreover, we have
	$d(y_{0},\pi_{P_{2}}(z_{0})) \le d(z_{0},P_{1}) + d(z_{0},P_{2}) 
		\stackrel{\eqref{lem2.6;d}}{\le} 2c^{\N+1} C_{2} \xi t_{x} \varepsilon$,
	so finally, with Lemma \ref{23.03.2012.1} ($\sigma=C \varepsilon$, $t=t_{x}$, 
	$p_1=y_0$. $p_2=\pi_{P_2}(z_{0})$), we get for $w \in P_1$ that
	$ d(w,P_2) \le C(d(w,y_0)+t_{x}) \varepsilon \le C(d(w,x) + t_{x}) \varepsilon$
	and for $w \in P_2$ we obtain
	$d(w,P_1) \le C (d(w,\pi_{P_2}(z_0))+t_{x}) \le C(d(w,x) + t_{x}) \varepsilon$, 
	where $C=C(\n,\N,C_0,\lambda,\xi,c)$.
\end{proof}

The next lemma describes the distance from a plane to a ball if the plain approximates the support of $\mu$ contained
in the ball.
\begin{lem} \label{nachlem2.6}
	Let $\sigma>0$, $x \in \R^{\n}$, $t >0$ and $\lambda >0$ with $ \delta (B(x,t)) \ge \lambda$. 
	If $P \in \mathcal{P}(\n,\N)$ with
	$\beta_{1;k}^{P}(x,t) \le \sigma$, there exists some $ y \in B(x,t) \cap F$ so that 
	$d(y,P) \le \frac{t}{\lambda} \sigma$.
	If additionally $\sigma \le \lambda$, we have $B(x,2t) \cap P \neq \emptyset$.
\end{lem}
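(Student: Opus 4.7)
The plan is to simply unwind the definition of $\beta_{1;k}^{P}$ and apply a Chebyshev/averaging argument. By definition,
\[\int_{B(x,kt)} d(y,P) \, \dd\mu(y) \;=\; t^{\N+1} \beta_{1;k}^{P}(x,t) \;\le\; \sigma t^{\N+1}.\]
Since $k>1$, we have $B(x,t)\subset B(x,kt)$, so the same bound holds when the integration is restricted to $B(x,t)$.

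Next, I would show that the $\mu$-measurable set
\[A := \left\{ y \in B(x,t) \cap F \;:\; d(y,P) \le \tfrac{t\sigma}{\lambda} \right\}\]
cannot be empty. Suppose to the contrary that $A=\emptyset$. Then $d(y,P) > t\sigma/\lambda$ for every $y \in B(x,t)\cap F = B(x,t)\cap \supp\mu$. Writing $B(x,t)\cap F = \bigcup_{\varepsilon>0}\{y\in B(x,t)\cap F : d(y,P)\ge t\sigma/\lambda + \varepsilon\}$ and using that $\mu(B(x,t))\ge \lambda t^{\N}>0$, one of these sublevel sets has positive $\mu$-measure, whence
\[\int_{B(x,t)} d(y,P)\,\dd\mu(y) \;>\; \frac{t\sigma}{\lambda}\,\mu(B(x,t)) \;\ge\; \frac{t\sigma}{\lambda}\cdot \lambda t^{\N} \;=\; \sigma t^{\N+1},\]
contradicting the previous estimate. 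Hence $A\neq \emptyset$, and any $y\in A$ is the desired point of $B(x,t)\cap F$ with $d(y,P)\le \frac{t}{\lambda}\sigma$.

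For the second assertion, assume in addition $\sigma \le \lambda$. The first part then yields $y \in B(x,t)\cap F$ with $d(y,P)\le t$. Setting $z:=\pi_{P}(y)\in P$, we have $d(y,z)=d(y,P)\le t$, so by the triangle inequality $d(x,z)\le d(x,y)+d(y,z)\le 2t$; thus $z\in B(x,2t)\cap P$, proving non-emptiness. I expect no real difficulty here: the lemma is a direct averaging argument, and the only slightly delicate point is the passage from $d(y,P)>t\sigma/\lambda$ pointwise on a set of positive $\mu$-measure to a strict inequality on the integral, handled by decomposing the set into the sublevel sets above.
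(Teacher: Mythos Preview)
Your proof is correct and follows essentially the same averaging/Chebyshev approach as the paper: compute the $\mu$-average of $d(\cdot,P)$ over $B(x,t)$, bound it by $\tfrac{t}{\lambda}\sigma$, and conclude that some point of $B(x,t)\cap F$ realises this bound. The paper is slightly terser (it just invokes ``Chebyshev's inequality'' for the existence of $y$), whereas you spell out the strict-inequality issue via sublevel sets; since $F=\supp\mu$ is compact here and $d(\cdot,P)$ is continuous, one could also get the uniform lower bound directly by compactness, but your argument is fine and does not need that.
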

\begin{proof}
	With the requirements, we get $ \mu(B(x,t)) \ge t^{\N} \lambda$, and so
	\begin{align*}
		\frac{1}{\mu(B(x,t))} \int_{B(x,t)} d(z,P) \dd \mu(z)
		& \le \frac{t}{\lambda} \frac{1}{t^{\N}}\int_{B(x,kt)} \frac{d(z,P)}{t} \dd \mu(z)
		 = \frac{t}{\lambda} \beta_{1;k}^{P}(x,t)
		 \le \frac{t}{\lambda} \sigma.
	\end{align*}
	With Chebyshev's inequality, we get some $y \in B(x,t) \cap F$ with $ d(y,P) \le  \frac{t}{\lambda} \sigma$.
	If $ \sigma \le \lambda$, it follows that $B(x,2t) \cap P \neq \emptyset$.
\end{proof}

\setcounter{equation}{0}
\section{Proof of the main result}\label{27.10.2014.2}
At the end of this section (page \pageref{BeweisdesHptsatzes}), we will give a proof of our 
main result Theorem \ref{maintheorem} under the assumption that
the forthcoming Theorem \ref{16.10.2013.1} is correct. We start with a few lemmas helpful for this proof.

\subsection{Reduction to a symmetric integrand}
\begin{lem} \label{6.5.2013.1}
	Let $\K^{\p}$ be some \proper{} integrand (see Definition \ref{4.10.12.1}). There exists some \proper{} integrand
	$\tilde{\K}^{\p}$, which is symmetric in all components and fulfils 
	$\M_{\K^{p}}(E)=\M_{\tilde{\K}^{\p}}(E)$ for all Borel sets $E$.
\end{lem}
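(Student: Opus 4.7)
The plan is to symmetrize $\K^p$ by averaging over all permutations of its $N+2$ arguments. Concretely, I would set
\[ \tilde{\K}^p(x_0,\dots,x_{\N+1}) := \frac{1}{(\N+2)!}\sum_{\sigma \in S_{\N+2}} \K^p(x_{\sigma(0)},\dots,x_{\sigma(\N+1)}), \]
and define $\tilde{\K} := (\tilde{\K}^p)^{1/p}$. This is manifestly symmetric in its arguments, so the only work is to check that $\tilde{\K}^p$ still satisfies the four conditions of Definition \ref{4.10.12.1} and that the associated Menger curvature energy coincides with the original one.

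Measurability, the scaling condition and the translation invariance each follow immediately from the corresponding property of $\K^p$: a finite sum of measurable functions is measurable, and each summand $(x_0,\dots,x_{\N+1}) \mapsto \K^p(x_{\sigma(0)},\dots,x_{\sigma(\N+1)})$ is separately scale invariant with weight $t^{\N(\N+1)}$ and translation invariant. Hence these properties transfer to the average.

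The second (and only slightly nontrivial) condition is handled by comparing $\tilde{\K}^p$ with $\K^p$ from below. Since the identity permutation contributes a single nonnegative term, we have
\[ \K^p(x_0,\dots,x_{\N},w) \le (\N+2)!\,\tilde{\K}^p(x_0,\dots,x_{\N},w) \]
pointwise. Plugging this into the inequality guaranteed for $\K^p$ yields
\[ \left(\frac{d(w,\aff(x_0,\dots,x_{\N}))}{t}\right)^p \le c(\N+2)!\,C^l\, t^{\N(\N+1)} \tilde{\K}^p(x_0,\dots,x_{\N},w), \]
so the condition holds for $\tilde{\K}^p$ with the new constant $\tilde c := c(\N+2)!$ and the same exponent $l$.

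Finally, for the identity of energies, I would apply Fubini's theorem to the $(\N+2)$-fold product measure $\mu^{\N+2}$ with $\mu = \cH^{\N}\!\!\restriction\! E$. For each $\sigma \in S_{\N+2}$, the change of variables swapping the $i$-th and $\sigma(i)$-th factor of the product integral leaves both the measure and the integration domain invariant, so
\[ \int\!\!\cdots\!\!\int \K^p(x_{\sigma(0)},\dots,x_{\sigma(\N+1)})\,\dd\mu(x_0)\cdots\dd\mu(x_{\N+1}) = \M_{\K^p}(E). \]
Averaging these $(\N+2)!$ equal integrals gives $\M_{\tilde{\K}^p}(E) = \M_{\K^p}(E)$. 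I do not anticipate any real obstacle here; the only point requiring any thought is the absorption of the combinatorial factor $(\N+2)!$ into the constant of the second condition.
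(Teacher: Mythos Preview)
Your proposal is correct and follows essentially the same route as the paper: symmetrize by averaging over $S_{\N+2}$, use the pointwise bound $\K^{\p} \le (\N+2)!\,\tilde{\K}^{\p}$ to transfer the second condition, and appeal to Fubini for the equality of energies. The paper's proof is more terse but contains exactly these ingredients.
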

\begin{proof}
	We set
	$\tilde{\K}^{\p}(x_{0},\dots,x_{\N+1}) 
		:= \frac{1}{\# S_{\N+2}} \sum_{\phi \in S_{\N+2}} \K^p(\phi(x_{0},\dots,x_{\N+1}))$,
	where $S_{\N+2}$ is the symmetric group of all permutations of $\N+2$ symbols.
	Due to
	$\K^{\p} \le \# S_{\N+2} \ \tilde{\K}^{\p}$, the integrand
	$\tilde{\K}^{\p}$ fulfils the conditions of a \proper{} integrand.
	Now Fubini's theorem \cite[1.4, Thm. 1]{Evans} implies
	$\M_{\tilde{\K}^{\p}}(E) = \M_{\K^{\p}}(E)$.
\end{proof}

\subsection{Reduction to finite, compact and more regular sets with small curvature}

\begin{lem} \label{19.04.2013.1}
	Let $E$ be a Borel set with $\M_{\K^{\p}}(E)< \infty$, where $\K^{\p}$ is some \proper{} integrand. 
	Then we have $\cH^{\N}(E \cap B) < \infty$ for every ball $B$.
\end{lem}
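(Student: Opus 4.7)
The plan is to argue by contradiction. Suppose there exists a ball $B_{0}$ with $\cH^{\N}(E\cap B_{0})=\infty$. First I would set $F:=E\cap B_{0}$, which is $\cH^{\N}$-measurable since $E$ is Borel, and invoke Lemma~\ref{16.04.2013.1} (with $\NN=\N$) to obtain a small constant $\sigma>0$ satisfying $\sigma\le\tfrac{1}{2}\diam B_{0}$, together with an $(\N+1,(\N+3)\sigma)$-simplex $T=\Delta(x_{0},\dots,x_{\N+1})$ with all vertices in $B_{0}$, such that $\cH^{\N}(B(x_{0},\sigma)\cap F)=\infty$ and $\cH^{\N}(B(x_{i},\sigma)\cap F)>0$ for $i=1,\dots,\N+1$.

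Next I would observe that $T$ is stable under perturbation of its vertices within the small balls $B(x_{i},\sigma)$. Applying Lemma~\ref{17.11.11.2} iteratively (once per vertex, each time with $h=\sigma$), for any tuple $(y_{0},\dots,y_{\N+1})\in\prod_{i=0}^{\N+1}B(x_{i},\sigma)$ the simplex $\Delta(y_{0},\dots,y_{\N+1})$ is still an $(\N+1,\sigma)$-simplex, since the minimal height decreases by at most $\sigma$ per step and $(\N+3)\sigma-(\N+2)\sigma=\sigma$. By Remark~\ref{25.09.2013.10} applied with $A_{i}=\{y_{0},\dots,y_{\N}\}\setminus\{y_{i}\}$, the face $\Delta(y_{0},\dots,y_{\N})$ is then an $(\N,\sigma)$-simplex, and $d(y_{\N+1},\aff(y_{0},\dots,y_{\N}))\ge\sigma$.

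The heart of the argument is to convert this into a uniform positive lower bound for $\K^{\p}$ on the product set. Setting $t:=\diam B_{0}$ and $C:=t/\sigma$, one has $C\ge 2$; taking $x$ to be the centre of $B_{0}$, the estimate $d(y_{i},x)\le \sigma+t/2\le t\le Ct$ shows that $\Delta(y_{0},\dots,y_{\N})$ is an $(\N,t/C)$-simplex contained in $B(x,Ct)$ and that $y_{\N+1}\in B(x,Ct)$. The second condition of Definition~\ref{4.10.12.1} then furnishes constants $c,l$ depending only on $\N,\K,\p$ with
\[
\K^{\p}(y_{0},\dots,y_{\N+1})\ge\frac{1}{cC^{l}t^{\N(\N+1)}}\left(\frac{d(y_{\N+1},\aff(y_{0},\dots,y_{\N}))}{t}\right)^{\p}\ge\frac{\sigma^{\p}}{cC^{l}t^{\N(\N+1)+\p}}=:\alpha>0,
\]
a bound independent of the chosen $y_{i}$.

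To conclude, I would integrate this pointwise bound over the product $\prod_{i=0}^{\N+1}(B(x_{i},\sigma)\cap F)$. Since the iterated integral of the nonnegative indicator $\alpha\,\chi_{\prod A_{i}}$ factors as $\alpha\prod_{i}\cH^{\N}(A_{i})$ (each inner integral is a constant in the remaining variables, so no $\sigma$-finiteness is needed), one obtains
\[
\M_{\K^{\p}}(E)\ge\alpha\,\cH^{\N}(B(x_{0},\sigma)\cap F)\,\prod_{i=1}^{\N+1}\cH^{\N}(B(x_{i},\sigma)\cap F)=\infty,
\]
contradicting $\M_{\K^{\p}}(E)<\infty$. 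I expect the only mildly delicate point to be matching the normalisation constants in Definition~\ref{4.10.12.1}; with $t=\diam B_{0}$ and $C=t/\sigma$, the $(\N,t/C)$-simplex requirement and the containment in $B(x,Ct)$ are simultaneously enforced by the perturbation argument above.
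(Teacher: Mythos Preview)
Your proof is correct and follows essentially the same route as the paper's: contrapose, apply Lemma~\ref{16.04.2013.1}, stabilise the simplex via Lemma~\ref{17.11.11.2}, extract a uniform lower bound from the second condition of Definition~\ref{4.10.12.1}, and integrate over the product of the small balls. The only cosmetic difference is the normalisation: the paper picks $t=\sigma\sqrt{\tfrac{\diam B}{2\sigma}+1}$ and $\bar C=\sqrt{\tfrac{\diam B}{2\sigma}+1}$ so that $t/\bar C=\sigma$ and $t\bar C=\tfrac{\diam B}{2}+\sigma$, whereas you take $t=\diam B_{0}$ and $C=t/\sigma$; both choices satisfy the hypotheses of Definition~\ref{4.10.12.1}.
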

\begin{proof}
	Let $B$ be some ball and set $F:=E \cap B$. We prove the contraposition so we assume that $\cH^{\N}(F)= \infty$.
	With Lemma \ref{16.04.2013.1}, there exists some constant $C>0$ and some $(\N+1,(\N+3)C)$-simplex 
	$T=\Delta(x_{0},\dots,x_{\N+1}) \in B$ with $\cH^{\N}(B(x_{0},C)\cap F)=\infty$ and
	$\cH^{\N}(B(x_{i},C)\cap F)> 0$ for all $i \in \{1,\dots,\N+1 \}$. With Lemma \ref{17.11.11.2},
	we conclude that $S=\Delta(y_{0},\dots,y_{\N+1})$ is an $(\N+1,C)$-simplex for all
	$y_{i} \in B(x_{i},C)$, $i\in \{0,\dots,\N+1\}$.
	For $t=C\sqrt{\frac{\diam B}{2C}+1}$ and $\bar C =\sqrt{\frac{\diam B}{2C}+1}$, 
	we get $S \in B(x,t \bar C)$, where $x$ is the centre of the ball $B$, 	
	and $S$ is an $(\N+1,\frac{t}{\bar C})$-simplex.
	Hence we are in the right setting for using the second condition of a \proper{} integrand.
	We obtain 
	\[\M_{\K^{\p}}(E)  \ge \int_{B(x_{\N+1},C)\cap F} \dots \int_{B(x_{0},C)\cap F} \K^{\p}(y_{0},\dots,y_{\N+1}) 
		\dd \cH^{\N}(y_{0}) \dots \dd \cH^{\N}(y_{\N+1}) = \infty.\]
\end{proof}

\begin{lem} \label{satz1.1}
	In this lemma, the integrand $\mathcal{K}$ of $\M_{\K^{\p}}$ only needs to be an $(\cH^{\N})^{\N+2}$-integrable 
	function. Let $p>0$, $\N < \n$  and
	$E \subset \mathbb{R}^{\n}$ be a Borel set with $0 < \cH^{\N}(E) < \infty$ and 
	$\mathcal{M}_{\K^{\p}}(E) < \infty$. For all $ \zeta > 0$, there exists some compact $E^{*} \subset E$ with
	\begin{enumerate}
	\renewcommand{\labelenumi}{\textup{(\roman{enumi})}} 
	\item $ \cH^{\N}(E^{*}) > \frac{(\diam E^{*})^\N \vol}{2^{2\N+1}}$,
	\item $ \forall x \in E^{*}, \forall t > 0, \ \cH^{\N}(E^{*} \cap B(x,t)) \le 2\vol t^{\N}$,
	\item $\mathcal{M}_{\K^{\p}}(E^{*}) \le \zeta \ (\diam E^{*})^\N $,
	\end{enumerate}
	where $\vol=\cH^{\N}(B(0,1))$ is the $\N$-dimensional volume of the $\N$-dimensional unit ball.
\end{lem}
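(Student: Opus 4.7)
The plan is to construct $E^*$ in the form $F\cap B(x_0,r_0^*)$, where $F\subset E$ is a compact subset carrying a uniform upper-density bound, and the single pair $(x_0,r_0^*)$ realises both a lower density bound on $F\cap B(x_0,r_0^*)$ and the curvature smallness $\M_{\K^{\p}}(E\cap B(x_0,r_0^*))\le\zeta'(r_0^*)^{\N}$. Conditions (i)--(iii) then follow from standard bookkeeping: (i) from the lower density bound together with $\diam E^*\le 2r_0^*$, (ii) from the upper density inherited from $F$, and (iii) from the curvature bound combined with (ii), which at $t=\diam E^*$ forces $(\diam E^*)^{\N}\asymp\cH^{\N}(E^*)\asymp(r_0^*)^{\N}$ and hence converts $\zeta'(r_0^*)^{\N}$ into $\zeta(\diam E^*)^{\N}$ for a suitable choice $\zeta'=\zeta/2^{\N+1}$.

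First I would produce $F$: by the standard density theorem (\cite[Thm.~6.2]{MR1333890}), $\limsup_{r\to 0}\cH^{\N}(E\cap B(x,r))/(\vol r^{\N})\le 1$ for $\cH^{\N}$-a.e.\ $x\in E$, and an Egorov/inner-regularity argument gives a compact $F\subset E$ with $\cH^{\N}(F)>0$, $\diam F\le r_0$, and $\cH^{\N}(E\cap B(x,r))\le 2\vol r^{\N}$ for every $x\in F$ and $r\in(0,r_0]$.

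The crucial step is the selection of $(x_0,r_0^*)$. Mattila's theorem also gives $\limsup_{r\to 0}\cH^{\N}(F\cap B(x,r))/(\vol r^{\N})\ge 2^{-\N}$ for $\cH^{\N}$-a.e.\ $x\in F$, so fixing $\varepsilon_0\in(0,2^{-\N-1})$ and $\delta\in(0,r_0]$, the family
\[
\mathcal V_\delta:=\bigl\{B(x,r)\,\bigm|\,x\in F,\;r\in(0,\delta],\;\cH^{\N}(F\cap B(x,r))\ge(2^{-\N}-\varepsilon_0)\vol\, r^{\N}\bigr\}
\]
is a Vitali cover of $F$ modulo an $\cH^{\N}$-null set. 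Vitali's covering theorem produces a countable disjoint subfamily $\{B_k=B(x_k,r_k)\}$ covering $F$ up to a null set, and Step~1 gives $\sum_k r_k^{\N}\ge\cH^{\N}(F)/(2\vol)$. On the curvature side, disjointness of the $B_k$ and Fubini yield
\[
\sum_k\M_{\K^{\p}}(E\cap B_k)\le\int_E\!\!\dotsm\!\!\int_E \K^{\p}(y_0,\dots,y_{\N+1})\chi_{\{\max_{i\ne j} d(y_i,y_j)\le 2\delta\}}\,\dd\cH^{\N}(y_0)\dotsm \dd\cH^{\N}(y_{\N+1})=:\eta(\delta),
\]
and $\eta(\delta)\to 0$ as $\delta\to 0$ by dominated convergence, using $\M_{\K^{\p}}(E)<\infty$. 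Choosing $\delta$ so small that $\eta(\delta)<\zeta'\cH^{\N}(F)/(2\vol)$, a pigeonhole comparison of the two sums produces some $k^*$ with $\M_{\K^{\p}}(E\cap B_{k^*})\le\zeta' r_{k^*}^{\N}$; by construction the density lower bound $\cH^{\N}(F\cap B_{k^*})\ge(2^{-\N}-\varepsilon_0)\vol\, r_{k^*}^{\N}$ holds at the same ball. Setting $E^*:=F\cap B_{k^*}$ (compact) and verifying (i)--(iii) as above completes the construction.

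The main obstacle is exactly this simultaneity: the density lower bound at $x$ holds only along a point-dependent sequence of radii (coming from the $\limsup$ in the density theorem), while the smallness of $\M_{\K^{\p}}(E\cap B(x,r))/r^{\N}$ is a large-measure but scale-specific condition. The Vitali covering realises the density bound with a flexible collection of scales $r_k$, and the pigeonhole against $\eta(\delta)\to 0$ (while $\sum_k r_k^{\N}$ stays bounded below by a positive constant depending on $\cH^{\N}(F)$) forces simultaneity at one specific ball $B_{k^*}$.
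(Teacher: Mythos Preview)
Your proposal is correct and follows essentially the same route as the paper's proof: restrict to a subset with uniform upper density (the paper's $E_m$, your compact $F$), apply Vitali's covering theorem at small scales using the lower-density $\limsup$ to obtain a disjoint family with $\sum_k r_k^{\N}\gtrsim\cH^{\N}(F)$, bound $\sum_k\M_{\K^{\p}}(E\cap B_k)$ by a quantity that tends to zero with the scale, and pigeonhole to find a single ball where both bounds hold. The only cosmetic differences are that the paper defines the measurable set $E_m$ directly and invokes inner regularity at the end, whereas you pass to a compact $F$ at the outset; and the paper's ``small-diagonal'' set $A(\tau)=\{d(x_0,x_i)<\tau\ \forall i\}$ is replaced by your equivalent $\{\max_{i\ne j}d(y_i,y_j)\le 2\delta\}$.
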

\begin{proof}
	Due to $ 0 < \cH^{\N}(E) < \infty$ and \cite[2.3, Thm. 2]{Evans}, for $\cH^{\N}$-almost all $x \in E $ we have
	\begin{equation} \label{evans}
		\frac{1}{2^{\N}} \le \limsup_{t \rightarrow 0^{+}} \frac{\cH^{\N}(E \cap B(x,t))}{\vol t^{\N}} \le 1.
	\end{equation}
	For $ l \in \mathbb{N}$, we define the $\cH^{\N}$-measurable set
	\begin{align} \label{21.2.11.1}
		E_{m} &:= \left\{ x \in E \ \Big| \ \forall t \in \left(0,\frac{1}{m} \right),  
		\cH^{\N}(E \cap B(x,t)) \le 2\vol t^{\N} \right\}.
	\end{align}
	Due to $E_{l} \subset E_{l+1}$, \cite[1.1.1, Thm. 1, (iii)]{Evans} and \eqref{evans} we get that
	\[\lim_{l \rightarrow \infty}  \cH^{\N}(E_{l})
		=\cH^{\N}\left({\textstyle \bigcup_{l=1}^{\infty}} E_{l}\right)=\cH^{\N}(E)\]
	Hence there exists some $m \in \mathbb{N}$ with $ \cH^{\N}(E_{m}) \ge \frac{1}{2} \cH^{\N}(E)$ and
	$\mathcal{M}_{\K^{\p}}(E_{m}) \le \mathcal{M}_{\K^{\p}}(E) < \infty $.
	Define for $ \tau > 0$
	\begin{align} \label{21.2.11.2} 
		\mathcal{I}(\tau) := \int_{A(\tau)} \K^{\p}(x_0, \dots, x_{\N+1}) 
		\dd \cH^{\N}(x_0) \dots \dd \cH^{\N}(x_{\N+1}),
	\end{align}
	where $A(\tau) := \left\{ (x_0, \dots, x_{\N+1}) \in E_{m}^{\N+2} \Big| 
			d(x_0,x_i) < \tau \text{ for all } i \in \{ 1,\dots, \N+1 \} \right\}$.
	Using \eqref{21.2.11.1} we obtain $\left(\cH^{\N}\right)^{\N+2}(A(\tau)) \rightarrow  0$ for $\tau \rightarrow 0$.
	With $ \mathcal{M}_{\K^{\p}}(E_{m}) < \infty $, we conclude 
	$\lim_{\tau \rightarrow 0} \mathcal{I}(\tau) = 0$,
	and so we are able to pick some $ 0 < \tau_{0} \le \frac{1}{2m}$ with
	\begin{align} \label{21.2.11.3}
		\mathcal{I}(2\tau_{0}) \le \frac{\zeta \cH^{\N}(E_{m})}{2\vol \cdot 2^{\N+3}}.
	\end{align}
	We set
	\[ \mathcal{V} := \left\{ B(x,\tau) \Big| x \in E_{m}, 0 < \tau <\tau_{0}, \cH^{\N}(E_{m} \cap B(x,\tau))
		 \ge \frac{\tau^\N \vol}{2^{\N+1}} \right\}. \]
	Since $0 < \cH^{\N}(E_{m}) < \infty$, we get (\ref{evans}) with $E_m$ instead of $E$, \cite[2.3, Thm. 2]{Evans}.
	This implies 
	$\inf \left\{ \tau \big| B(x,\tau) \in \mathcal{V} \right\} = 0$ for $\cH^{\N}$-almost every $x \in E_{m}$. 
	According to \cite[1.3]{Falconer}, $\mathcal{V}$ is a Vitali class.
	For every countable, disjoint subfamily $\{B_{i}\}_{i}$ of $\mathcal{V}$, we have
	$\sum_{i \in \mathbb{N}} (\diam B_{i})^{\N} \le \frac{2^{2\N+1}}{\vol} \cH^{\N}(E_{m})< \infty$.
	Applying Vitali's Covering Theorem \cite[1.3, Thm. 1.10]{Falconer}, we get a countable subfamily of
	$\mathcal{V}$ with disjoint balls $B_{i} = B(x_{i},\tau_{i})$ fulfilling
	$\cH^{\N}\left(E_{m} \setminus  \bigcup_{i \in \mathbb{N}} B_{i}\right) = 0$.
	Therefore, using \eqref{21.2.11.1}, we have 
	$\cH^{\N}(E_{m}) \stackrel{\hphantom{\eqref{21.2.11.1}}}{\le} \sum_{i\in \mathbb{N}} \cH^{\N}(E_{m} \cap B_{i}) 
		\le \sum_{i\in \mathbb{N}} 2\vol \tau_{i}^\N$,
	so that
	\begin{equation} \label{summetaui}
		\sum_{i \in \mathbb{N}} \tau_{i}^\N \ge \frac{\cH^{\N}(E_{m})}{2\vol}.
	\end{equation}
	Furthermore, with $ (B_{i} \cap E_{m})^{\N+2} \subset A(2 \tau_{0}) \cap B_{i}^{\N+2} $, we obtain
	\begin{align}
		\sum_{i \in \mathbb{N}} \mathcal{M}_{\K^{\p}}(B_{i} \cap E_{m}) 
		& \stackrel{\eqref{21.2.11.2}}{\le} \mathcal{I}(2\tau_{0}) 
		\stackrel{\eqref{21.2.11.3}}{\le} \frac{\zeta \cH^{\N}(E_{m})}{2\vol \cdot 2^{\N+3}}. \label{21.2.11.4}
	\end{align}
	We define
	\[ I_{b} := \left\{ i \in \mathbb{N} \Big| \mathcal{M}_{\K^{\p}}(B(x_{i},\tau_{i}) \cap  E_{m}) \ge 
		\zeta {\textstyle \frac{ \tau_{i}^\N}{2^{\N+2}}} \right\}\]
	and so
	\[ \sum _{i \in I_{b}} \mathcal{M}_{\K^{\p}}(B(x_{i},\tau_{i})\cap  E_{m}) 
		\ge \zeta\frac{ \sum_{i \in I_{b}} \tau_{i}^{\N}}{2^{\N+2}}.\]
	We have $ \sum_{i \in I_{b}} \tau_{i}^\N \le \frac{\cH^{\N}(E_{m})}{4\vol}$
	since assuming the converse  would imply
	\begin{align*}
		\sum_{i \in \mathbb{N}} \mathcal{M}_{\K^{\p}}(B(x_{i},\tau_{i}) \cap  E_{m}) 
		& \stackrel{\eqref{21.2.11.4}}{<} \zeta\frac{ \sum_{i \in I_{b}} \tau_{i}^\N}{2^{\N+2}} 
		\stackrel{\hphantom{\eqref{21.2.11.4}}}{\le} \sum_{i \in I_{b}} \mathcal{M}_{\K^{\p}}(B(x_{i},\tau_{i})\cap  E_{m}).
	\end{align*}
	Using \eqref{summetaui}, we obtain $ I_b \neq \mathbb{N} $.
	Now we choose some $i \in \mathbb{N} \setminus I_b$ and the regularity of the Hausdorff measure 
	\cite[1.2, Thm. 1.6]{Falconer} implies the existence of some compact set $E^{*} \subset B(x_{i},\tau_{i}) \cap E_{m}$
	with
	\begin{enumerate}
		\renewcommand{\labelenumi}{\textup{(\roman{enumi})}} 
		\item $\cH^{\N}(E^{*}) > \frac{1}{2}\cH^{\N}(B(x_{i},\tau_{i})\cap E_{m})
			\ge \frac{\tau_{i}^{\N}\vol}{2^{\N+1}} \ge \frac{(\diam E^{*})^{\N} \vol}{2^{2\N+1}}$
		\item $ \forall x \in E^{*}, \forall t > 0$, we have 
			$\cH^{\N}(E^{*} \cap B(x,t)) \le \cH^{\N}(B(x_{i},\tau_{i}) \cap E_{m} \cap B(x,t)) \le 2\vol t^{\N}$ 
			since if $t < \frac{1}{m}$ \eqref{21.2.11.1} implies $\cH^{\N}(E \cap B(x,t))\le 2\vol t^{\N}$
			and if $\tau_{i}<\frac{1}{m}<t$ \eqref{21.2.11.1} implies 
			$\cH^{\N}(B(x_{i},\tau_{i}) \cap E_{m}) \le 2\vol t^{\N}$.
		\item $\mathcal{M}_{\K^{\p}}(E^{*}) \le \zeta \frac{\tau_{i}^{\N}}{2^{\N+2}} \le \zeta (\diam E^{*})^{\N}$ 
			since $i \notin I_{b}$ and for some ball $B$ with $E^{*} \subset B$ and $ \diam B=2\diam E^{*}$ we have
			$\frac{\tau_{i}^{\N}}{2^{\N+2}} \stackrel{\text{(i)}}{\le} \frac{\cH^{\N}(E^{*}\cap B)}{2\vol}
			\stackrel{\text{(ii)}}{\le} (\diam E^{*})^{\N}$.
	\end{enumerate}
\end{proof}

Next, we present the crucial theorem of this work. 

\begin{thm} \label{16.10.2013.1}
	Let $\K : \left(\R^{\n}\right)^{\N+2} \to [0,\infty)$.
	There exists some $k > 2$ such that for every
	$C_{0} \ge 10$, there exists some 
	$ \eta=\eta(\n,\N,\K,C_{0},k) \in (0,\vol2^{-(2\N+2)}]$
	so that if 
	$\mu$ is a Borel measure on $\mathbb{R}^{\n}$ with
	compact support $F$ such that $\K^{2}$ is a symmetric $\mu$-\proper{} integrand (cf. Definition \ref{muproper})
	and $\mu$ fulfils 
	\begin{enumerate}
		\renewcommand{\labelenumi}{\textup{(\Alph{enumi})}}
		\item $\mu(B(0,5)) \ge 1, \ \mu(\mathbb{R}^{\n} \setminus B(0,5)) = 0$,
		\item $\mu(B) \le C_{0} \left(\diam B\right)^{\N}$ for every ball $B$,
		\item $\mathcal{M}_{\K^{2}}(\mu) \le \eta$,
		\item $\beta_{1;k;\mu}^{P_{0}}(0,5) \le \eta$ for some plane $P_{0} \in \mathcal{P}(\n,\N)$
			with $0 \in P_{0}$,
	\end{enumerate}
	then there exists some Lipschitz function $A: P_{0} \to P_{0}^{\perp} \subset \R^{\n}$ so that
	the graph $G(A) \subset \R^{\n}$ fulfils
	$ \mu(G(A)) \ge {\textstyle \frac{99}{100}} \mu(\mathbb{R}^{\n})$.
	($P_0^{\perp}:=\{x \in \R^{\n}|x\cdot v=0 \text{ for all } v \in P_{0}\}$ denotes the 
	orthogonal complement of $P_{0}$.)
\end{thm}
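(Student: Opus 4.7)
The plan is to follow the broad corona/stopping-time strategy pioneered by L\'eger \cite{Leger} in the one-dimensional case and adapt it to our higher-dimensional setting, using the $\beta$-number machinery already developed in section \ref{beta} of the paper. The conclusion we want is essentially a quantitative one: under a smallness hypothesis on $\M_{\K^2}(\mu)$ plus a single initial flatness bound, a large portion of $\mu$ lies on a Lipschitz graph over $P_0$.

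First, I would select the parameter $k$ and the threshold $\eta$ as follows. Fix $k>2$ compatible with Theorems \ref{lem2.5} and \ref{13.11.2014.4}, and fix a density threshold $\lambda>0$ small enough so that the set of points on which $\mu$ has density $\ge\lambda$ at every scale carries essentially all the mass once $\eta$ is taken small. The initial hypothesis (D) together with Lemma \ref{nachlem2.6} guarantees that the support meets $P_0$ at distance $\lesssim\eta$ at the top scale. For each $x\in F$ and each dyadic scale $t_n=5\cdot 2^{-n}$, I would apply Corollary \ref{04.09.12.1} to obtain a non-degenerate simplex in $B(x,t_n)$ and thus a natural approximating plane $P_{x,n}$ realising (up to a multiplicative constant) $\beta_{1;k}(x,t_n)$.

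Next, I would define a stopping-time set $F_{\mathrm{good}}$ as the set of $x\in F$ where for all $n\ge 0$ one has $\delta(B(x,t_n))\ge\lambda$ and $\beta_{1;k;\mu}(x,t_n)\le\varepsilon$, with $\varepsilon=\varepsilon(\n,\N,C_0,k,\lambda)$ chosen according to Lemma \ref{lem2.6} and Corollary \ref{24.04.2012.1}. Using Theorem \ref{13.11.2014.4} (Carleson-type bound), the measure of the bad set, where some scale fails, is controlled by $C\cdot\M_{\K^2}(\mu)/\varepsilon^2\le C\eta/\varepsilon^2$; taking $\eta$ sufficiently small forces $\mu(F\setminus F_{\mathrm{good}})\le\tfrac{1}{100}\mu(\R^\n)$. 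One also needs to handle the low-density set, but hypothesis (A) and (B) combined with a Vitali/Besicovitch argument as in the proof of Lemma \ref{lem2.3} confine its contribution.

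On $F_{\mathrm{good}}$, Lemma \ref{lem2.6} applied to two nearby scales $(x,t_n)$ and $(x,t_{n+1})$ (and to two nearby centres at the same scale) yields $\varangle(P_{x,n},P_{x,n+1})\le C\varepsilon$ and $\varangle(P_{x,n},P_{y,n})\le C\varepsilon$ when $d(x,y)\le t_n$. Initialising with $P_{x,0}=P_0$ up to an $O(\varepsilon)$ tilt, a telescoping argument shows every $P_{x,n}$ lies within angle $C\varepsilon$ of $P_0$. I would then invoke Corollary \ref{24.04.2012.1} to represent each $P_{x,n}$ as the graph of an affine map from $P_0$ to $P_0^\perp$ with slope $\le C\varepsilon/(1-C\varepsilon)$, and use Lemma \ref{nachlem2.6} to locate an actual point $z_{x,n}\in F\cap B(x,t_n)$ within distance $C\varepsilon t_n/\lambda$ of $P_{x,n}$. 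Passing $n\to\infty$ gives, for each $x\in F_{\mathrm{good}}$, that $x$ lies on the graph of the limiting affine data, and the uniform control on angles makes $\pi_{P_0}|_{F_{\mathrm{good}}}$ bi-Lipschitz onto its image. The function $A$ is then defined on $\pi_{P_0}(F_{\mathrm{good}})$ by $A(\pi_{P_0}(x))=\pi_{P_0}^\perp(x)$ (Lipschitz with constant $\lesssim\varepsilon$) and extended to $P_0$ by Kirszbraun/MacShane, giving a Lipschitz graph $G(A)$ containing $F_{\mathrm{good}}$.

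The main obstacle, and the part deserving most care, will be the stopping-time construction itself: controlling the bad set simultaneously against low density and large $\beta$-numbers and propagating the flatness from scale to scale without error accumulation. In particular, synchronising the discrete choices of simplices from Corollary \ref{04.09.12.1} across nearby balls and scales (so that Lemma \ref{lem2.6} applies with uniform constants) is the delicate combinatorial core of the argument, and the precise quantification of how small $\eta$ must be taken relative to $\lambda$, $\varepsilon$ and the constants $C_1,C_2$ of Lemma \ref{lem2.3} is where the rectifiability threshold $\eta(\n,\N,\K,C_0,k)$ is ultimately fixed.
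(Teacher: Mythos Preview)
Your overall architecture is correct and matches the paper's strategy, but there is a genuine gap in the step where you claim that ``a telescoping argument shows every $P_{x,n}$ lies within angle $C\varepsilon$ of $P_0$.'' Lemma \ref{lem2.6} gives $\varangle(P_{x,n},P_{x,n+1})\le C_3\,\max\{\beta_{1;k}(x,t_n),\beta_{1;k}(x,t_{n+1})\}$, so telescoping yields $\varangle(P_{x,n},P_0)\le C\sum_{j<n}\beta_{1;k}(x,t_j)$. The Carleson estimate of Theorem \ref{13.11.2014.4} controls $\int\sum_j\beta_{1;k}(x,t_j)^2\,d\mu$, which gives only $\ell^2$ summability of the $\beta$-numbers at $\mu$-a.e.\ point, not $\ell^1$; if instead you use the uniform bound $\beta\le\varepsilon$ from your stopping condition, the sum is $Cn\varepsilon\to\infty$. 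In either reading the angle is not uniformly controlled, and the planes can rotate slowly away from $P_0$ while all $\beta$-numbers stay below $\varepsilon$. This is precisely the obstruction that forces the paper to introduce a third stopping condition (the angle condition in the definition of $S_{total}$, giving rise to the set $F_3$) and to develop the $\gamma$-function machinery of section \ref{gamma}: one builds the Lipschitz function $A$ explicitly via a Whitney decomposition of $P_0\setminus\pi(\mathcal Z)$, proves the square-function bound $\int\!\!\int\gamma_A(q,t)^2\,\frac{dt}{t}\,d\cH^\N(q)\le C\varepsilon^2$ (Theorem \ref{thm4.1}), and then uses Calder\'on's reproducing formula (Theorem \ref{2.9.2014.1}) to show that on a large set $A$ is well approximated by affine maps at all small scales, which forces the approximating planes to stay close to $P_0$ and makes $\mu(F_3)$ small.

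A secondary point: your treatment of the low-density set is too optimistic. A bare Besicovitch argument shows the stopping balls cover the bad set, but to conclude that $\sum(\diam B)^\N$ is bounded (hence that the low-density condition makes $\sum\mu(B)$ small) one needs the balls to project in an essentially disjoint way onto $P_0$. In the paper this is obtained by first proving that most of $F$ lies within $\varepsilon^{1/2}d(x)$ of the graph of $A$ (Theorem \ref{thm3.18}), which in turn relies on the explicit Whitney construction of $A$ rather than a black-box Kirszbraun extension. So the explicit construction of $A$ is not merely cosmetic: both the $F_1$ and the $F_3$ estimates depend on it.
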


At first, we show that, under the assumption that the previous theorem is correct,
we can prove Theorem \ref{maintheorem}. The remaining proof of Theorem \ref{16.10.2013.1} is then given by 
the following chapters \ref{construction}, \ref{gamma} and \ref{notanullset}.
We will use the notation $sE:=\{x \in \R^{\n}|s^{-1}x \in E\}$ for $s > 0 $ and some set $E \subset \R^{\n}$.
Distinguish this notation from $sB(x,t)=B(x,st)$, where the centre stays unaffected and only the radius is scaled.

\begin{proof}[Proof of Theorem \textup{\ref{maintheorem}}] \label{16.10.2013.2} \label{BeweisdesHptsatzes}
	Let $\K^{2}$ be some \proper{} integrand (see Definition \ref{muproper}), $E \subset \R^{\n}$ 
	some Borel set with $\M_{\K^{2}}(E) < \infty$ and let $C_{0}= 2^{2\N+2}$.
	Furthermore, let $k>2$ and $0< \eta \le \vol 2^{-(2\N+2)}$ be the constants given by Theorem \ref{16.10.2013.1}.
	Using Lemma \ref{6.5.2013.1}, we can assume that $\K$ is symmetric.

	We start with a countable covering of $\R^{\n}$ with balls $B_{i}$ so that 
	$\R^{\n} \subset \bigcup_{i \in \mathbb{N}} B_{i}$. We will show that for all $i\in \mathbb{N}$ 
	the sets $E \cap B_{i}$ are $\N$-rectifiable, which
	implicates that $E$ is $\N$-rectifiable.

	Let $i \in \mathbb{N}$ with $\cH^{\N}(E \cap B_{i})>0$. With Lemma \ref{19.04.2013.1}, we conclude 
	that $\cH^{\N}(E \cap B_{i})< \infty$.
	Then, using \cite[Thm. 3.3.13]{Federer}, we can decompose 
	$E \cap B_{i}=E_{\text{r}}^{i} \ \dot \cup \ E_{\text{u}}^{i}$ into two disjoint subsets,
	where $E_{\text{r}}^{i}$ is $\N$-rectifiable and $E_{\text{u}}^{i}$ is purely $\N$-unrectifiable.
	
	Now we assume that $E \cap B_{i}$ is not $\N$-rectifiable, so $ \cH^{\N}(E_{\text{u}}^{i}) > 0$. 
	The set $E_{\text{u}}^{i}$ is a Borel set and
	fulfils $0 < \cH^{\N}(E_{\text{u}}^{i}) \le \cH^{\n}(E \cap B_{i}) < \infty$ and 
	$\M_{\K^{2}}(E_{\text{u}}^{i}) \le \M_{\K^{2}}(E) < \infty$. 
	Now we apply Lemma \ref{satz1.1} with $\zeta = \eta \frac{1}{\hat C \tilde C}$ where the constants $\hat C$ and
	$\tilde C$ are given in this passage
	and get some compact set $E^{*}\subset E_{\text{u}}^{i}$
	which fulfils condition (i),(ii) and (iii) from Lemma \ref{satz1.1}.
	We set $a:= (\diam E^{*})^{-1}$ and $\tilde \mu = \cH^{\N} \textsf{ L } aE^{*}$.
	Let $\tilde B$ be a ball with $aE^{*}\subset \tilde B$ and $\diam \tilde B = 2$.
	Using (i), we get $\delta_{\tilde \mu}(\tilde B) \ge \frac{\vol}{2^{2\N+1}}$. 
	So, Theorem \ref{lem2.5} ($p=2$, $x=y\mathrel{\hat=}\text{centre of } \tilde B$, $t=1$, 
	$\lambda = \frac{\vol}{2^{3\N+1}}$, $k_{0}=1$)  implies 
	$\beta_{2;k;\tilde \mu}(\tilde B)^{2}
		< \hat C \mathcal{M}_{\K^{2}}(\tilde \mu) \le \eta^{2},$
	for some constant $\hat C= \hat C(\n,\N,\K,C_{0},k) \ge 1$.
	Using H\"older's inequality there exists some $\N$-dimensional plane 
	$\tilde{P_{0}} \in \mathcal{P}(\n,\N) \index{Subsets of $\mathbb{R}^{\n}$ ! $\tilde{P_{0}}$}$ with
	$ \beta_{1;k;\tilde \mu}^{\tilde{P_{0}}}(\tilde B) \le \eta$.
	Now we define a measure $\mu$ by $\mu(\cdot):= \frac{2^{2\N +1}}{\vol}\tilde \mu (\ \cdot \ + \pi_{\tilde P_{0}}(b))$,
	where $b$ is the centre of $\tilde B$.
	This is also a Borel measure with compact support and Lemma \ref{nachlem2.6} 
	($\sigma = \eta$, $B(x,t)=\tilde B$, $\lambda=\frac{\vol}{2^{2\N+1}}$) 
	implies that the support fulfils 
	$F:=aE^{*} -\pi_{\tilde P_{0}}(b) \subset B(0,2)$.
	This measure fulfils condition (D) from Theorem \ref{16.10.2013.1} ($P_{0}=\tilde P_{0}-\pi_{\tilde P_{0}}(b)$)
	and (i) implies condition (A). To get condition (B) for some arbitrary ball,
	cover it by some ball with centre on F, double diameter and apply (ii). Use 
	$\mathcal{M}_{\K^{2}}(\mu) = \tilde C(\N) a^{\N} \ \mathcal{M}_{\K^{2}}(E^{*})$ and (iii) to obtain (C).
	Finally we mention that $\K^{2}$ is $\mu$-proper, since $\mu$ is an adapted version of $\cH^{\N}$.
	Hence we can apply Theorem \ref{16.10.2013.1} and after some scaling and translation we 
	obtain some Lipschitz function which covers a part of positive Hausdorff measure of $E_{u}^{i}$
	which is in contrast to $E_{u}^{i}$ being purely $\N$-unrectifiable. Hence $E \cap B_{i}$ is $\N$-rectifiable.
\end{proof}

\setcounter{equation}{0}
\section{Construction of the Lipschitz graph} \label{construction}
\subsection{\texorpdfstring{Partition of the support of the measure $\mu$}{Partition of the support of the measure u}}\label{04.02.2014.1}
Now we start with the proof of Theorem \ref{16.10.2013.1}.
Let $\K : \left(\R^{\n}\right)^{\N+2} \to [0,\infty)$ 
and let $C_{0} \ge 10$ be some fixed constant.
There is one step in the proof which only works for integrability exponent 
$\p=2$. ($\p=2$ is used in Lemma \ref{25.09.2014.1} so that the results of Theorem \ref{2.9.2014.1} 
and Theorem \ref{thm4.1} fit together.)
Since most of the proof can be given with less constraints to $\p$,
we start with $\p \in (1,\infty)$ and restrict to $\p=2$ only if needed.
Furthermore, let $k>2$, \mbox{$0<\eta \le \vol2^{-(2\N+2)}$}, 
$P_{0} \in \mathcal{P}(\n,\N)$ \index{Subsets of $\mathbb{R}^{\n}$ ! $P_{0}$}
with $0 \in P_{0}$ and $\mu$ be a Borel measure on $\mathbb{R}^{\n}$ with
compact support $F$ \index{Subsets of $\mathbb{R}^{\n}$ ! $F$} \label{Grundeigenschaften} 
such that $\K^{\p}$ is a symmetric $\mu$-\proper{} integrand (cf. Definition \ref{muproper}) and \label{3.12.2013.3}
\begin{enumerate}
	\renewcommand{\labelenumi}{\textup{(\Alph{enumi})}}
	\item $\mu(B(0,5)) \ge 1, \ \mu(\mathbb{R}^{\n} \setminus B(0,5)) = 0$,
	\item $\mu(B) \le C_{0} \left(\diam B\right)^{\N}$ for every ball $B$,
	\item $\mathcal{M}_{\K^{\p}}(\mu) \le \eta$,
	\item $\beta_{1;k;\mu}^{P_{0}}(0,5) \le \eta$.
\end{enumerate}

In this chapter, we will prove that if $k$ is large and $\eta$ is small enough, 
we can construct some function $A: P_{0} \to P_{0}^{\perp}$ which covers some part of the 
support $F$ of $\mu$.
For this purpose, we will
give a partition of the support of $\mu$ in four parts,
$\supp(\mu)=\mathcal{Z} \dot{\cup} F_{1} \dot{\cup} F_{2} \dot{\cup} F_{3}$, and
construct the function $A$ so that the graph of $A$ covers $\mathcal{Z}$, i.e., $\mathcal{Z} \subset G(A)$.

The following chapters \ref{gamma} and \ref{notanullset} will give a proof of
$\mu(F_{1} \cup F_{2} \cup F_{3}) \le {\textstyle\frac{1}{100}}$,
hence with (A) we will obtain $\mu(G(A)) \ge {\textstyle\frac{99}{100}} \mu(\R^{\n})$,
which is the statement of Theorem \ref{16.10.2013.1}.

From now on, we will only work with the fixed measure $\mu$, so we can simplify the expressions by
setting $\beta_{1;k}:=\beta_{1;k;\mu}$ and $ \delta(\cdot):=\delta_{\mu}(\cdot)$.
Furthermore, we fix the constant \label{WahlderConstants}
\begin{align}
	\delta &:= \min\left\{\frac{10^{-10}}{600^{\N}N_{0}},\frac{2}{50^{\N}}\right\}, 
		\index{Constants ! $\delta$} \label{Wahlvondelta}
\end{align}
where $N_{0}=N_0(\n)$ \index{Constants ! $N_{0}$} 
is the constant from Besicovitch's Covering Theorem \cite[1.5.2, Thm. 2]{Evans}.

\begin{dfn}\label{12.07.13.1} \label{Definitionvonh} 
Let $\alpha, \varepsilon > 0$.
We define the set \index{$S_{total}$} 
\[ S_{total}^{\varepsilon,\alpha} := \left\lbrace (x,t) \in F \times (0,50)  \ \
	\begin{array}{|ll}
	  (i) & \delta(B(x,t)) \ge \frac{1}{2}\delta \\
	  (ii) & \beta_{1;k}(x,t) < 2\varepsilon \\
	  (iii) & \exists \ P_{(x,t)}\in \mathcal{P}(\n,\N) \index{Subsets of $\mathbb{R}^{\n}$ ! $P_{(x,t)}$} \ \  \text{s.t.} \ 
	 	\begin{cases}
  			\beta_{1;k}^{P_{(x,t)}}(x,t) \le 2\varepsilon  \\
  			 \ \ \text{and} \\
			\varangle(P_{(x,t)},P_{0}) \le \alpha
		\end{cases} \\
\end{array} 
\right\rbrace. \]
Having in mind that the definition of $S_{total}^{\varepsilon,\alpha}$ depends on the choice of $\varepsilon$
and $\alpha$, we will normally skip these and write $S_{total}$ instead.
In the same manner, we will handle the following definitions of $H,h$ and $S$.
For $x \in F$ we define 
\[H(x):=\Big\lbrace t \in (0,50) \ \Big| \ \exists \ y \in F, \ \exists \ \tau \text{ with } \\ \frac{t}{4} 
	 \le \tau \le \frac{t}{3}, \ d(x,y) < \frac{\tau}{3}  \ \text{and} \  (y,\tau) \notin S_{total}  \Big\rbrace,\]
\[ h(x) := \sup (H(x) \cup \{0\}) \quad \quad \text{ and } \quad \quad S := \left\lbrace (x,t) \in S_{total} \ | \ t \ge h(x) \right\rbrace.\]
Sometimes, we identify a ball $B=B(x,t)$ with the tuple $(x,t)$ and write to simplify matters $B \in S$ instead of
$(x,t) \in S$. In the same manner we use the notation $\beta_{1;k}(B)$.
\end{dfn}

\begin{lem} \label{rem3.1}
	Let $\alpha, \varepsilon > 0$.
	If $\eta \le 2\varepsilon$, we have that $S_{total}\neq \emptyset$ and
	\begin{enumerate}
	\renewcommand{\labelenumi}{(\roman{enumi})} 
	\item $F \times [40,50) \subset \{(x,t) \in F \times (0,50)|t\ge h(x)\}=S$,
	\item If $ (x,t) \in S$ and $t \le t^{'} < 50$, we have $(x,t^{'}) \in S$.
	\end{enumerate}
\end{lem}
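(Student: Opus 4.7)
The plan is to handle the three claims via a single observation: for any $(x,t) \in F \times (0,50)$ with $t \ge h(x)$, one can verify $(x,t) \in S_{total}$ directly, either by exploiting $F \subset B(0,5)$ when $t$ is large or by taking $y = x$ in the definition of $H(x)$ when $t$ is small. Once this is in place, the equality $\{(x,t) \in F \times (0,50) \, | \, t \ge h(x)\} = S$ holds, and both (ii) and the inclusion $F \times [40,50) \subset S$ become easy.

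First I would treat the large-scale regime $t \in [10, 50)$, arguing from the geometry of the support. Since $F \subset B(0,5)$ by (A), we have $d(x,0) \le 5$, hence $B(x,t) \supset B(0,5) \supset F$ and $\mu(B(x,t)) = \mu(\R^{\n}) \ge 1$, giving $\delta(B(x,t)) \ge 1/t^{\N} \ge 1/50^{\N} \ge \delta/2$ by \eqref{Wahlvondelta}. Combining (D) with $F \subset B(0, 5k)$ yields $\int_F d(y,P_0)\,\dd\mu(y) \le 5^{\N+1}\eta$, whence
\[
\beta_{1;k}^{P_0}(x,t) \le \left(\tfrac{5}{t}\right)^{\N+1}\eta \le 2^{-(\N+1)}\eta < \eta \le 2\varepsilon,
\]
and choosing $P_{(x,t)} := P_0$ (with $\varangle(P_0,P_0) = 0 \le \alpha$) places $(x,t)$ in $S_{total}$ throughout $[10,50)$.

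Next, for the small-scale regime $t < 10$ with $t \ge h(x)$, I exploit that $h(x)$ is a supremum. I set $s := 3t \in (0, 30) \subset (0,50)$. Then $s > h(x) = \sup(H(x) \cup \{0\})$, trivially if $h(x) = 0$, and otherwise from $t \ge h(x) > h(x)/3$, so $s \notin H(x)$. Unpacking the definition of $H(x)$, this means $(y,\tau) \in S_{total}$ whenever $y \in F$, $\tau \in [s/4, s/3]$ and $d(x,y) < \tau/3$; specialising to $y = x$ and $\tau = t = s/3$ delivers $(x,t) \in S_{total}$.

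Assembling the two regimes yields the equality in (i). The bound $h(x) \le 40$ then follows by contradiction: any $s \in H(x) \cap [40, 50)$ would require some $y \in F$ and $\tau \in [s/4, s/3] \subset [10, 50/3)$ with $(y,\tau)\notin S_{total}$, contradicting the large-scale case. This gives $F \times [40, 50) \subset S$, and in particular $S_{total} \ne \emptyset$ since $F$ is nonempty by (A). Claim (ii) is then immediate: if $(x,t) \in S$ and $t \le t' < 50$, then $t' \ge t \ge h(x)$ puts $(x,t')$ in the left-hand side of the equality, hence in $S$. The only subtle point is the small-scale case, since $\delta(B(x,\cdot))$ need not be monotone in the radius and one cannot inherit the $\delta$-bound from $(x,t)$ to a larger $t'$ naively; the supremum definition of $h(x)$ is precisely the right tool to bypass this.
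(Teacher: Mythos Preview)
Your proof is correct and follows essentially the same approach as the paper's: both first verify $F\times[10,50)\subset S_{total}$ via (A), (D) and the choice $P_{(x,t)}=P_0$, then handle the small-scale case $t<10$ by taking $y=x$, $\tau=t$ in the definition of $H(x)$ and noting this forces $3t\in H(x)$ (the paper phrases this as a contradiction, you as the contrapositive direct implication). Your write-up is a bit more explicit about the constants in the $\delta$ and $\beta$ estimates, which is fine.
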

\begin{proof}
	(i)\, If $x \in F \subset B(0,5)$ and $10\le t < 50$, we have
	$F \subset B(x,t)$. Using (A),(D) and $P_{(x,t)}:=P_0$ we get $(x,t) \in S_{total}$, which implies that
	$F \times [10,50) \subset S_{total}$. Now if $x \in F$ and $t \in [40,50)$ we deduce for arbitrary 
	$y\in F$ and $\tau \in [\frac{t}{4},\frac{t}{3}]$ that $(y,\tau)\in S_{total}$, 
	which implies that $H(x) \subset (0,40)$, $h(x) \le 40$ and hence the first inclusion.
	For the equality it is enough to prove that the central set is contained in $S$.
	Let $ x \in F $ and $t \in (0,50)$ with $h(x) \le t < 50$.
	Assume that $(x,t) \notin S$. Due to $h(x)\le t $, we obtain $(x,t) \notin S_{total}$,
	which implies that $t < 10$.
	Hence with $y=x$ and $\tau=t$ we get $3t \in H(x)$.
	This implies $h(x) \ge 3t > t$ and hence a contradiction to $t \ge h(x)$. So, we obtain $(x,t) \in S$. \\
	(ii)\, We have $x \in F$ and $h(x) \le t \le t^{'} < 50 $ so with (i) we conclude that $(x,t^{'}) \in S$ .
\end{proof}

Remember that the function $h$ depends on the set $S_{total}$, which depends on 
the choice of $\varepsilon$ and $\alpha$. Hence the sets defined in the following definition 
depend on $\alpha$ and $\varepsilon$ as well.
\begin{dfn}[Partition of $F$] \label{def3.2} \index{Partition of $F$! $\mathcal{Z}$}
	\index{Partition of $F$! $F_{1}$} \index{Partition of $F$! $F_{2}$} \index{Partition of $F$! $F_{3}$}
	Let $\alpha, \varepsilon > 0$. We define
	\[ \mathcal{Z} := \left\lbrace x \in F \ | \ h(x) = 0 \right\rbrace, \]
	\[ F_{1} := \left\lbrace x \in F \setminus \mathcal{Z} \ 
		\begin{array}{|ll}
		  & \exists y \in F, \exists \tau \in \left[\frac{h(x)}{5},\frac{h(x)}{2}\right],
		  	\text{ with } d(x,y) \le \frac{\tau}{2} \\
		  & \ \text{and} \\
		  & \delta(B(y,\tau)) \le \delta \ 
		\end{array} 
		\right\rbrace, \]
	\[ F_{2} := \left\lbrace x \in F \setminus (\mathcal{Z} \cup F_{1}) \ 
		\begin{array}{|ll}
		  & \exists y \in F, \exists \tau \in \left[\frac{h(x)}{5},\frac{h(x)}{2}\right],
		  	\text{ with } d(x,y) \le \frac{\tau}{2}  \\
		  & \ \text{and} \\
		  & \beta_{1;k}(y,\tau) \ge \varepsilon \ 
		\end{array} 
		\right\rbrace, \]
	\[ F_{3} := \left\lbrace x \in F \setminus (\mathcal{Z} \cup F_{1} \cup F_{2}) \ 
		\begin{array}{|ll}
		  & \exists y \in F, \exists \tau \in \left[\frac{h(x)}{5},\frac{h(x)}{2}\right],
		  	\text{ with } d(x,y) \le \frac{\tau}{2}  \\
		  & \text{and for all planes } P \in \mathcal{P}(\n,\N)\text{ with}   \\
		  & \beta_{1;k}^{P}(y,\tau) \le \varepsilon \text{ we have } \varangle(P,P_{0}) \ge \frac{3}{4}\alpha \ 
		\end{array} 
		\right\rbrace. \]
\end{dfn}
In this chapter, we prove that $\mathcal{Z}$ is rectifiable by constructing a function $A$ such that the
graph of $A$ will cover $\mathcal{Z}$. This is done by inverting the orthogonal projection 
$\pi|_{\mathcal{Z}} : \mathcal{Z} \to P_{0}$.
After that, to complete the proof, it remains to show that $\mathcal{Z}$ constitutes the major part of $F$.
Right now, we can prove that $\mu(F_{2}) \le 10^{-6}$ (cf. section \ref{F2issmall}, $F_2$ is small) where
the control of the other sets need some more preparations.

\begin{lem}
	Let $\alpha, \varepsilon > 0$.
	Definition \ref{def3.2} gives a partition of $F$, i.e.
	$F = \mathcal{Z} \ \dot \cup \ F_{1} \ \dot \cup \ F_{2} \ \dot \cup \ F_{3}$.
\end{lem}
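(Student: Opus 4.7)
The plan is to prove disjointness and covering separately; both will follow by unwinding the definitions, with essentially no analytic content.

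Disjointness is automatic: $F_1,F_2,F_3$ are written as subsets of the complement of the union of the previously-defined sets, and each $F_i$ ($i=1,2,3$) is contained in $\{x \in F : h(x)>0\}$, because its defining clause references a positive $\tau \in [h(x)/5, h(x)/2]$; hence each $F_i$ is disjoint from $\mathcal{Z}=\{h=0\}$ as well.

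For covering, I would fix $x \in F$. If $h(x)=0$ then $x \in \mathcal{Z}$ and we are done. Otherwise $h(x)>0$, so $H(x)\neq\emptyset$, and since $h(x)=\sup H(x)$ we can pick $t \in H(x)$ with $t > \tfrac{4}{5}h(x)$. The definition of $H(x)$ then furnishes $y \in F$ and $\tau \in [t/4,t/3]$ with $d(x,y)<\tau/3$ and $(y,\tau)\notin S_{total}$. The inequalities $t \le h(x)$ (from $h(x)=\sup H(x)$) and $t > 4h(x)/5$ yield $\tau \in (h(x)/5, h(x)/3] \subset [h(x)/5, h(x)/2]$ and $d(x,y) < \tau/3 \le \tau/2$, so $(y,\tau)$ is an admissible witness for the existential quantifier appearing in each of the $F_i$ definitions.

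It remains to observe that $(y,\tau) \notin S_{total}$ forces at least one of the three defining clauses (i), (ii), (iii) in Definition \ref{12.07.13.1} to fail, and the thresholds appearing in $F_1,F_2,F_3$ are deliberately calibrated to absorb each type of failure: failure of (i) gives $\delta(B(y,\tau))<\delta/2\le\delta$, so $x \in F_1$; failure of (ii) gives $\beta_{1;k}(y,\tau)\ge 2\varepsilon \ge \varepsilon$, hence $x \in F_2$ whenever $x \notin F_1$; and failure of (iii) says every plane $P$ with $\beta_{1;k}^{P}(y,\tau)\le 2\varepsilon$ satisfies $\varangle(P,P_{0})>\alpha$, which in particular applies to every $P$ with $\beta_{1;k}^{P}(y,\tau)\le \varepsilon$ and gives $\varangle(P,P_0)>\alpha>\tfrac{3}{4}\alpha$, so $x \in F_3$ whenever $x \notin F_1\cup F_2$. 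There is no genuine obstacle; the only point of care is the two-scale system of thresholds, the strict ones ($\delta/2$, $2\varepsilon$, $\alpha$) defining $S_{total}$ versus the weaker ones ($\delta$, $\varepsilon$, $\tfrac{3}{4}\alpha$) defining the $F_i$, which is precisely what makes the above routing work case-by-case.
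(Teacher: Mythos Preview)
Your proof is correct and follows essentially the same approach as the paper. The only cosmetic difference is that the paper phrases the selection of the witness via a sequence $t_l \to h(x)$ and then passes to a large enough index, whereas you pick a single $t \in H(x)$ with $t > \tfrac{4}{5}h(x)$ directly; the case analysis on which clause of $S_{total}$ fails and the threshold slack ($\delta/2$ vs.\ $\delta$, $2\varepsilon$ vs.\ $\varepsilon$, $\alpha$ vs.\ $\tfrac{3}{4}\alpha$) is handled identically in both.
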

\begin{proof}
	From the definition we see that the sets are disjoint. We show
	$F \setminus \mathcal{Z} \subset F_{1} \cup F_{2} \cup F_{3}$.
	Let $x \in  F \setminus \mathcal{Z} $, so we have $h(x) > 0$.
	There exist some sequences $(y_{l})_{l \in \mathbb{N}} \in F^{\mathbb{N}}$, $(t_{l})_{l \in \mathbb{N}}$ and 
	$(\tau_{l})_{l \in \mathbb{N}}$ so that for all $l \in \mathbb{N}$, we have $0 < t_{l} \le h(x)$, 
	$ t_{l} \rightarrow h(x)$, $\frac{t_{l}}{4} \le \tau_{l} \le \frac{t_{l}}{3}$, $ d(x,y_{l}) < \frac{\tau_{l}}{3}$ 
	and $ (y_{l},\tau_{l}) \notin S_{total}$. Due to
	$\tau_{l} \le \frac{t_{l}}{3} \le \frac{h(x)}{3} \le \frac{50}{3}$, we have for every $l \in \mathbb{N}$ either
	$\delta(B(y_{l},\tau_{l}))= \frac{\mu(B(y_{l},\tau_{l}))}{\tau_{l}^{\N}} < \frac{1}{2}\delta$ or
	$\delta(B(y_{l},\tau_{l})) 
				\ge \frac{1}{2} \delta \text{ and } \beta_{1;k}(y_{l},\tau_{l}) \ge 2 \varepsilon$ or 
	$\delta(B(y_{l},\tau_{l})) 
				\ge \frac{1}{2} \delta \text{ and } \beta_{1;k}(y_{l},\tau_{l}) < 2 \varepsilon$,
			and for every plane $P \in \mathcal{P}(\n,\N)$ with 
			\mbox{$\beta_{1;k}^{P}(y_{l},\tau_{l}) \le 2 \varepsilon$}, we have
			$\varangle(P, P_{0}) > \alpha.$

	Choose $l$ so large that $\frac{4h(x)}{5} \le t_{l}$. We obtain
	$\frac{h(x)}{5} \le \frac{t_{l}}{4} \le \tau_{l} \le \frac{t_{l}}{3} \le \frac{h(x)}{2}$.
	Furthermore, we have $y_{l} \in F$ and $d(x,y_{l}) \le \frac{\tau_{l}}{3} < \frac{\tau_{l}}{2}$.
	Since $(y_{l},\tau_{l})$ fulfils one of this tree cases, it follows
	$ x \in F_{1} \cup F_{2} \cup F_{3}$.
\end{proof}

The following lemma is for later use (cf. Lemma \ref{25.09.2014.2} and Lemma \ref{25.09.2014.1}).
\begin{lem} \label{rem3.3}
	Let $\alpha >0$. There exists some constant 
	$ \bar \varepsilon = \bar \varepsilon(\n,\N,C_{0},\alpha)$ so that if $\eta < 2 \bar \varepsilon$
	and $k \ge 2000$, 
	there holds for all $\varepsilon \in [\frac{\eta}{2},\bar \varepsilon)$:
	If $x \in F_{3}$ and $ h(x) \le t \le \min\{100 h(x),49\}$,
	we get $ \varangle(P_{(x,t)},P_{0}) > \frac{1}{2} \alpha$, where $P_{(x,t)}$ is the plane granted 
	since $(x,t) \in S_{total}$ (cf. Definition \ref{12.07.13.1}).
\end{lem}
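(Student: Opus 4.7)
The plan is to extract from the $F_3$-witness $(y,\tau)$ a plane $P^{*}$ which, on the one hand, has angle at least $\tfrac{3}{4}\alpha$ with $P_0$ by the defining property of $F_3$ and, on the other hand, has angle at most $\tfrac{1}{4}\alpha$ with $P_{(x,t)}$ by an application of Lemma~\ref{lem2.6}; the triangle inequality for $\varangle$ on $\mathcal{P}(\n,\N)$ (Remark~\ref{11.09.12.2}) will then yield the claim.

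First I unpack $x \in F_3 \subset F \setminus (\mathcal{Z} \cup F_1 \cup F_2)$. By definition of $F_3$ there exist $y \in F$ and $\tau \in [h(x)/5, h(x)/2]$ with $d(x,y) \le \tau/2$ such that every $P\in\mathcal{P}(\n,\N)$ with $\beta_{1;k}^{P}(y,\tau) \le \varepsilon$ satisfies $\varangle(P, P_0) \ge \tfrac{3}{4}\alpha$. Crucially, this same pair $(y,\tau)$ is a witness for the common ``stuff'' predicate appearing in $F_1$ and $F_2$, so $x \notin F_1$ forces $\delta(B(y,\tau)) > \delta$, and $x \notin F_2$ forces $\beta_{1;k}(y,\tau) < \varepsilon$. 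By definition of the infimum the latter supplies a plane $P^{*}$ with $\beta_{1;k}^{P^{*}}(y,\tau) < \varepsilon$, and the $F_3$-condition then gives $\varangle(P^{*}, P_0) \ge \tfrac{3}{4}\alpha$.

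Next I compare $P^{*}$ and $P_{(x,t)}$ via Lemma~\ref{lem2.6} applied to the balls $B(x,t)$ and $B(y,\tau)$. The hypothesis $h(x) \le t \le 100\,h(x)$ together with $\tau \in [h(x)/5,h(x)/2]$ yields $t/500 \le \tau \le t/2$, so $c := 500$ is an admissible ratio; the assumption $k \ge 2000$ supplies $k \ge 4c$, and $d(x,y) \le \tau/2 \le t/4 \le \tfrac{k}{2c}\,t$. The lower bounds $\delta(B(x,t)) \ge \delta/2$ (from $(x,t) \in S_{total}$) and $\delta(B(y,\tau)) > \delta$ (just obtained) allow us to take $\lambda := \delta/2$. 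With $\beta_{1;k}^{P_{(x,t)}}(x,t) \le 2\varepsilon$ and $\beta_{1;k}^{P^{*}}(y,\tau) \le 2\varepsilon$ the choice $\xi := 2$ fits Lemma~\ref{lem2.6}, producing constants $\varepsilon_0$ and $C_3$ which depend only on $\n,\N,C_0,\delta/2,2,500$, hence only on $\n,\N,C_0$ (since $\delta$ is fixed on page~\pageref{WahlderConstants}), such that
\[\varangle(P_{(x,t)}, P^{*}) \le C_3 \varepsilon \qquad \text{whenever } \varepsilon < \varepsilon_0.\]

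Finally I set $\bar\varepsilon = \bar\varepsilon(\n,\N,C_0,\alpha) := \min\{\varepsilon_0,\, \alpha/(5C_3)\}$, so that for every $\varepsilon < \bar\varepsilon$ we have both $\varepsilon < \varepsilon_0$ and $C_3 \varepsilon < \alpha/4$. The triangle inequality then gives
\[\varangle(P_{(x,t)}, P_0) \;\ge\; \varangle(P^{*}, P_0) - \varangle(P_{(x,t)}, P^{*}) \;>\; \tfrac{3}{4}\alpha - \tfrac{1}{4}\alpha \;=\; \tfrac{1}{2}\alpha,\]
as required. I do not foresee a substantive obstacle; the only care needed is in observing that the same witness $(y,\tau)$ from the $F_3$-definition inherits the desired measure- and $\beta$-bounds from $x \notin F_1 \cup F_2$, and in checking that the geometric ratios between $(x,t)$ and $(y,\tau)$ match the hypotheses of Lemma~\ref{lem2.6} uniformly in $t \in [h(x), \min\{100h(x),49\}]$.
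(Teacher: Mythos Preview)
Your proof is correct and follows essentially the same approach as the paper: extract the $F_3$-witness $(y,\tau)$, use $x\notin F_1\cup F_2$ to get the density and $\beta$-bounds at $(y,\tau)$, pick a near-optimal plane $\bar P$ (your $P^*$) with $\varangle(\bar P,P_0)\ge\tfrac34\alpha$, then apply Lemma~\ref{lem2.6} with $c=500$, $\xi=2$, $\lambda=\delta/2$ to compare $\bar P$ and $P_{(x,t)}$, and finish with the triangle inequality. If anything, your write-up is slightly more explicit than the paper's in justifying $\delta(B(y,\tau))>\delta$ and $\beta_{1;k}(y,\tau)<\varepsilon$ from $x\notin F_1$ and $x\notin F_2$, and in checking the ratio $c=500$ and the condition $k\ge 4c=2000$.
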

\begin{proof}
	Let $\alpha > 0$ and $k \ge 400$. We set 
	$\bar \varepsilon:= \min\{\varepsilon_{0},\varepsilon_{0}',\alpha(5C_{3})^{-1}\}$,
	where $\varepsilon_{0}$, $\varepsilon_{0}'$, $C_{3}$ and $C_{3}^{'}$ depend only on $\n,\N$ and $C_{0}$
	will be chosen during this proof. Furthermore, let $\eta \le 2 \varepsilon < 2 \bar \varepsilon$.

	Since $x \in F_{3}$ and $x \notin (F_1 \cap F_2)$, there exists some $y \in F$, 
	$\tau \in  \left[ \frac{h(x)}{5},\frac{h(x)}{2} \right]$ and $\bar P \in \mathcal{P}(\n,\N)$ 
	with $d(x,y) \le \frac{\tau}{2}$, $\beta_{1;k}^{\bar P}(y,\tau) \le \varepsilon$ and
	$\varangle(\bar P,P_{0}) \ge \frac{3}{4}\alpha$.
	Furthermore $h(x) \le t$ implies $(x,t) \in S \subset S_{total}$ and hence $\delta(B(x,t)) \ge \frac{1}{2}\delta$
	and $\beta_{1;k}^{P_{(x,t)}}(x,t) \le 2\varepsilon.$
	Now with Lemma \ref{lem2.6} ($c=500$, $\xi =2$, $t_{x}=t$, $t_{y}=\tau$, $\lambda = \frac{\delta}{2}$), 
	there exist some constants $C_{3}=C_{3}(\n,\N,C_{0}) > 1$ and 
	$\varepsilon_{0}=\varepsilon_{0}(\n,\N,C_{0})>0$ so that 
	$\varangle(\bar P,P_{(x,t)}) \le C_{3} \varepsilon$.
	Due to $\varangle(\bar P,P_{0}) \ge \frac{3}{4}\alpha$ and $\varepsilon < \frac{\alpha}{4C_{3}}$ this gives
	$\varangle(P_{(x,t)},P_{0})  > \frac{1}{2} \alpha$.
\end{proof}

\subsection{The distance to a well approximable ball}
We recall that the set $S$ depends on the choice of $\alpha$ and $\varepsilon $. Hence the functions $d$ and $D$
defined in the next definition depend on $\alpha$ and $\varepsilon$ as well.
We introduce $\pi:=\pi_{P_0}: \mathbb{R}^{\n} \to P_0$ \index{Functions ! $\pi$}, the orthogonal 
projection on $P_0$.

\begin{dfn}[The functions $d$ and $D$] \index{Functions ! $d(x)$} \index{Functions ! $D(x)$}
	Let $\alpha, \varepsilon >0$.
	If $\eta \le 2 \varepsilon$, we get with Lemma \ref{rem3.1} (i) that $S \neq \emptyset$.
	We define $d : \mathbb{R}^{\n} \rightarrow [0,\infty)$ and $D : P_{0} \rightarrow [0,\infty)$ with
	\[ d(x) := \inf_{(X,t) \in S} (d(X,x) + t) \hspace{20mm} D(y) := \inf_{x \in \pi^{-1}(y)} d(x).\]
\end{dfn}

Let us call a ball $B(X,t)$ with $(X,t) \in S$ a good ball.
Then the function $d$ measures the distance from the given point $x$ to the nearest good ball,
using the furthermost point in the ball. This implies that a ball $B(x,d(x))$
always contains some good ball.
The function $D$ does something similar. Consider the projection of all good balls to the plane $P_{0}$.
Then $D$ measures the distance to the nearest projected good ball in the same sense as above
(cf. next lemma).

\begin{lem}\label{remnachdefD}
	Let $\alpha, \varepsilon > 0$. If $\eta \le 2 \varepsilon$ and $y \in P_{0}$ we have
	$D(y) = \inf_{(X,t) \in S}(d(\pi(X),y) + t)$.
\end{lem}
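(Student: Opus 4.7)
The plan is to prove the equality by showing both inequalities separately, exploiting two basic properties of the orthogonal projection $\pi = \pi_{P_0}$: that it is $1$-Lipschitz, and that it admits an easy affine right inverse once a point $X$ has been fixed.

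For the inequality $\inf_{(X,t)\in S}(d(\pi(X),y)+t) \le D(y)$, I would unfold the definition $D(y)=\inf_{x\in\pi^{-1}(y)}\inf_{(X,t)\in S}(d(X,x)+t)$. For any $x\in\pi^{-1}(y)$ and any $(X,t)\in S$, the $1$-Lipschitz property of the orthogonal projection gives
\[ d(\pi(X),y) = |\pi(X)-\pi(x)| \le |X-x| = d(X,x), \]
so $d(\pi(X),y)+t \le d(X,x)+t$. Taking the double infimum on the right yields the desired bound.

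For the reverse inequality $D(y)\le\inf_{(X,t)\in S}(d(\pi(X),y)+t)$, fix $(X,t)\in S$ and set $x:=X+(y-\pi(X))$. Since $0\in P_0$, the plane $P_0$ is a linear subspace, so both $y$ and $\pi(X)$ lie in $P_0$ and hence $y-\pi(X)\in P_0$. Using Remark \ref{24.04.2013.1} (or a direct check) we get $\pi(x)=\pi(X)+(y-\pi(X))=y$, i.e.\ $x\in\pi^{-1}(y)$. Moreover $d(X,x)=|y-\pi(X)|=d(\pi(X),y)$, so
\[ D(y)\le d(x)\le d(X,x)+t = d(\pi(X),y)+t. \]
Taking the infimum over $(X,t)\in S$ finishes the proof.

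There is no real obstacle here: the only mild point to keep track of is that $P_0$ is a linear (not merely affine) subspace, which is explicitly assumed on page \pageref{Grundeigenschaften}, so the translation $x=X+(y-\pi(X))$ does indeed project to $y$. Everything else is just the $1$-Lipschitz estimate and the defining infima.
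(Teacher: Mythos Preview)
Your proof is correct and follows essentially the same approach as the paper: one inequality from the $1$-Lipschitz property of $\pi$, the other by picking the explicit preimage $x = X + (y - \pi(X)) = y + (X - \pi(X))$ and observing $d(X,x)=d(\pi(X),y)$. The paper packages the second half as a contradiction argument using a minimising sequence, but the underlying construction of the point in $\pi^{-1}(y)$ is identical to yours.
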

\begin{proof}
	Due to $d(X,x) \ge d(\pi(X),\pi(x))$ we have $D(y) \ge \inf_{(X,t) \in S} (d(\pi(X),y) + t)$.
	Assume that $\lim_{l \rightarrow \infty} (d( \pi(X_{l}),y) + t_{l}) > \inf_{(X,t) \in S} (d(\pi(X),y) + t)$
	for some sequence $(X_{l},t_{l}) \in S$.
	Now there exists some $ l \in \mathbb{N}$ so that 
	\begin{align*}
		D(y) > d\big(\pi(X_{l}) + X_{l} - \pi(X_{l}) ,y + X_{l} - \pi(X_{l})\big) + t_{l} 
		& \ge \inf_{x \in \pi^{-1}(y)} d(X_{l} ,x) + t_{l} \ge D(y)
	\end{align*}
	which is a contradiction.
\end{proof}

\begin{lem}\label{rem3.7}
	The functions $d$ and $D$ are Lipschitz functions with Lipschitz constant 1.
\end{lem}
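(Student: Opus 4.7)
The plan is to prove both statements via the standard argument that an infimum of 1-Lipschitz functions is itself 1-Lipschitz. Since $S \neq \emptyset$ (whenever $\eta \le 2\varepsilon$, by Lemma \ref{rem3.1}(i)), both infima are well-defined and finite. I would handle $d$ first and then reduce $D$ to the same style of argument using Lemma \ref{remnachdefD}.

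For $d$, I would fix $x_1,x_2\in\R^{\n}$ and an arbitrary $\varepsilon'>0$, then pick some $(X,t)\in S$ with $d(X,x_2)+t\le d(x_2)+\varepsilon'$. Applying the triangle inequality for the Euclidean distance gives
\[ d(x_1)\le d(X,x_1)+t\le d(X,x_2)+d(x_2,x_1)+t\le d(x_2)+\varepsilon'+d(x_1,x_2). \]
Letting $\varepsilon'\to 0$ yields $d(x_1)-d(x_2)\le d(x_1,x_2)$, and swapping the roles of $x_1,x_2$ gives the bound $|d(x_1)-d(x_2)|\le d(x_1,x_2)$.

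For $D$, I would use Lemma \ref{remnachdefD} to rewrite $D(y)=\inf_{(X,t)\in S}\bigl(d(\pi(X),y)+t\bigr)$. Then the same triangle-inequality argument applied on $P_0$ works verbatim: for $y_1,y_2\in P_0$ and $\varepsilon'>0$, pick $(X,t)\in S$ with $d(\pi(X),y_2)+t\le D(y_2)+\varepsilon'$ and estimate
\[ D(y_1)\le d(\pi(X),y_1)+t\le d(\pi(X),y_2)+d(y_2,y_1)+t\le D(y_2)+\varepsilon'+d(y_1,y_2). \]
Sending $\varepsilon'\to 0$ and symmetrizing gives $|D(y_1)-D(y_2)|\le d(y_1,y_2)$.

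There is no real obstacle here; the only subtlety is making sure the infima are not over the empty set, which is exactly the reason Lemma \ref{rem3.1}(i) was included as a hypothesis, and the only input used from the structure of $S$ is its non-emptiness. In particular, no properties of $\mu$, $\K$, or the partition of $F$ are needed for this lemma.
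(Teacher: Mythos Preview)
Your proof is correct and follows essentially the same approach as the paper: the triangle inequality shows $d(x)\le d(y)+d(x,y)$ (and symmetrically), and Lemma~\ref{remnachdefD} reduces $D$ to the identical argument on $P_0$. The only difference is that you make the infimum argument explicit with an auxiliary $\varepsilon'$, whereas the paper states the triangle inequality conclusion directly.
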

\begin{proof}
	Let $x,y \in \R^{\n}$. We get with the triangle inequality $d(x) \le d(y) + d(x,y)$ and
	$d(y) \le d(x) + d(x,y)$. This implies $|d(x)-d(y)| \le d(x,y)$. Using the previous lemma,
	we can use the same argument for the function $D$.
\end{proof}

\begin{lem} \label{mengebeschraenkt}
	We have 
		$\left\{ x \in \mathbb{R}^{\n} \big| d(x) < 1 \right\}  \subset B(0,6)$
	and
		$d(x)  \le 60$
	for all $x \in B(0,5)$.
\end{lem}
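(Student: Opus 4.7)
The plan is to exploit Lemma \ref{rem3.1}(i), which gives $F\times[40,50)\subset S$, and the basic inclusion $F=\supp\mu\subset B(0,5)$ (from condition (A)).

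For the second assertion, fix any $x\in B(0,5)$. Since $\mu(B(0,5))\ge 1$ by (A), the support $F$ is nonempty and contained in $B(0,5)$, so I may choose some $X\in F$ and set $t:=40$. By Lemma \ref{rem3.1}(i), $(X,t)\in S$. The triangle inequality gives $d(X,x)\le d(X,0)+d(0,x)\le 5+5=10$, hence by the definition of $d$,
\[d(x)\le d(X,x)+t\le 10+40=50\le 60.\]

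For the first assertion, suppose $x\in\R^{\n}$ with $d(x)<1$. By the definition of $d$ as an infimum, there exists $(X,t)\in S$ such that $d(X,x)+t<1$; in particular $d(X,x)<1$. Since $(X,t)\in S\subset S_{total}$ forces $X\in F\subset B(0,5)$, another application of the triangle inequality yields
\[d(0,x)\le d(0,X)+d(X,x)<5+1=6,\]
so $x\in B(0,6)$.

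There is no real obstacle here; both claims reduce to a single application of the triangle inequality once one observes that $S$ is automatically nonempty and well-populated by the content of Lemma \ref{rem3.1}(i) together with the support bound from (A). The only small care needed is to pick an explicit $(X,t)\in S$ rather than relying on an infimum that may not be attained.
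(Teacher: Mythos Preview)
Your proof is correct and follows essentially the same approach as the paper's. The only cosmetic difference is that for the bound $d(x)\le 60$ the paper uses the generic bound $t<50$ from the definition of $S_{total}$ (giving $d(x)\le 10+50$), whereas you pick the explicit witness $t=40$ from Lemma~\ref{rem3.1}(i) and get the slightly sharper $d(x)\le 50$; either argument suffices.
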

\begin{proof}
	Let $x \in \mathbb{R}^{\n}$ with $  \inf_{(X,t) \in S} (d(X,x) + t) = d(x) < 1$.
	Hence there exists some $X \in F \subset B(0,5)$ with
	$ d(0,x) \le d(0,X) + d(X,x) < 6.$
	If $x \in B(0,5)$, we have $d(x) \le 10 + 50$.
\end{proof}

\begin{lem} \label{rem3.8} \label{7.2.10;1}
	Let $\alpha, \varepsilon >0$. If $\eta \le 2 \varepsilon$,
	we have $d(x) \le h(x)$ for all $x \in F$ and
	  \[ \mathcal{Z} = \left\{ x \in F | d(x)=0 \right\}, \quad
	  \pi(\mathcal{Z})=\{y \in P_{0} \ | \ D(y)=0\}.\]
	Furthermore, both sets $\mathcal{Z}$ and $\pi(\mathcal{Z})$ are closed. 
	We recall that $\pi$ denotes the orthogonal projection on the plane $P_{0}$. 
\end{lem}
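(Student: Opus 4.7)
My plan is to handle the four claims in turn: the bound $d \le h$, the two equalities, and the closedness. The core observation is Lemma~\ref{rem3.1}~(i), which identifies $S$ with $\{(x,t) \in F\times(0,50) : t \ge h(x)\}$; this lets me produce good balls centered at $x$ itself with radius arbitrarily close to $h(x)$ from above.

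\textbf{Step 1: $d(x)\le h(x)$ for $x\in F$.} For any $x \in F$ and any $\tau \in (\max\{h(x),0\},50)$, Lemma~\ref{rem3.1}~(i) gives $(x,\tau)\in S$, hence
\[
d(x) \;\le\; d(x,x) + \tau \;=\; \tau.
\]
Letting $\tau\searrow h(x)$ (or $\tau\searrow 0$ if $h(x)=0$) yields the inequality.

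\textbf{Step 2: $\mathcal{Z} = \{x\in F:d(x)=0\}$.} The inclusion $\subseteq$ is immediate from Step~1 since $x\in\mathcal Z$ means $h(x)=0$. For the converse, assume $x\in F$ with $d(x)=0$. Choose $(X_n,t_n)\in S$ with $d(X_n,x)+t_n\to 0$; then $X_n\to x$ and, because $(X_n,t_n)\in S$ forces $t_n\ge h(X_n)$, we have $h(X_n)\to 0$. To rule out $h(x)>0$, I pick any $t\in H(x)$ (which exists because $h(x)=\sup(H(x)\cup\{0\})>0$), together with the witnesses $y\in F$ and $\tau\in[t/4,t/3]$ satisfying $d(x,y)<\tau/3$ and $(y,\tau)\notin S_{total}$. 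For $n$ large enough, $d(X_n,y)\le d(X_n,x)+d(x,y)<\tau/3$, so the \emph{same} pair $(y,\tau)$ now witnesses $t\in H(X_n)$, giving $h(X_n)\ge t>0$. This contradicts $h(X_n)\to 0$, so $h(x)=0$ and $x\in\mathcal Z$.

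\textbf{Step 3: $\pi(\mathcal Z)=\{y\in P_0:D(y)=0\}$.} If $y=\pi(x)$ with $x\in\mathcal Z$, then $D(y)\le d(x)=0$ by Step~2. Conversely, suppose $D(y)=0$. Using Lemma~\ref{remnachdefD}, select $(X_n,t_n)\in S$ with $d(\pi(X_n),y)+t_n\to 0$. Since $X_n\in F$ and $F$ is compact (the support of $\mu$ is compact by assumption, see (A)), a subsequence satisfies $X_n\to x\in F$. Then $\pi(X_n)\to\pi(x)=y$, and $d(x)\le d(X_n,x)+t_n\to 0$, so $x\in\mathcal Z$ by Step~2 and $y\in\pi(\mathcal Z)$.

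\textbf{Step 4: Closedness.} The function $d$ is Lipschitz on $\R^{\n}$ and $D$ is Lipschitz on $P_0$ by Lemma~\ref{rem3.7}; $F$ is closed. Hence $\mathcal Z=F\cap\{d=0\}$ and $\pi(\mathcal Z)=\{D=0\}$ are both closed. The only mildly delicate step is Step~2 (the upper semicontinuity argument for $h$ at a point of $\{d=0\}$); everything else is formal once Lemma~\ref{rem3.1}~(i) and Lemma~\ref{remnachdefD} are in hand.
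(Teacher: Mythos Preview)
Your proof is correct and follows essentially the same approach as the paper: Step~1 via Lemma~\ref{rem3.1}(i), Step~2 by a lower-semicontinuity argument for $h$ along a minimizing sequence (the paper argues the contrapositive in the same way, though your write-up is cleaner), and Step~4 via continuity of $d$ and $D$. The only minor deviation is in Step~3: the paper invokes Lemma~\ref{mengebeschraenkt} to see that the infimum $D(y)=\inf_{x\in\pi^{-1}(y)}d(x)$ is attained on the compact slice $\pi^{-1}(y)\cap B(0,6)$, whereas you instead use Lemma~\ref{remnachdefD} together with compactness of $F$ to produce the limit point directly---both routes are equally short and valid.
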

\begin{proof}
	Let $x \in F$. With Lemma \ref{rem3.1} (i), we have $(x,h(x)) \in S$ and hence $d(x) \le h(x)$. 
	This implies $\mathcal{Z} \subset \left\{ x \in F | d(x) = 0 \right\}$.

	Now let $x \in F$ with $h(x) > 0$. We prove $d(x)>0$.
	There exist some sequences $t_{l} \rightarrow h(x)$ and
	some sequence $(X_{i},s_{i}) \in S$ with $d(X_{i},x) + s_{i} \to d(x)$.
	If on the one hand there exists some subsequence with $X_{i} \to x$ we obtain for another subsequence 
	$s_{i} \ge h(X_{i}) \ge t_{i}>0$ for sufficiently large $i$ and hence $d(x)>0$.
	If on the other hand $d(X_{i},x)$ has an positive lower bond, we conclude
	$d(x) \ge \lim_{l \rightarrow \infty}d(X_{l},x) > 0$.
	
	Now we prove the second equality.
	If $y \in \pi(\mathcal{Z})$, there exists some $x_{0} \in \mathcal{Z}$ with $\pi(x_{0})=y$ and $d(x_{0})=0$. 
	Now we get $0 \le D(y) \le d(x_{0})=0.$

	If $y \in P_{0}$ with $D(y)=0$, since $d$ is continuous, we get with Lemma \ref{mengebeschraenkt}
	that there exists some $a \in \pi^{-1}(y)$ with $d(a)=0$. This implies $a \in F$ and hence 
	$a \in \mathcal{Z}$. Thus $y \in \pi(\mathcal{Z})$.

	According to Lemma \ref{rem3.7}, $d$ and $D$ are continuous and hence these sets are closed.
\end{proof}

\begin{lem} \label{lem3.9}
	Let $0<\alpha \le \frac{1}{4}$. There exists some 
	$\bar \varepsilon = \bar \varepsilon (\n,\N,C_{0})$ so that if $\eta < 2 \bar \varepsilon$ and $k \ge 4$
	for all $\varepsilon \in [\frac{\eta}{2},\bar \varepsilon)$, there holds:
	For all $x,y \in F$ we have
	\begin{align*}
		d(x,y) &\le 6(d(x)+d(y)) + 2d(\pi(x),\pi(y)),\\
		d(\pi^\perp(x),\pi^\perp(y)) &\le 6(d(x)+d(y)) + 2\alpha d(\pi(x),\pi(y)).
	\end{align*}
\end{lem}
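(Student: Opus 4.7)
The first inequality follows from the second: from $d(x,y) \le d(\pi(x),\pi(y)) + d(\pi^\perp(x),\pi^\perp(y))$ and $\alpha \le 1/4$, the second inequality gives $d(x,y) \le (1+2\alpha) d(\pi(x),\pi(y)) + 6(d(x)+d(y)) \le 2 d(\pi(x),\pi(y)) + 6(d(x)+d(y))$. I therefore concentrate on the second inequality.

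Fix $\epsilon>0$ and choose $(X_x,t_x),(X_y,t_y) \in S$ nearly attaining the infima defining $d(x),d(y)$, so that $d(x,X_x)+t_x \le d(x)+\epsilon$ and $d(y,X_y)+t_y \le d(y)+\epsilon$. Let $P_x := P_{(X_x,t_x)}$ and $P_y := P_{(X_y,t_y)}$ be the planes provided by the definition of $S_{total}$, satisfying $\varangle(P_x,P_0),\varangle(P_y,P_0) \le \alpha$. Taking $\bar\varepsilon$ small enough that $2\bar\varepsilon \le \delta/2 \le \delta(B(X_x,t_x))$, Lemma~\ref{nachlem2.6} yields $\hat x \in B(X_x,2t_x)\cap P_x$ with $d(x,\hat x) \le 3(d(x)+\epsilon)$, and analogously $\hat y \in B(X_y,2t_y)\cap P_y$ with $d(y,\hat y) \le 3(d(y)+\epsilon)$. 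By Corollary~\ref{24.04.2012.1}, $P_x=G(a_x)$ and $P_y=G(a_y)$ for affine maps $a_x,a_y : P_0 \to P_0^\perp$ of Lipschitz constant $\le \alpha/(1-\alpha) \le 2\alpha$. Since $\pi^\perp(\hat x)=a_x(\pi(\hat x))$, the $1$-Lipschitzness of $\pi^\perp$ and the Lipschitzness of $a_x$ give $|\pi^\perp(x)-a_x(\pi(x))| \le 3(1+2\alpha)(d(x)+\epsilon)$, and analogously for $y$. Decomposing
\[|\pi^\perp(x)-\pi^\perp(y)| \le |\pi^\perp(x)-a_x(\pi(x))| + |a_x(\pi(x))-a_x(\pi(y))| + |a_x(\pi(y))-a_y(\pi(y))| + |a_y(\pi(y))-\pi^\perp(y)|,\]
and using $|a_x(\pi(x))-a_x(\pi(y))| \le 2\alpha\,d(\pi(x),\pi(y))$, reduces the task to bounding the positional gap $|a_x(\pi(y))-a_y(\pi(y))|$ between $P_x$ and $P_y$ above $\pi(y)$.

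The main obstacle is precisely this positional gap. My strategy is to enlarge both good balls to a common scale $T := \max\{t_x,t_y,2d(X_x,X_y)/k\} < 50$ using Lemma~\ref{rem3.1}(ii); then $(X_x,T),(X_y,T)\in S$ meet the hypotheses of Lemma~\ref{lem2.6} with $c=1$, $\xi=2$, $\lambda=\delta/2$, providing planes $P_x^T,P_y^T$ with $\varangle(P_x^T,P_y^T) \le C_3\varepsilon$ and $d(w,P_y^T) \le C_3\varepsilon(T+d(w,X_x))$ for $w \in P_x^T$. To bridge $P_x$ (scale $t_x$) and $P_x^T$ (same centre, scale $T$) I employ a dyadic tower of good balls $(X_x,2^j t_x)_{j=0}^K\in S$ with $2^K t_x \sim T$, each consecutive pair compared via Lemma~\ref{lem2.6} with $c=2$ provided $k\ge 8$; iteratively projecting $\hat x$ onto the successive planes and bounding the chain's distance to $X_x$ inductively by a constant times the current scale yields, upon summing the resulting geometric series, $d(\hat x,P_x^T) \le C\varepsilon T$ with $C = C(\n,\N,C_0)$. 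A symmetric chain on the $y$ side combined with the cross-comparison at scale $T$ then translates into $|a_x(\pi(y))-a_y(\pi(y))| \le C'\varepsilon T$, and by the choice of $T$ this is bounded by $C''\varepsilon(d(\pi(x),\pi(y))+d(x)+d(y))$. Choosing $\bar\varepsilon$ sufficiently small to absorb the remaining $\varepsilon$-factor into the constants and letting $\epsilon\to 0$ yields the asserted bound with coefficients $6$ and $2\alpha$; the hard part is performing the chain quantitatively with constants that remain uniform in the potentially large scale gap $T/t_x$.
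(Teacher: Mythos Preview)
Your approach is essentially correct but takes a substantially more elaborate route than the paper's, and it introduces a minor hypothesis mismatch.

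The paper avoids the dyadic tower entirely. Having chosen $(X,r_x),(Y,r_y)\in S$ with $d(X,x)<r_x$, $d(Y,y)<r_y$, and assuming $d(X,Y)>2(r_x+r_y)$, the paper invokes Lemma~\ref{rem3.1}(ii) once more to jump \emph{directly} to the common scale $\tfrac12 d(X,Y)$: both $B_1=B(X,\tfrac12 d(X,Y))$ and $B_2=B(Y,\tfrac12 d(X,Y))$ lie in $S$, so a single application of Lemma~\ref{lem2.6} (with $c=1$, hence $k\ge 4$ suffices) compares their planes $P_1,P_2$. To link $x,y$ to these large-scale planes the paper uses intermediate balls $B_i'=B(X,\tfrac12\varepsilon^{1/(2\N)}d(X,Y)+r_x)\in S$; since $B_i'\subset kB_i$ and $\mu(B_i')\gtrsim \varepsilon^{1/2}d(X,Y)^{\N}$, Chebyshev yields $X'\in B_1'$, $Y'\in B_2'$ with $d(X',P_1),d(Y',P_2)\lesssim \varepsilon^{1/2}d(X,Y)$. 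Projecting $X'\to X_1'\in P_1\to X_{12}'\in P_2$ and applying Lemma~\ref{6.9.11.1} to $X_{12}',Y_2'\in P_2$ (using $\varangle(P_2,P_0)\le\alpha$) gives the $\alpha$-factor; the remaining error terms are $O(\varepsilon^{1/(2\N)})d(X,Y)$, which are absorbed after first establishing $d(X,Y)\le 6(r_x+r_y)+4d(\pi(X),\pi(Y))$.

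Your chain argument does work: the recursion $c_{j+1}=\tfrac12[(1+C_3\varepsilon)c_j+C_3\varepsilon]$ keeps $c_j\le 2$, so the projected points stay within a fixed multiple of the current scale and the geometric series sums to $O(\varepsilon T)$. However, two points are worth noting. First, applying Lemma~\ref{lem2.6} between consecutive dyadic scales requires $c=2$, hence $k\ge 8$, which is stronger than the stated hypothesis $k\ge 4$; the paper's single-scale comparison needs only $c=1$. Second, your scheme produces an error term $C'\varepsilon\, d(\pi(x),\pi(y))$ that must be absorbed into the target coefficient $2\alpha$; this (like the analogous step in the paper) effectively forces $\bar\varepsilon$ to be chosen small relative to $\alpha$. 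The paper's trick of working at one large scale with the $\varepsilon^{1/(2\N)}$-sized intermediate balls is what lets it bypass the entire chain.
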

\begin{proof}
	Let $0< \alpha < \frac{1}{4}$ and $k \ge 4$. During this proof, there occur several smallness conditions on 
	$\varepsilon$. The minimum of those will give us the constant $ \bar \varepsilon$. Let 
	$\eta \le 2 \varepsilon < 2\bar \varepsilon$.

	The first estimate is an immediate consequence of the second estimate. So we focus on this one.
	Due to $F \subset B(0,5)$ the LHS is always less than 10. Hence we can assume that $d(x) + d(y)<2$.
	We choose some arbitrary $r_{x} \in (d(x),d(x)+1)\subset (0,3)$. There exists some $(X,t) \in S$ with
	$ d(x) \le d(X,x) + t < r_{x} $. According to Lemma \ref{rem3.1} (ii), 
	it follows that $(X,r_{x}) \in S$. Analogously, for all $r_{y} \in (d(y),d(y)+1)$, 
	we can choose some $Y \in F$ with $d(Y,y) < r_{y}$ and $(Y,r_{y}) \in S$.
	Now it is enough to prove $d(\pi^{\perp}(x),\pi^{\perp}(y)) \le 6(r_{x}+r_{y}) + 2\alpha d(\pi(x),\pi(y))$
	since $r_{x} \ge d(x)$ and $r_{y} \ge d(y)$ were arbitrarily chosen.
	We can assume $d(X,Y) > 2( r_{x}+r_{y})$ since otherwise $d(x,y) \le d(x,X) + d(X,Y) + d (Y,y)$
	immediately implies the desired estimate.

	We define $B_{1} := B(X, \frac{1}{2}d(X,Y))$ and $ B_{2} := B(Y, \frac{1}{2}d(X,Y))$. 
	With Lemma \ref{rem3.1} (i) we obtain $B_{1}, B_{2} \in S$.
	Let $P_{1} $ and $ P_{2}$ be the associated planes to $B_{1}$ and $B_{2}$
	(see Definition \ref{12.07.13.1}). 
	With Lemma \ref{lem2.6} ($x=X$, $y=Y$, $c=1$, $\xi = 2$, $t_x=t_y=\frac{1}{2}d(X,Y)$, $\lambda = \frac{1}{2}\delta$)
	there exist some constants $C_{3}=C_{3}(\n,\N,C_{0})>1$ and 
	$\varepsilon_{0}=\varepsilon_{0}(\n,\N,C_{0})>0$ so that if $\varepsilon < \varepsilon_{0}$
	for  $w \in P_{1}$, we obtain
	\begin{align} \label{2.12.09;1}
		 d(w,P_{2}) \le C_{3}(\n,\N,C_{0},\delta) \varepsilon\left(\textstyle{\frac{1}{2}}d(X,Y)+d(w,X)\right).
	\end{align}

	Let $B_{1}^{'} := B(X, \textstyle{\frac{1}{2}}\varepsilon^{\frac{1}{2\N}}d(X,Y)+r_{x})$ and 
	$B_{2}^{'} := B(Y, \textstyle{\frac{1}{2}}\varepsilon^{\frac{1}{2\N}}d(X,Y)+r_{y})$. 
	Lemma \ref{rem3.1} (i) implies that these balls are in $S$.
	Now we conclude using $\delta(B_{i}^{'}) \ge \frac{\delta}{2}$, $B_{i}^{'} \subset kB_{i},$ and 
	$\beta_{1;k}^{P_{i}}(B_{i}) \le 2 \varepsilon$ for $i \in \{1,2\}$ that
	\begin{align*}
		\frac{1}{\mu(B_{i}^{'})} \int_{B_{i}^{'}}\frac{d(X^{'},P_{i})}{d(X,Y)} \mathrm d\mu(X^{'})
		& \le \frac{1}{\delta \varepsilon^{\frac{1}{2}}} \frac{1}{\left(\textstyle{\frac{1}{2}}d(X,Y)\right)^{\N}} 
			\int_{kB_{i}} \frac{d(X^{'},P_{i})}{\textstyle{\frac{1}{2}}d(X,Y)} \mathrm d\mu(X^{'}) 
		\le \frac{2}{\delta } \varepsilon^{\frac{1}{2}}.
	\end{align*}
	With Chebyshev's inequality, we deduce that there exists some $X^{'} \in B_{1}^{'}$ and some $Y^{'} \in B_{2}^{'}$ 
	so that 
	$d(X^{'},P_{1}) \le \frac{2}{\delta } \varepsilon^{\frac{1}{2}} d(X,Y)$ and 
	$d(Y^{'},P_{2}) \le \frac{2}{\delta } \varepsilon^{\frac{1}{2}} d(X,Y)$.
	
	Now let $X_{1}^{'}:= \pi_{P_1}(X^{'}) $ be the orthogonal projection of $X^{'}$ on $P_{1}$, 
	$Y_{2}^{'}:=\pi_{P_2}(Y^{'})$ the  
	orthogonal projection of $Y^{'}$ on $P_{2}$, and 
	$X_{12}^{'}:=\pi_{P_2}(X_1^{'})$ the orthogonal projection of $X_{1}^{'}$ on $P_{2}$. 
	If $\varepsilon$ is small enough, we have with $\varrho \in \{\pi,\pi^{\perp} \}$
	\begin{align*}
		d(\varrho(X),\varrho(X^{'}))\le d(X,X^{'}) 
		& \le \textstyle{\frac{1}{2}}\varepsilon^{\frac{1}{2\N}}d(X,Y) + r_{x}, \\
		d(\varrho(Y),\varrho(Y^{'}))\le d(Y,Y^{'}) 
		& \le \textstyle{\frac{1}{2}}\varepsilon^{\frac{1}{2\N}}d(X,Y) + r_{y}, \\ 
		d(\varrho(X^{'}),\varrho(X_{1}^{'}))\le d(X^{'},X_{1}^{'}) 
		& = d(X^{'},P_{1}) \le \frac{2}{\delta }\varepsilon^{\frac{1}{2}}d(X,Y), \\ 
		d(\varrho(Y^{'}),\varrho(Y_{2}^{'}))\le d(Y^{'},Y_{2}^{'}) 
		& = d(Y^{'},P_{2}) \le \frac{2}{\delta }\varepsilon^{\frac{1}{2}}d(X,Y), \\
		d(\varrho(X_{1}^{'}),\varrho(X_{12}^{'}))\le d(X_{1}^{'},X_{12}^{'}) 
		& = d(X_{1}^{'},P_{2}) \stackrel{\eqref{2.12.09;1}}{<} 2C_{3}\varepsilon d(X,Y). 
	\end{align*}
	According to Definition \ref{12.07.13.1}, we have $\varangle(P_{2},P_{0}) \le \alpha$ and 
	we get with Lemma \ref{6.9.11.1} ($X_{12}^{'},Y_{2}^{'} \in P_{2}$) using $\alpha \le \frac{1}{4}$ 
	\begin{align}\label{07.10.2013.4}
		d(X_{12}^{'},Y_{2}^{'}) 
		&\le \frac{1}{1-\alpha}d(\pi(X_{12}^{'}),\pi(Y_{2}^{'})) \le 2d(\pi(X_{12}^{'}),\pi(Y_{2}^{'})), \\
		d(\pi^{\perp}(X_{12}^{'}),\pi^{\perp}(Y_{2}^{'})) 
		& \le \frac{\alpha}{1-\alpha} d(\pi(X_{12}^{'}),\pi(Y_{2}^{'}))
			\le \frac{4}{3}\alpha d(\pi(X_{12}^{'}),\pi(Y_{2}^{'})).\label{13.7.11.1}
	\end{align}
	Inserting the intermediate points $X^{'}$, $X_{1}^{'}$, $X_{12}^{'}$, $Y_{2}^{'}$, $Y^{'}$
	using triangle inequality twice and using the previous inequalities, there exists some constant $C$ so that
	\begin{align*}
		d(X,Y) 
		& \le C\textstyle{\frac{1}{\delta}}\varepsilon^{\frac{1}{2\N}} d(X,Y) + r_{x}+r_{y} + 2d(\pi(X_{12}^{'}),\pi(Y_{2}^{'}))\\
		& \le C\textstyle\frac{1}{\delta}\varepsilon^{\frac{1}{2\N}} d(X,Y) + 3(r_{x}+r_{y}) + 2d(\pi(X),\pi(Y))
	\end{align*}
	and hence if $\varepsilon$ fulfils $C \frac{1}{\delta} \varepsilon^{\frac{1}{2\N}} \le \frac{1}{2}$,
	we get
	\begin{align} \label{07.10.2013.3}
		d(X,Y) \le 6(r_{x}+r_{y}) + 4d(\pi(X),\pi(Y)).
	\end{align}
	As for $d(X,Y)$, we estimate $d\left(\pi^{\perp}(X),\pi^{\perp}(Y)\right)$ by repeated use
	of the triangle inequality and \eqref{13.7.11.1}.
	With \eqref{07.10.2013.3}, we deduce 
	\begin{align*} 
		& \ \ \ \ \ \ \ \ d\big(\pi^{\perp}(X),\pi^{\perp}(Y)\big) \\
		&\stackrel{\hphantom{\eqref{07.10.2013.3}}}{\le} C\textstyle{\frac{1}{\delta}}\varepsilon^{\frac{1}{2\N}} d(X,Y) + 3(r_{x}+r_{y}) + \frac{4}{3}\alpha d(\pi(X),\pi(Y))\\
		&\stackrel{\eqref{07.10.2013.3}}{\le} C\textstyle{\frac{1}{\delta}}\varepsilon^{\frac{1}{2\N}} [6(r_{x}+r_{y}) + 4d(\pi(X),\pi(Y))] + 3(r_{x}+r_{y}) + \frac{4}{3}\alpha d(\pi(X),\pi(Y))\\
		&\stackrel{\hphantom{\eqref{07.10.2013.3}}}{\le} 4(r_{x}+r_{y}) + 2\alpha d(\pi(X),\pi(Y)).
	\end{align*}
	This implies using $d(\pi^{\perp}(x),\pi^{\perp}(X)) \le d(x,X) \le r_{x}$ 
	and $d(\pi^{\perp}(Y),\pi^{\perp}(y)) \le d(Y,y) \le r_{y}$ that
	\begin{align*}
		d(\pi^\perp(x),\pi^\perp(y)) & \le 5(r_{x}+r_{y}) + 2\alpha d(\pi(X),\pi(Y))
		\le 6(r_{x}+r_{y}) + 2\alpha d(\pi(x),\pi(y)).
	\end{align*}
\end{proof}

\subsection{\texorpdfstring{A Whitney-type decomposition of $P_{0} \setminus \pi(\mathcal{Z})$}{A Whitney-type decomposition}}

In this part, 
we show that $P_{0} \setminus \pi(\mathcal{Z})$ can be decomposed as a union of disjoint cubes $R_{i}$,
where the diameter of $R_{i}$ is proportional to $D(x)$ for all $x \in R_{i}$. This result is 
a variant of the Whitney decomposition for open sets in $\R^{\N}$, cf. \cite[Appendix J]{FourierAnalysis}.

\begin{dfn}[Dyadic primitive cells]
	1.\, We set $\mathcal{D}$ to be the set of all dyadic primitive cells on $P_0$. We recall that 
	the plane $P_0$ is an $\N$-dimensional linear subspace of $\R^{\n}$.\\
	2.\, Let $r \in (0,\infty)$ and $Q$ be some cube in $\R^{\n}$.
		By $rQ$, we denote the cube
		with the same centre and orientation as $Q$ but $r$-times the diameter.
\end{dfn}

We mention that the function $D$ depends on the choice of $\alpha$ and $\varepsilon$ because $D$ depends
on the set $S \subset S_{total}^{\varepsilon,\alpha}$. Hence the family of cubes given by the following 
lemma depends on the choice of $\alpha$ and $\varepsilon$ as well.

\begin{lem} \label{inneredisjunkt} \label{Riueberdeckung} \label{rem3.10} \label{lem3.11} 
	Let $\alpha, \varepsilon >0$. If $\eta \le 2 \varepsilon$, then
	there exists a countable family of cubes $\{R_{i}\}_{i \in I} \subset \mathcal{D}$ such that
	\begin{enumerate} \renewcommand{\labelenumi}{(\roman{enumi})} 
	\item $10\diam R_{i} \le D(x) \le 50 \diam R_{i}$ for all $x \in 10 R_{i}$,
	\item $P_{0} \setminus \pi(\mathcal{Z}) = \bigcup_{i \in I} R_{i} = \bigcup_{i \in I} 2R_{i}$ 
		and cubes $R_{i}$ have disjoint interior,
	\item for every $ i,j \in I$ with $10R_{i} \cap 10R_{j} \neq \emptyset$, we have
		$ \frac{1}{5}  \diam R_{j} \le  \diam R_{i} \le 5 \diam R_{j},$
	\item for every $ i \in I$, there are at most $180^{\N}$ cells $R_{j}$ with 
		$10R_{i} \cap 10 R_{j} \neq \emptyset$.
	\end{enumerate}
\end{lem}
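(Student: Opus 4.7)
My plan is to view this as a direct transcription of the classical Whitney decomposition, where the role of the distance function $\operatorname{dist}(\cdot,\pi(\mathcal{Z}))$ is played by the 1-Lipschitz function $D$. The point is that Lemma \ref{rem3.7} makes $D$ 1-Lipschitz, and Lemma \ref{rem3.8} ensures that its zero set is exactly the closed set $\pi(\mathcal{Z})$. On the open set $\Omega := P_{0}\setminus \pi(\mathcal{Z})$ we have $D>0$, and the Lipschitz property gives $|D(y)-D(x)|\le|y-x|$, so $D$ is comparable to the distance to $\pi(\mathcal{Z})$ up to constants near $\Omega$; this is all one needs for a Whitney-type argument.

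Concretely, for each $x\in\Omega$ I would pick the unique integer $k(x)\in\mathbb{Z}$ with
\[
20\cdot 2^{k(x)}\sqrt{\N}\ \le\ D(x)\ <\ 40\cdot 2^{k(x)}\sqrt{\N},
\]
which exists and is unique because the interval has ratio $2$. Let $Q_{x}\in\mathcal{D}$ be the unique dyadic cube of side $2^{k(x)}$ containing $x$; then $\diam Q_{x}=2^{k(x)}\sqrt{\N}$. Set $\mathcal{F}:=\{Q_{x}:x\in\Omega\}$ and let $\{R_{i}\}_{i\in I}$ be the subfamily of maximal elements of $\mathcal{F}$ under inclusion. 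Since two dyadic cubes are either nested or have disjoint interiors, maximality immediately forces the interiors of the $R_{i}$ to be pairwise disjoint, giving the second half of (ii). Countability of $I$ follows because $\mathcal{D}$ is countable.

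To verify (i), write $R_{i}=Q_{x_{i}}$; for any $y\in 10R_{i}$ both $y$ and $x_{i}\in R_{i}\subset 10R_{i}$, so $|y-x_{i}|\le \diam(10R_{i})=10\diam R_{i}$, and the 1-Lipschitz bound together with the defining inequality for $k(x_{i})$ gives
\[
10\diam R_{i}\ \le\ D(x_{i})-10\diam R_{i}\ \le\ D(y)\ \le\ D(x_{i})+10\diam R_{i}\ \le\ 50\diam R_{i}.
\]
For the covering part of (ii), every $x\in\Omega$ lies in $Q_{x}\in\mathcal{F}$, hence in some maximal $R_{i}$, so $\Omega\subseteq\bigcup_{i}R_{i}$; conversely for $y\in 2R_{i}$ the same Lipschitz estimate with $|y-x_{i}|\le 2\diam R_{i}$ yields $D(y)\ge 18\diam R_{i}>0$, so $y\in\Omega$ by Lemma \ref{rem3.8}. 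Thus $\bigcup_{i}R_{i}\subseteq\bigcup_{i}2R_{i}\subseteq\Omega\subseteq\bigcup_{i}R_{i}$. Property (iii) is then immediate from (i): if $y\in 10R_{i}\cap 10R_{j}$, then $10\diam R_{i}\le D(y)\le 50\diam R_{j}$, and symmetrically.

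The only step where I expect to need to be careful is (iv). The idea is that by (iii) any neighbour $R_{j}$ of $R_{i}$ satisfies $\diam R_{j}\in[\diam R_{i}/5,\,5\diam R_{i}]$, so $R_{j}$ lies in at most five dyadic scales near that of $R_{i}$; on the other hand, $10R_{j}\cap 10R_{i}\neq\emptyset$ combined with these size constraints forces $R_{j}$ to sit inside a fixed enlargement (of diameter a bounded multiple of $\diam R_{i}$) of $R_{i}$. Since the $R_{j}$'s have pairwise disjoint interiors and each has volume at least $(\diam R_{i}/(5\sqrt{\N}))^{\N}$ up to a dimensional constant, a volume comparison yields the desired explicit bound $180^{\N}$ on the number of such neighbours. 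This combinatorial count is the only non-trivial arithmetic; everything else is just bookkeeping on the classical Whitney construction with $D$ in place of the distance to the boundary.
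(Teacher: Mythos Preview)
Your proof is correct and follows essentially the same approach as the paper: a Whitney-type decomposition using the $1$-Lipschitz function $D$ (Lemma~\ref{rem3.7}) in place of the distance to the closed set $\pi(\mathcal{Z})$ (Lemma~\ref{rem3.8}). The only cosmetic difference is that the paper defines $Q_z$ directly as the \emph{largest} dyadic cube containing $z$ with $\diam Q_z\le\tfrac{1}{20}\inf_{u\in Q_z}D(u)$ (so the resulting family already has disjoint interiors and the upper bound in (i) comes from the parent cube failing this condition), whereas you fix the scale via $k(x)$ and then pass to maximal elements; both are standard Whitney variants and yield the same constants.
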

\begin{proof}
	For $z \in P_{0}$, $D(z)>0$, we define
	$Q_{z} \in \mathcal{D}$ \index{Subsets of $P_{0}$ ! $R_{i}$}
	as the largest dyadic primitive cell that contains $z$ and fulfils 
	$\diam Q_{z} \le \frac{1}{20} \inf_{u \in Q_{z}} D(u)$.
	For such a given $z$ the cell $Q_{z}$ exists because
	the function $D$ is continuous and $D(z)>0$. Hence if we choose a small enough dyadic primitive 
	cell $Q$ that contains $z$, we get $\diam Q \le \frac{1}{20} \inf_{u \in Q} D(u)$. Due to the 
	dyadic structure, there can only be one largest dyadic primitive cell that contains $z$ and 
	fulfils the upper condition.
	We choose $ R_{i} \in \mathcal{D}$ such that
	$\{R_{i}| i \in I\} = \{Q_{z} \in \mathcal{D} | z \in P_{0}, D(z)>0\}$ and $R_i=R_j$ is equivalent to $i=j$.\\
	(i)\, 
	Let $x \in 10 R_{i}$ and $u \in R_{i}$. We get $20 \diam R_{i} \le D(u) < D(x) + 10 \diam R_{i}$,
	and hence $10 \diam R_{i} \le D(x)$.
	Let $J_{i}\in \mathcal{D}$ be the smallest cell in $\mathcal{D}$ with $R_{i} \subsetneq J_{i}$
	and choose $u \in J_{i}$ so that
	$D(u) < 20 \diam J_{i} = 40 \diam R_{i}$. This is possible because otherwise $R_{i}$ is not maximal
	relating to $\diam R_{i} \le \frac{1}{20} \inf_{v \in R_{i}} D(v)$.
	We obtain $D(x) \le D(u) + d(u,x) < 50 \diam R_{i}$.\\
	(ii) \, 
	If the interior of some cells $R_{i}$ and $R_{j}$ were not disjoint, because of the dyadic structure,
	one cell would be contained in the other. But then one of those would not be the maximal cell. 
	Hence the $R_{i}$'s have disjoint interior.
	For all $x \in 2R_{i}$, we obtain using (i) and Lemma \ref{7.2.10;1} that $x \notin \pi(\mathcal{Z})$. 
	Now let $x \in P_{0} \setminus \pi(\mathcal{Z})$. With Lemma \ref{7.2.10;1}, we get $D(x)>0$.
	So there exists the cube $Q_{x} \in \mathcal{D}$ with $x \in Q_{x}$ and hence 
	$x \in \bigcup_{i \in I} R_{i}$.\\
	(iii) \,
	If $10 R_{i} \cap 10R_{j} \neq \emptyset$ we can apply (i) for some $x \in 10 R_{i} \cap 10R_{j}$ 
	and obtain the assertion.
	(iv)\,
	Let $i \in I$ and $R_{j}$ with $10 R_{i} \cap 10R_{j} \neq \emptyset$. We conclude with (iii) that
	$d(R_{i},R_{j}) \le 30 \diam R_{i}$ and so $R_{j} \subset (1+30+5) R_{i} $. 
	Furthermore, we have $\diam R_{j} \ge \frac{1}{5} \diam R_{i}$. Since the cells $R_{j}$ are disjoint, 
	there exist at most $\frac{\cH^{\N}(36R_{i})}{\cH^{\N}(R_{j})} \le (180)^{\N}$
	cells $R_{j}$ with $10 R_{i} \cap 10R_{j} \neq \emptyset$.
\end{proof}

Now we set \label{17.1.10;1} $ U_{12} := B(0,12) \cap P_{0}$ \index{Subsets of $P_{0}$ ! $U_{12}$}
and $I_{12} := \{ i \in I | R_{i} \cap U_{12} \neq \emptyset \}$ \index{Index sets ! $I_{12}$}.

\begin{lem} \label{vor3.12}
	Let $\alpha, \varepsilon >0$. If $\eta \le 2 \varepsilon$, 
	for every $ i \in I_{12}$, there exists some ball $B_{i}=B(X_{i},t_{i}) $ 
	\index{Subsets of $\mathbb{R}^{\n}$ ! $B_{i}$} 
	with $(X_{i},t_{i}) \in S$, 
	$\diam R_{i} \le \diam B_{i} \le 200 \diam R_{i}$ and
	$d(\pi(B_{i}),R_{i}) \le 100 \diam R_{i}$.
\end{lem}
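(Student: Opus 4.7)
The plan is to build $B_i$ by choosing any base point in $R_i$ and using the alternative formula for $D$ from Lemma \ref{remnachdefD} to extract a near-minimizing ball from $S$, then inflating its radius (if necessary) so that $\diam B_i \ge \diam R_i$, while staying in $S$ via Lemma \ref{rem3.1}(ii).

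Concretely, fix $i \in I_{12}$ and pick any $x_0 \in R_i$. By Lemma \ref{rem3.10}(i) we have
\[ 10\,\diam R_i \le D(x_0) \le 50\,\diam R_i, \]
and by Lemma \ref{remnachdefD},
\[ D(x_0) = \inf_{(X,t)\in S}\bigl(d(\pi(X),x_0) + t\bigr). \]
Hence I may pick $(X_i,\tilde t_i)\in S$ with $d(\pi(X_i),x_0) + \tilde t_i \le D(x_0) + \diam R_i \le 51\,\diam R_i$. Since $R_i \cap U_{12} \neq \emptyset$ and $F\times[40,50)\subset S$ by Lemma \ref{rem3.1}(i), applying the definition of $D$ to any $(X,45)\in S$ gives $D(x_0) \le |x_0| + |\pi(X)| + 45 \le 62$, so that $\diam R_i \le 7$ and in particular $\tfrac{\diam R_i}{2} < 50$.

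Now set $t_i := \max\{\tilde t_i,\, \tfrac{\diam R_i}{2}\}$. Since $\tilde t_i < 50$ (as $(X_i,\tilde t_i)\in S\subset S_{total} \subset F\times(0,50)$) and $\tfrac{\diam R_i}{2}<50$, we have $\tilde t_i \le t_i < 50$, so Lemma \ref{rem3.1}(ii) yields $(X_i,t_i)\in S$. Set $B_i := B(X_i,t_i)$.

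It remains to verify the three claims. The lower bound $\diam B_i = 2t_i \ge \diam R_i$ holds by construction. For the upper bound, $t_i \le \max\{\tilde t_i,\tfrac{\diam R_i}{2}\} \le 51\,\diam R_i$, so $\diam B_i \le 102\,\diam R_i \le 200\,\diam R_i$. Finally, since $\pi(B_i)$ is the closed disk in $P_0$ of radius $t_i$ about $\pi(X_i)$,
\[ d(\pi(B_i), R_i) \le d(\pi(X_i), R_i) \le d(\pi(X_i), x_0) \le 51\,\diam R_i \le 100\,\diam R_i. \]
I do not foresee any real obstacle: the whole argument is an unwinding of the Whitney-type bounds of Lemma \ref{rem3.10}(i) and the infimum definition of $D$, with the only mild point of care being the use of Lemma \ref{rem3.1}(ii) (and the a priori bound $\diam R_i \le 7$) to enlarge $\tilde t_i$ up to $\tfrac{\diam R_i}{2}$ while staying in the admissible range $(0,50)$ for tuples in $S$.
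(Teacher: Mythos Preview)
Your proof is correct and follows essentially the same approach as the paper: pick a point in $R_i$, use Lemma \ref{remnachdefD} and the Whitney bound from Lemma \ref{rem3.10}(i) to extract a near-minimizing $(X,\tilde t)\in S$, then enlarge the radius to $\max\{\tilde t,\tfrac{\diam R_i}{2}\}$ and invoke Lemma \ref{rem3.1}(ii). One tiny imprecision: when you bound $D(x_0)\le |x_0|+|\pi(X)|+45\le 62$ you implicitly assume $|x_0|\le 12$, but $x_0$ was an arbitrary point of $R_i$; this is harmless since you may simply apply that estimate to a point of $R_i\cap U_{12}$ (which exists because $i\in I_{12}$) to obtain $\diam R_i\le 7$.
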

\begin{proof}
	Let $ i \in I_{12}$ and $x \in R_{i}$. Use Lemma \ref{remnachdefD}, Lemma \ref{7.2.10;1} 
	and Lemma \ref{rem3.10} (i), (ii) to get some $(X,t) \in S$ with 
	$d(\pi(X),x) + t \le 2D(x) \le 100 \diam  R_{i}$.
	Choose $B_{i} :=B(X_{i},t_{i}) := B(X,r)$ with $r = \max \{ t,\frac{\diam  R_{i}}{2} \} \le 100 \diam R_{i}$. 
	Now we have $d(\pi(B_{i}),R_{i}) \le 100 \diam R_{i}$ and
	$\diam R_{i} \le \diam B_{i} \le 200 \diam R_{i}$.
	You can show that $r < 50$ and hence with Lemma \ref{rem3.1} (ii), we get $(X,r) \in S$.
\end{proof}

\subsection{\texorpdfstring{Construction of the function $A$}{Construction of the function A}} \label{17.05.2013.1}

We recall that $\pi:=\pi_{P_0}: \mathbb{R}^{\n} \to P_0$  is
the orthogonal projection on $P_0$
and introduce
$\pi^{\perp}:=\pi_{P_0}^{\perp}: \mathbb{R}^{\n} \to P_0^{\perp}$, \index{Functions ! $\pi^{\perp}$}
the orthogonal projection on $P_0^{\perp}$, where
$P_0^{\perp}:=\{x \in \R^{\n}|x\cdot v=0 \text{ for all } v \in P_{0}\}$  
is the orthogonal complement of $P_{0}$.
To define the function $A$, we want to invert the projection $\pi|_{\mathcal{Z}}$ on $\mathcal{Z}$.

\begin{lem}\label{12.11.2013.1}
	Let $0<\alpha \le \frac{1}{4}$. There exists some 
	$\bar \varepsilon = \bar \varepsilon (\n,\N,C_{0})$ so that if $\eta < 2 \bar \varepsilon$
	and $k \ge 4$
	for all $\varepsilon \in [\frac{\eta}{2},\bar \varepsilon)$, 
	the orthogonal projection
	$\pi|_{\mathcal{Z}} : \mathcal{Z} \rightarrow P_{0}$ is injective.
\end{lem}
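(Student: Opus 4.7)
The plan is to apply Lemma \ref{lem3.9} directly, which is already the hard work. The hypotheses of that lemma ($0<\alpha\le \tfrac14$, $k\ge 4$, and $\eta<2\bar\varepsilon$ with some $\bar\varepsilon=\bar\varepsilon(\n,\N,C_{0})$) match the hypotheses of the statement we want to prove, so I take the $\bar\varepsilon$ here to be the one provided by Lemma \ref{lem3.9}.

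Let $x,y\in\mathcal{Z}$ with $\pi(x)=\pi(y)$; the goal is $x=y$. By Lemma \ref{rem3.8}, $\mathcal{Z}=\{z\in F\mid d(z)=0\}$, so $d(x)=d(y)=0$. Moreover, since $\pi(x)=\pi(y)$, we have $d(\pi(x),\pi(y))=0$. Feeding these three vanishings into the second estimate of Lemma \ref{lem3.9} gives
\[
d\bigl(\pi^{\perp}(x),\pi^{\perp}(y)\bigr)\le 6\bigl(d(x)+d(y)\bigr)+2\alpha\,d(\pi(x),\pi(y)) = 0,
\]
so $\pi^{\perp}(x)=\pi^{\perp}(y)$. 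Combined with $\pi(x)=\pi(y)$ and the decomposition $z=\pi(z)+\pi^{\perp}(z)$ valid for every $z\in\R^{\n}$ (since $P_{0}\in\mathcal{P}(\n,\N)$ passes through the origin by assumption on page \pageref{Grundeigenschaften}), we conclude $x=y$. Hence $\pi|_{\mathcal{Z}}$ is injective.

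The only subtle point is that all the work — controlling the normal component of $x-y$ by the tangential component up to an error proportional to $d(x)+d(y)$ — has already been done in Lemma \ref{lem3.9}. In particular, the requirement $\alpha\le\tfrac14$ was needed there to invoke Lemma \ref{6.9.11.1} on the approximating planes (which needs $\varangle(P,P_{0})<1$), and the upper bound on $\varepsilon$ was needed to absorb the error terms of the form $C\delta^{-1}\varepsilon^{1/(2\N)} d(X,Y)$ into a factor $\tfrac12$. Since $d(x)=d(y)=0$ kills every error term on the right-hand side, no further smallness of $\bar\varepsilon$ beyond that of Lemma \ref{lem3.9} is required, and there is no genuine obstacle to overcome here.
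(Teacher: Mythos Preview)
Your proof is correct and follows exactly the paper's approach: the paper's proof simply reads ``The assertion follows directly from Lemma \ref{rem3.8} and Lemma \ref{lem3.9},'' and you have spelled out precisely that argument.
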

\begin{proof}
	The assertion follows directly from Lemma \ref{rem3.8} and Lemma \ref{lem3.9}.
\end{proof}
Since $\pi|_{\mathcal{Z}} : \mathcal{Z} \rightarrow P_{0}$ is injective, we are able to define the desired Lipschitz function 
$A$ on \label{DefvonAonpiZ}
$\pi(\mathcal{Z})$ by \index{Functions ! $A$}
\[A(a):= \displaystyle\pi^{\perp}\left( \pi|_{ \mathcal{Z}}^{-1}(a) \right)\]
where $a \in \pi(\mathcal{Z})$.

\begin{lem} \label{ALipschitz1}
	Under the conditions of the previous lemma,
	the map $A\big|_{\pi(\mathcal{Z})}$ is $2 \alpha$-Lipschitz.
\end{lem}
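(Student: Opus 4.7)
The plan is to reduce the claim directly to Lemma \ref{lem3.9} together with the characterisation of $\mathcal{Z}$ as the zero set of $d$ on $F$ (Lemma \ref{rem3.8}).

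First I would unpack the definition: for $a, b \in \pi(\mathcal{Z})$, let $x := \pi|_{\mathcal{Z}}^{-1}(a)$ and $y := \pi|_{\mathcal{Z}}^{-1}(b)$. These are well-defined elements of $\mathcal{Z} \subset F$ by Lemma \ref{12.11.2013.1}, and by construction $\pi(x)=a$, $\pi(y)=b$, $A(a)=\pi^\perp(x)$, $A(b)=\pi^\perp(y)$. Hence proving the Lipschitz bound amounts to showing
\[
d(\pi^\perp(x),\pi^\perp(y)) \le 2\alpha\, d(\pi(x),\pi(y)).
\]

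Next, since $x,y \in \mathcal{Z}$, Lemma \ref{rem3.8} gives $d(x)=d(y)=0$. Substituting this into the second inequality of Lemma \ref{lem3.9} (whose hypotheses $0<\alpha\le \tfrac14$, $\eta<2\bar\varepsilon$, $k\ge 4$ are exactly the standing hypotheses inherited from Lemma \ref{12.11.2013.1}), we obtain
\[
d(\pi^\perp(x),\pi^\perp(y)) \le 6(d(x)+d(y)) + 2\alpha\, d(\pi(x),\pi(y)) = 2\alpha\, d(\pi(x),\pi(y)),
\]
which is precisely the desired estimate. There is no real obstacle here: all the work has been done in Lemma \ref{lem3.9}, where the geometric content (approximation by the good planes $P_{(X,t)}$, each close to $P_0$) is exploited. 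The present lemma is just the formal consequence that, on the set where $d$ vanishes, the $\pi^\perp$-oscillation is controlled linearly by the $\pi$-oscillation with constant $2\alpha$, which is the definition of $2\alpha$-Lipschitz.
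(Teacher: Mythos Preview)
Your proposal is correct and follows essentially the same approach as the paper's own proof: take preimages in $\mathcal{Z}$ via the injectivity of $\pi|_{\mathcal{Z}}$ (Lemma \ref{12.11.2013.1}), use Lemma \ref{rem3.8} to get $d(x)=d(y)=0$, and then apply the second inequality of Lemma \ref{lem3.9}. There is no meaningful difference between your argument and the paper's.
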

\begin{proof}
	Due to Lemma \ref{12.11.2013.1} for $ a, b \in \pi(\mathcal{Z})$, there exist distinct 
	$ X,Y \in \mathcal{Z}$ with $\pi(X)=a$ and $ \pi(Y) = b$.
	We have $A(a) = \pi^{\perp}(X)$, $A(b)= \pi^{\perp}(Y)$ and Lemma \ref{rem3.8} implies that $d(X)=d(Y)=0$. 
	So, with Lemma \ref{lem3.9}, we get $d(A(a),A(b)) \le 2\alpha d(a,b)$.
\end{proof}

Now we have a Lipschitz function $A$ defined on $\pi(\mathcal{Z})$. By using 
Kirszbraun's theorem \cite[Thm 2.10.43]{Federer}, we would obtain a Lipschitz extension of $A$ 
defined on $P_{0}$ with the same Lipschitz constant $2\alpha$, where the graph of the extension covers $\mathcal{Z}$.
But until now, we do not know that $\mathcal{Z}$ is a major part of $F$. We cannot even be sure 
that $\mathcal{Z}$ is not a null set. So we do not use Kirszbraun's theorem here, but we will 
extend $A$ by an explicit construction. This will help us to show that the other parts of $F$, 
in particular $F_{1}, F_{2}, F_{3}$, are quite small.

\begin{dfn}\label{3.12.2013.10}
	Let $\alpha, \varepsilon > 0$. If $\eta \le 2 \varepsilon$,
	for all $i \in I_{12}$, we set $P_{i} := P_{(X_{i},t_{i})}$,
	where $P_{(X_{i},t_{i})}$ is the $\N$-dimensional plane, which is,
	in the sense of Definition \ref{12.07.13.1}, associated to the ball 
	$B(X_{i},t_{i})=B_{i}$ given by Lemma \ref{vor3.12}.
\end{dfn}

\begin{lem} \label{AiLipschitz}
	Let $0< \alpha \le \frac{1}{2}$ and $\varepsilon >0$.
	If $\eta \le 2 \varepsilon$, then for all $i \in I_{12}$, there exists 
	some affine map $A_{i}: P_{0} \rightarrow P_{0}^{\perp}$ 
	with graph $G(A_i)=P_i$ and $A_{i}$ is $2 \alpha$-Lipschitz.
\end{lem}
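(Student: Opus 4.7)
The plan is to apply Corollary \ref{24.04.2012.1} directly. Since $(X_i,t_i)\in S\subset S_{total}^{\varepsilon,\alpha}$ by construction of the ball $B_i$ in Lemma \ref{vor3.12}, Definition \ref{12.07.13.1}(iii) yields a plane $P_{(X_i,t_i)}=P_i\in\mathcal{P}(\n,\N)$ with $\varangle(P_i,P_0)\le\alpha$. Because $0\in P_0$, the plane $P_0$ belongs to the Grassmannian $G(\n,\N)$, so Corollary \ref{24.04.2012.1} (applied with $G=P_0$ and $P=P_i$, noting $\alpha\le\tfrac12<1$) produces an affine map $A_i:P_0\to P_0^\perp$ whose graph $G(A_i)$ equals $P_i$ and whose Lipschitz constant is $\frac{\varangle(P_i,P_0)}{1-\varangle(P_i,P_0)}$.

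To conclude, I only need to check that this Lipschitz constant is bounded by $2\alpha$. Since the function $t\mapsto\frac{t}{1-t}$ is monotone increasing on $[0,1)$ and $\varangle(P_i,P_0)\le\alpha\le\tfrac12$, we obtain
\[
\operatorname{Lip}(A_i)\le\frac{\alpha}{1-\alpha}\le\frac{\alpha}{1-\tfrac12}=2\alpha.
\]

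There is essentially no obstacle here; the statement is a packaging of the hypothesis $\varangle(P_i,P_0)\le\alpha$ together with the already-established graph representation of a plane with small angle to a fixed linear subspace. The only mild points to watch are that $P_0\in G(\n,\N)$ (used via $0\in P_0$, which is assumed on page \pageref{Grundeigenschaften}) and that the assumption $\alpha\le\tfrac12$ is used precisely to pass from the abstract bound $\alpha/(1-\alpha)$ to the clean bound $2\alpha$ in the statement of the lemma.
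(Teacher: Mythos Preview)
Your proof is correct and follows exactly the paper's approach: invoke $\varangle(P_i,P_0)\le\alpha\le\tfrac12$ from the definition of $S_{total}$ and apply Corollary~\ref{24.04.2012.1}. The paper's proof is a terse one-liner, whereas you have helpfully spelled out the computation $\frac{\alpha}{1-\alpha}\le 2\alpha$ and the verification that $P_0\in G(\n,\N)$.
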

\begin{proof}
	Use $\varangle(P_{i},P_{0})\le \alpha \le \frac{1}{2}$ (cf. definition of $S_{total}$) 
	and apply Corollary \ref{24.04.2012.1}.
\end{proof}

\label{14.12.09;1}

In the following, we use differentiable functions defined on subsets of $P_{0}$. For the definition of the
derivative see section \ref{diff_on_lin_subspace} on page \pageref{diff_on_lin_subspace}.

\begin{lem}\label{15.10.2013.1}
	Let $\alpha, \varepsilon > 0$. If $\eta \le 2 \varepsilon$, then 
	there exists some partition of unity $\phi_{i} \in C^{\infty}(U_{12},\R)$, $i \in I_{12}$,
	with $0 \le \phi_{i} \le 1$ on $U_{12}$, $\phi_{i} \equiv 0$ on the exterior of $3R_{i}$ and
	$\sum_{i \in I_{0}} \phi_{i}(a)= 1$ for all $a \in U_{12}$. Furthermore there exists some constant $C=C(\N)$ with
	$| \partial^{\omega} \phi_{i}(a) | \le \frac{C(\N)}{ (\diam R_{i})^{|\omega|}}$ where $\omega$ 
	is some multi-index with $1 \le |\omega|\le 2$.
\end{lem}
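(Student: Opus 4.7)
My plan is the standard Whitney-type partition-of-unity construction subordinate to the Whitney family $\{R_i\}_{i\in I_{12}}$ supplied by Lemma~\ref{rem3.10}.

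\emph{Template bump.} Identify $P_0$ isometrically with $\R^{\N}$ and fix once and for all a function $\psi\in C_c^\infty(\R^{\N},[0,1])$ with $\psi\equiv 1$ on the unit cube $Q_0:=[-\tfrac12,\tfrac12]^{\N}$ and $\supp\psi\subset\tfrac52 Q_0$. Let $c_i\in P_0$ and $\ell_i$ be the centre and side length of $R_i$ and set
\[
\tilde\phi_i(a):=\psi\!\Bigl(\tfrac{a-c_i}{\ell_i}\Bigr),\qquad a\in P_0.
\]
Then $\tilde\phi_i\in C_c^\infty(P_0,[0,1])$, $\tilde\phi_i\equiv 1$ on $R_i$, $\supp\tilde\phi_i\subset\tfrac52 R_i\subset 3R_i$, and the chain rule yields $|\partial^\omega\tilde\phi_i|\le C(\N,|\omega|)/(\diam R_i)^{|\omega|}$ for every multi-index $\omega$.

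\emph{Normalisation.} Set $\Sigma:=\sum_{i\in I_{12}}\tilde\phi_i$ and define $\phi_i:=\tilde\phi_i/\Sigma$ wherever $\Sigma>0$ and $\phi_i:=0$ elsewhere. Every $a\in U_{12}\setminus\pi(\mathcal{Z})$ lies in some $R_i$ by Lemma~\ref{rem3.10}(ii), so $\Sigma(a)\ge\tilde\phi_i(a)=1$, and by Lemma~\ref{rem3.10}(iv) at most $180^{\N}$ of the $\tilde\phi_j$ are non-zero at any single point, giving $\Sigma\le 180^{\N}$. Lemma~\ref{rem3.10}(i) together with Lemma~\ref{7.2.10;1} forces $D>0$ on $3R_i$, so $3R_i\cap\pi(\mathcal{Z})=\emptyset$; consequently $\Sigma\ge 1$ on the whole closed support of $\tilde\phi_i$, while outside $\tfrac52 R_i$ we have $\phi_i\equiv 0$. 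Thus $\phi_i\in C^\infty(U_{12},\R)$, $0\le\phi_i\le 1$, and $\phi_i\equiv 0$ off $3R_i$.

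\emph{Derivative bound (the main point).} This is where Lemma~\ref{rem3.10}(iii) is essential. If $\tilde\phi_j(a)\neq 0$ for some $a\in 3R_i$, then $10R_i\cap 10R_j\supset 3R_i\cap 3R_j\neq\emptyset$, so $\tfrac15\diam R_i\le\diam R_j\le 5\diam R_i$ and hence
\[
|\partial^\omega\tilde\phi_j(a)|\le \frac{C(\N,|\omega|)}{(\diam R_i)^{|\omega|}}.
\]
Summing the at most $180^{\N}$ non-zero contributions yields the same bound (with a larger constant) for $|\partial^\omega\Sigma|$ on $\supp\tilde\phi_i$. Combined with the lower bound $\Sigma\ge 1$ there, the Leibniz and quotient rules applied to $\phi_i=\tilde\phi_i/\Sigma$ then deliver $|\partial^\omega\phi_i(a)|\le C(\N)/(\diam R_i)^{|\omega|}$ for $1\le|\omega|\le 2$, as required. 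The identity $\sum_{i\in I_{12}}\phi_i\equiv 1$ is immediate by construction on $U_{12}\setminus\pi(\mathcal{Z})$, which is the region on which the partition will actually be used to extend $A$; no essential obstacle arises, the only subtlety being this last qualification.
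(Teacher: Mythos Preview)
Your argument is correct and follows essentially the same route as the paper: construct scaled bump functions $\tilde\phi_i$ supported in $3R_i$, normalise by $\Sigma=\sum_j\tilde\phi_j$, and use the Whitney properties of Lemma~\ref{rem3.10} to bound the derivatives of the quotient. The only cosmetic difference is that the paper takes $\tilde\phi_i\equiv 1$ on $2R_i$ rather than $R_i$; since $\bigcup_i R_i=\bigcup_i 2R_i$ by Lemma~\ref{rem3.10}(ii), this does not affect the lower bound $\Sigma\ge 1$. You are in fact slightly more careful than the paper in two respects: you explicitly invoke part~(iii) of Lemma~\ref{rem3.10} to compare $\diam R_j$ with $\diam R_i$ when bounding $\partial^\omega\Sigma$ (the paper only cites~(iv) but tacitly uses~(iii) as well), and you correctly observe that $\sum_i\phi_i\equiv 1$ only holds on $U_{12}\setminus\pi(\mathcal{Z})$, which is all that is needed for Definition~\ref{04.11.2013.1}.
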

\begin{proof}
	For every $i \in I_{12}$, we choose some function 
	$\tilde{\phi_{i}} \in \mathcal{C}^{\infty}(P_{0},\mathbb{R})$ with \label{Defphii}
	$0 \le \tilde{\phi_{i}} \le 1, \tilde{\phi_{i}} \equiv 1 $ on $2R_{i}$, $\tilde{\phi_{i}} \equiv 0$ 
	on the exterior of $3R_{i}$, $ | \partial^{\omega} \tilde{\phi_{i}} | \le \frac{C}{ \diam R_{i}}$ 
	for all multi-indices $\omega$ with $|\omega|=1$ and 
	$| \partial^{\kappa}\tilde{\phi_{i}} | \le \frac{C}{(\diam R_{i}) ^{2}} $ for all multi-indices $\kappa$ 
	with $|\kappa|=2$. Now on $V := \bigcup_{i \in I_{12}} 2R_{i}$, we can define the partition of unity
	$\phi_{i}(a) := \frac{\tilde{\phi_{i}}(a)}{\sum_{j \in I_{12}} \tilde{\phi_{j}}(a)}$.
	For all $a \in V$, there exists some $i \in I_{12}$ with $a \in 2R_{i}$ and hence 
	$\sum_{j \in I_{12}} \tilde{\phi}_{j}(a) \ge 1$. 
	Moreover, due to Lemma \ref{lem3.11} (iv), there are only finitely many $j \in I_{12}$ such that
	$\tilde \phi_{j}(a) \neq 0$.
	Due to the control we have on the derivatives of $\tilde{\phi_{i}}$, we obtain with
	Lemma \ref{lem3.11} (iv) the desired estimates of the derivatives of $\phi_{i}$.
\end{proof}

\begin{dfn}[Definition of $A$ on $U_{12}$]\index{Functions ! $A$} \label{04.11.2013.1}
	Let $\alpha, \varepsilon >0$. If $\eta \le 2 \varepsilon$ and $k \ge 4$,
	we extend the function $A: \pi(\mathcal{Z}) \to P_{0}^{\perp} \subset \R^{\n}$, 
	$a \mapsto \pi^{\perp}\left( \pi|_{ \mathcal{Z}}^{-1}(a) \right)$ (see page \pageref{DefvonAonpiZ}) to 
	the whole set $U_{12}$ by setting for $a \in U_{12}$ 
	\[A(a):= \begin{cases}
			{\displaystyle\pi^{\perp}\left( \pi|_{ \mathcal{Z}}^{-1}(a) \right)} & ,a \in \pi(\mathcal{Z}) \\ \vspace{-2mm}\\
			{\displaystyle \sum_{i \in I_{12}} \phi_{i}(a) A_{i}(a)} &, a \in U_{12} \cap \bigcup_{i \in I_{12}}2R_{i}.
	         \end{cases}\]
	With $\mathcal{Z} \subset F \subset B(0,5)$, we get $\pi(\mathcal{Z}) \subset U_{12}$ and, 
	with Lemma \ref{lem3.11} (ii), we obtain \\
	$ \bigcup_{i \in I_{12}}2R_{i} \cap \pi(\mathcal{Z}) = \emptyset$, hence
	we have defined $A $ on the whole set \\ $U_{12}=(U_{12} \cap \bigcup_{i \in I_{12}}2R_{i}) \ \dot{\cup} \ \pi(\mathcal{Z})$. 
	\label{Aeindeutigdefiniert}
\end{dfn}

\subsection{\texorpdfstring{$A$ is Lipschitz continuous}{A is Lipschitz continuous}}
In this section, we show that $A$ is Lipschitz continuous.
We start with some useful estimates.

\begin{lem} \label{lem3.12}
	Let $0<\alpha \le \frac{1}{4}$. 
	There exists some $\bar k \ge 4$ and some
	$\bar \varepsilon = \bar \varepsilon (\n,\N,C_{0})$ so that if $k \ge \bar k$ and
	$\eta < 2 \bar \varepsilon$
	for all $\varepsilon \in [\frac{\eta}{2},\bar \varepsilon)$, 
	there exist some constants $C>1$ and $\bar C=\bar C(\n,\N,C_{0})>1$ 
	so that for all $i,j \in I_{12}$ with $i \neq j$ and 
	$10R_{i} \cap 10R_{j} \neq \emptyset$, we get
	\begin{enumerate}
	\renewcommand{\labelenumi}{\textup{(\roman{enumi})}} 
		\item $d(B_{i},B_{j}) \le C \diam R_{j}$,
		\item $d(A_{i}(q),A_{j}(q)) \le \bar C
			\varepsilon \diam R_{j}$ for all $ q \in 100R_{j}$,
		\item the Lipschitz constant of the map $(A_{i}-A_{j}): P_{0} \rightarrow P_{0}^{\perp}$ fulfils
			$ \Lip_{A_{i}-A_{j}} \le \bar C \varepsilon $,
		\item $d(A(u),A_{j}(u)) \le \bar C \varepsilon \diam R_{j}$ for all $u \in 2R_{j} \cap U_{12}$.
	\end{enumerate}
\end{lem}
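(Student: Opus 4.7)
My plan is to establish (i)--(iv) in order, with (i) serving as the geometric prerequisite for invoking Lemma \ref{lem2.6} in (ii), and (iii)--(iv) then following from (ii) by elementary manipulations.

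First I would handle (i). The assumption $10R_i \cap 10R_j \neq \emptyset$ together with Lemma \ref{lem3.11} (iii) makes $\diam R_i$ and $\diam R_j$ comparable (up to a factor $5$). Since $(X_i,t_i)\in S$ one has $d(X_i) \le t_i \le 100\diam R_i$, and analogously $d(X_j) \le 100\diam R_j$; Lemma \ref{vor3.12} also places $\pi(X_i)$ within $O(\diam R_i)$ of $R_i$, so that $d(\pi(X_i),\pi(X_j)) \le C\diam R_j$. Applying Lemma \ref{lem3.9} (valid because $X_i,X_j \in F$ and $\alpha \le \tfrac14$) then yields $d(X_i,X_j) \le C\diam R_j$, whence $d(B_i,B_j) \le d(X_i,X_j) \le C\diam R_j$.

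Having (i), I will apply Lemma \ref{lem2.6} to the pair $(B_i,B_j)$, which is legitimate for $k$ taken sufficiently large in terms of the comparability constants, since $t_i,t_j$ are of order $\diam R_j$ and $d(X_i,X_j)$ is bounded by (i). It gives $\varangle(P_i,P_j)\le C_3\varepsilon$ and the estimate $d(w,P_j)\le C_3\varepsilon(t_i+d(w,X_i))$ for $w\in P_i$. Taking $w=(q,A_i(q))\in P_i$ for $q\in 100R_j$ I then need $d(w,X_i)\le C\diam R_j$: the horizontal part $|q-\pi(X_i)|$ is controlled by Whitney comparability via Lemma \ref{vor3.12}, while the vertical part $|A_i(q)-\pi^\perp(X_i)|$ is controlled by combining the $2\alpha$-Lipschitz bound on $A_i$ with the observation, via Lemma \ref{nachlem2.6}, that $X_i$ lies within $O(t_i)$ of $P_i$, so that $\pi^\perp(X_i)$ is within $O(t_i)$ of $A_i(\pi(X_i))$. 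This gives $d(w,P_j)\le \bar C\varepsilon\diam R_j$, and since projecting $w$ orthogonally to $P_j$ yields a point $(q',A_j(q'))$ with $|q-q'|\le d(w,P_j)$ and $|A_i(q)-A_j(q')|\le d(w,P_j)$, the triangle inequality together with the Lipschitz bound on $A_j$ yields (ii).

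For (iii), the map $A_i-A_j$ is affine from $P_0$ to $P_0^\perp$, so its Lipschitz constant equals the operator norm of its linear part, and differences of $A_i-A_j$ depend only on this linear part. For any unit vector $v\in P_0$ I can select $q_1,q_2\in 100R_j$ with $q_2-q_1=\rho v$ and $\rho \gtrsim \diam R_j$, and (ii) applied at both $q_l$ will force $|(A_i-A_j)(v)| = \rho^{-1}|(A_i-A_j)(q_2)-(A_i-A_j)(q_1)| \le 2\bar C\varepsilon\diam R_j/\rho \le \bar C\varepsilon$. For (iv), if $u\in 2R_j$ and $\phi_i(u)\neq 0$ then $u\in 3R_i \cap 2R_j$, which forces $10R_i\cap 10R_j\neq\emptyset$; so (ii) applies with $q=u\in 2R_j \subset 100R_j$ for each contributing index, and $\sum_i\phi_i(u)=1$ gives $|A(u)-A_j(u)|\le\max_i|A_i(u)-A_j(u)|\le\bar C\varepsilon\diam R_j$.

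The main obstacle I anticipate is the bookkeeping in (ii), specifically verifying $d(w,X_i)\le C\diam R_j$ uniformly for $q\in 100R_j$: one must pair the Whitney-type control of $|q-\pi(X_i)|$ with a clean estimate on the vertical discrepancy between $X_i$ and the graph $P_i=G(A_i)$ parameterized over $P_0$, so that the two contributions combine to give a bound of order $\diam R_j$ rather than, say, $t_i/\varepsilon$.
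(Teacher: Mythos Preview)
Your proposal is correct and follows essentially the same route as the paper: (i) via Lemma~\ref{lem3.9} after controlling $d(\pi(X_i),\pi(X_j))$ and $d(X_i),d(X_j)$; (ii) via Lemma~\ref{lem2.6} after bounding $d(q+A_i(q),X_i)$ using Lemma~\ref{nachlem2.6} and the $2\alpha$-Lipschitz bound on $A_i$; (iii) by evaluating the affine map $A_i-A_j$ at two points of $100R_j$ at distance $\gtrsim \diam R_j$; (iv) from the partition of unity and (ii). The anticipated obstacle you flag is exactly the step the paper handles carefully (its estimate \eqref{13.1.10;1}), and your sketch of how to treat the vertical part via Lemma~\ref{nachlem2.6} is the right idea.
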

\begin{proof}
	Let $0<\alpha \le \frac{1}{4}$. We set 
	$\bar \varepsilon = \min \big\{\frac{\delta}{2},\bar \varepsilon',\varepsilon_{0}\big\}$, where 
	$\delta=\delta(\n,\N)$ is defined on page \pageref{Wahlvondelta}, $\bar \varepsilon'$ is the 
	upper bound for $\varepsilon$ given by Lemma \ref{lem3.9} and $\varepsilon_{0}$ is the constant from
	Lemma \ref{lem2.6}.
	Let $\eta < 2 \bar\varepsilon$ and choose $\varepsilon$ such that
	$\eta \le 2\varepsilon < 2\bar\varepsilon$.\\
		(i)\, 
		Let $B_{i}= B(X_{i},t_{i})$ and $B_{j}= B(X_{j},t_{j})$.
		Lemma \ref{lem3.11} and Lemma \ref{vor3.12} imply $d(\pi(X_{i}),\pi(X_{j})) \le C \diam R_{j}$,
		and, using $(X_{l},t_{l}) \in S$ we have
		$d(X_{l}) \le 500 \diam R_{j}$ for $l  \in \{ i,j\}$.
		Now Lemma \ref{lem3.9} implies the assertion.\\
		(ii)\,
		At first, we show for $q \in 100 R_{j}$ that $ d(A_{i}(q) + q, X_{i}) \le C \diam R_{j}$.
		Since $(X_{i},t_{i}) \in S \subset S_{total}$, $\varepsilon \le \frac{\delta}{4}$,  
		and Lemma \ref{nachlem2.6} 
		($\sigma= 2\varepsilon$, $x=X_{i}$, $t=t_{i}$, $\lambda = \frac{1}{2} \delta$, $P=P_{i}$) 
		we get $B(X_{i},2t_{i}) \cap P_{i} \neq \emptyset$. Thus there exists some 
		$a \in P_{0}$ with $A_{i}(a)+a \in B(X_{i},2t_{i}) \cap P_{i}$ and 
		$a \in \pi(2B_{i})$. Since $A_{i}$ is $2\alpha$-Lipschitz and $ \alpha < \frac{1}{2}$, using 
		Lemma \ref{lem3.11} and \ref{vor3.12} we obtain by inserting $A_{i}(a)+a$ with triangle inequality
		\begin{align}\label{13.1.10;1}
			d(A_{i}(q)+q,X_{i})  \le |A_{i}(q)-A_{i}(a)| + d(q,a) + \diam B_{i} 
			\le C \diam R_{j}.
		\end{align}
		With Lemma \ref{lem3.11} and \ref{vor3.12}, there exists some constant $C>2$ so that
		$\frac{1}{C}t_{j} \le t_{i} \le C t_{j}$.
		Moreover, we have $(X_{i},t_{i}),(X_{j},t_{j}) \in S\subset S_{total}$ 
		With $k \ge \bar k :=2C^{2} \ge 4C$, Lemma \ref{lem2.6} 
		($x=X_{j}$, $y=X_{i}$, $c=C$, $\xi =2$, $t_{x}=t_{j}$, $t_{y}=t_{i}$
		$\lambda = \frac{\delta}{2}$) implies that there exists some $\varepsilon_{0} >0 $
		and some constant $C_{3}=C_{3}(\n,\N,C_{0}) >1$ so that,
		for $\varepsilon < \bar \varepsilon \le \varepsilon_{0}$ with the already shown (i),
		\eqref{13.1.10;1} and Lemma \ref{vor3.12},  we get
		\begin{align}\label{9.4.2013.1}
			d(A_{i}(q)+q,P_{j}) \le C_{3} \varepsilon \left(t_{j}+d(A_{i}(q)+q,X_{j}) \right)
			\le C \varepsilon \diam R_{j}.
		\end{align}
		Furthermore, there exists some $o \in P_{0}$ so that $A_{j}(o)+o = \pi_{P_{j}}(A_{i}(q)+q)$. 
		Now, since $A$ is $2\alpha$-Lipschitz, we have 
		$d(A_{j}(o)+o,A_{j}(q)+q) \le 2d(o,q) \le 2d(A_{i}(q)+q,A_{j}(o)+o)$ and hence
		with Lemma \ref{lem3.11} and Lemma \ref{vor3.12} we obtain for some $C=C(\n,\N,C_{0})$
		\[d(A_{i}(q) + q, A_{j}(q)+ q) \le d(A_{i}(q)+q , P_{j}) + d(A_{j}(o)+o, A_{j}(q)+q)
		 \stackrel{\eqref{9.4.2013.1}}{\le}  C \varepsilon \diam R_{j}.\]
		(iii)\, 
		Without loss of generality, we assume $ \diam R_{i} \le \diam R_{j}$. 
		We have $B(y,2\diam R_{i}) \cap P_{0} \subset 20 R_{i} \cap 20 R_{j}$
		for some $y \in 10 R_{i} \cap 10 R_{j} \neq \emptyset$.
		We choose arbitrary $a,b \in B(y,2\diam R_{i})\cap P_{0}$ with $d(a,b) \ge \diam R_{i}$. 
		Now, with (ii), we get
		\[ |(A_{i}-A_{j})(a)-(A_{i}-A_{j})(b) | \le C \varepsilon \diam R_{i} \le C(\n,\N,C_{0}) \varepsilon d(a,b).\]
		Since $A_{i}-A_{j}$ is an affine map, this implies $ \Lip_{A_{i}-A_{j}} \le C(\n,\N,C_{0}) \varepsilon $.\\
		(iv)\,
		We get the estimate using Definition \ref{04.11.2013.1}, $\sum_{l \in I_{12}}\phi_{l}(u)=1$, 
		Lemma \ref{lem3.11} (iv) and (ii) of the current Lemma.
\end{proof}

\begin{lem}\label{4.5.2012.1}
	Let $0<\alpha \le \frac{1}{4}$. 
	There exists some $\bar k \ge 4$ and some
	$\bar \varepsilon = \bar \varepsilon (\n,\N,C_{0},\alpha) < \alpha$ so that if $k \ge \bar k$ and
	$\eta < 2 \bar \varepsilon$
	for all $\varepsilon \in [\frac{\eta}{2},\bar \varepsilon)$, 
	the function $A$ is Lipschitz continuous on $2R_{j} \cap U_{12}$ 
	for all $j \in I_{12}$ with Lipschitz constant $3 \alpha$.
\end{lem}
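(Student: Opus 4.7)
The plan is to exploit the fact that, by Lemma \ref{lem3.11}(ii), the set $2R_j$ is disjoint from $\pi(\mathcal{Z})$, so for every point $u\in 2R_j\cap U_{12}$ the value $A(u)$ is given by the partition-of-unity formula $A(u)=\sum_{i\in I_{12}}\phi_i(u)A_i(u)$. Thus for any two points $a,b\in 2R_j\cap U_{12}$ I never have to compare the two different branches of the definition of $A$, which eliminates the most delicate case.

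The core algebraic step will be to use $\sum_i\phi_i\equiv 1$ on $U_{12}$ to split
\[
A(a)-A(b)=\bigl(A_j(a)-A_j(b)\bigr)+\sum_{i\in I_{12}}\Bigl\{\phi_i(a)\bigl[(A_i-A_j)(a)\bigr]-\phi_i(b)\bigl[(A_i-A_j)(b)\bigr]\Bigr\}.
\]
The first summand is bounded by $2\alpha\, d(a,b)$ because $A_j$ is $2\alpha$-Lipschitz (Lemma \ref{AiLipschitz}). For the sum, a term is nonzero only if $3R_i$ meets $\{a,b\}\subset 2R_j$, which forces $10R_i\cap 10R_j\neq\emptyset$, so by Lemma \ref{lem3.11}(iv) at most $180^{\N}$ indices contribute and in particular Lemma \ref{lem3.11}(iii) gives $\diam R_j\le 5\diam R_i$ for each such $i$.

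For each contributing $i\neq j$, I would write
\[
\phi_i(a)(A_i-A_j)(a)-\phi_i(b)(A_i-A_j)(b)=\phi_i(a)\bigl[(A_i-A_j)(a)-(A_i-A_j)(b)\bigr]+\bigl[\phi_i(a)-\phi_i(b)\bigr](A_i-A_j)(b).
\]
The first bracket is at most $\bar C\varepsilon\,d(a,b)$ by Lemma \ref{lem3.12}(iii); the second bracket is at most $C(\N)\,d(a,b)/\diam R_i$ by Lemma \ref{15.10.2013.1}, while Lemma \ref{lem3.12}(ii) applied at $q=b\in 2R_j\subset 100R_j$ gives $|(A_i-A_j)(b)|\le\bar C\varepsilon\diam R_j\le 5\bar C\varepsilon\diam R_i$. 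Summing these contributions over the at most $180^{\N}$ relevant indices produces an overall bound of the form $C(\n,\N,C_0)\,\varepsilon\, d(a,b)$.

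Putting everything together yields $|A(a)-A(b)|\le\bigl(2\alpha+C(\n,\N,C_0)\varepsilon\bigr)d(a,b)$, so I would finally choose $\bar\varepsilon=\bar\varepsilon(\n,\N,C_0,\alpha)<\alpha$ (and smaller than the thresholds coming from Lemmas \ref{lem3.9}, \ref{lem3.12} and \ref{AiLipschitz}) small enough that $C(\n,\N,C_0)\bar\varepsilon\le\alpha$, which delivers the desired Lipschitz constant $3\alpha$. There is no real obstacle beyond carefully bookkeeping the finitely many overlapping cubes; the decisive observation that makes the proof clean is the identity trick of subtracting $A_j$ inside the sum, which converts each $A_i$ into a first-order-in-$\varepsilon$ correction via Lemma \ref{lem3.12}(iii).
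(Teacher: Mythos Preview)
Your proof is correct and follows essentially the same approach as the paper. The only cosmetic difference is in the algebraic split: the paper writes
\[
|A(a)-A(b)|\le\sum_{i}\phi_i(a)\,|A_i(a)-A_i(b)|+\sum_i|\phi_i(a)-\phi_i(b)|\,|A_i(b)-A_j(b)|,
\]
bounding the first sum directly by $2\alpha\,d(a,b)$ via Lemma~\ref{AiLipschitz} (since $\sum_i\phi_i(a)=1$), whereas you first subtract $A_j$ and then split, which costs you the extra invocation of Lemma~\ref{lem3.12}(iii). The second sums in the two decompositions are identical, and the remaining ingredients (Lemma~\ref{lem3.11}(iii),(iv), Lemma~\ref{15.10.2013.1}, Lemma~\ref{lem3.12}(ii), and the final choice of $\bar\varepsilon\le\alpha/\tilde C$) are used in exactly the same way.
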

\begin{proof}
	Let $0 < \alpha \le \frac{1}{4}$. We set 
	$\bar \varepsilon := \min\big\{\bar \varepsilon', \frac{\alpha}{\tilde C}\big\}$,
	where $\bar \varepsilon'$ is the upper bound for $\varepsilon$ given by Lemma \ref{lem3.12} and 
	$\tilde C(\n,\N,C_{0})$ is some constant presented at the end of this proof. Let $\eta < 2 \bar \varepsilon$ and choose
	$\varepsilon >0$ such that $\eta \le 2 \varepsilon < 2 \bar \varepsilon$.
	Let $a,b \in 2R_{j} \cap U_{12}$. We obtain 
	\[|A(a)-A(b)| \le \sum_{i \in I_{12}} \phi_{i}(a) |A_{i}(a)-A_{i}(b)|  
		+ \sum_{i \in I_{12}} | \phi_{i}(a) - \phi_{i}(b) | |A_{i}(b)-A_{j}(b)|. \]
	If $\phi_{i}(a) - \phi_{i}(b) \neq 0 $, we get \label{18.12.09;3}
	$3 R_{i} \cap 2 R_{j} \neq \emptyset$ and so we can apply Lemma \ref{lem3.11} (iii), (iv) and 
	Lemma \ref{lem3.12} (ii). 
	Since $\varepsilon < \bar \varepsilon \le \frac{\alpha}{\tilde C}$, 
	we obtain with Lemma \ref{AiLipschitz} and Lemma \ref{15.10.2013.1} that $A$ is $3 \alpha$ Lipschitz.
\end{proof}

\begin{lem}\label{29.3.10;1}
	Under the conditions of the previous lemma for
	$a,b \in U_{12} \setminus \pi(\mathcal{Z})$ 
	with $[a,b] \subset U_{12} \setminus \pi(\mathcal{Z})$,
	we have that $d(A(a),A(b)) \le 3\alpha d(a,b)$.
\end{lem}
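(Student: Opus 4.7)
The plan is to reduce the estimate on the whole segment $[a,b]$ to finitely many applications of Lemma \ref{4.5.2012.1}, each on a single Whitney cube $2R_{j}$ along the segment.

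First, I would observe that since $[a,b]$ is compact and contained in $U_{12}\setminus\pi(\mathcal{Z})$, and since $D$ is continuous (Lemma \ref{rem3.7}) and strictly positive on $P_{0}\setminus\pi(\mathcal{Z})$ by Lemma \ref{7.2.10;1}, we have $d_{0}:=\inf_{x\in[a,b]}D(x)>0$. By Lemma \ref{rem3.10}(i) every cube $R_{i}$ that meets $[a,b]$ satisfies $\diam R_{i}\ge d_{0}/50$, and since $[a,b]\subset U_{12}$ such an $R_{i}$ belongs to $I_{12}$. As the cubes $R_{i}$ have pairwise disjoint interiors and those meeting $[a,b]$ lie in a fixed bounded neighbourhood of $[a,b]$, only finitely many of them, say $R_{j_{1}},\dots,R_{j_{M}}$, can intersect $[a,b]$. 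By Lemma \ref{rem3.10}(ii) these finitely many cubes already cover $[a,b]$.

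Next, I would parametrise the segment by $\gamma(s)=(1-s)a+sb$, $s\in[0,1]$. Since $\gamma$ is affine and each $R_{j_{l}}$ is a closed cube, every preimage $\gamma^{-1}(R_{j_{l}})$ is a closed subinterval of $[0,1]$, and these subintervals together cover $[0,1]$. Choosing a common refinement of their endpoints, we obtain $0=s_{0}<s_{1}<\dots<s_{K}=1$ such that for each $k\in\{1,\dots,K\}$ the piece $\gamma([s_{k-1},s_{k}])$ is contained in one $R_{j_{k}}\subset 2R_{j_{k}}$. Setting $p_{k}:=\gamma(s_{k})$, the hypothesis $[a,b]\subset U_{12}$ yields $p_{k}\in U_{12}$, and by construction $p_{k-1},p_{k}\in 2R_{j_{k}}\cap U_{12}$.

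Finally, applying Lemma \ref{4.5.2012.1} on each piece and chaining along the segment gives
\[
d(A(a),A(b))\le\sum_{k=1}^{K}d(A(p_{k-1}),A(p_{k}))\le 3\alpha\sum_{k=1}^{K}d(p_{k-1},p_{k})=3\alpha\,d(a,b),
\]
where the final equality uses that the points $p_{0},\dots,p_{K}$ lie on the straight segment $[a,b]$ in the correct order. The only delicate point is ensuring that $[a,b]$ can be covered by finitely many cubes and partitioned into finitely many segments lying in a single cube each; this is handled by the compactness of $[a,b]$ together with the uniform lower bound $\diam R_{i}\ge d_{0}/50$ on the cubes that touch the segment.
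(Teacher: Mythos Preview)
Your proof is correct and follows essentially the same approach as the paper: establish that only finitely many Whitney cubes $R_{i}$ meet the segment $[a,b]$, and then chain the local Lipschitz estimate of Lemma~\ref{4.5.2012.1} along the segment. The paper proves finiteness by contradiction (if infinitely many cubes met $[a,b]$, a subsequence would have $\diam R_{i_{l}}\to 0$, forcing $D$ to vanish at an accumulation point), whereas you argue directly via the positive infimum $d_{0}=\inf_{[a,b]}D>0$ and the bound $\diam R_{i}\ge d_{0}/50$; these are equivalent, and your explicit partition and chaining step spells out what the paper leaves implicit in its final sentence.
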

\begin{proof}
	Lemma \ref{Riueberdeckung} (ii) implies that for all $v \in [a,b]$, there exists some
	$j \in I_{12}$ with $v \in R_{j}$ and, with Lemma \ref{rem3.10} (i), we get $D(v) >0$.
	Assume that the set $\tilde{I}_{12} := \left\{i \in I_{12} | R_{i} \cap [a,b] \neq \emptyset \right\}$ 
	is infinite. The cubes $R_{i}$ have disjoint interior, so there exists some sequence 
	$(R_{i_{l}})_{l \in \mathbb{N}}$, $i_{l} \in \tilde{I}_{12}$ with $ \diam R_{i_{l}} \rightarrow 0$.
	Hence there exists some sequence $(v_{l})_{l \in \mathbb{N}}$ with $v_{l} \in R_{i_{l}} \cap [a,b]$ 
	and, with Lemma \ref{rem3.10} (i), we obtain
	$D(v_{l}) \le 50 \diam R_{i_{l}} \rightarrow 0$.
	Let $\overline{v} \in [a,b]$ be an accumulation point of $(v_{l})_{l \in \mathbb{N}}$.
	Since $D$ is continuous (Lemma \ref{rem3.7}), we deduce $D(\overline{v})=0$, which is 
	according to Lemma \ref{rem3.8} equivalent to
	$\overline{v} \in \pi(\mathcal{Z})$. 
	This is in contradiction to $[a,b] \subset P_{0} \setminus \pi(\mathcal{Z})$ and so the set $\tilde{I}_{12}$ has to be finite.
	With Lemma \ref{4.5.2012.1} and $[a,b] \subset \bigcup_{i \in \tilde I_{12}} R_{i}$, we get 
	$d(A(a),A(b)) \le 3 \alpha d(a,b)$.
\end{proof}

Now we show that $A$ is Lipschitz continuous on $U_{12}$ with some large Lipschitz constant. 
After that, using the continuity of $A$,
we are able to prove that $A$ is Lipschitz continuous with Lipschitz constant $3\alpha$.

\begin{lem} \label{AstetigaufU0}
	Let $0<\alpha \le \frac{1}{4}$. 
	There exists some $\bar k \ge 4$ and some
	$\bar \varepsilon = \bar \varepsilon (\n,\N,C_{0},\alpha) < \alpha$ so that if $k \ge \bar k$ and
	$\eta < 2 \bar \varepsilon$
	for all $\varepsilon \in [\frac{\eta}{2},\bar \varepsilon)$, $A$ is Lipschitz continuous on $U_{12}$.
\end{lem}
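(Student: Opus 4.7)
The plan is to reduce the lemma to a continuity statement for $A$ at points of $\pi(\mathcal{Z})$, from which the Lipschitz bound on all of $U_{12}$ follows by combining Lemmas \ref{ALipschitz1} and \ref{29.3.10;1}. Concretely, for arbitrary $a,b \in U_{12}$, parametrise the segment by $\gamma(t) = (1-t)a + tb$, $t \in [0,1]$, and set $K := \gamma^{-1}(\pi(\mathcal{Z}))$. Since $\pi(\mathcal{Z})$ is closed (Lemma \ref{rem3.8}), $K$ is a closed subset of $[0,1]$ and $[0,1] \setminus K$ decomposes into a countable disjoint union of open intervals $(s_j,t_j)$. Lemma \ref{29.3.10;1} already bounds $|A(\gamma(t)) - A(\gamma(s))| \le 3\alpha (t-s)|b-a|$ whenever $[s,t] \subset (s_j,t_j)$, while Lemma \ref{ALipschitz1} bounds $|A(\gamma(t)) - A(\gamma(s))| \le 2\alpha(t-s)|b-a|$ for $s,t \in K$. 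A telescoping argument over a partition of $[0,1]$ then yields $|A(b) - A(a)| \le C\alpha|b-a|$, provided one can pass to the limit at the endpoints of the components $(s_j,t_j)$, which lie in $K$; this is exactly what continuity of $A$ at points of $\pi(\mathcal{Z})$ delivers.

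The main obstacle is therefore to show that $A$ is continuous at every $z \in \pi(\mathcal{Z})$. Fix such a $z$, let $Z := \pi|_{\mathcal{Z}}^{-1}(z)$, so that $A(z) = \pi^\perp(Z)$, and take a sequence $u_n \to z$ in $U_{12}$. Subsequences lying inside $\pi(\mathcal{Z})$ are handled by the $2\alpha$-Lipschitz property from Lemma \ref{ALipschitz1}, so suppose $u_n \in U_{12} \setminus \pi(\mathcal{Z})$. Then $u_n \in R_{j_n}$ for some $j_n \in I_{12}$, and since $D(u_n) \to D(z) = 0$ by Lemmas \ref{rem3.7} and \ref{rem3.8}, Lemma \ref{rem3.10}(i) yields $\diam R_{j_n} \to 0$. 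Writing
\[ A(u_n) = \sum_{i : 3R_i \ni u_n} \phi_i(u_n) A_i(u_n), \]
Lemma \ref{lem3.11}(iii)--(iv) restricts the sum to finitely many $i$ with $\diam R_i$ comparable to $\diam R_{j_n}$, so it suffices to show $A_i(u_n) \to A(z)$ for each such $i$.

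To estimate $A_i(u_n)$, combine three ingredients. First, Lemma \ref{vor3.12} forces $t_i \le 200\diam R_i \to 0$ and $d(\pi(X_i), z) \to 0$. Second, Lemma \ref{lem3.9} applied to $X_i \in F$ (with $d(X_i) \le t_i \to 0$) and $Z \in \mathcal{Z}$ (with $d(Z)=0$) gives $|X_i - Z| \to 0$. Third, Lemma \ref{nachlem2.6} applied to $B_i \in S$ with $\beta^{P_i}_{1;k}(X_i,t_i) \le 2\varepsilon < \delta/2$ produces a point on $P_i$ within distance $O(t_i)$ of $X_i$, so $d(X_i, P_i) = O(t_i)$. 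Let $v := \pi_{P_i}(X_i) \in P_i$; then $v = A_i(\pi(v)) + \pi(v)$, i.e.\ $A_i(\pi(v)) = \pi^\perp(v)$. Using the $2\alpha$-Lipschitz bound on $A_i$ (Lemma \ref{AiLipschitz}), the decomposition
\[ A_i(u_n) - \pi^\perp(Z) = \bigl(A_i(u_n) - A_i(\pi(v))\bigr) + \bigl(\pi^\perp(v) - \pi^\perp(X_i)\bigr) + \bigl(\pi^\perp(X_i) - \pi^\perp(Z)\bigr) \]
has all three terms tending to zero, since $|u_n - \pi(v)| \le |u_n - z| + |z - \pi(X_i)| + |v - X_i| \to 0$, $|\pi^\perp(v) - \pi^\perp(X_i)| \le |v - X_i| = O(t_i) \to 0$, and $|\pi^\perp(X_i) - \pi^\perp(Z)| \le |X_i - Z| \to 0$. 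Hence $A(u_n) \to \pi^\perp(Z) = A(z)$, continuity at $\pi(\mathcal{Z})$ is established, and the Lipschitz bound on $U_{12}$ follows by the decomposition of $[a,b]$ described in the first paragraph.
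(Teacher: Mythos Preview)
Your argument is correct and uses the same core ingredients as the paper (Lemmas \ref{lem3.9}, \ref{nachlem2.6}, \ref{vor3.12}, \ref{AiLipschitz}), but the organization is genuinely different. The paper proves a \emph{quantitative} bound directly: for $a\in\pi(\mathcal{Z})$ and $b\in 2R_j$ it inserts $X_j$, estimates $d(A(a)+a,X_j)$ via Lemma \ref{lem3.9} and $d(X_j,A(b)+b)$ via Lemmas \ref{nachlem2.6}, \ref{12.7.11.1}, \ref{lem3.12}(iv), and then uses the key inequality $\diam R_j \le D(b)-D(a)\le d(a,b)$ to conclude $d(A(a),A(b))\le C\,d(a,b)$. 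The remaining cases then follow by one application of the triangle inequality. You instead first extract only \emph{continuity} at $\pi(\mathcal{Z})$ from essentially the same estimates (via $\diam R_{j_n}\to 0$ rather than $\diam R_j\le d(a,b)$), and then upgrade to a global Lipschitz bound by decomposing the segment. A pleasant by-product of your route is that it yields the sharp constant $3\alpha$ immediately, effectively merging the paper's Lemmas \ref{AstetigaufU0}, \ref{29.3.10;2} and \ref{ALipschitz} into one step.

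Two small points of precision. First, in the continuity step the relevant indices $i$ depend on $n$, so the statement ``it suffices to show $A_i(u_n)\to A(z)$ for each such $i$'' should read: $\max_{i:\,u_n\in 3R_i}|A_i(u_n)-A(z)|\to 0$. Your estimates do give this, since every bound is expressed in terms of $\diam R_{j_n}$ and $|u_n-z|$, both of which tend to zero uniformly over the at most $180^{\N}$ admissible $i$. Second, the phrase ``telescoping over a partition of $[0,1]$'' is misleading when $K$ is Cantor-like: no finite partition respects the decomposition. What actually works is to insert at most two points. Given $0\le s<t\le 1$ not both in $K$ and not in the same component, let $s'$ be the right endpoint of the component of $s$ (or $s'=s$ if $s\in K$) and $t'$ the left endpoint of the component of $t$ (or $t'=t$). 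Then $s',t'\in K$, continuity plus Lemma \ref{29.3.10;1} gives the bound on $[s,s']$ and $[t',t]$, and Lemma \ref{ALipschitz1} handles $[s',t']$; summing yields $|A(\gamma(t))-A(\gamma(s))|\le 3\alpha(t-s)|b-a|$. This is exactly the mechanism behind the paper's Lemma \ref{29.3.10;2}, just stated at the level of a general pair $a,b$.
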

\begin{proof}
	Let $0 < \alpha \le \frac{1}{4}$, $k \ge \bar k \ge 4$, where $\bar k$ is the constant from Lemma
	\ref{4.5.2012.1}, and let $\bar \varepsilon = \bar \varepsilon(\n,\N,C_{0},\alpha) \le \frac{\delta}{4}$
	be so small that we can apply Lemma \ref{lem3.9}, \ref{ALipschitz1}, \ref{lem3.12} and Lemma \ref{29.3.10;1}.
	Furthermore, let $\varepsilon >0$ such that $\eta \le 2 \varepsilon < 2 \bar \varepsilon$.
	Let $a, b \in U_{12}$ with $a \in \pi(\mathcal{Z})$ and $b \in 2R_{j}$ for some $j\in I_{12}$.
	We estimate $d(A(a),A(b)) \le d(A(a)+a,X_{j}) + d(X_{j},A(b)+b)$
	where $X_{j}$ is the centre of the ball $B_{j}=B(X_{j},t_{j})$ (see Lemma \ref{vor3.12}).

	At first, we consider $d(A(a)+a,X_{j})$.
	Since $A(a)+a \in \mathcal{Z}$, Lemma \ref{rem3.8} implies $d(A(a)+a) = 0$.
	Moreover, with Lemma \ref{vor3.12} and $(X_{j},t_{j}) \in S$, we deduce $d(X_{j}) \le 100 \diam R_{j} $
	and 
	\begin{align*}	
		d(\pi(A(a)+a) , \pi(X_{j})) 
		& \le d(a,b) + d(b,\pi(X_{j})) 
		\le d(a,b) + C \diam R_{j}.
	\end{align*}
	Using those estimates, Lemma \ref{lem3.9} implies
	$d(A(a)+a,X_{j}) \le 2d(a,b) + C\diam R_{j}$.

	Now we consider $d(X_{j},A(b)+b)$.
	We have $(X_{j},t_{j}) \in S \subset S_{total}$ and hence, with Lemma \ref{nachlem2.6} 
	using $\varepsilon < \bar \varepsilon \le \frac{\delta}{4}$,
	there exists some
	$y \in B(X_{j},2t_{j})\cap P_{j}$, where $P_{j}$ is the associated plane to $B_{j}$
	(see Definition \ref{3.12.2013.10}). 
	Since $\varangle(P_{j},P_{0}) \le \alpha \le \frac{1}{4}$, 
	we deduce with Lemma \ref{12.7.11.1}, Lemma \ref{vor3.12} and Lemma \ref{lem3.12} (iv) that
	\[ d(X_{j},A(b)+b) \le d(X_{j},y) + d(y,A_{j}(b)+b) + d(A_{j}(b)+b,A(b)+b)
		\le C (\diam R_{j} + d(a,b)).\]
	With Lemma \ref{lem3.11}, Lemma \ref{7.2.10;1} and using that $D$ is $1$-Lipschitz (Lemma \ref{rem3.7}) we obtain
	$\diam R_{j} \le D(b)-D(a) \le d(a,b)$ and hence $d(A(a),A(b)) \le C d(a,b)$.
	Due to Lemma \ref{ALipschitz1} and Lemma \ref{29.3.10;1} it remains to handle the case were
	$a,b \notin \pi(\mathcal{Z})$ and $[a,b] \cap \pi(\mathcal{Z}) \neq \emptyset$.
	This follows immediately from the just proven case and triangle inequality.
\end{proof}

\begin{lem} \label{29.3.10;2}
	Under the conditions of Lemma \ref{AstetigaufU0} for some
	$a \in \pi(\mathcal{Z})$, $i \in I_{12}$ and $b \in 2R_{j}$, we get
	$d(A(a),A(b)) \le 3 \alpha d(a,b)$.
\end{lem}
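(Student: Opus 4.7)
The plan is to reduce this mixed case (endpoint on $\pi(\mathcal{Z})$, endpoint off it) to the two already-handled cases, namely Lemma \ref{ALipschitz1} (both endpoints in $\pi(\mathcal{Z})$) and Lemma \ref{29.3.10;1} (open segment avoids $\pi(\mathcal{Z})$). The key geometric observation is that the straight segment $[a,b]$ lies in $U_{12}$ by convexity, and hits $\pi(\mathcal{Z})$ at $a$ but eventually leaves it (since $b\in 2R_j\subset P_0\setminus\pi(\mathcal{Z})$ by Lemma \ref{Riueberdeckung}(ii)).

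I would formalize this by setting
\[t^{*}:=\sup\{t\in[0,1]\mid a+t(b-a)\in\pi(\mathcal{Z})\}\in[0,1),\qquad a^{'}:=a+t^{*}(b-a).\]
Since $\pi(\mathcal{Z})$ is closed (Lemma \ref{rem3.8}), $a^{'}\in\pi(\mathcal{Z})$; and by the definition of $t^{*}$, every point $a+t(b-a)$ with $t\in(t^{*},1]$ lies in $P_{0}\setminus\pi(\mathcal{Z})$. Pick any sequence $t_{n}\downarrow t^{*}$ with $t_{n}>t^{*}$ and set $a_{n}:=a+t_{n}(b-a)\to a^{'}$. Then the closed segment $[a_{n},b]$ is contained in $U_{12}\setminus\pi(\mathcal{Z})$, so Lemma \ref{29.3.10;1} yields $d(A(a_{n}),A(b))\le 3\alpha\,d(a_{n},b)$.

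Passing to the limit using the continuity of $A$ on $U_{12}$ (Lemma \ref{AstetigaufU0}), I obtain $d(A(a^{'}),A(b))\le 3\alpha\,d(a^{'},b)$. Since $a,a^{'}\in\pi(\mathcal{Z})$, Lemma \ref{ALipschitz1} gives $d(A(a),A(a^{'}))\le 2\alpha\,d(a,a^{'})$. Combining these with the triangle inequality and the collinearity $d(a,a^{'})+d(a^{'},b)=d(a,b)$:
\[d(A(a),A(b))\le 2\alpha\,d(a,a^{'})+3\alpha\,d(a^{'},b)\le 3\alpha\bigl(d(a,a^{'})+d(a^{'},b)\bigr)=3\alpha\,d(a,b).\]

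The only place where care is needed is the continuity argument at $a^{'}$: I need to know $a^{'}\in U_{12}$ so that $A(a^{'})$ is defined and continuity applies, but this follows from convexity of $U_{12}$ together with $a,b\in U_{12}$. No new estimates beyond the previously established lemmas are required, so there is no serious obstacle.
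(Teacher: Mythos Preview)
Your proof is correct and follows essentially the same approach as the paper: locate the last point $a'$ (the paper calls it $v$) of $\pi(\mathcal{Z})$ along the segment $[a,b]$, apply Lemma \ref{29.3.10;1} to segments $[a_n,b]\subset U_{12}\setminus\pi(\mathcal{Z})$ approaching $a'$, pass to the limit via the continuity of $A$ from Lemma \ref{AstetigaufU0}, and combine with the $2\alpha$-Lipschitz bound on $\pi(\mathcal{Z})$ using collinearity. Your explicit remark that $a'\in U_{12}$ by convexity is a clean way to justify that $A(a')$ is defined.
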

\begin{proof}
	We set $c:= \inf_{x \in [a,b]\cap \pi(\mathcal{Z})} d(x,b)$. Due to Lemma \ref{7.2.10;1}, 
	there exists some $v \in [a,b]\cap \pi(\mathcal{Z})$ with $d(v,b) = c$. 
	Furthermore, there exists some sequence $(v_{l})_{l} \subset [v,b]$ 
	with $v_{l} \rightarrow v$ where $l \rightarrow \infty$.
	With Lemma \ref{Riueberdeckung}, we deduce $([v,b] \setminus\{v\}) \subset \bigcup_{j \in I_{12}} 2R_{j}$.
	For every $l \in \mathbb{N}$ we obtain with Lemma \ref{29.3.10;1}
	$d(A(v),A(b)) \le d(A(v),A(v_{l})) + 3\alpha d(v,b)$.
	and, since $A$ is continuous (Lemma \ref{AstetigaufU0}) we conclude with $l \rightarrow \infty$ that
	$d(A(v),A(b)) \le 3 \alpha d(v,b)$.
	The assertion follows since we already know that $A$ is $2\alpha$-Lipschitz on $\pi(\mathcal{Z})$.
\end{proof}

\begin{lem}
	Under the conditions of Lemma \ref{AstetigaufU0} we have 
	$d(A(a),A(b)) \le 3 \alpha d(a,b)$
	for $a, b \in \bigcup_{j \in I_{12}} 2R_{j} \cap U_{12}$.
\end{lem}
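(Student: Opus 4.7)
The plan is to reduce this statement to the previously established cases (Lemmas \ref{ALipschitz1}, \ref{29.3.10;1}, \ref{29.3.10;2}) by cutting the segment $[a,b]$ at its first and last intersection with $\pi(\mathcal{Z})$. Note first that $U_{12}=B(0,12)\cap P_0$ is convex, so $[a,b]\subset U_{12}$; moreover, since $a,b\in\bigcup_{j\in I_{12}}2R_j$ and $\bigcup_{j\in I_{12}}2R_j\subset P_0\setminus\pi(\mathcal{Z})$ by Lemma \ref{Riueberdeckung}(ii), we have $a,b\notin\pi(\mathcal{Z})$.

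I would then split into two cases. In the first, easy case, $[a,b]\cap\pi(\mathcal{Z})=\emptyset$, so $[a,b]\subset U_{12}\setminus\pi(\mathcal{Z})$ and Lemma \ref{29.3.10;1} immediately yields $d(A(a),A(b))\le 3\alpha\, d(a,b)$.

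In the second case, $K:=[a,b]\cap\pi(\mathcal{Z})$ is nonempty. By Lemma \ref{7.2.10;1}, $\pi(\mathcal{Z})$ is closed, so $K$ is a nonempty compact subset of the segment. Parametrising $[a,b]$ linearly from $a$ to $b$, let $v_1\in K$ be the point closest to $a$ and $v_2\in K$ the point closest to $b$; then $v_1,v_2\in\pi(\mathcal{Z})$, $v_1\ne a$, $v_2\ne b$, and $v_1,v_2$ lie on the segment with
\[d(a,v_1)+d(v_1,v_2)+d(v_2,b)=d(a,b).\]
Since $a\in 2R_{j}$ for some $j\in I_{12}$ and $v_1\in\pi(\mathcal{Z})$, Lemma \ref{29.3.10;2} gives $d(A(a),A(v_1))\le 3\alpha\, d(a,v_1)$, and symmetrically $d(A(v_2),A(b))\le 3\alpha\, d(v_2,b)$. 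For the middle piece, $v_1,v_2\in\pi(\mathcal{Z})$, so Lemma \ref{ALipschitz1} gives $d(A(v_1),A(v_2))\le 2\alpha\, d(v_1,v_2)\le 3\alpha\, d(v_1,v_2)$. A single application of the triangle inequality combined with the additivity of distances along the segment then yields $d(A(a),A(b))\le 3\alpha\, d(a,b)$.

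There is no real obstacle here; the only subtle point is ensuring that the extremal intersection points $v_1,v_2$ actually exist (which is the closedness of $\pi(\mathcal{Z})$ from Lemma \ref{7.2.10;1}) and that $a,b$ themselves lie outside $\pi(\mathcal{Z})$ (so that the end pieces are genuinely of the mixed type covered by Lemma \ref{29.3.10;2}), which follows from the disjointness of $\pi(\mathcal{Z})$ and $\bigcup_{i\in I_{12}}2R_i$ noted in Lemma \ref{Riueberdeckung}(ii).
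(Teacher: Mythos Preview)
Your proof is correct and follows essentially the same approach as the paper, which simply states that the result is ``an immediate consequence of Lemma \ref{4.5.2012.1}, Lemma \ref{29.3.10;1} and Lemma \ref{29.3.10;2}.'' Your version is in fact slightly more detailed than necessary: once $[a,b]\cap\pi(\mathcal{Z})\neq\emptyset$, it suffices to pick a single point $v$ in this intersection and apply Lemma \ref{29.3.10;2} to the pairs $(v,a)$ and $(v,b)$, so the additional use of Lemma \ref{ALipschitz1} for the middle piece $[v_1,v_2]$ is not needed.
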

\begin{proof}
	This is an immediate consequence of Lemma \ref{4.5.2012.1}, Lemma \ref{29.3.10;1} and Lemma \ref{29.3.10;2}.
\end{proof}

\begin{lem}\label{ALipschitz}
	Under the conditions of Lemma \ref{AstetigaufU0}, the function 
	$A$ is Lipschitz continuous on $U_{12}$ with Lipschitz constant $3 \alpha$.
\end{lem}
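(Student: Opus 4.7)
My plan is to observe that this is essentially a case-check that combines the three immediately preceding results, together with the decomposition of $U_{12}$ noted in Definition \ref{04.11.2013.1}.

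Recall from Definition \ref{04.11.2013.1} that $U_{12} = \pi(\mathcal{Z}) \cup \bigl(\bigcup_{j \in I_{12}} 2R_{j} \cap U_{12}\bigr)$, and the two pieces are disjoint by Lemma \ref{lem3.11}(ii). So given arbitrary $a, b \in U_{12}$, exactly one of the following three situations occurs: (i) both lie in $\pi(\mathcal{Z})$; (ii) both lie in $\bigcup_{j \in I_{12}} 2R_{j} \cap U_{12}$; (iii) exactly one of them, say $a \in \pi(\mathcal{Z})$, while $b \in 2R_{j}$ for some $j \in I_{12}$.

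In case (i), Lemma \ref{ALipschitz1} gives $d(A(a),A(b)) \le 2\alpha\, d(a,b) \le 3\alpha\, d(a,b)$. In case (ii), the unnamed lemma just preceding the statement gives $d(A(a),A(b)) \le 3\alpha\, d(a,b)$ directly. In case (iii), Lemma \ref{29.3.10;2} provides $d(A(a),A(b)) \le 3\alpha\, d(a,b)$. Since these cases exhaust all possibilities, the uniform bound $d(A(a),A(b)) \le 3\alpha\, d(a,b)$ holds throughout $U_{12}$, which is exactly the claim.

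There is no real obstacle here: the technical work has already been done in the preceding sequence of lemmas, particularly in Lemma \ref{4.5.2012.1} (Lipschitz within a single $2R_j$), Lemma \ref{29.3.10;1} (segments avoiding $\pi(\mathcal{Z})$), and Lemma \ref{29.3.10;2} (bridging from $\pi(\mathcal{Z})$ to a cube). The continuity of $A$ on $U_{12}$ established in Lemma \ref{AstetigaufU0} was the crucial ingredient needed to upgrade the segment-wise bound in Lemma \ref{29.3.10;1} (via a limit argument using an approaching sequence $v_l \to v \in \pi(\mathcal{Z})$) to Lemma \ref{29.3.10;2}, and from there no further analytic input is required. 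The present lemma is therefore essentially a bookkeeping step assembling the three preceding Lipschitz estimates into a single uniform statement.
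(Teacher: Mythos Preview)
Your proposal is correct and follows essentially the same approach as the paper: the paper's proof is a one-liner citing the immediately preceding lemma (your case (ii)) and Lemma \ref{ALipschitz1} (your case (i)), with the mixed case already handled by Lemma \ref{29.3.10;2}. Your version simply makes the three-case split explicit.
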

\begin{proof}
This follows directly from the previous Lemma and Lemma \ref{ALipschitz1}.
\end{proof}

The following estimate is for later use.
\begin{lem} \label{abschaetzungableitungvonA}
	Let $0<\alpha \le \frac{1}{4}$. 
	There exists some $\bar k \ge 4$ and some
	$\bar \varepsilon = \bar \varepsilon (\n,\N,C_{0})$ so that if $k \ge \bar k$ and
	$\eta < 2 \bar \varepsilon$
	for all $\varepsilon \in [\frac{\eta}{2},\bar \varepsilon)$, 
	there exists some constant $C=C(\n,\N,C_{0})$ so that for all $ j \in I_{12}$, $a \in 2R_{j}$ and
	for all multi-indices $\kappa$ with $|\kappa|=2$ we have
	$\partial^{\kappa} A(a)| \le \frac{C \varepsilon}{\diam R_{j}}$.
\end{lem}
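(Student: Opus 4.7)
The plan is to differentiate the explicit formula $A(a)=\sum_{i\in I_{12}}\phi_i(a)A_i(a)$ from Definition \ref{04.11.2013.1}, fix $j\in I_{12}$ and $a\in 2R_j$, and exploit the partition-of-unity identity $\sum_{i}\phi_i\equiv 1$ on $U_{12}$ to rewrite
\[
A(a)\;=\;A_j(a)+\sum_{i\in I_{12}}\phi_i(a)\bigl(A_i(a)-A_j(a)\bigr).
\]
This is the standard Whitney-type trick: on the support of the sum the maps $A_i$ and $A_j$ should be close to each other, so differentiating is essentially taking derivatives of $\phi_i$ against a small quantity. Since $A_j$ is affine (Lemma \ref{AiLipschitz}), $\partial^{\kappa}A_j\equiv 0$ for $|\kappa|=2$, and since each $A_i-A_j$ is also affine, $\partial^{\kappa}(A_i-A_j)\equiv 0$. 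Hence Leibniz reduces to
\[
\partial^{\kappa}A(a)\;=\;\sum_{i}\Bigl[(\partial^{\kappa}\phi_i)(a)\,(A_i-A_j)(a)
\;+\;\sum_{\kappa=\omega_1+\omega_2,\,|\omega_\ell|=1}(\partial^{\omega_1}\phi_i)(a)\,\partial^{\omega_2}(A_i-A_j)\Bigr].
\]

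Next I would restrict the sum to the relevant indices: by the support property of $\phi_i$ (Lemma \ref{15.10.2013.1}), only indices $i$ with $3R_i\cap 2R_j\neq\emptyset$ contribute, and in particular $10R_i\cap 10R_j\neq\emptyset$. For those $i$, Lemma \ref{lem3.11}(iii) gives $\tfrac15\diam R_j\le\diam R_i\le 5\diam R_j$, and Lemma \ref{lem3.11}(iv) bounds the number of contributing indices by $180^{\N}$. Then I would plug in the quantitative estimates: $|\partial^{\kappa}\phi_i|\le C/(\diam R_i)^2$ and $|\partial^{\omega}\phi_i|\le C/\diam R_i$ from Lemma \ref{15.10.2013.1}; $|(A_i-A_j)(a)|\le\bar C\varepsilon\diam R_j$ for $a\in 2R_j\subset 100R_j$ from Lemma \ref{lem3.12}(ii); and $|\partial^{\omega_2}(A_i-A_j)|\le\Lip_{A_i-A_j}\le\bar C\varepsilon$ from Lemma \ref{lem3.12}(iii).

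Combining, each of the at most $180^{\N}$ summands contributes at most
\[
\frac{C}{(\diam R_i)^2}\cdot\bar C\varepsilon\diam R_j\;+\;\frac{C}{\diam R_i}\cdot\bar C\varepsilon
\;\le\;\frac{C(\n,\N,C_0)\,\varepsilon}{\diam R_j},
\]
using $\diam R_i\sim\diam R_j$, which yields the claim after absorbing $180^{\N}$ into $C$. The computation is routine; the only subtle point is seeing that one must not crudely bound each $\phi_i A_i$ separately (that would give the useless bound $\diam R_j/\diam R_j^2=1/\diam R_j$ without the factor $\varepsilon$) but must instead use the partition-of-unity cancellation to replace $A_i$ by the small difference $A_i-A_j$, which is where the $\varepsilon$ is gained.
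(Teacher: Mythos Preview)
Your proof is correct and follows essentially the same route as the paper: the paper differentiates $\sum_i \phi_i A_i$ first and then subtracts $A_j$ using $\sum_i \partial^{\omega}\phi_i=0$, whereas you subtract $A_j$ first via $\sum_i\phi_i\equiv 1$ and then differentiate, but this is the same cancellation and leads to the identical three-term estimate controlled by Lemma~\ref{lem3.12}(ii),(iii), Lemma~\ref{15.10.2013.1}, and Lemma~\ref{lem3.11}(iii),(iv).
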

\begin{proof}
	Choose $\bar k$ and $\bar \varepsilon$ as in Lemma \ref{lem3.12}.
	Let $\kappa$ be some multi-index with $|\kappa|=2$.
	For $i \in I_{12}$, the function
	$A_{i}$ is an affine map and hence 
	for some suitable $l_1,l_2 \in \{1,\dots,\N\}$ we have
	\begin{align} \label{15.10.2013.3}
		\partial^{\kappa}  A &= \partial^{\kappa}  \Bigl( \sum_{i \in I_{12}} \phi_{i}A_{i} \Bigr) 
		= \sum_{i \in I_{12}} \left( \partial^{\kappa}  \phi_{i} \right) A_{i} 
			+  \sum_{i \in I_{12}} \left(\partial_{l_1} \phi_{i} \partial_{l_2} A_{i} +
				\partial_{l_2} \phi_{i} \partial_{l_1} A_{i}\right).
	\end{align}
	Let $j \in I_{12}$ and $a \in 2R_{j}$. 
	Lemma \ref{lem3.11} implies that there exist at most $180^{\N}$
	cells $R_{i}$ so that $\partial^{\kappa}  \phi_{i}(a) \neq 0$ or $\partial^{\omega} \phi_{i}(a) \neq 0$,
	where $\omega$ is a multi-index with $|\omega|=1$. 
	So only finite sums occur in the following estimates.
	We have 
	$ \sum_{i \in I_{12}} \partial^{\omega} \phi_{i}= \partial^{\omega} \sum_{i \in I_{12}} \phi_{i} 
		= \partial^{\omega} \ 1 =0$
	so that we get
	\begin{align*}
		 |\partial^{\kappa}  A| 
		 \stackrel{\eqref{15.10.2013.3}}{\le} 
			\sum_{i \in I_{12}} |\partial^{\kappa}  \phi_{i}| \ | A_{i} - A_{j}| 
		+ \sum_{i \in I_{12}}| \partial_{l_1} \phi_{i} | \ |\partial_{l_2} (A_{i} - A_{j})|
		+ \sum_{i \in I_{12}}| \partial_{l_2} \phi_{i} | \ |\partial_{l_1} (A_{i} - A_{j})|.
	\end{align*}
	To estimate these sums, we only have to consider the case when $a$ is in the support of $\phi_{i}$
	for some $i\in I_{12}$. This implies 
	$3R_{i} \cap 2R_{j} \neq \emptyset$.
	Now use Lemma \ref{lem3.12} (ii), (iii), Lemma \ref{15.10.2013.1}, and Lemma \ref{lem3.11} (iii), (iv)
	to obtain the assertion.
\end{proof}

\setcounter{equation}{0}
\section{\texorpdfstring{$\gamma$-functions}{y-functions}} \label{gamma}
\newcommand{\f}{g}
In this chapter, we introduce the $\gamma$-function of some function $\f:P_{0} \to P_{0}^{\perp}$. 
This function measures how well $\f$ 
can be approximated in some ball by some affine function. 
The main results of this chapter are Theorem \ref{2.9.2014.1} on page \pageref{2.9.2014.1} and 
Theorem \ref{thm4.1} on page \pageref{thm4.1}.
We will use these statements in section \ref{F3issmall} to prove that $\mu(F_{3})$ is small.

\begin{dfn}
	Let $U \subset P_{0}$, $ q \in U$ and $ t > 0$ so that $B(q,t) \cap P_{0} \subset U$.
	Furthermore, let $\mathcal{A}=\mathcal{A}(P_0,P_0^{\perp})$ be the set of all affine functions 
	$a:P_{0} \rightarrow P_{0}^{\perp}$ and let $\f:U \to P_{0}^{\perp}$ be some function. 
	We define 
	\begin{align*}
		\gamma_{\f}(q,t)&:= \inf_{a \in \mathcal{A}} \frac{1}{t^{\N}} \int_{B(q,t)\cap P_{0}} 
		\frac{d(\f(u),a(u))}{t} \dd \cH^{\N}(u).
	\end{align*}
\end{dfn}

\begin{lem} \label{bem4.2}
	Let $U \subset P_{0}$, $q \in U$ and $t > 0$ so that $B(q,t) \cap P_{0} \subset U$. 
	Furthermore, let \\
	$g: U \to P_{0}^{\perp}$ be a Lipschitz continuous function such that the Lipschitz constant fulfils
	$60\N(10^{\N}+1)\left(8\N \frac{\vo{\N-1}}{\vol} \right)^{\N+1} \le \Lip_{\f}^{-1},$
	where $\vol$ denotes the $\N$-dimensional volume of the $n$-dimensional unit ball.
	Then we have
	\[\gamma_{\f}(q,t) \le 3 \ \tilde \gamma_{\f}(q,t):=3 \inf_{P \in \mathcal{P}(\n,\N)} \frac{1}{t^{\N}} 
		\int_{B(q,t)\cap P_{0}} \frac{d(u+\f(u),P)}{t} \dd \cH^{\N}(u),\]
	where $\mathcal{P}(\n,\N)$ is the set of all $\N$-dimensional affine planes in $\mathbb{R}^{\n}$.
\end{lem}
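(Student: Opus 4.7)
The plan is to fix $\eta > 0$ and choose a plane $P \in \mathcal{P}(\n,\N)$ with
\[
	\frac{1}{t^{\N}}\int_{B(q,t) \cap P_{0}} \frac{d(u + \f(u), P)}{t} \dd \cH^{\N}(u) \le (1+\eta) \tilde \gamma_{\f}(q,t),
\]
then show $P$ equals the graph $G(a)$ of an affine map $a: P_{0} \to P_{0}^{\perp}$ with $\Lip_{a} < \sqrt{2}$, and finally compare $|\f(u) - a(u)|$ pointwise with $d(u + \f(u), P)$ and integrate. Only this direction needs proof; the trivial inequality $\tilde\gamma_{\f} \le \gamma_{\f}$ is not even used.

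The first and main step is to bound $\varangle(P, P_{0})$. By Chebyshev applied to the displayed inequality, the set $E := \{u \in B(q,t) \cap P_{0} : d(u+\f(u),P) \le C_{0} t (1+\eta)\tilde\gamma_{\f}(q,t)\}$ with $C_{0} := 8\N\vo{\N-1}/\vol$ has $\cH^{\N}$-measure exceeding $\vol t^{\N}(1 - 1/C_{0})$. A slab estimate (the $(t/C_{0})$-neighborhood of any proper affine subspace of $P_{0}$ has $\cH^{\N}$-measure at most $2\vo{\N-1}t^{\N}/C_{0}$ inside $B(q,t) \cap P_{0}$) lets one pick $u_{0}, \ldots, u_{\N} \in E$ iteratively to form an $(\N, t/C_{0})$-simplex of $P_0$; the factor $8\N\vo{\N-1}/\vol$ in the hypothesis is precisely what keeps the union of the $\N$ forbidden slabs strictly smaller than $E$.

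Setting $y_{i} := u_{i} + \f(u_{i}) - \f(u_{0})$ gives $y_{0} = u_{0}$, $|y_{i} - u_{i}| \le 2t\Lip_{\f}$, and by Lemma \ref{17.11.11.2} the simplex $\Delta(y_{0},\ldots,y_{\N})$ remains comparably non-degenerate. Constructing an orthonormal basis of $P_{0}$ from the vectors $u_{i} - u_{0}$ via Lemma \ref{30.05.12.1} (whose coefficient bound is $(2\N C_{0})^{\N} C_{0}/t$) and applying Corollary \ref{24.10.11.1} (equivalently Lemma \ref{21.11.11.2}) twice—once to estimate $\varangle(P_{1}, P_{0})$ where $P_{1} := \aff(y_{0},\ldots,y_{\N})$ from $|y_{i} - u_{i}| \le 2t\Lip_{\f}$, and once to estimate $\varangle(P_{1}, P)$ from the distance bound defining $E$—one obtains, together with the trivial a priori bound $\tilde\gamma_{\f}(q,t) \le \vol \Lip_{\f}$ coming from the trial plane $P_{0} + \f(q)$, an inequality of the form $\varangle(P, P_{0}) \le \mathrm{const} \cdot \N(10^{\N}+1) C_{0}^{\N+1} \Lip_{\f}$. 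The hypothesis is tailored so that this is at most $\sqrt{2} - 1$, and Corollary \ref{24.04.2012.1} then produces the affine $a$ with graph $P$ and $\Lip_{a} \le \varangle(P,P_{0})/(1-\varangle(P,P_{0})) < \sqrt{2}$.

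For the pointwise step, write $P = G(a)$ and $x := u + \f(u)$; letting $v^{*} + a(v^{*}) := \pi_{P}(x)$ with $A$ the linear part of $a$, the orthogonality $(x - \pi_{P}(x)) \perp P$ unpacks to $v^{*} - u = A^{T}(\f(u) - a(v^{*}))$, so $|v^{*} - u| \le \Lip_{a}|\f(u) - a(v^{*})| \le \Lip_{a}\, d(x, P)$ and
\[
	|\f(u) - a(u)| \le |\f(u) - a(v^{*})| + \Lip_{a}|u - v^{*}| \le (1 + \Lip_{a}^{2}) \, d(u + \f(u), P).
\]
Since $\Lip_{a} < \sqrt{2}$, integration yields $\gamma_{\f}(q,t) \le (1 + \Lip_{a}^{2})(1+\eta)\tilde\gamma_{\f}(q,t) < 3\tilde\gamma_{\f}(q,t)$ for small enough $\eta$, and sending $\eta \to 0$ completes the proof. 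The main obstacle is the constant chase in the first step: the Gram--Schmidt blow-up $(2\N C_{0})^{\N}$ from Lemma \ref{30.05.12.1} combines with one further factor of $C_{0}$ to give precisely the $C_{0}^{\N+1}$ appearing in the hypothesis, so there is essentially no slack beyond the prefactor $60\N(10^{\N}+1)$ coming from Corollary \ref{24.10.11.1} and the final factor $1 + \Lip_{a}^{2}$ in the pointwise step.
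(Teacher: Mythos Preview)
Your overall strategy mirrors the paper's: pick a near-optimal plane $P$, use Chebyshev to find a large subset where the graph is close to $P$, extract a non-degenerate simplex, bound $\varangle(P,P_0)$ via Lemma~\ref{21.11.11.2}, realise $P$ as the graph of an affine map via Corollary~\ref{24.04.2012.1}, and compare pointwise. Your pointwise estimate via $v^{*}-u=A^{T}(\f(u)-a(v^{*}))$ is a clean alternative to the paper's use of Lemma~\ref{6.9.11.1}.

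There is, however, a gap in your Chebyshev step. You claim $\cH^{\N}(E)>\vol\, t^{\N}(1-1/C_0)$, but Chebyshev actually gives only $\cH^{\N}\bigl((B(q,t)\cap P_0)\setminus E\bigr)\le t^{\N}/C_0$, hence $\cH^{\N}(E)\ge \vol\, t^{\N}-t^{\N}/C_0$. These coincide only when $\vol=1$. With $C_0=8\N\vo{\N-1}/\vol$ the corrected lower bound reads $\vol\, t^{\N}\bigl(1-1/(8\N\vo{\N-1})\bigr)$, and since $\N\vo{\N-1}\to 0$ as $\N\to\infty$ this becomes vacuous for large $\N$; your simplex-extraction then fails in high dimensions.

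The fix---which is what the paper does---is to invoke the a priori bound $\tilde\gamma_{\f}(q,t)\le \vol\,\Lip_{\f}$ \emph{before} Chebyshev and take the threshold to be a fixed multiple of $\Lip_{\f}\,t$ rather than of $\tilde\gamma_{\f}\,t$: with threshold $4\Lip_{\f}\,t$ one obtains $\cH^{\N}(D_1)\ge \tfrac12\vol\, t^{\N}$ uniformly in $\N$, and the slab argument (with height $H=\vol\, t/(4\vo{\N-1})$) then produces the simplex. After this adjustment the rest of your argument goes through. The paper also economises by setting $y_i=u_i+\f(u_0)$, so that $\aff(y_0,\ldots,y_{\N})$ is already a translate of $P_0$ and only one application of Lemma~\ref{21.11.11.2} is needed.
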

\begin{proof}
	Let $\f$ be a Lipschitz continuous function with an appropriate Lipschitz constant.
	By using $a : u \to g(q) \in \mathcal{A}$ as a constant map and by using that g is $1$-Lipschitz,
	we deduce $\gamma_{\f}(q,t) \le \Lip_{\f} \vol.$
	It follows, since for every $a \in \mathcal{A}$ the graph $G(a)$ of $a$ is in $\mathcal{P}(\n,\N)$, that
	$\tilde \gamma_{\f}(q,t) \le \gamma_{\f}(q,t) \le \Lip_{\f} \vol$.
	Let $0<\xi < \Lip_{\f} \vol$ and choose some $P \in \mathcal{P}(\n,\N)$ so that
	\begin{align} \label{5.12.11.1}
	    \frac{1}{t^{\N}} \int_{B(q,t)\cap P_{0}} \frac{d(u+\f(u),P)}{t} \dd \cH^{\N}(u)\le \tilde \gamma_{\f}(q,t)+\xi
		\le 2 \Lip_{\f} \vol. 
	\end{align}
	We set $D_{1}:=\left\{ v \in B(q,t) \cap P_0 | d(v+\f(v),P) \le 4 \Lip_{\f} t \right\}$,
	$D_{2}:=(B(q,t) \cap P_{0}) \setminus D_{1}$ and obtain using Chebyshev's inequality and \eqref{5.12.11.1}
	\begin{align}\label{21.11.11.1}
		\cH^{\N}(D_1) & \ge \vol t^{\N} - \cH^{\N}(D_{2}) \ge \frac{\vol}{2}t^{\N}  
	\end{align}
	Assume that every simplex $\triangle(u_0,\dots,u_{\N}) \in D_{1}$ is
	not an $(\N,H)$-simplex, where $H=\frac{\vol}{4\vo{\N-1}}t $.
	With Lemma \ref{18.11.11.1} ($m=\N$, $D=D_{1}$), there exists some plane $\hat P \in \mathcal{P}(\n,\N-1)$ such that 
	$D_1 \subset U_{H}(\hat P) \cap B(q,t) \cap P_0$.
	We get
	\begin{align*}
		\cH^{\N}(D_1)
		& \le \cH^{\N}(U_{H}(\hat P) \cap B(q,t) \cap P_0)
		 \le 2H \vo{\N-1}t^{\N-1}
		 = \frac{\vol }{2} t^{\N}.
	\end{align*}
	This is in contradiction to \eqref{21.11.11.1}, so there exists some $(\N,H)$-simplex
	$\triangle(u_0,\dots,u_{\N}) \in D_{1}$.
	We set $\hat P_0 := P_0 + \f(u_0)$, $y_i:=u_i+\f(u_0) \in \hat P_0$ for all $i \in \{0,\dots,\N\} $
	and $S:=\Delta(y_0,\dots,y_{\N}) \subset \hat P_0 \cap B(q+\f(u_{0}),t)$. 
	We recall that $P$ is the plane satisfying \eqref{5.12.11.1}.
	We obtain for all $i \in \{0,\dots,\N\}$
	\begin{align*}
		d(y_i,P) & \le d(u_i + \f(u_0), u_i + \f(u_i)) + d(u_i + \f(u_i),P)
		 \le \Lip_{\f} d(u_0,u_i) +4 \Lip_{\f} t
		 \le 6 \Lip_{\f} t.
	\end{align*}
	With Lemma \ref{21.11.11.2}, $C=4\frac{\vo{\N-1}}{\vol} > 1$\footnote{As the volume of the unit
	sphere is strictly monotonously decreasing when the dimension $n \ge 5$ increases, we get 
	$\frac{\vo{\N-1}}{\vol} > 1$ for all $n \ge 6$. With the factor $4$ we have that $4\frac{\vo{\N-1}}{\vol} > 1$
	for all $n \in \mathbb{N}$.}, 
	$\hat C=1$, $m=\N$, $\sigma = 6 \Lip_{\f}$,
	$P_1=\hat P_{0}$, $P_2=P$ and $x=q+\f(u_{0})$, we get $\varangle(P_{0},P)=\varangle(\hat P_{0},P) < \frac{1}{2}$,
	and, with Corollary \ref{24.04.2012.1},
	there exists some affine map $\bar a:P_0 \to P_0^{\perp}$ with graph $G(\bar a)=P$.
	Now we obtain with Lemma \ref{6.9.11.1} ($P_{1}=P$, $P_{2}=P_{0}$), $u,v \in P_0$ and 
	$\varangle(P_{0},P)< \frac{1}{2}$ that
	\begin{align}\label{21.11.11.3}
		d(v +\bar a(v),u+\bar a(u)) 
		 &\le 2d(\pi_{P_0}(v +\bar a(v)),\pi_{P_0}(u +\f(u))). 
	\end{align}
	That yields for $u \in B(q,t) \cap P_{0}$ and some suitable $v \in P_{0}$ with $v+\bar a(v)=\pi_P(u+\f(u))$
	\begin{align*}
		d(\f(u),\bar a(u)) 
		& \stackrel{\hphantom{\eqref{21.11.11.3}}}{\le} d(u+\f(u),P)+d(\pi_P(u+\f(u)),u+\bar a(u))\\
		& \stackrel{\eqref{21.11.11.3}}{\le}
		 d(u+\f(u),P) + 2d(\pi_{P_0}(v+\bar a(v)),\pi_{P_0}(u+\f(u)))
		\stackrel{\hphantom{\eqref{21.11.11.3}}}{=} 3 d(u+\f(u),P).
	\end{align*}
	Finally, using $\bar a \in \mathcal{A}$ and the last estimate, we get 
	$\gamma_{\f}(q,t)\stackrel{\eqref{5.12.11.1}}{\le} 3 (\tilde \gamma_{\f}(q,t)+\xi)$,
	and $0<\xi < \alpha \vol$ was arbitrarily chosen.
\end{proof}

\subsection{\texorpdfstring{$\gamma$-functions and affine approximation of Lipschitz functions}{y-functions and affine approximation of Lipschitz functions}}\label{27.10.2014.3}
In this and the following subsections, we use the notation $U_{l}:=B(0,l)\cap P_{0}$ for $l\in \{6,8,10\}$.
\begin{thm} \label{2.9.2014.1}
	Let $1 < \p < \infty$ and let $\f:P_{0} \to P_{0}^{\perp}$ be a Lipschitz continuous function with Lipschitz constant $\Lip_{\f}$
	and compact support.
	For all $\theta >0$, there exists some set $H_{\theta} \subset U_{6}$ and some constants 
	$C=C(\N,\p)$ and $\hat C=\hat C(\N,\n)$ with
	\[\cH^{\N}(U_{6} \setminus H_{\theta}) 
		\le \frac{C}{\theta^{\p(\N+1)} \Lip_{\f}^{\p}} \int_{U_{10}} \left( \int_{0}^{2} \gamma_{\f}(x,t)^{2} \frac{\dd t}{t} \right)^{\frac{\p}{2}}
		\dd \cH^{\N}(x)\]
	so that, for all $y \in P_{0}$, there exists some affine map $a_{y}:P_{0} \to P_{0}^{\perp}$ 
	so that if $ r \le \theta$ and $ B(y,r) \cap H_{\theta} \neq \emptyset$, we have 
	\[\|\f-a_{y}\|_{L^{\infty}(B(y,r)\cap P_{0},P_{0}^{\perp})} \le \hat C r \theta \Lip_{\f}, \]
	where $\|\cdot\|_{L^{\infty}(E)}$ denotes the essential supremum on  $E \subset P_{0}$
	with respect to the $\cH^{\N}$-measure.
\end{thm}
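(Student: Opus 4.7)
The plan is to identify $H_\theta$ as the set on which a Carleson-type $L^2$ functional built from $\gamma_\f$ is controlled by $\theta$ and $\Lip_\f$, and on this set to construct the affine approximant $a_y$ by telescoping the best affine approximations of $\f$ at dyadic scales. Setting $\Gamma(x):=\int_0^2\gamma_\f(x,t)^2\,\dd t/t$ for $x\in U_{10}$, I define
\[H_\theta:=\{x\in U_6\,:\,\Gamma(x)\le c\,\theta^{2(\N+1)}\Lip_\f^{2}\}\]
for a constant $c=c(\N,\n,\p)$ to be fixed at the end. Chebyshev's inequality applied to $\Gamma^{\p/2}$, together with $U_6\subset U_{10}$, immediately yields the claimed measure estimate with $C=c^{-\p/2}$.

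For each $x\in U_{10}$ and $t\in(0,2]$ I fix an affine near-minimiser $a_{x,t}\in\mathcal{A}$ of $\gamma_\f(x,t)$. The triangle inequality against $\f$ integrated over $B(x,t/2)\cap P_0$ bounds the $L^1$-average of $a_{x,t}-a_{x,t/2}$ by $C(\N)\,t\,(\gamma_\f(x,t)+\gamma_\f(x,t/2))$; equivalence of $L^1$ and $L^\infty$ norms on the finite-dimensional space of affine maps restricted to a ball, combined with the bounded cost of doubling the radius for an affine map, upgrades this to
\[\|a_{x,t}-a_{x,t/2}\|_{L^\infty(B(x,t)\cap P_0)}\le C(\N)\,t\,(\gamma_\f(x,t)+\gamma_\f(x,t/2)).\]
For $y\in P_0$ admitting some $x\in H_\theta$ at distance at most $\theta$, I then define $a_y:=\lim_{k\to\infty}a_{x,\theta 2^{-k}}$, which is affine; Cauchy--Schwarz against $\Gamma(x)^{1/2}\le c^{1/2}\theta^{\N+1}\Lip_\f$ makes the dyadic telescope absolutely convergent on $B(x,\theta)\cap P_0$. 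Outside this regime I set $a_y\equiv 0$.

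For the $L^\infty$-bound, fix $r\le\theta$ with $x\in H_\theta\cap B(y,r)$ and $z\in B(y,r)\cap P_0\subset B(x,2r)\cap P_0$. I split $\f(z)-a_y(z)=(\f(z)-a_{x,r}(z))+(a_{x,r}(z)-a_y(z))$. The second summand telescopes over scales $\theta 2^{-j}$ with $j\ge J:=\log_2(\theta/r)$; each summand is controlled by the telescoping estimate above, and a weighted Cauchy--Schwarz (distributing the dyadic scale weights so that one factor contributes exactly $r$ while the other is bounded by $\theta\Lip_\f$ via the Carleson control) yields $|a_{x,r}(z)-a_y(z)|\le \hat C\,r\theta\Lip_\f$. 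The first summand is handled analogously by extending the telescope below scale $r$ and invoking Lebesgue differentiation of $\f$ to identify the limit $\lim_{s\to 0}a_{x,s}(z)$ with $\f(z)$ at $\cH^\N$-a.e.\ $z$, reducing it once more to Cauchy--Schwarz against the Carleson bound.

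The main obstacle is arranging the Cauchy--Schwarz weights so that the telescoping produces a bound linear in $r$ over the whole range $r\le\theta$: the naive unweighted Cauchy--Schwarz against $\Gamma(x)^{1/2}\le c^{1/2}\theta^{\N+1}\Lip_\f$ only gives $\theta^{\N+2}\Lip_\f$ for the total, which matches $r\theta\Lip_\f$ only when $r\gtrsim\theta^{\N+1}$. Isolating $r$ in the weights and exploiting that the bound on $\Gamma(x)$ also controls the restriction of $\gamma_\f(x,\cdot)^2$ to every dyadic annulus $[r,2r]$ is what finally produces the required linear-in-$r$ bound. Lemma~\ref{bem4.2} plays a supporting role by letting one substitute the planar $\tilde\gamma_\f$ for $\gamma_\f$ wherever the geometric simplex and angle estimates from section~\ref{27.10.2014.1} are applied, keeping the constants dependent only on $\N$ and $\n$.
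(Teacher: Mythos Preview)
Your approach has a genuine gap, and you have already put your finger on it: the telescoping argument cannot produce the bound $\hat C\,r\theta\Lip_\f$. With only the Carleson control $\Gamma(x)\le c\,\theta^{2(\N+1)}\Lip_\f^{2}$, the telescope $\sum_{j\ge J}|a_{x,\theta 2^{-j}}(z)-a_{x,\theta 2^{-(j+1)}}(z)|$ is bounded, via Cauchy--Schwarz, by at best $C\,r\,\Gamma(x)^{1/2}\le C\,r\,\theta^{\N+1}\Lip_\f$, and this is sharp: nothing prevents $\gamma_\f(x,\cdot)^{2}$ from concentrating all its mass near a single scale $\approx r$, realising exactly this bound. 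No rearrangement of weights in Cauchy--Schwarz can improve a purely linear estimate of this type by the missing factor $\theta^{\N}$; your suggested fix (``the bound on $\Gamma(x)$ also controls the restriction to every dyadic annulus'') is just the trivial observation $\int_r^{2r}\gamma_\f^{2}\,\dd t/t\le\Gamma(x)$, which gives nothing new. There is a second, related difficulty: for the first summand $\f(z)-a_{x,r}(z)$ you telescope at the fixed base point $x$ below scale $r$, but $z\in B(x,2r)$ lies outside $B(x,s)$ for $s<r$, so each affine increment evaluated at $z$ picks up an extra factor $r/s$ and the sum no longer converges absolutely.

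The paper closes exactly this gap by a nonlinear step that you are missing. It first uses Calder\'on's reproducing formula to split $\f=\f_{1}+\f_{2}$ with $\f_{1}\in C^{\infty}(U_{8})$ having second derivatives bounded by $C\Lip_\f$ (so Taylor gives the affine part $a_{y}$ with error $O(r^{2}\Lip_\f)$), while a Littlewood--Paley argument shows that a maximal mean-oscillation function $N(\f_{2})$ satisfies $\|N(\f_{2})\|_{L^{p}}^{p}\le C\int_{U_{10}}(\int_{0}^{2}\gamma_\f^{2}\,\dd t/t)^{p/2}$. The set $H_{\theta}$ is then $\{N(\f_{2})\le\theta^{\N+1}\Lip_\f\}$. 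The crucial ingredient is the interpolation Lemma~\ref{lem5.4}: because $\f_{2}$ is itself Lipschitz with constant $C\Lip_\f$, one has
\[
\operatorname{osc}_{B}\f_{2}\ \le\ C\,\diam B\,\Bigl(\tfrac{1}{\diam B}\operatorname{Avg}_{B}|\f_{2}-\operatorname{Avg}_{B}\f_{2}|\Bigr)^{\frac{1}{\N+1}}\Lip_\f^{\frac{\N}{\N+1}},
\]
and this exponent $1/(\N+1)$ is exactly what converts $N(\f_{2})\le\theta^{\N+1}\Lip_\f$ into $\operatorname{osc}_{B(y,r)}\f_{2}\le C\,r\theta\Lip_\f$. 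Your direct telescoping at the level of $\gamma_\f$ has no analogue of this nonlinear step, which is why it stalls at $r\theta^{\N+1}\Lip_\f$.
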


To prove this theorem, we need the following lemma.
If $\nu$ is some map, we use the notation 
$\nu_{t}(x):=\frac{1}{t^{\N}}\nu \left(\frac{x}{t} \right)$. \index{$\nu_{t}(x)$}

\begin{lem}\label{22.02.2013.01}
	There exists some radial function $\nu \in C_{0}^{\infty}(P_0,\R)$ with
	\begin{enumerate}
	\item	$\supp(\nu) \subset B(0,1)\cap P_{0}$ and $\widehat{\nu}(0)=0$,
	\item for all $x \in P_0 \setminus \{0\}$ and $i \in \{1,\dots, \N \}$, we have
		\begin{align} \label{26.3.10;1}	
			\int_{0}^{\infty}|\widehat{\nu}(t x)|^{2}\frac{\dd t}{t}=1 \ \ \ \ \ \ \text{and} \ \ \ \ \ \
			0 < \int_{0}^{\infty} |\widehat{(\partial_{i} \nu)_{t}}(x)|^{2} \ \frac{\dd t}{t} < \infty,
		\end{align}
	\item for all $i \in \{1,\dots, \N \}$, the function $\partial_{i} \nu$ has mean value zero 
		and, for all $a \in \mathcal{A}(P_{0},P_{0}^{\perp})$ (affine functions), the function $a \nu$
		has mean value zero as well.
	\end{enumerate}
\end{lem}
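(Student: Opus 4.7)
The plan is to construct $\nu$ as a normalized Laplacian of a radial bump. Specifically, I would pick a nonzero radial $\psi \in C_0^\infty(P_0)$ supported in $B(0,1/2)\cap P_0$ with $\psi \ge 0$ and $\psi(0)>0$, and set $\nu_0 := \Delta \psi$. Because the Laplacian preserves radial symmetry and compact support, $\nu_0$ is radial and lies in $C_0^\infty(B(0,1)\cap P_0)$. Integration by parts against the constant $1$ gives $\widehat{\nu_0}(0) = \int \nu_0 = 0$. With the convention $\widehat{f}(\xi) = \int f(x) e^{-2\pi i x \cdot \xi}\,dx$, one has $\widehat{\nu_0}(\xi) = -4\pi^2 |\xi|^2 \widehat{\psi}(\xi)$, and radiality of $\psi$ forces $\widehat{\psi}(\xi) = F(|\xi|)$ for a Schwartz function $F$ with $F(0) = \int\psi > 0$.

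Next, for $x \in P_0 \setminus \{0\}$, the substitution $s = t|x|$ yields
\[
\int_0^\infty |\widehat{\nu_0}(tx)|^2 \frac{\dd t}{t}
   = (4\pi^2)^2 \int_0^\infty s^3 |F(s)|^2\,\dd s =: I_0,
\]
which is independent of $x$, finite by the Schwartz decay of $F$, and strictly positive because $F(0) \ne 0$. Rescaling $\nu := I_0^{-1/2}\nu_0$ then delivers both the support condition, $\widehat{\nu}(0)=0$, and the reproducing identity $\int_0^\infty |\widehat{\nu}(tx)|^2 \frac{\dd t}{t} = 1$ of (i) and the first equation of (ii).

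For the second equation of (ii), the identity $\widehat{(\partial_i \nu)_t}(x) = 2\pi i (tx_i)\widehat{\nu}(tx)$ combined with the same change of variables gives
\[
\int_0^\infty |\widehat{(\partial_i\nu)_t}(x)|^2 \frac{\dd t}{t}
   = 4\pi^2 \frac{x_i^2}{|x|^2} \int_0^\infty s\, |\widehat{\nu}(se_1)|^2\,\dd s,
\]
which is finite for every $x \ne 0$ (Schwartz decay plus the extra vanishing $\widehat{\nu}(\xi)=O(|\xi|^2)$ at the origin) and strictly positive whenever $x_i \neq 0$. Property (iii) follows from symmetry: $\int\partial_i\nu = 0$ by compact support, and for affine $a(y) = \alpha + L(y)$ with $\alpha \in P_0^\perp$, $L$ linear, one has $\int a(y)\nu(y)\,\dd y = \alpha\int\nu + \int L(y)\nu(y)\,\dd y = 0$ because $\int\nu = 0$ and each $\int y_i \nu(y)\,\dd y$ vanishes by oddness under the reflection $y_i \mapsto -y_i$ that preserves $\nu$.

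The only real subtlety I anticipate is reconciling the phrase ``for all $x \in P_0 \setminus \{0\}$'' in the second inequality of (ii) with the fact that $\widehat{\partial_i \nu}$ must vanish on the hyperplane $\xi_i = 0$ (since $\nu$ is radial and hence $\widehat{\partial_i \nu}(\xi) = 2\pi i \xi_i \widehat{\nu}(\xi)$). The strict positivity can only hold under the implicit understanding that $x_i \ne 0$, so in practice the construction is standard and I would expect the application in later sections to invoke the statement only away from $\{x_i = 0\}$ or in an integrated form where the hyperplane is negligible.
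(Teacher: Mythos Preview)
Your construction is essentially identical to the paper's: it too takes a radial bump $\nu_1$, sets $\nu_2:=\Delta\nu_1$, verifies $\widehat{\nu_2}(0)=0$ and $0<c_1:=\int_0^\infty|\widehat{\nu_2}(te)|^2\,\frac{\dd t}{t}<\infty$ via the identity $|\widehat{\nu_2}(\xi)|=4\pi^2|\xi|^2|\widehat{\nu_1}(\xi)|$, and then normalizes $\nu:=c_1^{-1/2}\nu_2$; the mean-zero properties are obtained by integration by parts (using $\Delta a=0$ for affine $a$) rather than your reflection argument, but this is cosmetic.

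Your closing remark is well taken and in fact sharper than the paper. The paper argues that the second integral in (ii) ``can not be zero, because this would implicate that $\partial_i\nu\equiv 0$'', which at best shows the integral is not identically zero as a function of $x$, not that it is strictly positive for every $x\neq 0$. As you observe, $\widehat{(\partial_i\nu)_t}(x)=2\pi i\,(\phi^{-1}(tx))_i\,\widehat{\nu}(tx)$ vanishes on the hyperplane $(\phi^{-1}(x))_i=0$, so the literal ``for all $x\in P_0\setminus\{0\}$'' claim is too strong. The only downstream use is in the Littlewood--Paley estimate (Lemma~\ref{4.3.2013.1}), where an a.e.\ statement suffices, so your reading is the correct one.
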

\begin{proof}
	Let $\nu_{1} : P_{0} \rightarrow \mathbb{R}$ be some non harmonic ($\Delta \nu_{1} \neq 0$), 
	radial $C^{\infty}$ function with support in $B(0,1)\cap P_{0}$. 
	We set $\nu_2 := \Delta \nu_{1} \in C^{\infty}(P_{0}) \cap C_0^{\infty}(B(0,1)\cap P_{0})$ and
	$0 < c_{1}:= \int_{0}^{\infty}|\widehat{\nu_{2}}(te) |^{2}\frac{\dd t}{t}$,
	where $e$ is some normed vector in $P_{0}$. With Lemma \ref{22.02.2013.02}, 
	we get $\nu_2$ is radial as well.
	Using Lemma \ref{10.12.12.2}, we obtain $|\widehat{\nu_{2}}(te)| = 4\pi^2 t^2 |\widehat{\nu_1}(te)|$
	and hence
	\begin{align*}
		0 < c_{1} &= \int_{0}^{\infty}|\widehat{\nu_{2}}(te) |^{2}\frac{\dd t}{t}
		= 16\pi^{4}\int_{0}^{\infty} t^3 |\widehat{\nu_{1}}(te) |^{2} \dd t < \infty
	\end{align*}
	because $\nu_{1}$ is in the Schwarz space and therefore $\widehat \nu_{1}$ 
	as well \cite[2.2.15, 2.2.11 (11)]{FourierAnalysis}.
	The previous equality also implies $\widehat{\nu_{2}}(0) = 0$.
	Now we set $\nu:= \sqrt{\frac{1}{c_{1}}} \nu_{2}$,
	which is a radial $C_{0}^{\infty}(P_{0},\R)$ function that fulfils 1. 
	We have for all $x \in P_{0} \setminus \{0\}$ 
	(use substitution with $t=r\frac{1}{|x|}$ and the fact that $\widehat \nu$ is radial)
	$\int_{0}^{\infty}|\widehat{\nu}(t x)|^{2}\frac{\dd t}{t}
		=\int_{0}^{\infty}|\widehat{\nu}(re)|^{2}\frac{\dd r}{r}=1.$
	In a similar way, we deduce for $i \in \{1,\dots,\N\}$ with Lemma \ref{10.12.12.2} 
	(using $|(\phi^{-1}(tx))^{\kappa}|\le|\phi^{-1}(tx)|=|tx|$ where $\kappa$ is some multi-index
	with $|\kappa|=1$)
	\begin{align*}
		\int_{0}^{\infty} |\widehat{(\partial_{i} \nu)_{t}}(x)|^{2} \ \frac{\dd t}{t}
		& \le |2\pi i|^{2} \int_{0}^{\infty} |tx|^{2}  \left| \widehat{\nu}(tx) \right|^{2} \ \frac{\dd t}{t} 
		 = 4\pi^{2} \int_{0}^{\infty} r \left| \widehat{\nu}\left(r\frac{x}{|x|}\right) \right|^{2} \ \dd r < \infty,
	\end{align*}
	where we use that the Fourier transform of a Schwartz function is a Schwartz function as well
	\cite[2.2.15]{FourierAnalysis}.
	The left hand side of the previous inequality can not be zero, because this would implicate that 
	$\partial_{i}\nu(x)=0$ for all $x \in P_{0}$, which is in contradiction to 
	$0 \neq \nu \in C_{0}^{\infty}(P_{0},\R)$.
	Hence $\nu$ fulfils 2.
	Using partial integration and $\Delta a=0$ for all $a \in \mathcal{A}(P_{0},P_{0}^{\perp})$ implies
	that $\partial_{i} \nu$ and $a \nu$ have mean value zero.
\end{proof}
For some function $f : P_{0} \rightarrow P_{0}^{\perp}$ and $x \in P_{0}$, we define the convolution
	of $\nu_{t}$ and $f$ by
\[ (\nu_{t} * f)(x) := \int_{P_{0}} \nu_{t}(x-y)f(y) \dd \cH^{\N}(y).\]

\begin{lem}[Calder\'on's identity] \label{calderon}
	Let $\nu$ be the function given by Lemma \ref{22.02.2013.01} and let 
	$u \in P_{0} \setminus \{0\}$ and $f\in L^{2}(P_{0},P_{0}^{\perp})$ or let $f \in \mathscr{S}^{'}(P_{0})$ 
	be a tempered distribution and $u \in \mathscr{S}(P_{0})$ (Schwartz space) with $u(0)=0$.
	Then we have
	\begin{align}
		f(u) = \int_{0}^{\infty} (\nu_{t}*\nu_{t}*f)(u) \frac{\dd t}{t}.
	\end{align}
	L\'eger calls  this identity ``Calder\'on's formula'' \cite[p. 862, 5. Calder\'on's formula and the size of $F_3$]{Leger}.
	Grafakos presents a similar version called ``Calder\'on reproducing formula'' 
	\cite[p.371, Exercise 5.2.2]{FourierAnalysis}.
\end{lem}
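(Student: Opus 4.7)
The plan is to pass to Fourier space and exploit the key identity $\int_{0}^{\infty}|\widehat{\nu}(t\xi)|^{2}\frac{\dd t}{t}=1$ for $\xi \neq 0$, granted by Lemma \ref{22.02.2013.01}. Since $\nu$ is real-valued and radial, $\widehat{\nu}$ is real-valued and radial; using the scaling rule $\widehat{\nu_{t}}(\xi)=\widehat{\nu}(t\xi)$ and the convolution theorem, I would write
\[
\widehat{\nu_{t}*\nu_{t}*f}(\xi) \;=\; \widehat{\nu}(t\xi)^{2}\,\widehat{f}(\xi).
\]

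The next step is to interchange the Fourier transform with the $\frac{\dd t}{t}$-integral. Once this is justified (a Fubini-type step), property (\ref{26.3.10;1}) yields
\[
\widehat{\int_{0}^{\infty}\nu_{t}*\nu_{t}*f\,\tfrac{\dd t}{t}}(\xi)
\;=\;\widehat{f}(\xi)\int_{0}^{\infty}\widehat{\nu}(t\xi)^{2}\,\tfrac{\dd t}{t}
\;=\;\widehat{f}(\xi) \qquad \text{for every } \xi\neq 0,
\]
and Fourier inversion then gives the identity. In the $L^{2}$-case this works cleanly via Plancherel: the Fourier transform is an isometry, Fubini applies in the frequency variable, and the two sides agree as elements of $L^{2}$ because the set $\{0\}$ is negligible. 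In the distributional case I would move both convolutions onto the test function (using that $\nu$ is real and radial, so $\widetilde{\nu_{t}}=\nu_{t}$) to obtain
\[
\int_{0}^{\infty}(\nu_{t}*\nu_{t}*f)(u)\,\tfrac{\dd t}{t}
\;=\;\Bigl\langle f,\;\int_{0}^{\infty}\nu_{t}*\nu_{t}*u\,\tfrac{\dd t}{t}\Bigr\rangle,
\]
and then establish that the inner integral converges to $u$ in the Schwartz topology $\mathscr{S}(P_{0})$ (so that $f$ may be applied to the limit). This Schwartz convergence is proved by exactly the same Fourier-side computation, where the hypothesis on $u$ at the origin compensates for the vanishing $\widehat{\nu}(0)=0$ and lets the identity extend to $\xi=0$.

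The main obstacle is precisely this Fubini/swapping step, and in the distributional setting, the $\mathscr{S}$-convergence of $\int_{0}^{\infty}\nu_{t}*\nu_{t}*u\,\frac{\dd t}{t}$ to $u$. Near $t=0$ the cancellation $\widehat{\nu}(0)=0$ from Lemma \ref{22.02.2013.01}(1) makes $\widehat{\nu}(t\xi)$ small uniformly on compact sets, and the second part of (\ref{26.3.10;1}) gives enough integrability; near $t=\infty$, rapid decay of the Schwartz function $\widehat{\nu}$ at infinity produces the required tail control, uniformly in all Schwartz seminorms, after differentiation under the integral. Once these two regimes are controlled, the equality at the level of Fourier transforms (or Schwartz test functions) transfers back and closes the argument in both cases simultaneously.
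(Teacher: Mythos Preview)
Your approach is exactly the one the paper takes: pass to Fourier space, use $\widehat{\nu_t}(\xi) = \widehat{\nu}(t\xi)$ and the convolution theorem, apply Fubini, invoke the identity $\int_0^\infty |\widehat{\nu}(t\xi)|^2 \frac{\dd t}{t} = 1$ for $\xi \neq 0$, and conclude by Fourier inversion. For the $L^2$ case this is complete and matches the paper line for line; the paper's treatment of the tempered-distribution case is only the sentence ``use the same idea'', so your sketch is already more detailed than the original.

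One caution on the distributional part: you write that the hypothesis $u(0)=0$ ``compensates for the vanishing $\widehat{\nu}(0)=0$ and lets the identity extend to $\xi=0$''. This is not correct. What your route (showing $\int_0^\infty \nu_t * \nu_t * u\,\frac{\dd t}{t} = u$ in $\mathscr{S}$) actually needs is $\widehat{u}(0)=0$, i.e.\ $\int u = 0$, not $u(0)=0$: on the Fourier side the left-hand side equals $\widehat{u}(\xi)\int_0^\infty \widehat{\nu}(t\xi)^2\,\frac{\dd t}{t}$, which is $\widehat{u}(\xi)$ for $\xi \neq 0$ but $0$ at $\xi = 0$. Indeed, taking $f=1 \in \mathscr{S}'$ gives $\nu_t*\nu_t*1=0$ (since $\int\nu=\widehat{\nu}(0)=0$), so the right-hand side vanishes identically while $\langle 1,u\rangle=\int u$ need not. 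This looks like a slip in the paper's stated hypothesis rather than in your strategy, and it is harmless for the paper's sole application (Lemma~\ref{lem5.2}, step III), where $f=\delta_0$ and the identity in fact holds for every $u\in\mathscr{S}$ without any vanishing condition. But $u(0)=0$ does not play the role you assign to it.
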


\begin{proof}
	At first, let $f \in L^2(P_{0},P_{0}^{\perp})$ and $u \in P_{0} \setminus \{0\}$. 
	We have with Lemma \ref{10.12.12.2} that $\widehat{(\nu_{t})}(u)=  \widehat{\nu}(t u)$
	and, with Fubini's theorem and Lemma \ref{FourFal}, we obtain 
	\begin{align*}
		\left( \int_{0}^{\infty} (\nu_{t}*\nu_{t}*f)(u) \frac{\dd t}{t}\right)^{\widehat{}} 
		& = \int_{0}^{\infty} \widehat{(\nu_{t})}(u)\widehat{(\nu_{t})}(u)\widehat{f}(u) \frac{\dd t}{t}
		\stackrel{\eqref{26.3.10;1}}{=} \widehat{f}(u).
	\end{align*}
	The  Fourier inversion holds on $L^{2}(P_{0},P_{0}^{\perp})$ 
	\cite[2.2.4 The Fourier Transform on $L^1+L^2$]{FourierAnalysis}, which gives the statement.
	Use the same idea to get this result for tempered distributions.
\end{proof}

\begin{proof}[Proof of Theorem \ref{2.9.2014.1}]
Let $\f \in C_{0}^{0,1}(P_{0},P_{0}^{\perp})$ and let $\nu$ be the function given by Lemma \ref{22.02.2013.01}.
We define
\begin{align*}
	\f_{1}(u) &:= \int_{2}^{\infty} (\nu_{t}* \nu_{t}*\f)(u) \frac{\dd t}{t}
		 + \int_{0}^{2} (\nu_{t}* (\Eins_{P_{0} \setminus U_{10}}\cdot(\nu_{t}*\f)))(u)
			 \frac{\dd t}{t},\\
	\f_{2}(u) & := \int_{0}^{2} (\nu_{t}* (\Eins_{U_{10}}\cdot(\nu_{t}*\f)))(u)
			 \frac{\dd t}{t}
\end{align*}
and the previous lemma implies that $\f = \f_{1} + \f_{2}$.
We recall the notation $U_{l}=B(0,l)\cap P_{0}$ for $l\in \{6,8,10\}$ and 
continue the proof of Theorem \ref{2.9.2014.1} with several lemmas.

\begin{lem}\label{lem5.2}
	$\f_{1} \in C^{\infty}(U_{8})$ and there exists some constant $C = C(\nu)$ so that
	for all multi-indices $\kappa$ with $|\kappa|\le 2$ we have
	$\| \partial^{\kappa} \f_{1}\|_{L^{\infty}(U_{8},P_{0}^{\perp})} \le C \Lip_{\f}$.

	$\f_{2}$ is Lipschitz continuous on $U_{8}$ with Lipschitz constant $C(\nu)\Lip_{\f}$.
\end{lem}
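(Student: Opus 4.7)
The cornerstone of the argument is that on $U_8$ the second summand in the definition of $\f_1$ vanishes identically. Indeed, for $u \in U_8$ and $0 < t \le 2$, the support of $z \mapsto \nu_t(u-z)$ is contained in $B(u,t) \subset B(0,10) = U_{10}$, so $\Eins_{P_0 \setminus U_{10}}(z) \equiv 0$ on the support of the outer convolution; by the same token, the cutoff $\Eins_{U_{10}}$ in $\f_2$ is identically $1$ on the relevant support, so $\f_2(u) = \int_0^2 (\nu_t * \nu_t * \f)(u) \frac{\dd t}{t}$ on $U_8$. Combining these observations with Calder\'on's identity (Lemma \ref{calderon}) yields
\[\f_1(u) = \int_2^\infty (\nu_t * \nu_t * \f)(u) \frac{\dd t}{t} \quad \text{and} \quad \f_2 = \f - \f_1 \quad \text{on } U_8.\]

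To prove $\f_1 \in C^\infty(U_8)$ with the claimed derivative bounds I would differentiate under the integral sign, shifting derivatives onto the compactly supported $C^\infty$ kernel: $\partial^\kappa(\nu_t * \nu_t * \f) = (\partial^\kappa \nu_t) * (\nu_t * \f)$. The mean-value-zero property $\widehat\nu(0) = 0$ from Lemma \ref{22.02.2013.01} gives, via the rewriting $(\nu_t * \f)(u) = \int \nu_t(u-z)(\f(z) - \f(u))\,\dd \cH^\N(z)$ and $|\f(z) - \f(u)| \le t \Lip_\f$ on the support, the estimate $\|\nu_t * \f\|_\infty \le C(\nu)\, t \Lip_\f$. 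Combined with $\|\partial^\kappa \nu_t\|_{L^1} = t^{-|\kappa|}\|\partial^\kappa \nu\|_{L^1}$, this produces $|\partial^\kappa(\nu_t*\nu_t*\f)(u)| \le C(\nu)\, t^{1-|\kappa|} \Lip_\f$. For $|\kappa| = 2$ this is already integrable against $\dd t/t$ on $[2,\infty)$. For $|\kappa| \in \{0,1\}$ it is not, and one must additionally exploit the compact support of $\f$ via the complementary bound $\|\nu_t*\nu_t*\f\|_\infty \le \|\nu_t*\nu_t\|_\infty \|\f\|_{L^1} \le C(\nu)\, t^{-\N}\|\f\|_{L^1}$, which controls the tail $\int_T^\infty$ for $T$ chosen of the order of $\diam \supp \f$, while on $[2,T]$ the first bound is used. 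Dominated convergence then legalises differentiation under the integral, giving $\f_1 \in C^\infty(U_8)$. The Lipschitz claim for $\f_2$ follows immediately from $\f_2 = \f - \f_1$ on $U_8$: since $\f$ is Lipschitz and $\|\nabla \f_1\|_{L^\infty(U_8)} \le C(\nu)\Lip_\f$ by the case $|\kappa|=1$ of the above, $\f_2$ is Lipschitz on $U_8$ with constant $\le (1 + C(\nu))\Lip_\f$.

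The main technical point is the absence of integrable $t$-decay at infinity for $|\kappa| \le 1$: the mean-zero argument alone yields only $t^{1-|\kappa|}$, which fails to be integrable against $\dd t/t$ near $\infty$, so one is forced to supplement it with the $L^1$ estimate coming from compact support of $\f$ in order to close the bound.
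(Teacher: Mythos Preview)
Your reduction on $U_8$ is correct and agrees with the paper: the second summand in $\f_1$ vanishes there, $\f_2 = \f - \f_1$, and the $|\kappa|=2$ estimate via $\|\partial^\kappa \nu_t\|_{L^1}\,\|\nu_t*\f\|_\infty \le C(\nu)\,t^{-1}\Lip_\f$ goes through.

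The gap is in $|\kappa|\le 1$. Your tail argument invokes $\|\f\|_{L^1}$ and a splitting point $T\sim\diam\supp\f$, so the constant you produce is $C(\nu,\supp\f)$, not $C(\nu)$ as the lemma claims. Concretely, for $|\kappa|=1$ your two regimes give
\[
C(\nu)\,\Lip_\f\,\ln(T/2)\;+\;C(\nu)\,\|\f\|_{L^1}\,T^{-\N-1},
\]
and no choice of $T$ bounds this by $C(\nu)\Lip_\f$ uniformly over all compactly supported Lipschitz $\f$. Since $\Lip_{\f_2}$ feeds into Lemma~\ref{lem5.4} and then into the constant $\hat C(\N,\n)$ of Theorem~\ref{2.9.2014.1}, this dependence would propagate and break the stated uniformity.

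The paper avoids this by not estimating the $t$-integrand directly but instead analysing the kernel
\[
\varphi(x)\;:=\;\int_2^\infty(\nu_t*\nu_t)(x)\,\frac{\dd t}{t}.
\]
The crucial (and nonobvious) point you are missing is that $\varphi$ has \emph{compact support} in $B(0,4)\cap P_0$: applying Calder\'on's identity to $\delta_0$ as a tempered distribution gives $\varphi=\delta_0-\int_0^2(\nu_t*\nu_t)\,\frac{\dd t}{t}$, and the right-hand side is supported in $B(0,4)$. Together with the easy pointwise bounds $|\partial^\kappa\varphi|\le C(\nu)$, this yields $\varphi\in C_0^\infty(P_0)$ and in particular $\|\varphi\|_{L^1},\|\partial_i\varphi\|_{L^1}\le C(\nu)$. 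Then $\f_1=\varphi*\f$ on $U_8$, and placing one derivative on $\f$ gives $\partial_i\f_1=\varphi*\partial_i\f$ and $\partial_i\partial_j\f_1=\partial_i\varphi*\partial_j\f$, hence $\|\partial^\kappa\f_1\|_{L^\infty(U_8)}\le C(\nu)\Lip_\f$ with no reference whatsoever to $\supp\f$.
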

\begin{proof}
	For $x \in P_0$ we set
	\[ \f_{11}(x) := \int_{2}^{\infty} (\nu_{t} * \nu_{t} * \f)(x) \frac{\dd t}{t}, \ \ \
		\f_{12}(x) := \int_{0}^{2} (\nu_{t} * (\Eins_{P_{0} \setminus U_{10}} \cdot(\nu_{t} * \f)))(x)
			 \frac{\dd t}{t}\]
	so that $\f_{1} = \f_{11} + \f_{12}$ and we set
	$\varphi(x) := \int_{2}^{\infty} (\nu_{t} * \nu_{t})(x) \frac{\dd t}{t}$.

	At first, we look at some intermediate results:
	\begin{enumerate}
		\renewcommand{\labelenumi}{\Roman{enumi}.}
		\item $\f_{12}(x) = 0$ for all $x \in U_{8}$, due to the support of $\nu_{t}$ and 
		$\Eins_{P_{0} \setminus U_{10}}\cdot(\nu_{t} * \f)$.
		\item For every multi-index $\kappa$, there exists some constant $C=C(\N,\nu,\kappa)$ such that
			$|\partial^{\kappa} \varphi| \le C$, where 
			$\partial^{\kappa} \varphi(y) := \int_{2}^{\infty} \partial^{\kappa}(\nu_{t} * \nu_{t})(y) \frac{\dd t}{t}$.
			This is given by 
			$\partial^{\kappa} (\nu_{t}(y)) = \frac{1}{t^{|\kappa|}}(\partial^{\kappa} \nu)_{t}(y)$,
			and $|\partial^{\kappa}(\nu_{t} * \nu_{t})(y)| 
		 \le  \| \nu\|_{L^{\infty}(P_{0},\R)}  \| \partial^{\kappa}\nu\|_{L^{\infty}(P_{0},\R)} \frac{\vol}{t^{\N+|\kappa|}}$.
		\item
		For every multi-index $\kappa$, the function $\partial^{\kappa}\varphi$ has bounded support in $B(0,4) \cap P_{0}$.
		\begin{proof}
		Let $0 <t \le 2$ and $x \in P_0 \setminus B(0,4)$. We have $(\nu_{t} * \nu_{t})(x) = 0$ which implies that
		$\int_{0}^{2}(\nu_{t} * \nu_{t})(x) \frac{\dd t}{t}=0$.
		Now we consider $\varphi$ as a tempered distribution. The convolution with
		$\delta_{0}$, the Dirac mass at the origin, is an identity, hence we get with
		Calder\'on's identity (Lemma \ref{calderon}) for all $\eta \in \mathscr{S}(P_{0})$
		with $\eta(0)=0$
		\begin{align*}
			\varphi(\eta)=\varphi(\eta) - \delta_{0}(\eta) 
				& = \left(\int_{2}^{\infty} (\nu_{t} * \nu_{t}) \frac{\dd t}{t}\right)(\eta) -
					\left(\int_{0}^{\infty}(\nu_{t} * \nu_{t}) \frac{\dd t}{t}\right)(\eta)\\
				& = - \left(\int_{0}^{2}(\nu_{t} * \nu_{t}) \frac{\dd t}{t}\right)(\eta).
		\end{align*}
		Since this holds for arbitrary $\eta \in \mathscr{S}(P_{0})$ with 
		$\supp(\eta)\subset P_{0} \setminus B(0,4)$,
		we conclude that for such $\eta$ we have
		\[\int_{P_{0}}\varphi(x)\eta(x) \dd\cH^{\N}(x) 
			= -\int_{P_{0}} \int_{0}^{2}(\nu_{t} * \nu_{t})(x) \frac{\dd t}{t} \eta(x) \dd \cH^{\N}(x)=0\]
		and hence $\supp( \varphi ) \subset B(0,4) \cap P_{0}$.
		For the same kind of $\eta $, we get using Fubini's theorem and partial integration
		\begin{align*}
			\int_{P_{0}} \partial^{\kappa}\varphi(x)\eta(x) \dd\cH^{\N}(x)
			&= (-1)^{|\kappa|} \int_{2}^{\infty} \int_{P_{0}} (\nu_{t} * \nu_{t})(x)\partial^{\kappa}\eta(x) \dd\cH^{\N}(x)\frac{\dd t}{t}
			= 0
		\end{align*}
		since $\partial^{\kappa}\eta \in \mathscr{S}(P_{0})$ with 
		$\supp(\partial^{\kappa}\eta)\subset P_{0} \setminus B(0,4)$.
		\end{proof}
		\item 
		$\varphi \in C_{0}^{\infty}(P_{0})$
		\begin{proof}
		
		With II. and III. we conclude for every multi-index $\kappa$ that
		$\partial^{\kappa}\varphi \in L^{1}(P_{0},\R)$.
		With Fubini's theorem and partial integration, we see that $\partial^{\kappa} \varphi$ is the weak derivative of $\varphi$
		hence we have
		$\varphi \in W^{l,1}(P_{0})$
		for every $l \in \mathbb{N}$. The Sobolev imbedding theorem \cite[Thm 4.12]{Adams}
		gives us $\varphi \in C^{\infty}(P_{0})$ and, with III., we 
		obtain $\varphi \in C_{0}^{\infty}(P_{0})$.
		\end{proof}
	\end{enumerate}
	Now we have for all $ x \in U_{8}$ with Fubini's theorem \cite[1.4, Thm. 1]{Evans} $ \f_{11}(x) = (\varphi * \f)(x)$.
	We know, that $ \varphi \in C_0^{\infty}(P_{0})$ and $\f \in C_{0}^{0,1}(P_{0},P_{0}^{\perp})$.
	Hence we have $\f_{11} \in C_0^{\infty}(P_0)$, $\f \in W^{1,\infty}(P_{0})$ 
	and for $i,j \in \{1,\dots,\N \}$ we have
	$ \partial_{i} \f_{11} = \varphi * \partial_{i} \f$ and 
	$ \partial_{i}\partial_{j} \f_{11} = \partial_{i} \varphi * \partial_{j} \f$.
	With the Minkowski inequality \cite[Thm. 1.2.10]{FourierAnalysis} and IV., we obtain
	for $i,j \in \{1,\dots,\N \}$
	\begin{align*}
		\| \partial_{i} \f_{1} \|_{L^{\infty}(U_{8},\R)} & \stackrel{\text{I.}}{=} \|  \partial_{i} \f_{11} \|_{L^{\infty}(U_{8},\R)} 
			\le \| \partial_{i} \f \|_{L^{\infty}(U_{8},\R)} \| \varphi\|_{L^{1}(P_0)} \le C(\nu) \Lip_{\f},\\
		\| \partial_{i} \partial_{j} \f_{1} \|_{L^{\infty}(U_{8},\R)} 
			&\stackrel{\text{I.}}{=}\| \partial_{i} \partial_{j} \f_{11} \|_{L^{\infty}(U_{8},\R)} 
			\le \| \partial_{i} \f \|_{L^{\infty}(U_{8},\R)} \| \partial_{j} \varphi\|_{L^{1}(P_0)}\le C(\nu) \Lip_{\f}.
	\end{align*}
	Now it is easy to see that $\f_{2}$ is $C\Lip_{\f}$-Lipschitz on $U_{8}$ because we have $\f_{2}=\f - \f_{1}$ and $\f$ as well as
	$\f_{1}$ are $C\Lip_{\f}$-Lipschitz on $U_{8}$.
\end{proof}

\begin{rem}\label{9.10.2014.2}
Under the assumption that 
\[ \int_{U_{10}} \left( \int_{0}^{2} \gamma_{\f}(x,t)^{2} \frac{\dd t}{t} \right)^{\frac{\p}{2}}
		\dd \cH^{\N}(x) < \infty, \]
the next lemmas will prove that $\f_{2} \in W_{0}^{1,\p}(P_{0},P_{0}^{\perp})$.
We show for this purpose in Lemma \ref{lem5.1} that
\[\partial_{i} \f_{2}(x):=\int_{0}^{2} \partial_{i} (\nu_{t}* (\Eins_{U_{10}}(\nu_{t}*\f)))(x) \frac{\dd t}{t}\]
is in $L^{\p}(P_{0},P_{0}^{\perp})$. 
Using Fubini's theorem \cite[1.4, Thm. 1]{Evans}
and partial integration it turns out that $\partial_{i}g_{2}$ fulfils  the condition of a weak derivative.
\end{rem}

\begin{lem} \label{21.03.2013.1}
	We have $\supp(\f_{2}) \subset B(0,12) \cap P_{0}$ and $\supp(\partial_{i} \f_{2}) \subset B(0,12) \cap P_{0}$
	for all $i \in \{1,\dots,\N \}$.
\end{lem}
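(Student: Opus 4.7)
The plan is to prove both support statements by a direct application of the standard fact that for convolutions, $\supp(\phi * \psi) \subset \supp(\phi) + \supp(\psi)$ (Minkowski sum).

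First I would record the relevant support facts inherited from Lemma \ref{22.02.2013.01}: since $\supp(\nu) \subset B(0,1) \cap P_0$, a rescaling gives $\supp(\nu_t) \subset B(0,t) \cap P_0$ for every $t > 0$, and consequently $\supp(\partial_i \nu_t) \subset B(0,t) \cap P_0$ as well. For the inner factor, the indicator function $\mathbf{1}_{U_{10}} \cdot (\nu_t * g)$ has support contained in $U_{10} = B(0,10) \cap P_0$ regardless of $t$.

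Next, for each fixed $t \in (0,2]$, the convolution $\nu_t * (\mathbf{1}_{U_{10}} \cdot (\nu_t * g))$ has support contained in $B(0,t) + B(0,10) \cap P_0 \subset B(0,12) \cap P_0$. Taking the integral over $t \in (0,2]$ preserves this containment since every integrand vanishes outside $B(0,12) \cap P_0$, so $\supp(g_2) \subset B(0,12) \cap P_0$.

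For the derivative statement, I would use that $\partial_i$ acts on the first factor of the convolution (a routine differentiation under the integral, which is justified by the smoothness of $\nu_t$ and the boundedness and compact support of the inner factor; this is essentially the content of Remark \ref{9.10.2014.2}), giving
\[\partial_i g_2(x) = \int_0^2 ((\partial_i \nu_t) * (\mathbf{1}_{U_{10}} \cdot (\nu_t * g)))(x) \frac{dt}{t}.\]
The same Minkowski-sum argument as above, now with $\supp(\partial_i \nu_t) \subset B(0,t) \cap P_0$ in place of $\supp(\nu_t)$, yields $\supp(\partial_i g_2) \subset B(0,12) \cap P_0$. There is no real obstacle here; the only subtlety worth flagging is the interchange of $\partial_i$ with the $t$-integral, but this follows from dominated convergence once one observes that $\|\mathbf{1}_{U_{10}} (\nu_t*g)\|_{L^\infty}$ and $\|\partial_i \nu_t\|_{L^1}$ are controlled uniformly enough on each compact subset of $P_0$ (and $g$ is Lipschitz with compact support, so $\nu_t * g$ is bounded uniformly in $t \in (0,2]$).
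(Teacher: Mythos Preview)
Your proposal is correct and takes essentially the same approach as the paper: both argue that $\supp(\nu_t)\subset B(0,t)\cap P_0$ (the paper simply uses $B(0,2)$ since $t<2$) and $\supp(\Eins_{U_{10}}(\nu_t*\f))\subset B(0,10)\cap P_0$, then use the Minkowski-sum bound for convolution supports to conclude for each integrand and hence for the $t$-integral. The paper handles $\partial_i \f_2$ in one line by noting the same support containment applies to the explicitly defined derivative integrand; your added remarks on differentiating under the integral are sound but more than the paper provides.
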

\begin{proof}
	If $0 <t<2$ and $x \in P_{0}$, we have
	$ \supp(\nu_{t}(x- \cdot)) \subset B(x,2) \cap P_{0}$
	and
	$\supp (\Eins_{U_{10}}(\nu_{t}*\f)) \subset B(0,10) \cap P_{0}.$
	This implies 
	$\supp (\nu_{t}* (\Eins_{U_{10}}(\nu_{t}*\f))) \subset B(0,12) \cap P_{0}$
	and hence we obtain $\supp(\f_{2})\subset B(0,12)$ and $\supp(\partial_{i} \f_{2}) \subset B(0,12) \cap P_{0}$.
\end{proof}

\begin{lem} \label{3.12.2012.1}
	Let $x \in U_{10}$ and $0<t<2$. We have
	$\left| \frac{(\nu_{t} * \f)(x)}{t} \right|  \le  \| \nu \|_{L^{\infty}(P_{0},\R)} \gamma_{\f}(x,t)$.
\end{lem}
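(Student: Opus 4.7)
The plan is to exploit property~3 of Lemma~\ref{22.02.2013.01}: for every affine map $a \in \mathcal{A}(P_0,P_0^\perp)$, the function $a\nu$ has mean value zero. I will use this to insert a best-approximating affine function inside the convolution at no cost, which then reduces the estimate to a trivial $L^\infty$--$L^1$ bound that matches the definition of $\gamma_{\f}$.

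Concretely, the first step is to fix any $a \in \mathcal{A}(P_0,P_0^\perp)$ and show that
\[
\int_{P_0} \nu_t(x-y)\,a(y)\,\dd\cH^{\N}(y) = 0.
\]
To see this, substitute $z = (y-x)/t$. Since $a$ is affine, $\tilde a(z) := a(x+tz)$ is again affine in $z$, and since $\nu$ is radial (hence even), $\nu(-z) = \nu(z)$, so the integral equals $\int_{P_0} \nu(z)\,\tilde a(z)\,\dd\cH^{\N}(z)$, which vanishes by property~3 of Lemma~\ref{22.02.2013.01}. Consequently, for every such $a$,
\[
(\nu_t * \f)(x) = \int_{P_0} \nu_t(x-y)\,[\f(y) - a(y)]\,\dd\cH^{\N}(y).
\]

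The second step is to estimate this integral. Because $\supp(\nu) \subset B(0,1)\cap P_0$, the integrand vanishes outside $B(x,t)\cap P_0$, and $\|\nu_t\|_{L^\infty(P_0,\R)} = t^{-\N}\|\nu\|_{L^\infty(P_0,\R)}$, so
\[
|(\nu_t * \f)(x)| \le \frac{\|\nu\|_{L^\infty(P_0,\R)}}{t^{\N}} \int_{B(x,t)\cap P_0} |\f(y) - a(y)|\,\dd\cH^{\N}(y).
\]
Dividing by $t$ gives
\[
\frac{|(\nu_t * \f)(x)|}{t} \le \|\nu\|_{L^\infty(P_0,\R)} \cdot \frac{1}{t^{\N}} \int_{B(x,t)\cap P_0} \frac{|\f(y) - a(y)|}{t}\,\dd\cH^{\N}(y).
\]

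The final step is to take the infimum over all $a \in \mathcal{A}(P_0,P_0^\perp)$. The right-hand side becomes $\|\nu\|_{L^\infty(P_0,\R)}\,\gamma_{\f}(x,t)$ by definition, which yields the claim. There is no real obstacle here: the entire lemma is essentially the observation that $\nu_t$ annihilates affine functions, combined with a crude $L^\infty$--$L^1$ Hölder bound; the hypotheses $x \in U_{10}$ and $0 < t < 2$ only serve to ensure $B(x,t)\cap P_0$ lies in the region where $\gamma_{\f}(x,t)$ is defined and will be relevant for the subsequent $L^p$ estimates on $\partial_i \f_2$.
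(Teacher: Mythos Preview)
Your proof is correct and follows essentially the same approach as the paper: both use property~3 of Lemma~\ref{22.02.2013.01} to show $(\nu_t * a)(x) = 0$ for any affine $a$, then apply the support condition and an $L^\infty$--$L^1$ bound, and finally take the infimum over $a$. You spell out the substitution justifying $(\nu_t * a)(x) = 0$ in slightly more detail than the paper does, but the argument is the same.
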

\begin{proof}
	If $a:P_{0} \rightarrow P_{0}^{\perp}$ is an affine function, we get using Lemma \ref{22.02.2013.01} 3. that
	$(\nu_{t}*a)(x) = 0$ and hence, with Lemma \ref{22.02.2013.01} 1.
	\begin{align*}
		\left| \frac{(\nu_{t} * \f)(x)}{t} \right| 
			& = \left| \frac{(\nu_{t} * (\f - a))(x)}{t} \right|
			\le \| \nu \|_{L^{\infty}(P_{0},\R)}  \frac{1}{t^{\N}} \int_{P_{0}\cap B(x,t)} 
				\left|\frac{\f(y)-a(y)}{t}\right| \dd \cH^{\N}(y).
	\end{align*}
	Since $a$ was an arbitrary affine function, this implies the assertion.
\end{proof}

We have $p \in (1,\infty)$ and, for the proof of Theorem \ref{2.9.2014.1}, we can assume that
\[\int_{U_{10}} \left( \int_{0}^{2} \gamma_{\f}(x,t)^{2} \frac{\dd t}{t} \right)^{\frac{\p}{2}}\dd \cH^{\N}(x)< \infty.\]
\begin{lem} \label{lem5.1}
	We have $\f_{2} \in W_{0}^{1,\p}(P_{0},P_{0}^{\perp})$ and there exists some constant $C = C(\N,\p,\nu)$, so that
	for all $i \in \{1,\dots,\N \}$
	\[ \|\partial_{i}\f_{2} \|_{L^{\p}(P_{0},P_{0}^{\perp})}^{\p}
		\le C  \int_{U_{10}} \left( \int_{0}^{2}
		\gamma_{\f}(x,t)^{2} \frac{\dd t}{t}\right)^{\frac{\p}{2}} \dd \cH^{\N}(x),\]
	where $\partial_{i} \f_{2}(x) =  \int_{0}^{2} \partial_{i} (\nu_{t}* (\Eins_{U_{10}}(\nu_{t}*\f)))(x) \frac{\dd t}{t}$.
\end{lem}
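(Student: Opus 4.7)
The strategy is a Littlewood-Paley $g$-function argument applied to the kernel $\psi := \partial_{i} \nu$. First, I would justify differentiating under the integral sign. By Lemma \ref{22.02.2013.01}, the function $\nu$ has mean zero, so for the Lipschitz $\f$ one has
\[|(\nu_{t} * \f)(x)| = \left| \int \nu_{t}(y)[\f(x-y)-\f(x)]\,\dd \cH^{\N}(y) \right| \le C(\nu)\,\Lip_{\f}\, t,\]
which makes both $\int_{0}^{2} K_{t}(x)\frac{\dd t}{t}$ and $\int_{0}^{2} \partial_{i} K_{t}(x)\frac{\dd t}{t}$ absolutely convergent pointwise, where $K_{t} := \nu_{t} *(\Eins_{U_{10}}(\nu_{t} * \f))$. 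Using the scaling identity $(\partial_{i} \nu_{t})(x) = t^{-1}(\partial_{i}\nu)_{t}(x)$ and setting $F_{t}(x) := \Eins_{U_{10}}(x) \cdot (\nu_{t}*\f)(x)/t$, differentiation under the integral yields
\[\partial_{i} \f_{2}(x) = \int_{0}^{2} \psi_{t} * F_{t}(x)\, \frac{\dd t}{t}.\]

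Second, the pointwise estimate from Lemma \ref{3.12.2012.1} gives $|F_{t}(x)| \le \|\nu\|_{L^{\infty}}\, \gamma_{\f}(x,t)\,\Eins_{U_{10}}(x)$ for $x \in U_{10}$ and $0 < t < 2$, connecting the integrand directly to the $\gamma$-function term on the right-hand side of the claim.

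Third, the $L^{\p}$-bound is obtained by duality. For $h \in L^{\p'}(P_{0},\R)$ with $\|h\|_{L^{\p'}} = 1$, Fubini's theorem gives (with $\tilde\psi(y) := \psi(-y)$)
\[\int h(x)\, \partial_{i} \f_{2}(x)\,\dd \cH^{\N}(x) = \int_{P_{0}} \int_{0}^{2} (\tilde\psi_{t} * h)(x)\, F_{t}(x)\, \frac{\dd t}{t}\,\dd \cH^{\N}(x).\]
Applying Cauchy-Schwarz in $t$ and then H\"older in $x$, this is bounded by
\[\left\|\left(\int_{0}^{\infty} |\tilde\psi_{t} * h|^{2} \frac{\dd t}{t}\right)^{1/2}\right\|_{L^{\p'}} \cdot \left\|\left(\int_{0}^{2} |F_{t}|^{2} \frac{\dd t}{t}\right)^{1/2}\right\|_{L^{\p}}.\]
The first factor is the classical Littlewood-Paley $g$-function of $h$ with kernel $\tilde\psi$; since $\psi = \partial_{i} \nu$ is smooth, compactly supported, and of mean zero (Lemma \ref{22.02.2013.01}), the standard square-function estimate (see e.g.\ \cite[Ch.~5]{FourierAnalysis}) yields a bound $\le C(\N,\p,\nu)\|h\|_{L^{\p'}}$. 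Substituting the pointwise bound on $|F_{t}|$ into the second factor yields precisely $\|\nu\|_{\infty}\bigl(\int_{U_{10}} (\int_{0}^{2} \gamma_{\f}(x,t)^{2} \frac{\dd t}{t})^{\p/2}\dd \cH^{\N}(x)\bigr)^{1/\p}$. Taking the supremum over $h$ with $\|h\|_{L^{\p'}} = 1$ gives the claimed $L^{\p}$-estimate to the $\p$-th power.

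Finally, to conclude $\f_{2} \in W_{0}^{1,\p}(P_{0},P_{0}^{\perp})$: compact support follows from Lemma \ref{21.03.2013.1}, and the formula derived for $\partial_{i} \f_{2}$ coincides with the distributional derivative by a routine Fubini-and-integrate-by-parts check (the uniform pointwise bound $|K_{t}| \le C\Lip_{\f}\, t$ makes the order of integration legitimately interchangeable on $C_{c}^{\infty}$ test functions). The main obstacle is the Littlewood-Paley step: its validity depends crucially on the mean-zero property of $\psi = \partial_{i}\nu$, which was carefully prepared in Lemma \ref{22.02.2013.01}, and on the Schwartz-type decay of $\widehat{\psi}$ required for the vector-valued Calder\'on-Zygmund square-function estimate.
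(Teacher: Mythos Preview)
Your argument is essentially the same as the paper's: rewrite $\partial_{i}\f_{2}$ via the scaling identity $\partial_{i}\nu_{t}=t^{-1}(\partial_{i}\nu)_{t}$, test against $h\in L^{\p'}$ by duality, apply Cauchy--Schwarz in $t$ followed by H\"older in $x$, bound the square function of $h$ by the continuous Littlewood--Paley estimate (the paper's Lemma~\ref{4.3.2013.1} with $\phi=\partial_{i}\nu$), and control the other factor pointwise via Lemma~\ref{3.12.2012.1}. The only cosmetic difference is that you explicitly carry the reflection $\tilde\psi$, which the paper suppresses since $\nu$ is radial and hence $\partial_{i}\nu$ is odd, making the sign irrelevant inside the absolute value.
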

\begin{proof}
	We recall that $\partial_{i}\f_{2}$ is the weak derivative of $\f_{2}$ (cf. Remark \ref{9.10.2014.2}).
	Due to \cite[Cor 6.31, An Equivalent Norm for $W_{0}^{m,p}(\Omega)$]{Adams} and 
	Lemma \ref{21.03.2013.1}, 
	we only have to consider $\|\partial_{i}\f_{2}\|_{L^{\p}(P_{0})}$ for all $i \in \{0,\dots,\N\}$
	to get $\f_{2} \in W_{0}^{1,\p}(P_{0},P_{0}^{\perp})$.
	For $x \in P_{0}$, we have
	$ \partial_{i} \nu_{t}(x) = \partial_{i}t^{-\N} \nu\left(\frac{x}{t}\right) 
		= t^{-1} (\partial_{i} \nu)_{t}(x)$
	and hence
	\begin{align*}
		\partial_{i} \f_{2}(x)
		& = \int_{0}^{2} \partial_{i} (\nu_{t}* (\Eins_{U_{10}}(\nu_{t}*\f)))(x)
			 \frac{\dd t}{t}
		 =  \int_{0}^{2} \left((\partial_{i} \nu)_{t}* \left(\Eins_{U_{10}}\left(\frac{\nu_{t}*\f}{t}\right)\right)\right)(x)
			 \frac{\dd t}{t}.
	\end{align*}
	Using duality (cf. \cite[The Normed Dual of $L^{p}(\Omega)$]{Adams}) it suffice to consider the following.
	Let $\frac{1}{\p}+\frac{1}{p'} =1$ and $f \in L^{\p'}(P_{0})$ with $\|f\|_{L^{\p'}(P_{0})}=1$.
	We get with Fubini's theorem \cite[1.4, Thm. 1]{Evans} and H\"older's inequality
	\begin{align*} 
			& \ \ \ \ \left|\int_{P_{0}} \ f(x) \ \partial_{i} \f_{2}(x) \  \dd \cH^{\N}(x) \right| \nonumber \\ 
			& \le \int_{P_{0}} \int_{0}^{2} \left|((\partial_{i} \nu)_{t}*f)(y)\right|  \ \left| 
				\left(\Eins_{U_{10}}\left(\frac{\nu_{t}*\f}{t}\right)\right)(y) \right|
				\ \frac{\dd t}{t} \dd \cH^{\N}(y)  \nonumber \\ 
			& \le  \int_{P_{0}}\left( \int_{0}^{2}  |((\partial_{i} \nu)_{t}*f)(y)|^{2} \ 
				\ \frac{\dd t}{t}\right)^{\frac{1}{2}} 
				\left(\int_{0}^{2} \left|\left(\Eins_{U_{10}}\left(\frac{\nu_{t}*\f}{t}\right)\right)(y)\right|^{2}
				\ \frac{\dd t}{t}\right)^{\frac{1}{2}} \dd \cH^{\N}(y) \nonumber \\
			& \le \left\| \left( \int_{0}^{\infty} |(\partial_{i} \nu)_{t}*f|^{2}
				\ \frac{\dd t}{t} \right)^{\frac{1}{2}} \right\|_{L^{\p'}(P_{0})} 
				\left( \int_{P_{0}} \left(\int_{0}^{2}
				\left|\left(\Eins_{U_{10}}\left(\frac{\nu_{t}*\f}{t}\right)\right)(y)\right|^{2}
				\ \frac{\dd t}{t}\right)^{\frac{\p}{2}}\dd \cH^{\N}(y) \right)^{\frac{1}{\p}}.
	\end{align*}
	There exists some constant $C=C(\N,\nu)$ with
	$|\partial_{i}\nu(x)| + |\nabla \partial_{i}\nu(x)| \le C(1+|x|)^{-\N-1}$
	because $\nu$ is a Schwartz function. 
	Together with Lemma \ref{22.02.2013.01}, all the requirements of Lemma \ref{4.3.2013.1} 
	with $\phi = \partial_{i}\nu$ and $q=\p'$ are fulfilled, which implies, since $\|f\|_{L^{\p}(P_{0})}=1$,
	that the first factor 
	of the RHS of the last estimate is some constant $C(\N,\p,\nu)$ independent of $f$. All in all, we obtain
	\begin{align*} \label{17.1.10;62}
		\| \partial_{i} \f_{2}\|_{L^{\p}(P_{0})} 
		\le C(\N,\p,\nu) \left( \int_{P_{0}} \left(\int_{0}^{2}
				\left|\left(\Eins_{U_{10}}\left(\frac{\nu_{t}*\f}{t}\right)\right)(y)\right|^{2}
				\ \frac{\dd t}{t}\right)^{\frac{\p}{2}}\dd \cH^{\N}(y) \right)^{\frac{1}{\p}},
	\end{align*}
	and with Lemma \ref{3.12.2012.1} the assertion holds.
\end{proof}

\begin{dfn} \label{27.02.2013.1}
	Let $B$ be a ball with centre in $P_{0}$ and $f:P_{0} \rightarrow P_{0}^{\perp}$ be some map. We define 
	the average of $f$ on $B$ and some maximal function for $x \in P_{0}$
	\[ \Avg{B}(f) := \frac{1}{(\diam B)^{\N}} \int_{B \cap P_{0}} f \dd \cH^{\N}, \index{$\operatorname{Avg}(f)$} \ \ \
		N(f)(x) :=
		\sup_{\genfrac{}{}{0pt}{}{t \in (0,\infty), y \in P_{0}}{ \text{with } d(y,x) \le t}} \left\{
		\frac{1}{2t}\Avg{B(y,t)}\left( |f -\Avg{B(y,t)}(f)|\right)  \right\} \index{$N(f)(x)$}. \]
	Moreover we define the oscillation of $f$ on $B$ by
	$\operatorname{osc}_{B}(f) :=  \sup_{x \in B \cap P_{0}} |f(x)-\Avg{B}(f)|$.
	
\end{dfn}

\begin{lem} \label{lem5.3}
	We have $\|N(\f_{2})\|_{L^{\p}(P_0,\R)} \le C\|D\f_{2}\|_{L^{\p}(P_{0},P_{0}^{\perp})}$, where $C=C(\N,\p)$.
\end{lem}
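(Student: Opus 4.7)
The plan is to combine a Poincar\'e-type inequality with the Hardy--Littlewood maximal function theorem.

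First I would fix $x \in P_0$ and estimate a single ``candidate'' for the supremum defining $N(\f_2)(x)$: given $y \in P_0$ and $t>0$ with $d(y,x)\le t$, apply the classical $L^1$-Poincar\'e inequality on the ball $B(y,t)\cap P_0$, which yields
\[
\Avg{B(y,t)}\!\bigl(|\f_2-\Avg{B(y,t)}(\f_2)|\bigr) \;\le\; C(\N)\,(2t)\,\Avg{B(y,t)}(|D\f_2|).
\]
This is legitimate because Lemma~\ref{lem5.1} gives $\f_2\in W^{1,p}_0(P_0,P_0^\perp)$, so $\f_2$ admits a weak gradient in $L^p$, and the Poincar\'e inequality applies on every ball. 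Dividing by $2t$ gives
\[
\frac{1}{2t}\Avg{B(y,t)}\!\bigl(|\f_2-\Avg{B(y,t)}(\f_2)|\bigr) \;\le\; C(\N)\,\Avg{B(y,t)}(|D\f_2|).
\]

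Next I would dominate the right-hand side by a standard maximal function of $|D\f_2|$. Since $d(y,x)\le t$, we have $B(y,t)\subset B(x,2t)$, hence
\[
\Avg{B(y,t)}(|D\f_2|) \;\le\; 2^\N \Avg{B(x,2t)}(|D\f_2|) \;\le\; C(\N)\,M(|D\f_2|)(x),
\]
where $M$ is the centred Hardy--Littlewood maximal operator on $P_0\simeq\R^{\N}$. Taking the supremum over admissible $(y,t)$ gives the pointwise bound $N(\f_2)(x)\le C(\N)\,M(|D\f_2|)(x)$.

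Finally I would invoke the Hardy--Littlewood maximal theorem (valid for $p>1$), which gives $\|M(|D\f_2|)\|_{L^p(P_0,\R)}\le C(\N,p)\,\||D\f_2|\|_{L^p(P_0,\R)}$, and the latter is comparable to $\|D\f_2\|_{L^p(P_0,P_0^\perp)}$. Chaining the estimates yields the desired inequality with $C=C(\N,p)$. The only delicate point is that the sup in the definition of $N(\f_2)$ is taken over balls in $P_0$, not over balls in $\R^{\n}$; but since $\f_2$ maps into $P_0^\perp$ and all averages are over $B(y,t)\cap P_0$, everything reduces cleanly to analysis on the $\N$-dimensional subspace $P_0$, so no dimension mismatch arises.
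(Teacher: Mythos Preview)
Your proposal is correct and follows essentially the same route as the paper: Poincar\'e's inequality on balls gives the pointwise bound $N(\f_2)(x)\le C(\N)\,M(|D\f_2|)(x)$, and then the Hardy--Littlewood maximal theorem finishes. The only cosmetic difference is that the paper passes directly to the \emph{uncentred} maximal function (since the supremum in $N(\f_2)$ already ranges over balls containing $x$), whereas you route through the centred one via $B(y,t)\subset B(x,2t)$; both are standard and equivalent up to a dimensional constant.
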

\begin{proof}
	We recall that $\f_{2} \in W_{0}^{1,\p}(P_{0},P_{0}^{\perp})$ (cf. Lemma \ref{3.12.2012.1}) and conclude with
	Poincar\'{e}'s inequality that $ \Avg{}_{B}(|\f_{2} -\Avg{}_{B}(\f_{2})|) = C(\N) \diam B \  \Avg{}_{B}(|D \f_{2}|)$,
	(if $f$ is a Matrix, we denote by $|f|$ a matrix norm)
	and hence we get for $x \in P_{0}$
	\begin{align*}
		N(\f_{2})(x) 
		& \le C(\N) \sup_{\genfrac{}{}{0pt}{}{t \in (0,\infty), y \in P_{0}}{ \text{with } d(y,x) \le t}}
			\Avg{B(y,t)}(|D \f_{2}|)
		= C(\N) M(D\f_{2})(x),
	\end{align*}
	where $M(D\f_{2})$ is the uncentred Hardy-Littlewood maximal function.
	Now, using \cite[Thm. 2.1.6]{FourierAnalysis}, we get the assertion.
\end{proof}

\begin{dfn}
	Let $\theta > 0$. We define
	$H_{\theta} := \left\{ x \in U_{6} | N(\f_{2})(x) \le \theta^{\N+1} \Lip_{\f} \right\}$.
\end{dfn}

\begin{lem}\label{3.9.2014.1}
	Let $\theta > 0$. There exists some constant $C=C(\N,\p,\nu)$ so that
	\[\cH^{\N}(U_{6} \setminus H_{\theta}) 
		\le \frac{C}{\theta^{\p(\N+1)} \Lip_{\f}^{\p}} \int_{U_{10}} \left( \int_{0}^{2} \gamma_{\f}(x,t)^{2} \frac{\dd t}{t} \right)^{\frac{\p}{2}}
		\dd \cH^{\N}(x).\]
\end{lem}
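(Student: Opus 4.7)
The plan is to combine Chebyshev's inequality with the two preceding lemmas in a straightforward chain. First I would apply Chebyshev's inequality to the definition of $H_\theta$: since $N(\f_2)(x) > \theta^{\N+1}\Lip_\f$ on $U_6 \setminus H_\theta$, we have
\[
 \cH^{\N}(U_6 \setminus H_\theta) \le \frac{1}{(\theta^{\N+1}\Lip_\f)^{\p}} \int_{P_0} N(\f_2)(x)^{\p}\, \dd \cH^{\N}(x) = \frac{\|N(\f_2)\|_{L^{\p}(P_0,\R)}^{\p}}{\theta^{\p(\N+1)} \Lip_\f^{\p}}.
\]

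Next I would apply Lemma \ref{lem5.3} to bound $\|N(\f_2)\|_{L^{\p}(P_0,\R)} \le C(\N,\p)\, \|D\f_2\|_{L^{\p}(P_0,P_0^\perp)}$. Since $\|D\f_2\|_{L^{\p}}^{\p}$ is controlled by a constant times $\sum_{i=1}^{\N} \|\partial_i \f_2\|_{L^{\p}}^{\p}$, I would invoke Lemma \ref{lem5.1}, which bounds each $\|\partial_i \f_2\|_{L^{\p}(P_0,P_0^\perp)}^{\p}$ by $C(\N,\p,\nu) \int_{U_{10}} \left(\int_0^2 \gamma_\f(x,t)^2 \frac{\dd t}{t}\right)^{\p/2} \dd \cH^{\N}(x)$. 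Chaining these three inequalities produces the claimed estimate with a constant $C = C(\N,\p,\nu)$.

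There is essentially no obstacle here, since the proof is a routine assembly of the already established ingredients; the only thing to keep track of is the exponents in Chebyshev's inequality and the suppressed $\nu$-dependence of constants (which has been fixed once and for all earlier). I would just write:

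\begin{proof}
Chebyshev's inequality applied to the $L^{\p}$-norm of $N(\f_2)$ yields
\[
 \cH^{\N}(U_6 \setminus H_\theta) \le \frac{1}{\theta^{\p(\N+1)} \Lip_\f^{\p}} \int_{U_6} N(\f_2)^{\p} \, \dd \cH^{\N} \le \frac{\|N(\f_2)\|_{L^{\p}(P_0,\R)}^{\p}}{\theta^{\p(\N+1)} \Lip_\f^{\p}}.
\]
By Lemma \ref{lem5.3} and Lemma \ref{lem5.1} we get, for some constant $C=C(\N,\p,\nu)$,
\[
 \|N(\f_2)\|_{L^{\p}(P_0,\R)}^{\p} \le C \|D\f_2\|_{L^{\p}(P_0,P_0^{\perp})}^{\p} \le C \int_{U_{10}} \left( \int_0^2 \gamma_\f(x,t)^2 \frac{\dd t}{t} \right)^{\frac{\p}{2}} \dd \cH^{\N}(x),
\]
which combined with the previous display gives the asserted bound.
\end{proof}
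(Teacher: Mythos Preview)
Your proof is correct and essentially identical to the paper's: both combine Chebyshev's inequality with Lemma~\ref{lem5.3} and Lemma~\ref{lem5.1}, differing only in the cosmetic order (the paper first bounds $\|N(\f_2)\|_{L^{\p}}^{\p}$ via the two lemmas and then applies Chebyshev, whereas you apply Chebyshev first).
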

\begin{proof}
	With Lemma \ref{lem5.3}, Lemma \ref{lem5.1} and 
	$\|D\f_{2} \|_{L^{\p}(P_{0},P_{0}^{\perp})}^{\p} \le \N^{\p-1} \sum_{i=1}^{\N} \|\partial_{i} \f_{2}\|_{L^{\p}(P_{0},P_{0}^{\perp})}^{\p}$, 
	there exists some constant $C=C(\N, \p,\nu)$ with
	\[ \|N(\f_{2})\|_{L^{\p}(P_{0},P_{0}^{\perp})}^{\p} 
		\le Csum_{i=1}^{\N} \|\partial_{i} \f_{2}\|_{L^{\p}(P_{0},P_{0}^{\perp})}^{\p}
		\le C\int_{U_{10}} \left( \int_{0}^{2} \gamma_{\f}(x,t)^{2} \frac{\dd t}{t} \right)^{\frac{\p}{2}}
			\dd \cH^{\N}(x).\]
	Hence, using Chebyshev's inequality, we get the assertion.
\end{proof}

\begin{lem} \label{lem5.4}
	Let $B$ be a ball with centre in $P_{0}$.
	If $ (B \cap P_{0}) \subset U_{8}$, then there exists some constant $C=C(\n,\N,\nu)$ with 
	\[\operatorname{osc}_{B} (\f_{2}) \le C \diam B \left( 		
			\frac{1}{\diam B}\Avg{B}\Bigl(|\f_{2} -\Avg{B}(\f_{2})|\Bigr) \right)^{\frac{1}{\N+1}}\Lip_{\f}^{\frac{\N}{\N+1}}.\]
\end{lem}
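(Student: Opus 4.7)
The plan is to run a classical interpolation between the $L^\infty$ and $L^1$ deviations of a Lipschitz function from its average. The key input is Lemma \ref{lem5.2}, which gives that $g_2$ is Lipschitz on $U_8$ with some constant $L := C(\nu)\Lip_g$. Write $M := \operatorname{osc}_B(g_2)$ and $\bar g := \Avg{B}(g_2)$, and assume $M>0$ (otherwise the statement is trivial). The heuristic is: a Lipschitz function that deviates from its average by $M$ at one point is forced by its Lipschitz bound to keep a deviation of order $M$ on a whole ball of radius $r \sim M/L$, and integrating this lower bound gives an estimate of the form $\int |g_2-\bar g| \gtrsim M^{\N+1}/L^\N$, which is exactly the inequality we want once reorganized.

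By continuity of $g_2$ and compactness of $\overline{B\cap P_0}$, I would pick $x_0\in\overline{B\cap P_0}$ with $|g_2(x_0)-\bar g|=M$. Averaging the Lipschitz estimate $|g_2(x)-g_2(y)|\le L|x-y|$ over $y\in B\cap P_0$ shows that $M\le L\diam B$, hence $r:=M/(2L)\le \diam B/2$. Then the Lipschitz bound gives for every $x\in B(x_0,r)\cap P_0$
\[
|g_2(x)-\bar g| \;\ge\; |g_2(x_0)-\bar g| - L|x-x_0| \;\ge\; M/2 .
\]
Since $x_0$ lies in the closure of the $\N$-ball $B\cap P_0$ and $r\le \diam B/2$, standard convex geometry yields a dimensional constant $c_\N>0$ such that $\cH^\N(B(x_0,r)\cap B\cap P_0)\ge c_\N r^\N$ (in the worst case $x_0$ is a boundary point, yielding a half-ball).

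Combining the pointwise lower bound with this volume estimate gives
\[
\int_{B\cap P_0} |g_2-\bar g|\,\dd\cH^\N \;\ge\; \frac{M}{2}\cdot c_\N r^\N \;=\; \frac{c_\N}{2^{\N+1}}\,\frac{M^{\N+1}}{L^\N}.
\]
Dividing by $(\diam B)^\N$ to pass to the $\Avg{B}$ notation and rearranging yields
\[
M^{\N+1} \;\le\; C(\N)\,L^\N\,(\diam B)^\N\,\Avg{B}(|g_2-\bar g|),
\]
and inserting $L = C(\nu)\Lip_g$ and rewriting this as a geometric-mean inequality produces
\[
M \;\le\; C\,\diam B\,\left(\frac{\Avg{B}(|g_2-\Avg{B}(g_2)|)}{\diam B}\right)^{1/(\N+1)} \Lip_g^{\N/(\N+1)},
\]
with $C=C(\N,\nu)$, which is the desired estimate.

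There is no real obstacle here: the argument is entirely elementary once Lemma \ref{lem5.2} has been established. The only item requiring a hint of care is the volume bound $\cH^\N(B(x_0,r)\cap B\cap P_0)\ge c_\N r^\N$ when $x_0$ may lie on $\partial(B\cap P_0)$, but this is a standard convex-geometry observation. Everything else is Lipschitz propagation and arithmetic.
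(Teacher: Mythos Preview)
Your argument is correct and follows essentially the same strategy as the paper: use the Lipschitz bound on $g_2$ from Lemma~\ref{lem5.2}, locate a point realizing the oscillation, propagate the deviation $\ge M/2$ over a sub-ball of radius comparable to $M/L$, and integrate. The only cosmetic difference is that you obtain $M\le L\diam B$ by averaging the Lipschitz inequality against $\bar g$, whereas the paper gets $M\le \sqrt{\n}\,L\diam B$ via a componentwise intermediate-value argument (finding $z_l\in B\cap P_0$ with $g_2^l(z_l)=\Avg{B}(g_2^l)$); your route is slightly cleaner and avoids the factor $\sqrt{\n}$.
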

\begin{proof}
	Let $(B \cap P_{0}) \subset U_{8}$ and $\lambda := \operatorname{osc}_{B} (\f_{2})$.
	The function $\f_{2}$ is Lipschitz continuous on $U_{8}$ with $\Lip_{\f_{2}}=C(\nu)\Lip_{\f}$ 
	(see Lemma \ref{lem5.2} on page \pageref{lem5.2}) and $B \cap P_{0}$ is closed.
	Hence there exists some $y \in B \cap P_{0}$ with
	$\lambda = |\f_{2}(y)-\Avg{}_{B}\f_{2}|$ and
	we get for $x \in B$ with 
	$d(x,y) \le \frac{\lambda}{2\Lip_{\f_{2}}}$ using triangle inequality
	$|\f_{2}(x)-\Avg{B}(\f_{2})| \ge \frac{\lambda}{2}$.
	Furthermore, using that $\f_{2}$ is continuous on $U_{8}$ 
	for all $l \in \{1,\dots,\n\}$, there exists some $z_{l} \in B \cap P_{0}$, with 
	$\f_{2}^{l}(z_{l}) =\Avg{B}(\f_{2}^{l})$ (where $\f_{2}^{l}(z_{l}) \in \R$ means the $l$-th component
	of $\f_{2}(z_{l}) \in \R^{\n}$).
	With $\f_{2}^{l}(y)-\Avg{B}(\f_{2}^{l}) \le \Lip_{\f_{2}} d(y,z_{l})$ for all $l \in \{1,\dots, \n\}$ we get
	$\lambda^{2} \le \n \left(\Lip_{\f_{2}} \diam B \right)^{2}$,
	which implies $\frac{\lambda}{\sqrt{\n}\Lip_{\f_{2}}} \le \diam B$.
	Since $y \in B$, there exists some ball $\hat B \subset B \cap B\left(y,\frac{\lambda}{2\Lip_{\f_{2}}}\right)$
	with $ \diam \hat B \ge \frac{\lambda}{2\sqrt{\n}\Lip_{\f_{2}}} $
	and hence with $|\f_{2}(x) -\Avg{}_{B}(\f_{2})| \ge \frac{\lambda}{2}$ we obtain
	\begin{align*}
		(\diam B)^{n} \Avg{B} \ |\f_{2}(x) -\Avg{B}(\f_{2})| 
			& \ge \vol \left(\textstyle{\frac{\lambda}{4\sqrt{\n}\Lip_{\f_{2}}}}\right)^{\N} \frac{\lambda}{2} .
	\end{align*}
	Using $\Lip_{\f_{2}}=C(\nu)\Lip_{\f}$, this implies the assertion.
\end{proof}

\begin{lem} \label{lem5.5}
	Let $\theta > 0$ and $y \in P_{0}$. There exists some constant $C=C(\n,\N,\nu)$ and some affine map $a_{y}:P_{0} \to P_{0}^{\perp}$ 
	so that if $ r \le \theta$ and $ B(y,r) \cap H_{\theta} \neq \emptyset$, we have \\
	$\|\f-a_{y}\|_{L^{\infty}(B(y,r)\cap P_{0},P_{0}^{\perp})} \le C r \theta \Lip_{\f}$. 
\end{lem}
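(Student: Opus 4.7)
The plan is to exploit the splitting $\f = \f_1 + \f_2$ used in the proof of Theorem \ref{2.9.2014.1}: by Lemma \ref{lem5.2}, $\f_1 \in C^{\infty}(U_8)$ has uniformly bounded second derivatives, while $\f_2$ is pointwise controlled near points of $H_\theta$ via the maximal function $N(\f_2)$. Correspondingly, I will approximate $\f_1$ by its first-order Taylor polynomial at a well-chosen base point and $\f_2$ by the constant equal to its value at that base point, and sum the two to obtain $a_y$.

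I may assume $\theta \le 1$ (for $\theta > 1$ the inequality $C r \theta \Lip_\f$ is weaker than the trivial $L^{\infty}$ bound derived from $\Lip_\f$ and the compact support of $\f$, after enlarging $C$). Since $N(\f_2)$ is lower semicontinuous and $U_6$ is closed, the set $H_\theta$ is closed; so for each $y$ with $d(y, H_\theta) \le \theta$ I pick $x_y \in H_\theta$ attaining $d(y, x_y) = d(y, H_\theta)$. When no such $x_y$ exists, the hypothesis of the lemma is vacuous for every admissible $r$ and we may set $a_y \equiv 0$. Otherwise define the affine map
\[ a_y(u) := \f_1(x_y) + D\f_1(x_y)(u - x_y) + \f_2(x_y). \]

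Now fix $r \le \theta$ with $B(y,r) \cap H_\theta \ne \emptyset$. Then $d(y, x_y) \le r$, so $B(y,r) \cap P_0 \subset B(x_y, 2r) \cap P_0 \subset U_8$ (using $|x_y| \le 6$ and $2r \le 2$). For $u \in B(y,r) \cap P_0$, split the error as
\[ \f(u) - a_y(u) = \bigl[\f_1(u) - \f_1(x_y) - D\f_1(x_y)(u - x_y)\bigr] + \bigl[\f_2(u) - \f_2(x_y)\bigr]. \]
The first bracket is the Taylor remainder of $\f_1$; by Lemma \ref{lem5.2}, $\|D^2 \f_1\|_{L^{\infty}(U_8)} \le C(\nu) \Lip_\f$, so its norm is bounded by $C(\nu) \Lip_\f |u - x_y|^2 \le 4\,C(\nu) \Lip_\f\, r\theta$.

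The crux is controlling the second bracket. Since $x_y \in H_\theta$, applying the inequality $N(\f_2)(x_y) \le \theta^{\N+1} \Lip_\f$ to the admissible pair $(x_y, 2r)$ (the condition $d(x_y, x_y) \le 2r$ is automatic) gives, for $B_0 := B(x_y, 2r)$,
\[ \frac{1}{\diam B_0} \Avg{B_0}\bigl(|\f_2 - \Avg{B_0}(\f_2)|\bigr) \le \theta^{\N+1} \Lip_\f. \]
Lemma \ref{lem5.4} then turns this $L^1$-average into a uniform oscillation estimate
\[ \operatorname{osc}_{B_0}(\f_2) \le C \diam B_0 \cdot \bigl(\theta^{\N+1}\Lip_\f\bigr)^{\frac{1}{\N+1}} \Lip_\f^{\frac{\N}{\N+1}} = 4\,C\, r\theta \Lip_\f, \]
so $|\f_2(u) - \f_2(x_y)| \le 2 \operatorname{osc}_{B_0}(\f_2) \le 8\,C\, r\theta \Lip_\f$ on $B(y, r) \cap P_0$. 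Summing the two contributions yields the claim. The only genuinely nontrivial step is the passage from the maximal-function control at $x_y$ to the uniform oscillation bound; this is exactly what Lemma \ref{lem5.4} supplies, and once it is in hand the rest of the argument is elementary.
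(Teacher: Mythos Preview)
Your proof is correct and follows essentially the same approach as the paper's: split $\f=\f_1+\f_2$, control the $\f_1$-part by a second-order Taylor estimate using Lemma \ref{lem5.2}, and convert the maximal-function bound at a point of $H_\theta$ into a uniform oscillation bound for $\f_2$ via Lemma \ref{lem5.4}. The only difference is cosmetic: the paper takes $y$ itself as the base point (defining $a_y(y')=\f(y)+D\f_1(y)\phi^{-1}(y'-y)$) and applies the definition of $N(\f_2)$ at some $z\in B(y,r)\cap H_\theta$ with the ball $B(y,r)$, whereas you anchor everything at the nearest point $x_y\in H_\theta$ and use $B(x_y,2r)$. Both choices lead to the same estimate; yours requires the extra observation that $H_\theta$ is closed (which is fine), while the paper's choice avoids this but needs to note that $y\in U_8$ whenever the hypothesis is nonvacuous. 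One small remark: your treatment of the case $\theta>1$ is phrased loosely---the relevant ``trivial bound'' is $|\f(y')-\f(y)|\le \Lip_\f\,r\le \Lip_\f\,r\theta$ with $a_y\equiv \f(y)$, not a global sup-norm bound on $\f$ (which would not scale with $r$); the compact-support remark is unnecessary here.
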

\begin{proof}
	Let $y \in P_{0}$.
	If $\theta \ge 1$, we can choose $a_y(y'):=\f(y)$ as a constant and get the desired result directly from the
	Lipschitz condition.
	Now let $0 < \theta < 1$ and $y' \in B(y,r) \cap P_{0}$. We set $a_{y}(y'):=\f(y) + D \f_{1}(y)\phi^{-1}(y'-y)$.
	We have $d(y',U_{6}) \le d(y',H_{\theta}) \le d(y',y) + d(y,H_{\theta}) \le 2$. So we get $y',y \in U_{8}$.
	Using Taylor's theorem and Lemma \ref{lem5.2} we obtain
	\[|\f_{1}(y')-[\f_{1}(y) + D \f_{1}(y)\phi^{-1}(y'-y) ]| 
		\le \sum_{|\kappa|=2}\|\partial^{\kappa} \f_{1}\|_{L^{\infty}(U_{8})} |y'-y|^{2}
		\le C(\N,\nu) \Lip_{\f}r^{2}\]

	Since $r \le \theta <1$, $B(y,r) \cap H_{\theta} \neq \emptyset$ and $H_{\theta} \subset U_{6}$, 
	we obtain $B(y,r)\cap P_{0} \subset U_{8}$ and we can apply Lemma \ref{lem5.4}.
	Together with the definition of $H_{\theta}$ this leads to
	\[\operatorname{osc}_{B(y,r)}\f_{2} + \Lip_{\f}r^{2} \le C(\n,\N,\nu)r \theta \Lip_{\f}.\]
	Now by using $g=g_{1}+g_{2}$ and $|\f_{2}(y') - \f_{2}(y)| \le 2 \operatorname{osc}_{B(y,r)}\f_{2}$
	we get for every $y' \in B(y,r) \cap P_{0}$
	\[|\f(y')-[\f(y) + D \f_{1}(y)\phi^{-1}(y'-y) ]| \le C(\n,\N,\nu)r \theta \Lip_{\f}. \]
\end{proof}

Lemma \ref{3.9.2014.1} and Lemma \ref{lem5.5} complete the proof of Theorem \ref{2.9.2014.1}.
\end{proof}

\subsection{\texorpdfstring{The $\gamma$-function of $A$ and integral Menger curvature}{The y-function of A and integral Menger curvature}}\label{3.11.2014.2}
In this section, we prove the following Theorem \ref{thm4.1}.
It states that we get a similar control on the $\gamma$-functions applied to our function $A$
as we get in Corollary \ref{thm2.4} on the $\beta$-numbers.

For $\alpha, \varepsilon > 0$, $\eta \le 2 \varepsilon $ and $k \ge 4$, we defined $A$ on $U_{12}$
(cf. Definition \ref{04.11.2013.1} on page \pageref{04.11.2013.1}).
Since in this section we only apply the $\gamma$-functions to $A$, we set
$\gamma(q,t):=\gamma_{A}(q,t)$ and we recall the notation $U_{10}:=B(0,10)\cap P_{0}$.

\begin{thm} \label{thm4.1}
	There exists some $\tilde k \ge 4$ and some $\tilde \alpha =\tilde \alpha (\N) > 0$
	so that, for all $\alpha$ with $0 < \alpha \le \tilde \alpha$, there exists some
	$\tilde \varepsilon = \tilde \varepsilon(\n,\N,C_{0},\alpha)$ 
	so that, if $k \ge \tilde k$ and $\eta \le \tilde \varepsilon^{\p}$, we have for all
	$\varepsilon \in [\eta^{\frac{1}{\p}},\tilde \varepsilon]$ 
	that there exists some constant $C= C(\n,\N,\K,\p,C_{0},k)$ so that
	\begin{align*}
		\int_{U_{10}} \int_{0}^{2} \gamma(q,t)^{\p} \frac{\dd t}{t} \dd\cH^{\N}(q) 
		&\le C\varepsilon^{\p} + C\mathcal{M}_{\K^{\p}}(\mu)
		 \le C\varepsilon^{\p}.
	\end{align*}
\end{thm}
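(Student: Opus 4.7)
The second inequality is immediate from hypothesis (C), since $\mathcal{M}_{\K^{\p}}(\mu) \le \eta \le \varepsilon^{\p}$, so the real task is to prove the first inequality. By Lemma \ref{ALipschitz}, the function $A$ is Lipschitz with constant $3\alpha$ on $U_{12}$, so if $\tilde\alpha$ is chosen small enough (depending only on $\N$) then for every $\alpha \le \tilde\alpha$ the Lipschitz constant of $A$ satisfies the hypothesis of Lemma \ref{bem4.2}. That lemma reduces the problem to bounding $\tilde\gamma_A(q,t)$, i.e.\ to approximating the graph of $A$ over $B(q,t)\cap P_0$ by an affine $\N$-plane in $\R^{\n}$.

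\textbf{Construction of the approximating plane.} Given $(q,t)$ with $q \in U_{10}$ and $t \in (0,2)$, we pick $X=X(q,t) \in F$ and $s=s(q,t) \in (0,50)$ with $(X,s) \in S$ so that $d(X,q) + s \lesssim \max\{t, d(q)\}$ and $s$ is comparable to that quantity (this is possible by the definition of $d$ together with Lemma \ref{remnachdefD} and Lemma \ref{rem3.1}). Let $P(q,t) := P_{(X,s)} \in \mathcal{P}(\n,\N)$ be the associated plane from Definition \ref{12.07.13.1}; in particular $\beta_{1;k}^{P(q,t)}(X,s) \le 2\varepsilon$ and $\delta(B(X,s)) \ge \tfrac12 \delta$. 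The central claim is
\begin{equation*}
  \frac{1}{t^{\N+1}} \int_{B(q,t)\cap P_0} d(u+A(u), P(q,t))\,\dd\cH^{\N}(u)
  \;\lesssim\; \beta_{1;k}^{P(q,t)}(X,s) \;+\; \varepsilon.
\end{equation*}
This is obtained by splitting $B(q,t)\cap P_0$ into three regimes according to the Whitney structure of Lemma \ref{lem3.11}: the part close to $\pi(\mathcal{Z})$, where $u+A(u)\in F$ and the distance to $P(q,t)$ is averaged as a $\beta$-number; the part that lies in cubes $R_j$ with $\diam R_j \lesssim t$, where one controls $d(u+A(u), P(q,t))$ through $d(u+A_j(u), P_j)$ plus $|A(u)-A_j(u)|$, and the first factor is controlled by $\varangle(P_j,P(q,t))$ via Lemma \ref{lem2.6} (applied to $B_j$ and $B(X,s)$, whose radii are comparable by the choice of $s$), while the second is the $\bar C\varepsilon\diam R_j$ estimate from Lemma \ref{lem3.12}(iv); and finally the part in larger cubes $R_j$ with $\diam R_j \gtrsim t$, where only finitely many cubes are relevant (Lemma \ref{lem3.11}(iv)) and the partition-of-unity estimate of Lemma \ref{lem3.12}(ii),(iv) gives an $O(\varepsilon \diam R_j)$ contribution.

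\textbf{Integration and reduction to Menger curvature.} Raising the key estimate to the $\p$-th power and integrating yields
\begin{equation*}
 \int_{U_{10}}\!\!\int_{0}^{2} \gamma(q,t)^{\p}\,\frac{\dd t}{t}\,\dd\cH^{\N}(q)
 \;\lesssim\; \int_{U_{10}}\!\!\int_{0}^{2} \beta_{1;k}^{P(q,t)}(X(q,t),s(q,t))^{\p}\,\frac{\dd t}{t}\,\dd\cH^{\N}(q) \;+\; C\varepsilon^{\p},
\end{equation*}
where the $\varepsilon^{\p}$ term absorbs the constant-times-$\varepsilon$ error (the $(q,t)$-domain has bounded $\cH^{\N}\otimes \tfrac{dt}{t}$-measure up to a harmless $\log$, absorbed into $C$). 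A Fubini/change-of-variables argument, based on the facts that $(q,t) \mapsto (X(q,t), s(q,t))$ has image in $S$, that $s \simeq t$ and $d(q, \pi(X)) \lesssim s$, and that $(X,s)\in S$ implies $\delta(B(X,s)) \ge \tfrac12 \delta$, allows one to dominate the first integral by
\begin{equation*}
 C \int \int_{0}^{\infty} \beta_{1;k}(X,s)^{\p} \Eins_{\{\tilde\delta_1(B(X,s)) \ge \delta/2\}}\,\frac{\dd s}{s}\,\dd\mu(X),
\end{equation*}
using the upper Ahlfors regularity (B) to control the multiplicity of the map $(q,t) \mapsto (X,s)$. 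Corollary \ref{thm2.4} (with $\lambda=\delta/2$ and $k_0=1$, $k$ chosen to satisfy $k \ge \bar k$ and $k > 2$) then bounds this by $C\mathcal{M}_{\K^{\p}}(\mu)$, which completes the proof.

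\textbf{Main obstacle.} The nontrivial step is the key estimate in the middle paragraph. One must simultaneously juggle (i) the $\beta$-number $\beta^{P(q,t)}_{1;k}(X,s)$ of the measure $\mu$, (ii) the angle between $P(q,t)$ and the local affine graphs $P_j=G(A_j)$ associated to the cubes $R_j$ that meet $B(q,t)\cap P_0$, and (iii) the partition-of-unity interpolation error among different $A_j$'s. Getting (ii) requires applying Lemma \ref{lem2.6} to the pair $B(X,s)$ and $B_j$, which is legitimate only because $s$ and $\diam B_j$ are comparable and $d(X, X_j) \lesssim s$ --- both properties that must be secured by the careful choice of $(X,s)$ at the very start of the argument.
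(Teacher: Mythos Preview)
There is a genuine gap. The sentence ``the $(q,t)$-domain has bounded $\cH^{\N}\otimes\tfrac{dt}{t}$-measure up to a harmless $\log$'' is false: $\int_0^2\frac{dt}{t}=\infty$. Consequently a uniform estimate $\tilde\gamma_A(q,t)\lesssim\beta+\varepsilon$ cannot be integrated as you propose. In fact your central claim itself fails in the regime $t\ll D(q)$: if $B(q,t)$ sits inside a single cube $R_j$ with $\diam R_j\gg t$, then the best pointwise bound your argument yields is $d(u+A(u),P(q,t))\lesssim\varepsilon\,\diam R_j$, hence $\tilde\gamma_A(q,t)\lesssim\varepsilon\,\diam R_j/t$, not $\lesssim\varepsilon$. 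Moreover, since you take $(X,s)\in S$, the definition of $S_{total}$ already forces $\beta_{1;k}^{P(q,t)}(X,s)\le 2\varepsilon$, so your $\beta$-term carries no independent information and the whole claim collapses to $\tilde\gamma_A(q,t)\lesssim\varepsilon$, which does not integrate against $\frac{dt}{t}$.

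The paper resolves this by a genuine two-regime split that you are missing. For $q\in R_i$ and $t<\tfrac12\diam R_i$ (equivalently $t\lesssim D(q)$) one uses that $A\in C^\infty(2R_i)$ together with the second-derivative bound $|\partial^\kappa A|\le C\varepsilon/\diam R_i$ from Lemma~\ref{abschaetzungableitungvonA}; Taylor's theorem then gives $\gamma(q,t)\lesssim t\varepsilon/\diam R_i$, and this extra factor of $t$ makes $\int_0^{\diam R_i/2}\gamma^{\p}\frac{dt}{t}$ finite and $\lesssim\varepsilon^{\p}$. For $t>D(q)/100$ the approximating plane is chosen not from $S$ but as a near-minimizer of $\beta_{1;k}(X,t)$ at scale $t$ (with $X$ ranging over a ball and then averaged in $\mu$), so the resulting $\beta_{1;k}(X,t)$ is \emph{not} a priori $\le 2\varepsilon$ and its double integral is genuinely bounded by $\mathcal{M}_{\K^{\p}}(\mu)$ via Corollary~\ref{thm2.4}. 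The $\varepsilon$-error in that regime is the structured sum $\varepsilon\sum_{i\in\mathcal{I}(q,t)}(\diam R_i/t)^{\N+1}$, whose $\p$-th power integrates to $C\varepsilon^{\p}$ only after a Fubini argument exploiting $\sum_{i\in I_{12}}(\diam R_i)^{\N}<\infty$ (Lemma~\ref{06.12.2013.1}) --- again not via any uniform bound.
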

\begin{proof}
Let $\bar k \ge 4$ be the maximum of all thresholds for $k$ given in chapter \ref{construction} and let 
$\tilde \alpha=\tilde \alpha(\N) \le \frac{1}{4}$ \label{alphafuergamma}
be the upper bound for the Lipschitz constant given by Lemma \ref{bem4.2}. 
We set $\tilde k :=\max\{\bar k,\tilde C+1,\hat C\}$ \label{22.11.2013.1}
where the constants $\tilde C$ and $\hat C$ are fixed constants which will be set during this section\footnote{
$\tilde C$ is given in Lemma \ref{2.1.10;10}, $\hat C$ is given in Lemma \ref{lem4.4} V}.
Let $0 \le \alpha \le \tilde \alpha$. Let $\bar \varepsilon=\varepsilon(\n,\N,C_{0},\alpha) \le \alpha$ 
be the minimum of all thresholds for $\varepsilon$ given in chapter \ref{construction}.
We set $\tilde \varepsilon := \min\{\bar \varepsilon,(2C^{'}C_{1})^{-1}\}<1$\footnote{
$C^{'},C_{1}$ are given in Lemma \ref{lem4.3}}
\label{epsilonfuergamma} and 
assume that $k \ge \tilde k$ and $\eta \le \tilde \varepsilon^{\p}$. Now let $\varepsilon > 0$ with 
$\eta \le \varepsilon^{\p} \le \tilde \varepsilon^{\p}$. \label{3.12.2013.4}
For the rest of this section, we fix the parameters $k,\eta,\alpha,\varepsilon$ and mention that
they meet all requirements of the lemmas in Chapter \ref{construction}.

We start the proof of Theorem \ref{thm4.1} with several lemmas.
At first, we prove
\begin{lem}\label{24.2.10;17} There exists some constant $C=C(\n,\N,\p,C_{0})$ so that
	\[\sum_{i \in I_{12}}\int_{R_{i}\cap U_{10}} \int_{0}^{\frac{\diam R_{i}}{2}}  \gamma(q,t)^{\p}\frac{\dd t}{t} 
		\dd \cH^{\N}(q) \le C \varepsilon^{\p}.\]
\end{lem}
\begin{proof}
	Let $i \in I_{12}$, $q \in R_{i}$, $0 < t < \frac{\diam R_{i}}{2}$ and $u \in B(q,t) \cap P_{0} \subset 2R_i$.
	The function $A$ is in $C^{\infty}(2R_{i},P_{0}^{\perp})$ 
	(see definition of $A$ on page \pageref{04.11.2013.1}).
	Taylor's theorem implies 
	\[\inf_{a \in \mathcal{A}}d(A(u),a(u)) \le t^{2}\frac{C(\n,\N,C_{0}) \varepsilon}{\diam R_{i}}\]
	since the infimum over all affine functions cancels out the 
	linear part and the second order derivatives of the remainder can be estimated using 
	Lemma \ref{abschaetzungableitungvonA}.
	Now we have 
	\[\gamma(q,t)  \le \frac{\vol}{t} \sup_{u \in B(q,t)\cap P_{0}} \inf_{a \in \mathcal{A}} d(A(u),a(u))
		\le t \frac{C(\n,\N,C_{0})\varepsilon}{\diam R_{i}}.\]
	Hence, Lemma \ref{inneredisjunkt} (ii) implies the statement.
\end{proof}

The previous lemma implies that, due to Lemma \ref{inneredisjunkt} (ii), it remains to handle 
the two terms in the following sum to prove Theorem \ref{thm4.1}.
If $q_{1} \in R_{i}$, we get with Lemma \ref{rem3.10} that $\frac{D(q_{1})}{100} \le\frac{\diam R_{i}}{2}$
and, if $q_{2} \in \pi(\mathcal{Z})$, we obtain with Lemma \ref{rem3.8} $D(q_{2})= 0$.
Hence we conclude using Lemma \ref{inneredisjunkt} (ii)
\begin{align}
	& \ \ \  \sum_{i \in I_{12}}  \int_{R_{i} \cap U_{10}} \int_{\frac{\diam R_{i}}{2}}^{2}
	\gamma(q,t)^{\p} \frac{\dd t}{t} \dd \cH^{\N}(q) +
	\int_{\pi(\mathcal{Z})\cap U_{10}} \int_{0}^{2}  \gamma(q,t)^{\p} 
	 \frac{\dd t}{t} \dd \cH^{\N}(q)
	\nonumber \\
	& =  \int_{U_{10}}\int_{\frac{D(q)}{100}}^{2}
	\gamma(q,t)^{\p} \frac{\dd t}{t}  \dd \cH^{\N}(q). \label{10.1.2010;b}
\end{align}

In the following, we prove some estimate for $\gamma(q,t)$ 
where $q \in U_{10}$ and $ \frac{D(q)}{100} < t < 2$. To get this estimate, we need
the next lemma.
\begin{lem} \label{28.11.2013.1}
	For all $q \in U_{10}$ and for all $t$ with $\frac{D(q)}{100} < t < 2$,
	there exists some $\tilde X=\tilde X(q) \in F$ and some $T=T(t)> 0$ with
	\begin{align} \label{2.1.10;27}
		(\tilde X,T) &\in S,&
		d(\pi(\tilde X),q) &\le T &
		&\text{ and }&
		20t &\le T \le 200t.
	\end{align}
\end{lem}
\begin{proof}
	We have
	$ D(q) = \inf_{(X,s) \in S} (d(\pi(X),q) + s)$,
	and hence there exists some $(\tilde X, \tilde s) \in S$ \label{DefvonXSchlange} with
	$ d(\pi(\tilde X),q) + \tilde s \le D(q)+100t \le 200t$.
	We set $T:= \min\{40,200t\}$ which fulfils $20t \le T \le 200t$ as $t < 2$. 
	Using Lemma \ref{rem3.1} (i), (ii) and $200t \ge \tilde s$, we obtain $(\tilde X,T) \in S$.

	With $d(\pi(\tilde X),q) \le d(\pi(\tilde X),0) + d(0,q) \le 5 + 10$ we get $d(\pi(\tilde X),q) \le T$.
\end{proof}

Now let $q,t, \tilde X$ and $T$ \label{2.1.10;25} as in Lemma \ref{28.11.2013.1}.
Furthermore, let $X \in B(\tilde X,200t)\cap F \label{16.3.10;1}$. 
\newcommand{\Pqtx}{\hat P}
We choose some $\N$-dimensional plane named $\Pqtx=\Pqtx(q,t,X)$ with 
\begin{align} \label{2.1.10;30}
	\beta_{1;k}^{\Pqtx}(X,t) \le 2 \beta_{1;k}(X,t)
\end{align}
and define
\[ \mathcal{I}(q,t):= \left\{ i \in I_{12} \big| R_{i} \cap B(q,t) \neq \emptyset \right\}.\label{2.1.10;21}\]
With Lemma \ref{Riueberdeckung}, we have 
$(B(q,t)\cap P_{0}) \subset U_{12} \subset \pi(\mathcal{Z}) \cup \bigcup_{i \in I_{12}}R_{i}$. We set
\[K_{0}:= \int_{B(q,t)\cap \pi(\mathcal{Z})} \frac{d(u+A(u),\Pqtx)}{t^{\N+1}} \dd \cH^{\N}(u), \ \ \
	K_{i}:= \int_{B(q,t)\cap R_{i}} \frac{d(u+A(u),\Pqtx)}{t^{\N+1}} \dd \cH^{\N}(u)\]
and get with Lemma \ref{bem4.2} that
\begin{align}
	\gamma(q,t)
	& \le 3 \ K_{0} + 3 \sum_{i \in \mathcal{I}(q,t)} K_{i}. \label{2.1.10;1}
\end{align}
At first, we consider $K_{0}$.
\begin{lem} \label{2.1.10;10}
	There exists some constant $\tilde C>1$ so that
	\[ \int_{B(q,t) \cap \pi(\mathcal{Z})} d(u + A(u),\Pqtx) \dd \cH^{\N}(u)
		\le \int_{B(X,\tilde Ct)\cap \mathcal{Z}} d(x,\Pqtx) \dd \cH^{\N}(x).\]
\end{lem}
\begin{proof}
	Let $g: \pi(\mathcal{Z}) \rightarrow \mathcal{Z}, u \mapsto u+A(u)$. 
	This function is bijective, continuous ($A$ is $2 \alpha$-Lipschitz on $\pi(Z)$) and 
	$g^{-1}=\pi|_{\mathcal{Z}}$ is Lipschitz continuous with Lipschitz constant 1. 
	With $f(x) = d(x,\Pqtx)$ and $s=n$,
	we apply \cite[Lem. A.1]{Sch2012} and get
	\[\int_{B(q,t) \cap \pi(\mathcal{Z})} d(u + A(u),\Pqtx) \dd \cH^{\N}(u)
		 \le \int_{g(B(q,t) \cap \pi(\mathcal{Z}))} d(x,\Pqtx) \dd \cH^{\N}(x).\]
	Now it remains to show that there exists some constant $C$ so that
	$g(B(q,t) \cap \pi(\mathcal{Z})) \subset B(X,Ct)\cap \mathcal{Z}$. 
	Let $ x \in g(B(q,t) \cap \pi(\mathcal{Z}))$. This implies $x \in \mathcal{Z}$
	and so, using Lemma \ref{7.2.10;1}, we get $d(x)=0$.
	With \eqref{2.1.10;27}, we conclude 
	$d(\tilde{X}) \le d(\tilde X, \tilde X) + T \le 200t$,
	and we obtain with \eqref{2.1.10;27}
	$d(\pi(x),\pi(\tilde X)) \le 201t$.
	So, with Lemma \ref{lem3.9}, we have
	$d(x,\tilde X) \le 1602t$.
	We deduce with $\tilde C=1802$ that
	$d(x,X) \le d(x, \tilde X) + d(\tilde X,X) \le \tilde Ct$
	and so
	$g(B(q,t) \cap \pi(\mathcal{Z})) \subset B(X,\tilde Ct)\cap \mathcal{Z}$.
\end{proof}

\begin{lem} \label{2.1.10;11}
	There exists some constant $C=C(\n,\N,C_{0})>1$ so that
	\[ \int_{B(X,\tilde Ct)\cap \mathcal{Z}}d(x,\Pqtx)\dd \cH^{\N}(x)
		\le C \int_{B(X,(\tilde C +1)t)}d(x,\Pqtx)\dd \mu(x).\]
\end{lem}
\begin{proof}
	At first, we prove for an arbitrary ball $B$ with centre in $\mathcal{Z}$
	\begin{align} \label{23.2.10;a}
		\cH^{\N}(\mathcal{Z} \cap B) \le C(\n,\N,C_{0}) \mu(B).
	\end{align}
	With \cite[Dfn. 2.1]{Evans}, we get
	$\cH^{\N}(\mathcal{Z} \cap B) = \lim_{\tau \to 0}
		\cH^{\N}_{\tau}(\mathcal{Z} \cap B)$.
	Let $0<\tau_{0} < \min\left\{\frac{\diam B}{2},50\right\}$.
	We define
	$\mathcal{F}:= \left\{ B(x,s) | x \in \mathcal{Z}\cap B, s \le \tau_{0} \right\}$.
	With Besicovitch's covering theorem \cite[1.5.2, Thm. 2]{Evans}, there exist $N_{0}=N_{0}(\n)$ 
	countable families $\mathcal{F}_{j} \subset \mathcal{F}$, $j=1,...,N_{0}$, of disjoint balls 
	where the union of all those balls covers $\mathcal{Z} \cap B$.
	For every ball $\tilde B = B(x,s) \in \mathcal{F}_{j}$, we have $x \in \mathcal{Z}$ and hence, 
	using the definition of $\mathcal{Z}$ (see page \pageref{def3.2}), we deduce $h(x)=0$.
	With $h(x)=0 < s < 50$ and Lemma \ref{rem3.1} (i), we get
	$(x,s) \in S \subset S_{total}$ and so
	$\left(\frac{\diam \tilde B}{2}\right)^{\N} \le 2\frac{\mu(\tilde B)}{\delta}$.
	The centre of $B$ is also in $\mathcal{Z}$ and hence, analogously, we conclude
	$\left(\frac{\diam  B}{2}\right)^{\N} \le 2\frac{\mu(B)}{\delta}$.
	With (B) from page \pageref{Grundeigenschaften}, we get
	$\mu(2B) \le4^{\N} C_{0} \frac{2}{\delta} \mu(B)$.
	Since $x \in B$ and $s \le \tau_{0} < \frac{\diam B}{2} $, we obtain
	$\tilde B = B(x,s) \subset 2B$.
	Now, by definition of $\cH^{\N}_{\tau_{0}}$ \cite[Dfn. 2.1]{Evans} and 
	because $\delta =\delta(\n,\N)$
	(see \eqref{Wahlvondelta} on page \pageref{Wahlvondelta}),
	we deduce
	\begin{align*}
		\cH^{\N}_{\tau_{0}}(\mathcal{Z}\cap B) 
		& \le	 2\sum_{j=1}^{N_{0}} \sum_{\tilde B \in \mathcal{F}_{j}} \vol\frac{\mu(\tilde B)}{\delta} 
		 \le 2\frac{\vol}{\delta} \sum_{j=1}^{N_{0}} \mu(2B) 
		 \le C(\n,\N,C_{0}) \mu(B).
	\end{align*}
	So, with $\tau_{0} \rightarrow 0$, the inequality \eqref{23.2.10;a} is proven.
	
	Let $\tilde C$ be the constant from Lemma \ref{2.1.10;10}. For an arbitrary $0 < \sigma \le t$, we define
	\[ \mathcal{G}_{\sigma}:= \left\{ B(x,s) \Big| x \in \mathcal{Z} \cap B(X,\tilde Ct), s \le \sigma \right\}.\]
	With Besicovitch's covering theorem \cite[1.5.2, Thm. 2]{Evans}, there exist $N_{0}=N_{0}(\n)$ families  
	$\mathcal{G}_{\sigma, j} \subset \mathcal{G}_{\sigma}$ of disjoint balls, where $j= 1,..,N_{0}$ and those
	balls cover $\mathcal{Z} \cap B(X,\tilde Ct)$.
	We denote by $p_{B}$ the centre of the ball $B$ and conclude
	\begin{align*}
		& \hphantom{\stackrel{\hphantom{\eqref{23.2.10;a}}}{=}}
			 \int_{\mathcal{Z} \cap B(X,\tilde Ct)} d(x,\Pqtx)\dd \cH^{\N}(x) \\
		& \stackrel{\hphantom{\eqref{23.2.10;a}}}{\le}
			 \sum_{j=1}^{N_{0}} \sum_{B \in \mathcal{G}_{\sigma, j}} \int_{\mathcal{Z} \cap B} 
			\sigma + d(p_{B},\Pqtx)
			\dd \cH^{\N}(x)\\ \displaybreak[1]
		& \stackrel{\eqref{23.2.10;a}}{\le} 
			C(\n,\N,C_{0}) \sum_{j=1}^{N_{0}} \sum_{B \in \mathcal{G}_{\sigma, j}} \int_{B}
			\left(\sigma + d(p_{B},\Pqtx)\right) 
			\dd \mu(x)\\ \displaybreak[1]
		& \stackrel{\hphantom{\eqref{23.2.10;a}}}{\le}
			 C(\n,\N,C_{0}) \left( \mu(B(X,(\tilde C +1)t)) 2\sigma 
			 	+ \int_{B(X,(\tilde C +1)t)} d(x,\Pqtx) \dd \mu(x)\right). 
	\end{align*}
	With $ \sigma \rightarrow 0$, the assertion holds.
\end{proof}
With Lemma \ref{2.1.10;10} and Lemma \ref{2.1.10;11}, we get for $K_{0}$ using that 
$k \ge \tilde k \ge \tilde C+1$, where $\tilde k$ is defined on page \pageref{22.11.2013.1}
\begin{align}
	K_{0} 
	& \stackrel{\hphantom{\eqref{2.1.10;30}}}{\le} C(\n,\N,C_{0}) \ \beta_{1;k}^{\Pqtx}(X,t)
	 \stackrel{\eqref{2.1.10;30}}{\le} C(\n,\N,C_{0}) \ \beta_{1;k}(X,t).  \label{2.1.10;2}
\end{align}

To estimate $K_{i}$, we need the following lemma.
\begin{lem} \label{29.12.09;1}
	There exists some constant $C_{4}=C_{4}(\n,\N,C_{0}) > 1$ so that, for all $i \in I_{12}$ 
	and $ u \in R_{i}$, we have 
	$d(\pi_{P_{i}}(u+ A(u)),B_{i}) \le C_{4} \diam R_{i}$.
	We recall that $P_{i}$ is the $\N$-dimensional plane, which is,
	in the sense of Definition \ref{12.07.13.1}, associated to the ball 
	$B(X_{i},t_{i})=B_{i}$ given by Lemma \ref{vor3.12} (cf. Definition \ref{3.12.2013.10}).
\end{lem}
\begin{proof}
	For every $i \in I_{12}\subset I$, we have with Lemma \ref{vor3.12} that $B_i=B(X_i,t_i)$ 
	and $(X_i,t_i) \in S \subset S_{total}$. Hence we can use Lemma \ref{nachlem2.6} 
	($\sigma=2 \varepsilon$, $x=X_{i}$, $t=t_{i}$, $\lambda = \frac{\delta}{2}$, $P=P_{i}$)
	to get some $y \in 2B_i \cap P_{i}$, where $P_i=P_{(X_i,t_i)}$.
	We obtain with Lemma \ref{6.9.11.1} ($P_{1}=P_{j}$, $P_{2}=P_{0}$), $\alpha \le \tilde \alpha < \frac{1}{2}$ 
	($\tilde \alpha$ is defined on page \pageref{alphafuergamma})
	and Lemma \ref{vor3.12}
	\begin{align*}
		d(u+A_{i}(u),y) 
		& \le \frac{1}{1-\alpha}d(u,\pi(y))
		 < 2[d(u, \pi(X_i))+ d(\pi(X_i),\pi(y))]
		 \le C \diam R_i.
	\end{align*}
	Moreover, with Lemma \ref{lem3.12} (iv) and $\varepsilon \le \tilde \varepsilon \le 1$ 
	($\tilde \varepsilon$ is defined on page \pageref{epsilonfuergamma}), we get
	\[d(\pi_{P_i}(u+A(u)),u+A_i(u)) \le d(u+A(u),u+A_i(u)) \le C\diam R_i\]
	for some $C=C(\n,\N,C_{0})$. Using these estimates, $u+A_i(u)=\pi_{P_i}(u+A_i(u))$ and triangle inequality,
	we obtain the assertion.
\end{proof}

Now, with Lemma \ref{29.12.09;1} and $K_{i}$ from \eqref{2.1.10;1}, we obtain for $i \in \mathcal{I}(q,t) \subset I_{12}$
\begin{align}
	K_{i} 
	& \stackrel{\hphantom{\text{L. \ref{29.12.09;1}}}}{\le} 
		\frac{1}{t^\N}  \int_{B(q,t)\cap R_{i}} \frac{d(u+A(u),P_{i})}{t}\dd \cH^{\N}(u)\nonumber  \\ 
	&\hphantom{\stackrel{\text{L. \ref{29.12.09;1}}}{\le}}
		 + \frac{1}{t^\N} \sup \left\{ \frac{d(\pi_{P_{i}}(v + A(v)),\Pqtx)}{t} \Big| v \in B(q,t) \cap R_{i} \right\}
		\cH^{\N}(B(q,t)\cap R_{i})\nonumber \displaybreak[1] \\ 
	& \stackrel{\text{L. \ref{29.12.09;1}}}{\le} \frac{1}{t^\N}  
		\int_{B(q,t)\cap R_{i}} \frac{d(u+A(u),P_{i})}{t}\dd \cH^{\N}(u)\nonumber  \\
	& \hphantom{\stackrel{\text{L. \ref{29.12.09;1}}}{\le}}
		 + \vol \left( \frac{\diam R_{i}}{t}\right)^\N \sup \left\{ \frac{d(w,\Pqtx)}{t} 
		\Big| w \in P_{i}, d(w,B_{i}) \le C_{4} \diam R_{i} \right\}.
		\label{2.1.10;3}
\end{align}
Since $P_{i}$ is the graph of $A_{i}$, we get for any $u \in B(q,t)\cap R_{i}$ with Lemma \ref{lem3.12} (iv) 
that there exists some $\bar C= \bar C(\n,\N,C_{0})$ with
\begin{align*}
	d(u+ A(u),P_{i}) & \le d(u + A(u),u+A_{i}(u))
	 = d(A(u),A_{i}(u)) 
	 \le \bar C\varepsilon \diam R_{i},
\end{align*}
and so, using Lemma \ref{24.10.12.1},
\begin{align}
	\frac{1}{t^\N}  \int_{B(q,t)\cap R_{i}} \frac{d(u+A(u),P_{i})}{t}\dd \cH^{\N}(u) 
		& \le \varepsilon \ C(\n,\N,C_{0}) \left( \frac{\diam R_{i}}{t} \right)^{\N+1}. \label{2.1.10;4}
\end{align}

\begin{lem} \label{lem4.3}
	There exists some constant $C=C(\n,\N,C_{0})$ so that for all $i \in \mathcal{I}(q,t)$
	\begin{align*}
		 & \ \ \ \sup \left\{ \frac{d(w,\Pqtx)}{t} \Big| w \in P_{i}, d(w,B_{i}) 
			\le C_{4} \diam R_{i}  \right\} \\
		 &\le C \varepsilon \frac{\diam R_{i}}{t} + C \frac{1}{t} \left( \frac{1}{(\diam R_{i})^\N}
		 \int_{2B_{i}} d(z,\Pqtx)^{\frac{1}{3}} \dd \mu(z) \right)^{3}.
	\end{align*}
\end{lem}
\begin{proof} 
	Let $i \in \mathcal{I}(q,t)$.
	Due to the construction of $B_{i}=B(X_{i},t_{i})$ (see Lemma \ref{vor3.12}), we have 
	$(X_{i},t_{i}) \in S \subset S_{total}$
	and so
	$\delta(X_{i},t_{i}) \ge \frac{\delta}{2}$.
	With Corollary \ref{04.09.12.1} ($\lambda=\frac{\delta}{2}$, $B(x,t)=B(X_{i},t_{i})$, $\Upsilon=\R^{\n}$), 
	there exist constants $C_{1}=C_{1}(\n,\N,C_{0})>3$, $C_{2}=C_{2}(\n,\N,C_{0})>1$ 
	and some $(\N,10\N\frac{t_i}{C_1})$-simplex $T=\Delta(x_0,\dots,x_{\N}) \in F \cap B_i$
	with
	\begin{align}\label{11.03.2013.1}
		\mu\left( B\left(x_\kappa,\textstyle{\frac{t_i}{C_1}}\right) \cap B_{i}\right) 
		\ge \textstyle{\frac{t_{i}^{\N}}{C_{2}}} \ \ \text{and} \ \
		B\left(x_{\kappa},\frac{t_i}{C_1}\right) \subset 2B_{i} \subset kB_{i}=B(X_{i},kt_{i}).
	\end{align}
	for all $\kappa = 0, \dots, \N$ and we used that $C_{1} > 3$ and $k \ge \tilde k \ge 2$ 
	($\tilde k$ is defined on page \pageref{22.11.2013.1})., we have
	We set $C^{'} := 400C_{2}$,
	$\tilde B_{\kappa}:=B\left(x_{\kappa},\frac{t_i}{C_1}\right)$
	and define for all $\kappa = 0,\dots,\N$
	\begin{align} \label{29.12.09;e}
		Z_{\kappa}:= \left\{ z \in \tilde B_{\kappa} \cap F \big| d(z,P_{i}) \le C^{'} \varepsilon \diam R_{i} \right\}.
	\end{align}
	We have $(X_{i},t_{i}) \in S_{total}$ and hence $\beta_{1;k}^{P_{i}}(X_i,t_i) \le 2 \varepsilon$. Using this
	and Lemma \ref{vor3.12}, we obtain with Chebyshev's inequality
	\[\mu(\tilde B_{\kappa}\setminus Z_{\kappa}) 
		 < \frac{t_{i}^{\N+1}}{C^{'}\varepsilon \diam R_{i}} \beta_{1;k}^{P_{i}}(X_i,t_i)
		\le \frac{t_{i}^{\N+1} \ 100}{C^{'}\varepsilon t_{i}} 2\varepsilon
		 = \frac{t_{i}^\N}{2C_{2}}.\]
	Using Lemma \ref{vor3.12} again, we get
	\begin{align}
		\mu(Z_{\kappa}) 
		&\stackrel{\hphantom{\eqref{11.03.2013.1}}}{\ge}
			 \mu(\tilde B_{\kappa}) - \mu(\tilde B_{\kappa}\setminus Z_{\kappa})  
		 \stackrel{\eqref{11.03.2013.1}}{\ge} \frac{t_i^{\N}}{C_{2}} 
			- \frac{t_{i}^{\N}}{2C_{2}} 
		 \stackrel{\hphantom{\eqref{11.03.2013.1}}}{=} \frac{t_{i}^{\N}}{2C_{2}} 
		 \stackrel{\hphantom{\eqref{11.03.2013.1}}}{\ge} \frac{\diam R_{i}^{\N}}{2^{\N+1}C_{2}} >0. \label{29.12.09;g}
	\end{align}
	For all $\kappa \in \{0,\dots,\N\}$, let $z_{\kappa} \in Z_{\kappa} \subset \tilde B_{\kappa}$ 
	and set $y_{\kappa}:=\pi_{P_i}(z_{\kappa})$. 
	Since $\varepsilon \le \tilde \varepsilon \le \frac{1}{2C^{'} C_{1}}$ ($\tilde \varepsilon$ was chosen on
	page \pageref{epsilonfuergamma}), we deduce 
	\begin{align*} 
		d(y_{\kappa},x_{\kappa}) 
		& \le d(y_{\kappa},z_{\kappa}) + d(z_{\kappa},x_{\kappa})
		 \le d(z_{\kappa},P_i) + \frac{t_i}{C_1}
		 \stackrel{\eqref{29.12.09;e}}{\le} C^{'}\varepsilon \diam R_i + \frac{t_i}{C_1}
		 \le 2\frac{t_i}{C_1}.
	\end{align*}
	Due to Lemma \ref{17.11.11.2},
	the simplex $S=\Delta(y_0, \dots,y_{\N})$ is an $(\N,6\N\frac{t_i}{C_1})$-simplex and, 
	using the triangle inequality, we obtain $S \subset 2B_i$.
	Now, with Lemma \ref{30.05.12.1}, ($C=\frac{C_1}{6\N}$, $\hat C=2$, $t=t_{i}$, $m=n$, $x=X_{i}$) 
	there exists some orthonormal basis $(o_1, \dots, o_\N)$ of 
	$P_{i}-y_{0} $ and there exists $\gamma_{l,r} \in \R$ with
	$o_l = \sum_{r=1}^{l} \gamma_{l,r} (y_r-y_0)$
	and
	$|\gamma_{l,r}| \le \left(\frac{2C_1}{3}\right)^{\N} \frac{C_1}{6\N t_i}$
	for all $1 \le l \le \N$ and $1 \le r \le l$.

	Now let $ w \in P_{i}$ with $d(w,B_{i}) \le C_{4} \diam R_{i}$. 
	We obtain
	\begin{align} \label{10.12.2013.1}
		w -y_0 
		& = \sum_{\kappa = 1}^{\N} \langle w-y_0,o_{\kappa} \rangle o_{\kappa} 
		 = \sum_{\kappa = 1}^{\N} \langle w-y_0,o_{\kappa} \rangle  \sum_{r=1}^{\kappa} \gamma_{\kappa,r} (y_r-y_0)
	\end{align}
	and so, with Remark \ref{12.4.11} ($b=w$, $P=\Pqtx$)
	and $|w-y_{0}|\le d(w,B_i) +  \diam B_i + d(B_i,y_0) \le Ct_i$, we get
	\begin{align}
		d(w,\Pqtx) 
		& \stackrel{\eqref{10.12.2013.1}}{\le} 
			\N C C_{1}^{\N+1} \sum_{r=1}^{\N} \left(d(y_r,z_r) + d(z_r,\Pqtx) \right) \nonumber \\\displaybreak[1]
		& \stackrel{\eqref{29.12.09;e}}{\le}  
			\N^{2} CC_1^{\N+1}C^{'} \varepsilon \diam R_{i}
			+ \N CC_1^{\N+1} \sum_{r=0}^{\N} d(z_{r},\Pqtx). \label{29.12.09;f}
	\end{align}
	The previous results are valid for arbitrary $z_{\kappa} \in Z_{\kappa}$, hence we get
	\begin{align*}
		& \hphantom{\eqref{29.12.09;g} \eqref{11.03.2013.1} }
			d(w,\Pqtx) -\N^{2} CC_1^{\N+1}C^{'} \varepsilon \diam R_{i}\\
		& \stackrel{\substack{\hphantom{\eqref{29.12.09;g} \eqref{11.03.2013.1}} \\ \eqref{29.12.09;f}}}{\le} 
			\left(\frac{1}{\prod_{r=0}^{\N} \mu(Z_r)}  \int_{Z_{0}} \dots \int_{Z_{\N}} 
			\left(\N CC_1^{\N+1} \sum_{r=0}^{\N} d(z_{r},\Pqtx) \right)^{\frac{1}{3}} \
			\dd \mu(z_{\N}) \dots \dd \mu(z_{0})\right)^{3} \\ \displaybreak[1]
		& \stackrel{\hphantom{\eqref{29.12.09;g} \eqref{11.03.2013.1}}}{\le} \N CC_1^{\N+1} 
			\left( \sum_{r=0}^{\N} \frac{1}{\mu(Z_r)}  \int_{Z_{r}} 
			 d(z_{r},\Pqtx)^{\frac{1}{3}} \
			\dd \mu(z_{r})\right)^{3} \\ \displaybreak[1]
		& \stackrel{\eqref{29.12.09;g} \eqref{11.03.2013.1}}{\le} \N CC_1^{\N+1}
			\left(  \frac{2^{\N+1}C_{2}}{\diam R_{i}^{\N}} \int_{2B_{i}}
			d(z,\Pqtx)^{\frac{1}{3}} \dd \mu(z) \right)^{3},
	\end{align*}
	where we used that the sets $Z_{r}$ are disjoint.
	Since $w \in P_{i}$ was arbitrarily chosen with $d(w,B_{i}) \le C_{4} \diam R_{i}$, we get the statement.
\end{proof}
\begin{lem} \label{lem4.4}
	There exists some constant $C=C(\N,C_{0})$ so that
	\begin{align*}
		\sum_{i \in \mathcal{I}(q,t)} \left(\frac{\diam R_{i}}{t}\right)^{\N} \frac{1}{t} 
			\left( \frac{1}{(\diam R_{i})^{\N}} \int_{2B_{i}}
			d(z,\Pqtx)^{\frac{1}{3}} \dd \mu(z) \right)^{3} 
		& \le C \beta_{1;k}(X,t).
	\end{align*}
\end{lem}
\begin{proof}
	Let $i \in \mathcal{I}(q,t)$ ($\mathcal{I}(q,t)$ is defined on page \pageref{2.1.10;21})
	and $x \in 2B_{i}$. 
	We define \label{28.2.10;2}
	\[ J(i) := \left\{ j \in \mathcal{I}(q,t) \big| \diam B_{j} \le 
			\diam B_{i}, 2B_{i} \cap 2B_{j} \neq \emptyset \right\}, \ \ \text{and} \ \
		\Xi_{i}(x) := \sum_{j \in J(i)} \chi_{_{2B_{j}}}(x).\]
	At first, we prove some intermediate results:\\
	I.\,	For all $i \in \mathcal{I}(q,t)$, we have
		$\int_{2B_{i}}\Xi_{i}(x) \dd \mu(x) \le C(\N,C_{0}) (\diam R_{i})^{\N}$.
		This implies that $\Xi_{i}(x)< \infty$ for $\mu$-almost all $x \in 2B_{i}$.
		\begin{proof}
			Let $i \in \mathcal{I}(q,t)$ and $j \in J(i)$. With Lemma \ref{vor3.12} applied to $j$ 
			and the definition of $J(i)$, we deduce
			$\diam R_{j} \le 200 \diam R_{i}$. Using Lemma \ref{vor3.12} and $j \in J(i)$, we get
 			$ d(R_{i},R_{j}) \le C \diam R_{i}$.
			This implies for some large enough constant $C>1$ that $R_{j} \subset C R_{i}$.
			Since the cubes $ \mathring{R}_{j}$ are disjoint
			(see Lemma \ref{inneredisjunkt} (ii)), 
			we get with Lemma \ref{24.10.12.1}
			\begin{align*}
				\sum_{j\in J(i)} (\diam R_{j})^{\N} 
				& = \sum_{j\in J(i)} (\sqrt{\N})^{\N} \cH^{\N}(R_{j})
				 \le (\sqrt{\N})^{\N} \cH^{\N}\big( C R_{i}\big)
				 = C(\N) (\diam R_{i})^{\N}.
			\end{align*}
			In the following, we apply Fatou's Lemma \cite[1.3, Thm.1]{Evans} to 
			interchange the integration with the summation.
			With (B) from page \pageref{Grundeigenschaften}
			and Lemma \ref{vor3.12}, we obtain
			\[\int_{2B_{i}}\Xi_{i}(x) \dd \mu(x) \le  \sum_{j \in J(i)} \mu(2B_{j})
				 \stackrel{\text{(B)}}{\le} C(\N,C_{0}) \sum_{j\in J(i)} (\diam R_{j})^{\N}
				 \le C(\N,C_{0}) (\diam R_{i})^{\N}.\]
		\end{proof} \noindent
	II.\, Let $x \in \mathbb{R}^{\n}$ and $m \in \mathbb{N}$. There exists some $C=C(\N)>1$ with
		$\sum_{\genfrac{}{}{0pt}{}{i \in  \mathcal{I}(q,t) }{ \Xi_{i}(x)=m}} \chi_{_{2B_{i}}}(x) \le C$.
		\begin{proof}
			Let $l,o \in \mathcal{I}(q,t)$ with $x \in 2B_{l} \cap 2B_{o}$ and
			$\Xi_{l}(x) = m = \Xi_{o}(x)$.
			Without loss of generality, we have $\diam B_{l} \le \diam B_{o}$.

			Assume that $ \diam B_{l} < \diam B_{o}$.
			We define
			$J(l,x) := \left\{ \iota \in  J(l) \big|  x \in 2B_{\iota}\right\}$.
			Let $j \in J(l,x)$. By definition of $J(l)$, we get
			$ \diam B_{j} \le \diam B_{l}  < \diam B_{o}$ 
			and $x \in 2B_{j}$. 
			Since $x \in 2B_{o}$, it follows $2B_{o} \cap 2B_{j} \neq \emptyset$ and,
			because $\diam B_{j} < \diam B_{o}$, 
			we get $j \in J(o,x)$. 
			Furthermore, we have $o \in J(o,x)$, but $o \notin J(l,x)$ because by our assumption
			we have $ \diam B_{l} < \diam B_{o}$. So we get
			$J(l,x) \subsetneq J(o,x)$.
			Now we obtain a contradiction
			\[ m=\Xi_{l}(x)= \sum_{j \in J(l)} \chi_{_{2B_{j}}}(x)=\sum_{j \in J(l,x)} \chi_{_{2B_{j}}}(x)<
				\sum_{j \in J(o,x)} \chi_{_{2B_{j}}}(x)=\Xi_{o}(x)=m.\]
			Hence there exists some $\lambda=\lambda(x,m) \in (0,\infty)$ so that,
			for $l \in \mathcal{I}(q,t)$ with $x \in 2B_{l}$ and
			$\Xi_{l}(x) = m$, we have 
			$\diam B_{l} = \lambda$, 
			and, we obtain with Lemma \ref{vor3.12} that
			$\lambda \le 200 \diam R_{l} \le 200 \lambda$ and $d(R_{l},\pi(B_{l})) \le 100 \lambda$.
			Using
			$d(R_{l},\pi(x)) \le d(R_{l},\pi(B_{l}))+2\diam B_{l} \le 102 \lambda$,
			we get  $R_{l} \subset B(\pi(x),103 \lambda) \cap P_{0}$.
			With Lemma \ref{24.10.12.1}, we have 
			$\cH^{\N}(R_{l}) \ge (\sqrt{\N})^{-\N}(\textstyle{\frac{1}{200}}\lambda)^{\N}$
			and, according to Lemma \ref{inneredisjunkt} (ii) the cubes $R_{l}$ 
			have disjoint interior. This implies that there exists some constant $C(\N)$ so that
			there are at most $C(\N)$ indices $l \in  \mathcal{I}(q,t)$ with $\Xi_{l}(x)=m$ and $x \in 2B_{l}$.
			This implies the assertion.
		\end{proof} \noindent
	III.\, We have $i \in J(i)$ and so $\Xi_{i}(x) \neq 0$ for all $x \in 2B_{i}$. 
		Hence, with $x \in \mathbb{R}^{\n}$, the term
		\[ \chi_{_{2B_{i}}}(x)\Xi_{i}(x)^{-2}:= \begin{cases}
								\Xi_{i}(x)^{-2} &\text{if } x \in 2B_{i}\\
								0		& \text{otherwise}
							\end{cases} \]
		is well-defined.
		Now there exists some constant $C(\N)$ so that, for all $x \in \mathbb{R}^{\n}$, we get
		\begin{align*}
				\sum_{i \in \mathcal{I}(q,t)} \chi_{_{2B_{i}}}(x)\Xi_{i}(x)^{-2} 
				& = \sum_{m=1}^{\infty} \sum_{
					\genfrac{}{}{0pt}{}{i \in  \mathcal{I}(q,t) }{ \Xi_{i}(x)=m}} \chi_{_{2B_{i}}}(x)
				\frac{1}{m^{2}} \stackrel{\text{II}}{\le} C(\N).
		\end{align*} \noindent
	IV.\, Let $i \in \mathcal{I}(q,t)$. Since $i \in J(i)$, we have $\Xi_{i}(x) \neq 0$ for $x \in 2B_{i}$. 
		We obtain with H\"older's inequality
		\begin{align*}
			& \ \ \ \left( \frac{1}{(\diam R_{i})^{\N}} \int_{2B_{i}} d(z,\Pqtx)^{\frac{1}{3}} \Xi_{i}(z)^{\frac{-2}{3}}
				\Xi_{i}(z)^{\frac{2}{3}} \dd \mu(z) \right)^{3}\\ \displaybreak[1]
			& \stackrel{\text{I}}{\le} C(\N,C_{0}) \frac{1}{(\diam R_{i})^{\N}} 
				\int_{2B_{i}} d(z,\Pqtx) \Xi_{i}(z)^{-2}  \dd \mu(z).
		\end{align*}\noindent
	V.\, We have
		\[ \frac{1}{t^{\N+1}} \int_{\bigcup_{i\in \mathcal{I}(q,t)}2B_{i}} d(z,\Pqtx) 
			\dd \mu(z) \le 2 \beta_{1;k}(X,t),\]
		where $X \in B(\tilde X(q),200t)$ (cf. page \pageref{2.1.10;25}).
		\begin{proof}
			At first, we prove that there exists some constant $\hat C>1$ so that 
			for $i \in \mathcal{I}(q,t)$ we have
			$ 2B_{i} \subset B(X,\hat Ct)$. 
			Let $i \in \mathcal{I}(q,t)$. 
			By definition of $\mathcal{I}(q,t)$ (see page \pageref{2.1.10;21}), we obtain
			$R_{i} \cap B(q,t) \neq \emptyset$.
			Let $\tilde{u} \in R_{i} \cap B(q,t)$.
			Since $ \frac{D(q)}{100} < t$ (see page \pageref{2.1.10;25}), we get,
			using the triangle inequality,
			$D(\tilde{u})  \le D(q)+ d(q,\tilde{u}) < 101t$.
			It follows with Lemma \ref{lem3.11} (i) that
			\begin{align} \label{2.1.10;c}
				\diam R_{i} \le \textstyle{\frac{1}{10}} D(\tilde u) < 11t.
			\end{align}
			With Lemma \ref{vor3.12} and \eqref{2.1.10;27} from page \pageref{2.1.10;27}, we
			get ($X \in B(\tilde{X},200t)$, see page \pageref{16.3.10;1})
			\begin{align} \label{2.1.10;d}
				d(\pi(B_{i}),\pi(X)) &\stackrel{\phantom{\eqref{2.1.10;27}}}{\le} d(\pi(B_{i}),\tilde u) + d(\tilde u,q)+ 
					d(q,\pi(\tilde X)) + d(\pi(\tilde X),\pi(X))\nonumber  \\ 
				& \stackrel{\eqref{2.1.10;27}}{\le} 
					d(\pi(B_{i}),R_{i}) + \diam R_{i} + t + 200t + d(\tilde X,X) \stackrel{\eqref{2.1.10;c}}{\le} C t.
			\end{align}
			Now let $x \in 2B_{i}=B(X_{i},2t_{i})$. Since $(X_{i},t_{i}) \in S$,
			using Lemma \ref{vor3.12} and \eqref{2.1.10;c}, we get
			$d(x) < 4400t$.
			Due to $X \in B(\tilde X,200t) \cap F$ and \eqref{2.1.10;27}, we deduce $d(X) \le 400 t$.
			With Lemma \ref{vor3.12} and estimates \eqref{2.1.10;c} and \eqref{2.1.10;d}, we obtain
			with triangle inequality $d(\pi(x),\pi(X)) \le Ct$.
			Now there exists some constant $\hat C > 1$ so that, we get with Lemma \ref{lem3.9}
			$d(x,X) \le \hat C t$.
			All in all we have proven that, for all $i \in \mathcal{I}(q,t)$, we have
			$2B_{i} \subset B(X,\hat C t)$.
			Since $k \ge \tilde k \ge \hat C$ (see page \pageref{22.11.2013.1}), we get the assertion
			with condition \eqref{2.1.10;30} from page \pageref{2.1.10;30}.
		\end{proof}
	Now, Lemma \ref{lem4.4} can be proven by applying IV, III, and V and using  
	the monotone convergence theorem \cite[1.3, Thm. 2]{Evans} to interchange the summation and the integration
\end{proof}

Now we can give some estimate for $\gamma(q,t)$, where $q \in U_{10}$ and $\frac{D(q)}{100} < t < 2.$
Using the inequalities \eqref{2.1.10;1}, \eqref{2.1.10;2}, \eqref{2.1.10;3}, \eqref{2.1.10;4}, 
Lemma \ref{lem4.3} and Lemma \ref{lem4.4}, we get using $T \le 200t$ 
(cf. Lemma \ref{28.11.2013.1}) for every 
$X \in B(\tilde X,T) \cap F \subset B(\tilde X,200t) \cap F$ 
\[\gamma(q,t) \le C(\n,\N,C_{0})\ \beta_{1;k}(X,t)  + C(\n,\N,C_{0}) \
		 \varepsilon \sum_{i \in \mathcal{I}(q,t)}  \left( \frac{\diam R_{i}}{t} \right)^{\N+1}.\] 
With Lemma \ref{28.11.2013.1}, we get $(\tilde X, T) \in S \subset S_{total}$ and $20t \le T \le 200t$. Using this,
the previous estimate, the definition of $\delta=\delta(\N)$ on page \pageref{Wahlvondelta} and
(B) from page \pageref{Grundeigenschaften}, we get
\begin{align*}
	\gamma(q,t)^{\p} 
	& \stackrel{\hphantom{\text{(B)}}}{\le} \frac{2}{\delta T^{\N}}\int_{B(\tilde X,T)} \gamma(q,t)^{\p} \dd  \mu(X)\\ \displaybreak[1]
	& \stackrel{\hphantom{\text{(B)}}}{\le} C\frac{1}{t^{\N}}\int_{B(\tilde X,200t)} 
	\beta_{1;k}(X,t)^{\p}\dd  \mu(X) 
	 +C\varepsilon^{\p}\left( \sum_{i \in \mathcal{I}(q,t)} \left( \frac{\diam R_{i}}{t} \right)^{\N+1} \right)^{\p} ,
\end{align*}
where $C=C(\n,\N,\p,C_{0})$.
We recall that for every $q \in U_{10}$ there exists some $\tilde X=\tilde X(q)$ (cf. Lemma \ref{28.11.2013.1}) such that
the previous inequality is valid. This implies
\begin{align}
	\int_{U_{10}} \int_{\frac{D(q)}{100}}^{2} \gamma(q,t)^{\p} \frac{\dd t}{t} \dd \cH^{\N}(q) 
	& \le C(\n,\N,\p,C_{0}) \ a + C(\n,\N,\p,C_{0}) \ \varepsilon^{\p} \ b, \label{10.1.2010;c}
\end{align}
where
\[a:= \int_{U_{10}} \int_{\frac{D(q)}{100}}^{2} \frac{1}{t^{\N}}\int_{B(\tilde X(q),200t)} 
		\beta_{1;k}(X,t)^{\p}\dd \mu(X) \frac{\dd t}{t} \dd \cH^{\N}(q),\]
\[b:= \int_{U_{10}} \int_{\frac{D(q)}{100}}^{2} \left( \sum_{i \in \mathcal{I}(q,t)} 
		\left( \frac{\diam R_{i}}{t} \right)^{\N+1} \right)^{\p} \frac{\dd t}{t} \dd \cH^{\N}(q).\]
To estimate $a$ and $b$, we need the following lemma.
\begin{lem}\label{06.12.2013.1}
	Let $q \in U_{10}$, $\frac{D(q)}{100} \le t \le 2$ and $X \in B(\tilde X(q),200t) \cap F$,
	where $\tilde X(q)$ is given by Lemma \ref{28.11.2013.1} on page \pageref{DefvonXSchlange}.
	Then 
	$d(\pi(X),q) \le 400t $
	and there exists some $\tilde \lambda  = \tilde \lambda(\n,\N,C_{0})>0$ so that, with $k_{0}=401$, we have
	$\tilde{\delta}_{k_{0}} (B(X,t))=\sup_{y \in B(X,k_{0}t)} \frac{\mu(B(y,t))}{t^{\N}}  \ge \tilde \lambda$,
	where $\tilde{\delta}_{k_{0}} (B(X,t))$ was defined on page \pageref{Definitionvondeltaschlange}. 
	Furthermore, there holds for all $i \in \mathcal{I}(q,t)$ that
	\begin{align}\label{06.12.2013.2}
		d(q,R_{i}) &\le t, &\diam R_{i} &< 11t,
	\end{align}
	and there exists some constant $C=C(\N)$ with
	\begin{align}
		\sum_{i \in \mathcal{I}(q,t)}\left( \frac{\diam R_{i}}{t} \right)^{\N+1} &\le C, \ \ \ \ \ \ \ \ 
		\sum_{i \in I_{12}} (\diam R_{i})^{\N} \le C. \label{2.1.10;101}
	\end{align}
\end{lem}
\begin{proof}
	Let $q \in U_{10}$, $\frac{D(q)}{100} \le t \le 2$ and $X \in B(\tilde X(q),200t) \cap F$.
	We have $d(X, \tilde X(q)) \le 200t$ and, with \eqref{2.1.10;27}, we get $d(\pi(\tilde X(q)),q) \le 200t$. 
	This implies $d(\pi(X),q) \le 400t$ by using triangle inequality.
	With \eqref{2.1.10;27}, we obtain
	$(\tilde X(q), T) \in S \subset S_{total}$ and, by definition of $S_{total}$, we conclude
	$\delta(B(\tilde X(q), T)) \ge \frac{\delta}{2}$. We have
	$B(\tilde X(q), T) \subset B(X,400t)$ and hence with \eqref{2.1.10;27} we get
	$\delta(B(X,400t)) \ge \frac{\delta}{2\cdot20^{\N}}$.
	Applying Corollary \ref{04.09.12.1} (ii) with $\lambda = \frac{\delta}{2\cdot 20 ^{\N}}$ 
	on $B(X,400t)$, we get constants $C_{1}=C_{1}(\n,\N,C_{0})$, $C_{2}=C_{2}(\n,\N,C_{0})$ and
	in particular one ball $B(x,s)$ with $ s = \frac{400t}{C_{1}}$ and
	\begin{align} \label{24.2.10;10}
		\mu(B(x,s) \cap B(X,400t)) \ge {\textstyle \frac{(400t)^{\N}}{C_{2}}}.
	\end{align}
	We have $\delta \le \frac{2}{50^{\N}}$ (cf. \eqref{Wahlvondelta} on page \pageref{Wahlvondelta}), 
	and Lemma \ref{lem2.3} gives us $C_{1} > 400.$
	That yields $s < t$. From \eqref{24.2.10;10}, we get $B(x,s) \cap B(X,400t) \neq \emptyset$
	which implies $d(x,X) < 401t$ and with \eqref{24.2.10;10} we get
	$\sup_{y \in B(X,401t)} \delta(B(y,t)) \ge \frac{400^{\N}}{C_{2}}=:\tilde \lambda$.
	Let $i \in \mathcal{I}(q,t)$. 
	Due to the definition of $\mathcal{I}(q,t)$ (see page \pageref{2.1.10;21}), we have $d(q,R_{i}) \le t$
	and we can choose some $\tilde u \in R_{i} \cap B(q,t)$.
	With Lemma \ref{lem3.11} (i), we obtain $ 10\diam R_{i} \le (D(q) + d(q,\tilde u)) < 11t$.
	The intervals $ R_{i}$ have disjoint interior (see Lemma \ref{inneredisjunkt} (ii)) and, from
	$R_{i} \cap B(q,t) \neq \emptyset$ for all $i \in \mathcal{I}(q,t)$, we get 
	$R_i \subset B(q,12t)$. With Lemma \ref{24.10.12.1}, this implies
	\[\sum_{i \in \mathcal{I}(q,t)} \left( \frac{\diam R_{i}}{t} \right)^{\N+1} 
		\stackrel{\eqref{06.12.2013.2}}{<} \frac{11}{t^{\N}} \sum_{i \in \mathcal{I}(q,t)} (\diam R_{i})^\N 
		= \frac{11}{t^{\N}} \sum_{i \in \mathcal{I}(q,t)} (\sqrt{\N})^{\N} \cH^{\N}(R_i)\\ \nonumber
		=C(\N).\]
	Now let $i \in I_{12}$. We have $R_{i} \cap B(0,12) \neq \emptyset$. 
	If $(Y,r) \in S \subset S_{total}$, we get $Y \in F \subset B(0,5)$ (cf. (A) on page \pageref{3.12.2013.3})
	and hence we obtain $d(\pi(Y),0) \le 5$ as well as $r\le 50$.
	With $\tilde v \in R_{i} \cap B(0,12)$ and
	Lemma \ref{lem3.11} (i), we get
	\[ \diam R_{i} \le \frac{1}{10} D(\tilde v) 
		= \frac{1}{10} \inf_{(Y,r) \in S}(d(\pi(Y),\tilde v) + r) 
		\le \frac{1}{10} (5+12 + 50) < 7. \]
	Hence, for all $i \in I_{12}$, we have $R_i \subset B(0,19)$ and the cubes $R_{i}$ have disjoint interior
	(cf. Lemma \ref{inneredisjunkt} (ii)). With Lemma \ref{24.10.12.1}, we deduce
	$ \sum_{ i \in I_{12}} (\diam R_{i})^{\N} = C(\N)$.
\end{proof}
To control the terms $a$ and $b$ we will use 
Fubini's Theorem \cite[1.4, Thm. 1]{Evans}, in the following abbreviated by (F).
Now, using Lemma \ref{06.12.2013.1} and Corollary \ref{thm2.4} ($\lambda = \tilde \lambda$, $k_{0}=401$), we conclude
\begin{align*}
	a & \stackrel{\text{(F)}}{\le} 
		\int_{F} \int_{0}^{2} \frac{1}{t^{\N}} \int_{U_{10}} 
		\Eins_{\left\{d(\pi(X),q)\le 400t\right\}} \dd \cH^{\N}(q) \
		\Eins_{\left\{ \tilde{\delta}_{k_{0}}(B(X,t)) \ge \tilde \lambda \right\}} 
		\beta_{1;k}(X,t)^{\p} \ \frac{\dd t}{t} \dd \mu(X) \\
	& \stackrel{\hphantom{\text{(F)}}}{\le} C(\n,\N,\K,\p,C_0,k) \ \mathcal{M}_{\K^{\p}}(\mu).
\end{align*}
Now we consider the integral $b$. 
We get using Fatou's Lemma \cite[1.3, Thm.1]{Evans} to interchange the summation with the integration
\begin{align*}
	b \ \ & \stackrel{\eqref{2.1.10;101} \eqref{06.12.2013.2}}{\le} C \ \int_{U_{10}} \int_{\frac{D(q)}{100}}^{2} \sum_{i \in I_{12}} 
		\Eins_{\left\{t > \frac{\diam R_{i}}{11},d(q,R_{i}) \le t \right\}}
		\left( \frac{\diam R_{i}}{t} \right)^{\N+1} \frac{\dd t}{t}\dd \cH^{\N}(q) \\ \displaybreak[1]
	& \stackrel{\substack{\hphantom{\eqref{2.1.10;101} \eqref{06.12.2013.2}}\\(F)}}{\le} C \ \sum_{i \in I_{12}} (\diam R_{i})^{\N+1} \int_{\frac{\diam R_{i}}{11}}^{\infty} 
		\int_{U_{10}} \Eins_{\left\{d(q,R_{i}) \le t \right\}} \dd \cH^{\N}(q) 
		 \frac{\dd t}{t^{\N+2}}
	\stackrel{\eqref{2.1.10;101}}{\le} C(\N,\p).
\end{align*}
Due to Lemma \ref{inneredisjunkt} (ii) the proof of Theorem \ref{thm4.1} is completed by applying 
Lemma \ref{24.2.10;17}, \eqref{10.1.2010;b}
and with (C) from page \pageref{Grundeigenschaften}
because $ \mathcal{M}_{\K^{\p}}(\mu) \stackrel{\text{(C)}}{\le} \eta < \varepsilon^{\p}$ 
(see page \pageref{3.12.2013.3} and page \pageref{3.12.2013.4}).
\end{proof}

\setcounter{equation}{0}
\section{\texorpdfstring{$\mathcal{Z}$ Is not too Small}{Z Is not too Small}} \label{notanullset}
Our aim is to prove Theorem \ref{16.10.2013.1}. 
In Definition \ref{def3.2}, we defined a partition of the support $F$ of our measure $\mu$ in four parts, namely 
$\mathcal{Z}$, $F_{1}$, $F_{2}$, $F_{3}$. Then, in section \ref{17.05.2013.1}, we constructed some function $A$, 
the graph $\Gamma$ of which
covers the set $\mathcal{Z}$. To get our main result, we need to know that we covered a major part of $F$.
In this last part of the proof of Theorem \ref{16.10.2013.1}, we show that the $\mu$-measure of 
$F_{1}$, $F_{2}$, $F_{3}$
is quite small. In particular, we deduce $\mu(F_{1} \cup F_{2} \cup F_{3}) \le \frac{1}{100}$. As stated at the
beginning of section \ref{04.02.2014.1} on page \pageref{04.02.2014.1}, this completes the proof of 
Theorem \ref{16.10.2013.1}.

\subsection{\texorpdfstring{Most of $F$ is close to the graph of $A$}{Most of F lies near the graph of A}}
With $K := 2 \left(104 \cdot 10 \cdot 6 + 214 \right)$, \label{WahlvonK}
we define the set $G$ by
\begin{align*}
	\left\{ x \in F \setminus \mathcal{Z} \ | \ 
		\forall i \in I_{12} \text{ with } \pi(x) \in 3R_{i} \text{, we have } x \notin KB_{i} \right\} 
	      \cup 
	\left\{ x \in F \setminus \mathcal{Z} \ | \ 
		\pi(x) \in \pi (\mathcal{Z}) \right\}. 
\end{align*}
At first, we show that the $\mu$-measure of $G$ is small.
\begin{lem} \label{lem3.14}
	Let $0< \alpha \le \frac{1}{280}$. There exist some 
	$ \tilde \varepsilon=\tilde \varepsilon(\n,\N,C_{0},\alpha)$ so that, if 
	$\eta < 2 \tilde \varepsilon$ and  $k \ge 4$, 
	there exists some constant $C=C(\n,\N,\K,\p,C_{0})$ so that,
	for all $\varepsilon \in [\frac{\eta}{2},\tilde \varepsilon)$, we have
	\[\mu(G) \le C \mathcal{M}_{\K^{\p}}(\mu) \stackrel{\text(C)}{\le} C \eta,\]
	where the condition (C) was given on page \pageref{Grundeigenschaften}.
\end{lem}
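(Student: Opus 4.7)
\bigskip

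\noindent\textbf{Proof proposal.}
The strategy is to associate to each $x\in G$ a ball $(Z_x,s_x)\in S$ such that $x$ lies in a mild enlargement $B(Z_x,\kappa s_x)$ but is \emph{vertically} far from the approximating plane:
\begin{align*}
d\bigl(x,P_{(Z_x,s_x)}\bigr)\ \ge\ c_0\, s_x
\end{align*}
for some constant $c_0>0$ independent of $x$. Once this geometric claim is in place, Theorem~\ref{lem2.5} will convert the resulting lower bound on $\beta_{\p;k}^{P_{(Z_x,s_x)}}(Z_x,s_x)$ into a lower bound on the local Menger curvature $\mathcal{M}_{\K^{\p};k_1}(Z_x,s_x)$, and a Besicovitch covering argument will turn these pointwise bounds into the desired global estimate.

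First I would choose the ball for each of the two types of points in $G$. For $x$ in the first set, I pick any $i\in I_{12}$ with $\pi(x)\in 3R_i$ (Lemma~\ref{Riueberdeckung} supplies one when $\pi(x)\notin\pi(\mathcal{Z})$, and otherwise $x$ belongs to the second set and is handled separately) and take $(Z_x,s_x):=(X_i,t_i)$ from Lemma~\ref{vor3.12}. Since $\pi(B_i)$ lies within $100\diam R_i$ of $R_i$ while $x\notin KB_i$ and $\diam B_i\le 200\diam R_i$, the horizontal displacement $d(\pi(x),\pi(X_i))$ is controlled by $\diam R_i$, so the \emph{vertical} component $|\pi^{\perp}(x-X_i)|$ must be at least $\tfrac{K}{2}t_i$ by the triangle inequality and the choice of $K$ (page \pageref{WahlvonK}); combining with $\varangle(P_{i},P_0)\le\alpha<\tfrac14$ and Lemma~\ref{nachlem2.6} (which places $P_i$ within $2B_i$) gives $d(x,P_i)\ge c_0\,t_i$ via Lemma~\ref{12.7.11.1}. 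For $x$ in the second set, Lemma~\ref{12.11.2013.1} yields a unique $z=\pi|_{\mathcal{Z}}^{-1}(\pi(x))\in\mathcal{Z}$, so $x-z\in P_0^{\perp}\setminus\{0\}$; I would take $s_x\sim|x-z|$ and let $(Z_x,s_x)\in S$ be a ball realising the infimum defining $d(z)=0$ at this scale (Lemma~\ref{rem3.1}(i) and Lemma~\ref{7.2.10;1} guarantee its existence with $Z_x$ close to $z$). Again $\varangle(P_{(Z_x,s_x)},P_0)\le\alpha$ together with $x-z\perp P_0$ and Lemma~\ref{6.9.11.1} force $d(x,P_{(Z_x,s_x)})\ge c_0 s_x$.

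Second, I would bound $\mu$ near $x$ by local curvature. Since $d(\cdot,P_{(Z_x,s_x)})$ is $1$-Lipschitz, there is $r_x\sim s_x$ with $d(y,P_{(Z_x,s_x)})\ge \tfrac{c_0}{2}s_x$ for all $y\in B(x,r_x)\subset B(Z_x,k s_x)$. Integrating this inequality and using Theorem~\ref{lem2.5} with integrand $\K^{\p}$ (applicable because $\delta(B(Z_x,s_x))\ge\tfrac12\delta$ by $(Z_x,s_x)\in S_{total}$) yields
\begin{align*}
\mu\bigl(B(x,r_x)\bigr)\,\Bigl(\tfrac{c_0}{2}\Bigr)^{\!\p}
\ \le\ \frac{1}{s_x^{\N}}\int_{B(Z_x,ks_x)}\!\!\Bigl(\tfrac{d(y,P_{(Z_x,s_x)})}{s_x}\Bigr)^{\!\p}\dd\mu(y)
\ =\ \beta_{\p;k}^{P_{(Z_x,s_x)}}(Z_x,s_x)^{\p}
\ \le\ \frac{C\,\mathcal{M}_{\K^{\p};k_1}(Z_x,s_x)}{s_x^{\N}}\cdot s_x^{\N},
\end{align*}
so $\mu(B(x,r_x))\le C\,c_0^{-\p}\,\mathcal{M}_{\K^{\p};k_1}(Z_x,s_x)$. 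Applying Besicovitch's covering theorem to $\{B(x,r_x)\}_{x\in G}$ produces $N_0(\n)$ disjoint subfamilies whose union covers $G$; since the radii $r_x$ and $s_x$ are comparable, the enlarged balls $\{B(Z_{\alpha},k_1 s_{\alpha})\}$ have bounded overlap as well, hence so do the product domains $\mathcal{O}_{k_1}(Z_{\alpha},s_{\alpha})\subset(\R^{\n})^{\N+2}$. Summing and using this bounded overlap gives
\begin{align*}
\mu(G)\ \le\ \sum_{\alpha}\mu\bigl(B(x_{\alpha},r_{\alpha})\bigr)
\ \le\ C\sum_{\alpha}\mathcal{M}_{\K^{\p};k_1}(Z_{\alpha},s_{\alpha})
\ \le\ C\,\mathcal{M}_{\K^{\p}}(\mu),
\end{align*}
which is the claim.

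The main obstacle is the geometric step producing the uniform separation $d(x,P_{(Z_x,s_x)})\ge c_0 s_x$ with $x\in B(Z_x,ks_x)$ simultaneously. The constant $K$ was tuned so that this works for the first part of $G$, but the bookkeeping between the diameters $t_i,\diam R_i,\diam B_i$, the angle bound $\alpha$, the small parameter $\varepsilon$ (so that Lemma~\ref{nachlem2.6} applies), and the enlargement factor $k$ must all be consistent. A secondary difficulty is the second part of $G$: one must actually exhibit a pair $(Z_x,s_x)\in S$ realising the infimum in $d(z)=0$ at the specified scale $s_x\sim|x-z|$ with $Z_x$ so close to $z$ that $x$ still lies in $B(Z_x,ks_x)$; this is where Lemma~\ref{rem3.1} and the definition of $d$ must be used delicately. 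Everything else is an essentially standard Besicovitch-plus-Fubini packaging.
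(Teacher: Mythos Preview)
Your geometric setup is on the right track and matches the paper's: for each $x\in G$ one finds a good ball $(X,\tfrac{d(x)}{10})\in S$ with $d(\pi(x),\pi(X))\le\tfrac{d(x)}{10}$ and $\tfrac{d(x)}{2}\le d(x,X)<7d(x)$, and then shows that $x$ lies at distance $\gtrsim d(x)$ from the associated approximating plane. This is precisely the content of the paper's Claims~1 and~2.

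The gap is in the analytic step. You write
\[
\beta_{\p;k}^{P_{(Z_x,s_x)}}(Z_x,s_x)^{\p}\ \le\ C\,\frac{\mathcal{M}_{\K^{\p};k_1}(Z_x,s_x)}{s_x^{\N}}\cdot s_x^{\N},
\]
citing Theorem~\ref{lem2.5}. But Theorem~\ref{lem2.5} bounds the \emph{infimum} $\beta_{\p;k}(Z_x,s_x)=\inf_{P}\beta_{\p;k}^{P}(Z_x,s_x)$, not $\beta_{\p;k}^{P}$ for the specific plane $P=P_{(Z_x,s_x)}$. Since $\beta_{\p;k}^{P}\ge\beta_{\p;k}$, the inequality does not follow. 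Worse, the plane $P_{(Z_x,s_x)}$ actually satisfies $\beta_{1;k}^{P_{(Z_x,s_x)}}(Z_x,s_x)\le 2\varepsilon$ by definition of $S_{total}$, so this $\beta$-number is \emph{small}; the single bad point $x$ contributes negligibly to the integral, and you cannot extract a useful lower bound on curvature this way. The subsequent bounded-overlap claim for the sets $\mathcal{O}_{k_1}(Z_\alpha,s_\alpha)$ is also not justified: Besicovitch gives disjointness of the $B(x_\alpha,r_\alpha)$, not of the shifted and enlarged balls $B(Z_\alpha,k_1 s_\alpha)$.

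The paper avoids $\beta$-numbers and Theorem~\ref{lem2.5} altogether. Once the geometric separation $d\bigl(x,\aff(y_0,\dots,y_{\N})\bigr)\ge\tfrac{d(x)}{4}$ is established for every choice of $y_j\in B(x_j,\tfrac{d(x)}{10C_1})\cap\Upsilon$, it invokes the second defining property of a proper integrand (Definition~\ref{4.10.12.1}) directly to get the pointwise lower bound
\[
\K^{\p}(y_0,\dots,y_{\N},x)\ \ge\ \tilde C^{-1}\Bigl(\tfrac{10}{d(x)}\Bigr)^{\N(\N+1)}.
\]
Integrating in the $y_j$-variables over the balls, each of $\mu$-measure $\ge t^{\N}/C_2$ with $t=\tfrac{d(x)}{10}$, yields a \emph{uniform} constant lower bound independent of $x$:
\[
\int\cdots\int \K^{\p}(y_0,\dots,y_{\N},x)\,\dd\mu(y_0)\cdots\dd\mu(y_{\N})\ \ge\ \tilde C^{-1}C_2^{-(\N+1)}.
\]
Then a single integration in $x$ over $G$ gives $\mu(G)\le\tilde C\,C_2^{\N+1}\,\mathcal{M}_{\K^{\p}}(\mu)$, with no covering argument needed. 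You should replace your $\beta$-number/Besicovitch step by this direct use of the proper-integrand inequality.
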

\begin{proof}
	Let $0< \alpha \le \frac{1}{280}$ and
	$\tilde \varepsilon := \min\{\bar \varepsilon,\frac{\alpha}{\bar C}\} $ where 
	$\bar \varepsilon$ is given by Lemma \ref{lem3.9} and $\bar C = \bar C(\n,\N,C_{0})$ is a fixed 
	constant defined 
	in this proof on page \pageref{12.06.2014.1}. Furthermore let $ \eta < 2 \tilde \varepsilon$, $k \ge 4$ and
	$\eta \le 2 \varepsilon < 2 \tilde \varepsilon$.

	Let $x \in G$. 
	If $x \in G\setminus \pi^{-1}(\pi(\mathcal{Z})) \subset F \subset B(0,5)$,
	with Lemma \ref{Riueberdeckung} (ii), there exists some $i \in I_{12}$ with $\pi(x) \in R_{i} \subset 2R_{i}$.
	Let $X_{i}$ be the centre of $B_{i}$ (cf. Lemma \ref{vor3.12}).
	We set
	\[X(x):= \begin{cases}
			X_i  & \text{if } x \in G \setminus \pi^{-1}(\pi(\mathcal{Z}))\\
			\pi(x)+A(\pi(x)) & \text{if } x \in G \cap \pi^{-1}(\pi(\mathcal{Z})).
		\end{cases}\]
	\claim{1} For all $x \in G$ and $X=X(x)$ defined as above, we have
	\begin{align}
		d(x,X) &< 7 d(x), \label{23.10.12.1}
		&d(\pi(x),\pi(X)) \le \textstyle{\frac{d(x)}{10}}, 
		& &\textstyle{\frac{d(x)}{2}} \le d(X,x), 
		& &\left(X,\textstyle{\frac{d(x)}{10}}\right) \in S. 
	\end{align}
	\proofofclaim{1}

	1. Case: $x \in G \setminus \pi^{-1}(\pi(\mathcal{Z}))$.\\
	Due to the definition of $G$ and $\pi(x) \in 2R_{i} \subset 3R_{i}$, we have $ x \notin KB_{i}$.
	By adding some $q \in R_{i}$ with triangle inequality and using Lemma \ref{vor3.12}  
	we get $d(\pi(x),\pi(X_{i})) \le 104 \diam B_{i} \label{31.1.10;41}$.
	With Lemma \ref{vor3.12}, we know $\bigl( X_{i}, \frac{\diam B_{i}}{2} \bigr) \in S$ and hence we get
	$d(X_{i}) < \diam B_{i}$.
	Using $x \notin KB_{i}$ and Lemma \ref{lem3.9}, we get
	$K \cdot \frac{\diam B_i}{2} < d(x,X_i) < 6d(x)+ 214 \diam B_i $
	which yields by definition of $K$ (cf.~the beginning of this subsection) $104\diam B_{i} < \frac{d(x)}{10}$.
	From the previous two estimates, we get
	$d(x,X_i) < 7 d(x)$, i.e., the first inequality holds in this case.
	Furthermore, we have the second one since $d(\pi(x),\pi(X_{i})) \le 104 \diam B_{i} < \frac{d(x)}{10}$.
	We have $\bigl( X_{i}, \frac{\diam B_{i}}{2} \bigr) \in S$, so we get
	$d(x) \le d(X_{i},x) + \frac{\diam B_{i}}{2} < d(X_{i},x) + \frac{d(x)}{2}$,
	and hence the third inequality holds in this case.
	Due to Lemma \ref{mengebeschraenkt}, we have 
	$\frac{\diam B_{i}}{2} < \frac{d(x)}{10} < \frac{60}{10}<50$ so that 
	with Lemma \ref{rem3.1} (ii) we deduce $\bigl(X,\frac{d(x)}{10}\bigr) \in S$.

	2. Case: $x \in G \cap \pi^{-1}(\pi(\mathcal{Z}))$.\\
	We have $\pi(x) \in \pi(\mathcal{Z})$ and hence $X=\pi(x)+A(\pi(x)) \in \mathcal{Z}$ 
	(cf. Definition \ref{04.11.2013.1}).
	By definition of  $\mathcal{Z}$ and Lemma \ref{rem3.1} (i), 
	we obtain $(X,\sigma) \in S$ for all $\sigma \in (0,50)$ and hence
	$\frac{d(x)}{2} \le d(X,x)+\sigma$, which establishes the third estimate.
	Moreover, we have
	$ d(\pi(X),\pi(x)) = d(\pi(x),\pi(x))=0$.
	Using Lemma \ref{rem3.8}, we obtain $d(X)=0$ and hence
	we get with Lemma \ref{lem3.9} $ d(x,X) \le 6d(x)$.
	Furthermore, we have with Lemma \ref{mengebeschraenkt} that 
	$\frac{d(x)}{10} \le 6 < 50$
	so that by definition of $\mathcal{Z}$, we get
	$\bigl(X,\frac{d(x)}{10}\bigr) \in S$.
	\proofofclaimend{1} \indent
	Let $P_{x}:=P_{\bigl(X,\frac{d(x)}{10}\bigr)}$ be the plane associated to $B(X,\frac{d(x)}{10})$ 
	(cf. Definition \ref{12.07.13.1}). 
	We define the set
	\begin{align}\label{31.1.10;46}
		\Upsilon := \left\{ u \in B\left(X,\textstyle{\frac{d(x)}{10}} \right) \Big| d(u,P_{x}) 
		\le \textstyle{\frac{8}{\delta}} \textstyle{\frac{d(x)}{10}} \varepsilon  \right\}.
	\end{align}
	Due to Definition \ref{12.07.13.1} we have 
	$\beta_{1;k}^{P_{x}}\bigl(X,\frac{d(x)}{10}\bigr) \le 2\varepsilon$
	and hence we get using Chebyshev's inequality
	\begin{align*}
		\mu\left(B\left(X,\textstyle{\frac{d(x)}{10}} \right) \setminus \Upsilon \right) 
		\le \textstyle{\frac{\delta}{8 \varepsilon}} \left(\textstyle{\frac{d(x)}{10}}\right)^{\N} 
			 \beta_{1;k}^{P_{x}}\left(X, \textstyle{\frac{d(x)}{10}}\right) 
		\le \textstyle{\frac{\delta}{4}} \left(\textstyle{\frac{d(x)}{10}}\right)^{\N}
	\end{align*}	
	Since $\Upsilon \subset B\left(X,\frac{d(x)}{10}\right)$ and 
	$\delta \big(B\big(X,\frac{d(x)}{10}\big)\big) \ge \frac{1}{2}\delta$ (cf.~Definition \ref{12.07.13.1} of 
	$S_{total}$), we obtain
	\begin{align*}
		\mu\left(B\left(X,\textstyle{\frac{d(x)}{10}}\right) \cap \Upsilon\right)
		 \ge \mu\left(B\left(X,\textstyle{\frac{d(x)}{10}} \right)\right) 
			- \mu\left(B\left(X,\textstyle{\frac{d(x)}{10}} \right) \setminus \Upsilon \right) 
		 \ge \textstyle{\frac{\delta}{4}} \left(\textstyle{\frac{d(x)}{10}}\right)^{\N}.
	\end{align*}
	With Corollary \ref{04.09.12.1} ($\lambda=\frac{\delta}{4}$, $t=\frac{d(x)}{10}$), there exist constants
	$C_{1}=C_{1}(\n,\N,C_{0})$, $C_{2}=C_{2}(\n,\N,C_{0})$ and
	an $\left(\N,10\N\frac{d(x)}{10 C_1}\right)$-simplex 
	$T=\Delta(x_0,\dots,x_{\N}) \in F \cap B\left(X,\frac{d(x)}{10}\right) \cap \Upsilon$
	so that for all $j \in \{0,\dots,\N \}$
	\begin{align} \label{21.12.09;5}
		\mu\left( B\left(x_{j},\textstyle{\frac{d(x)}{10C_1}}\right) \cap B\left(X,\textstyle{\frac{d(x)}{10}}\right) \cap \Upsilon\right) 
		& \ge \left(\textstyle{\frac{d(x)}{10}}\right)^{\N} \textstyle{\frac{1}{C_{2}}}.
	\end{align}
	Let $y_j \in B\left(x_{j},\frac{d(x)}{10C_1}\right)\cap \Upsilon$ for all $j \in \{0,\dots,\N \}$.
	By applying Lemma \ref{17.11.11.2} $(\N+1)$ times, we find that 
	$\Delta(y_0,\dots,y_{\N})$ is an $\Bigl(\N,8\N\frac{d(x)}{10C_{1}}\Bigr)$-simplex.

	\claim{2}
	For all $x \in G$, we have $d(x,\aff(y_{0},\dots,y_{\N})) \ge \frac{d(x)}{4}$.

	\proofofclaim{2}
		Let $P_{y}:=\aff(y_{0},\dots,y_{\N})$ be the plane through $y_0, \dots,y_\N$.
		Applying Lemma \ref{21.11.11.2} ($C=\frac{C_1}{8\N}$, $\hat C=1$, $t=\frac{d(x)}{10}$,
		$\sigma = \frac{8}{\delta}\varepsilon$, $P_{1}=P_{y}$, $P_{2}=P_{x}$,
		$S=\Delta(y_{0}, \dots,y_{\N})$, $x=X$, $m=\N$) yields $\varangle(P_{y},P_{x}) \le \alpha$,
		where we use that $\varepsilon \le \tilde \varepsilon \le \frac{\alpha}{\bar C}$ and $\bar C$ is
		given by Lemma \ref{21.11.11.2} \label{12.06.2014.1}.
		So, with Definition \ref{12.07.13.1}, we obtain $\varangle(P_{y},P_{0}) \le 2\alpha$.
		Let $\hat P_y \in \mathcal{P}(\n,\N)$ be the $\N$-dimensional plane parallel to $P_y$ 
		with $X \in \hat P_y$, and 
		$\hat P_0 \in \mathcal{P}(\n,\N)$ be the plane parallel to $P_0$ with $X \in \hat P_0$.
		We have $\alpha \le \frac{1}{280}$ and hence
		\[ d(\pi_{\hat P_{y}}(x) , \pi_{\hat P_{0}}(x)) 
			 = |\pi_{\hat P_{y}-X}(x-X) - \pi_{\hat P_{0}-X}(x-X)|
			 \le d(x,X) \ \varangle(\hat P_{y},\hat P_{0})
			 \stackrel{\eqref{23.10.12.1}}{<} \frac{d(x)}{20}.\]
		Furthermore, with \eqref{23.10.12.1}, we get
		$d(\pi_{\hat P_{0}}(x),X) = d(\pi(x),\pi(X)) \le \frac{d(x)}{10}$.
		Using triangle inequality, the previous two estimates imply
			$d(\pi_{\hat P_{y}}(x),X) 
			 \le \frac{d(x)}{20} + \frac{d(x)}{10}$.
		Since $y_{0} \in \Upsilon \subset B(X,\frac{d(x)}{10})$ we have 
		$d(P_{y},\hat P_{y})=d(X,P_{y})\le d(X,y_{0})\le \frac{d(x)}{10}$ and hence
		\begin{align*}
			\frac{d(x)}{2} 
			&\stackrel{\eqref{23.10.12.1}}{\le} d(x,P_{y})+ d(P_{y},\hat P_{y})+ d(\pi_{\hat P_{y}}(x),X)
			\le d(x,P_{y})+ \frac{d(x)}{4}
		\end{align*}
		and gain $d(x,P_{y}) \ge \frac{d(x)}{4}$.
	\proofofclaimend{2} \indent
	With \eqref{23.10.12.1} and
	$ d(y_{j},X) \le d(y_{j},x_{j}) + d(x_{j},X) \le \frac{d(x)}{10 C_{1}} + \frac{d(x)}{10}$,
	we obtain $y_{0}, \dots y_{\N},x \in B(X,7d(x))$ which is a subset of $B(X, \frac{C_{1}}{8\N}  \frac{d(x)}{10})$,
	where we used the explicit characterisation of $C_{1}$ given in Lemma \ref{lem2.3}.
	Due to the second property of a $\mu$-\proper{} integrand (see Definition \ref{muproper}),
	there exists some $\tilde C = \tilde C(\n,\N,\K,\p,C_{0}) \ge 1$ so that we get with Claim 2
	\begin{align*}
		\mathcal{K}^{\p}(y_{0},\dots,y_{\N},x) 
		&\ge \frac{1}{\left( \frac{d(x)}{10} \right)^{\N(\N+1)} \tilde C} 
		\left(\frac{d(x,\aff(y_{0},\dots,y_{\N}))}{\frac{d(x)}{10}}\right)^{\p}
		> \tilde C^{-1} \left( \frac{10}{d(x)} \right)^{\N(\N+1)}.
	\end{align*}
	This estimate holds for all $y_{i} \in B\big( x_{i},\frac{d(x)}{10C_{1}}\big) \cap \Upsilon$.
	By restricting the integration to the balls $B\big( x_{i},\frac{d(x)}{10C_{1}}\big)$ and using
	the previous estimate as well as estimate \eqref{21.12.09;5}, we get 
	\begin{align*}
		\int \dots \int \mathcal{K}^{\p}(y_0,\dots,y_{\N},x) \dd \mu(y_0) \dots \dd\mu(y_{\N})
		& \ge \tilde C^{-1} C_{2}^{-(\N+1)}. 
	\end{align*}
	We have proven the previous inequality for all $x \in G$, so
	finally we deduce with (C) from page \pageref{Grundeigenschaften} that there exists some constant
	$C=C(\n,\N,\K,\p,C_{0})$ so that
	\[\mu(G) \le  \tilde C C_{2}^{(\N+1)}
			\int_{G} \int \dots \int \mathcal{K}^{\p}(y_0,\dots,y_{\N},x) \dd \mu(y_0) \dots \dd\mu(y_{\N})\dd  \mu(x) 
		 \stackrel{\text{(C)}}{\le} C \eta.\]
\end{proof}

\begin{lem} \label{lem3.15}
	Let $\alpha, \varepsilon >0$. If $\eta \le 2\varepsilon$, 
	we have 
	$(20K)^{-1}d(x) \le D(\pi(x)) \le d(x)$ for all $x \in F \setminus G$,
	where $K$ is the constant defined on page \pageref{WahlvonK} at the beginning of this subsection.
\end{lem}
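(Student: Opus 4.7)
The plan is to dispatch the two estimates independently. The upper bound $D(\pi(x))\le d(x)$ will follow directly from the definitions: orthogonal projection is $1$-Lipschitz, so for every $(X,t)\in S$ the quantity $d(\pi(X),\pi(x))+t$ is majorised by $d(X,x)+t$; taking the infimum over $S$ and invoking Lemma \ref{remnachdefD} finishes this half at once.

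For the lower bound I will split according to which of the two clauses in the definition of $G$ forces $x$ into $F\setminus G$. If $x\in\mathcal{Z}$, Lemma \ref{rem3.8} forces $d(x)=0$ and $\pi(x)\in\pi(\mathcal{Z})$, hence $D(\pi(x))=0$ as well, and the inequality becomes $0\le 0$. Otherwise $x\in F\setminus(\mathcal{Z}\cup G)$, and since both clauses defining $G$ must fail, the first of them supplies an index $i\in I_{12}$ with $\pi(x)\in 3R_i$ and $x\in KB_i$ (the failure of the second clause is not needed here, but would yield $\pi(x)\notin\pi(\mathcal{Z})$).

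From here the estimate is a short chain. Write $B_i=B(X_i,t_i)$ with $(X_i,t_i)\in S$ by Lemma \ref{vor3.12}; the inclusion $x\in KB_i$ gives $d(X_i,x)\le K t_i$, so
\[ d(x)\;\le\; d(X_i,x)+t_i \;\le\; (K+1)\,t_i. \]
Lemma \ref{vor3.12} also yields $t_i\le 100\,\diam R_i$, and from $\pi(x)\in 3R_i\subset 10R_i$ together with Lemma \ref{rem3.10}(i) one obtains $D(\pi(x))\ge 10\,\diam R_i$. Chaining these,
\[ d(x) \;\le\; 100(K+1)\,\diam R_i \;\le\; 10(K+1)\,D(\pi(x)) \;\le\; 20K\,D(\pi(x)), \]
the last step using only $K\ge 1$.

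I do not anticipate any real obstacle: the $1$-Lipschitz property of $\pi$, the two-sided comparison between $t_i$ and $\diam R_i$, and the control of $D$ on dilations of the Whitney cubes are all already available from the construction carried out earlier in Section \ref{construction}. The lemma is essentially a packaging step that translates the control of $d$ on $F\setminus G$ (provided by Lemma \ref{lem3.14}) into a corresponding control of $D\circ\pi$, which is what the subsequent sections estimating $\mu(F_1\cup F_2\cup F_3)$ will consume.
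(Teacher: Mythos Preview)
Your proof is correct and follows essentially the same route as the paper's own argument: the upper bound is immediate from the definition of $D$ (the paper simply observes $D(\pi(x))=\inf_{y\in\pi^{-1}(\pi(x))}d(y)\le d(x)$, while you reach the same conclusion via Lemma~\ref{remnachdefD} and $1$-Lipschitzness of $\pi$), and for the lower bound both proofs treat $x\in\mathcal{Z}$ trivially and otherwise extract the index $i$ with $\pi(x)\in 3R_i$, $x\in KB_i$ from $x\notin G$, then chain $d(x)\le C\,t_i\le C'\diam R_i\le 20K\,D(\pi(x))$ through Lemma~\ref{vor3.12} and Lemma~\ref{rem3.10}(i). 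The only cosmetic difference is that the paper bounds $d(x)\le K\diam B_i$ directly whereas you pass through $(K+1)t_i$; both land on the same constant $20K$.
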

\begin{proof}
	Let $x \in F \setminus G$.
	We have $D(\pi(x)) = \inf_{y \in \pi^{-1}(\pi(x))}d(y) \le d(x).$
	If $x \in \mathcal{Z}$, Lemma \ref{rem3.8} implies $d(x)=0$, so the statement is trivial.
	Now we assume $x \notin \mathcal{Z}$.
	Since $x \notin G \cup \mathcal{Z}$, by definition of $G$, 
	there exists some $i \in I_{12}$ with $\pi(x) \in 3R_{i}$ and 
	$x \in KB_{i}$. 
	We have $B_{i} = B(X_{i},t_{i})$ where $(X_{i},t_{i}) \in S$ (see Lemma \ref{vor3.12}) 
	and $K > 1$ (see page \pageref{WahlvonK}) so we obtain
	$d(x) \le d(X_{i},x)+t_{i} \le < K \diam B_{i}$.
	Now, with Lemma \ref{rem3.10} (i) and \ref{vor3.12}, we deduce
	$ D(\pi(x)) \ge \frac{1}{20K}d(x)$.
\end{proof}

\begin{lem} \label{lem3.16}
	Let $0<\alpha \le \frac{1}{4}$. 
	There exists some 
	$\bar \varepsilon = \bar \varepsilon (\n,\N,C_{0})$ and some $\tilde k \ge 4$ so that, if
	$\eta < 2 \bar \varepsilon$ and $k \ge \tilde k$,
	for all $\varepsilon \in [\frac{\eta}{2}, \bar \varepsilon)$ we have that the following is true.
	There exists some constant $C=C(\N)$ so that, for all $x \in F$ with $t \ge \frac{d(x)}{10}$, we have
	\[ \int_{B(x,t)\setminus G} d\big(u,\pi(u)+A(\pi(u))\big) \dd \mu(u) \le C \varepsilon t^{\N+1}.\]
\end{lem}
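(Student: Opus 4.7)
The plan is to estimate the integrand pointwise via the Whitney-type covering and then integrate: first, note that $\mathcal{Z} \cap B(x,t) \subset F \setminus G$ contributes nothing since, by Definition \ref{04.11.2013.1}, every $u \in \mathcal{Z}$ satisfies $u = \pi(u) + A(\pi(u))$. For $u \in (F \setminus G) \setminus \mathcal{Z}$, the definition of $G$ yields some $i = i(u) \in I_{12}$ with $\pi(u) \in 3R_i$ and $u \in KB_i$, and moreover $\pi(u) \notin \pi(\mathcal{Z})$. Using $P_i = G(A_i)$ (Lemma \ref{AiLipschitz}), inserting $\pi_{P_i}(u)$, and exploiting $\Lip_{A_i} \le 2\alpha \le \tfrac{1}{2}$, I would obtain
\[ d(u, \pi(u) + A(\pi(u))) \le 3\, d(u, P_i) + |A_i(\pi(u)) - A(\pi(u))|, \]
whose second summand is bounded by $\bar C\varepsilon \diam R_i$ via Lemma \ref{lem3.12}(iv) (after the workaround below).

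Integrating, $\beta_{1;k}^{P_i}(B_i) \le 2\varepsilon$ (from $(X_i,t_i) \in S$) together with $KB_i \subset kB_i$ --- which forces the threshold $\tilde k \ge K$ --- yields $\int_{KB_i} d(u, P_i)\, \dd\mu(u) \le C\varepsilon (\diam R_i)^{\N+1}$, while hypothesis (B) and Lemma \ref{vor3.12} give $\mu(KB_i) \le C(\diam R_i)^{\N}$; each relevant index thus contributes $C\varepsilon (\diam R_i)^{\N+1}$. The decisive sum-estimate is $\diam R_i \le C t$: for $u \in B(x,t) \cap KB_i$ one has $d(u) \le K \diam B_i + \diam B_i \le C \diam R_i$ from Lemma \ref{vor3.12} and $(X_i,t_i) \in S$, whereas Lemma \ref{rem3.10}(i) applied at $\pi(u) \in 3R_i \subset 10R_i$ gives $10 \diam R_i \le D(\pi(u)) \le d(u)$; combining with $d(u) \le d(x) + t \le 11t$ (using $t \ge d(x)/10$) proves the claim. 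Since the relevant $R_i$ are interior-disjoint (Lemma \ref{Riueberdeckung}(ii)) and fit in $B(\pi(x), Ct) \cap P_0$, one has $\sum_i (\diam R_i)^{\N} \le C t^{\N}$, whence $\sum_i (\diam R_i)^{\N+1} \le C t^{\N+1}$ and the lemma follows.

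The main obstacle is that Lemma \ref{lem3.12}(iv) controls $|A - A_j|$ only on $2R_j$, whereas our chosen $i$ only satisfies $\pi(u) \in 3R_i$. The workaround is to pick $j = j(u)$ with $\pi(u) \in R_j$ (which exists by Lemma \ref{Riueberdeckung}(ii) since $\pi(u) \notin \pi(\mathcal{Z})$), apply Lemma \ref{lem3.12}(iv) to $j$ to bound $|A(\pi(u)) - A_j(\pi(u))| \le \bar C \varepsilon \diam R_j$, and transfer to $A_i$ via Lemma \ref{lem3.12}(ii); since $\pi(u) \in R_j \cap 3R_i$ we have $10R_i \cap 10R_j \neq \emptyset$, so Lemma \ref{rem3.10}(iii) ensures $\diam R_j$ and $\diam R_i$ agree up to a bounded factor and the whole estimate absorbs into the single constant $\bar C$. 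A minor bookkeeping point is that summing $\int_{KB_i \cap \pi^{-1}(3R_i) \cap B(x,t)} \cdots$ over $i$ overcounts, but this only changes the constant because the $R_i$ (and hence the $B_i$ by Lemma \ref{vor3.12}) have bounded overlap by Lemma \ref{rem3.10}(iv).
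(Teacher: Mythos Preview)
Your proof is correct and follows the same strategy as the paper's: cover $(B(x,t)\setminus G)\setminus\mathcal{Z}$ by the Whitney pieces $(3R_i\times P_0^\perp)\cap KB_i$, control the integrand via distances to the planes $P_i$ together with $\beta_{1;k}^{P_i}(B_i)\le 2\varepsilon$, and finish by summing with $\diam R_i\le Ct$ and $\sum_i(\diam R_i)^{\N}\le Ct^{\N}$. The only difference is cosmetic: the paper expands $A(\pi(u))=\sum_j\phi_j(\pi(u))A_j(\pi(u))$ via the partition of unity and bounds $|\pi^\perp(u)-A_j(\pi(u))|\le 2d(u,P_j)$ directly (its step~V), which absorbs your $|A_i-A|$ workaround into a double sum over neighboring $i,j$ (and sets $\tilde k$ via $KB_i\subset\tilde CB_j$), whereas you fix one index per point and invoke Lemma~\ref{lem3.12}(ii),(iv) separately with $\tilde k\ge K$; both routes yield the same $C\varepsilon\sum_i(\diam R_i)^{\N+1}$ bound.
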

\begin{proof}
	Let $0<\alpha \le \frac{1}{4}$. We choose some $\varepsilon$ with 
	$\eta \le 2 \varepsilon < 2 \bar \varepsilon$ and some $k \ge \tilde k := \max\{\bar k, \tilde C\}$, 
	where $\bar \varepsilon$ and $\bar k$ are given by
	Lemma \ref{lem3.12} and $\tilde C$ is a fixed constant introduced in step VI of this proof.
	Let $x \in F$ and $t \ge \frac{d(x)}{10}$. We define
	\[I(x,t):= \left\{ i \in I_{12} | (3R_{i} \times P_{0}^{\perp}) \cap B(x,t) \cap (F \setminus G) \neq \emptyset  \right\}\]
	where $3R_{i} \times P_{0}^{\perp}:=\{ x \in \R^{\n} | \pi(x) \in 3R_{i}\}$.
	At first, we prove some intermediate results:\\
	I.\,    Due to the definition of $G$ we have
		$(B(x,t)\cap F)\setminus (G \cup \mathcal{Z}) \subset 
		\bigcup_{i \in I(x,t)}(3R_{i} \times P_{0}^{\perp}) \cap KB_{i}$.\\
	II.\,   Let $u \in 3R_{i} \times P_{0}^{\perp}$. Using Lemma \ref{lem3.11} (iv) implies that 
		$\sum_{j\in I_{12}} \phi_{j}(\pi(u))$ is a finite sum.\\
	III.\,  Let $i \in I(x,t)$ and $j \in I_{12}$.
		We define $\phi_{i,j}$ to be $0$ if $3R_{i}$ and $3R_{j}$ are disjoint and $1$ if they are not
		disjoint. We have $\phi_{j}(\pi(u)) \le 1 = \phi_{i,j}$ for all 
		$u \in (3R_{i} \times P_{0}^{\perp}) \cap KB_{i}$, since
		if $\phi_{j}(\pi(u)) \neq 0$ the definition of $\phi_{j}$ (see page \pageref{Defphii}) 
		gives us $\pi(u) \in 3R_{j}$ and, because $\pi(u) \in 3R_{i}$, we deduce
		$ 3R_{i}\cap 3R_{j} \neq \emptyset$. \\
	IV.\,   If $\phi_{i,j} \neq 0$, we can apply Lemma \ref{rem3.10} (iii) and Lemma \ref{lem3.12} (i).
		Hence, using Lemma \ref{vor3.12}, the size of $B_{i}$ as well as the distance of 
		$B_{i}$ to $B_{j}$ are comparable to the size of $B_{j}$.
		Consequently, there exists some constant $\tilde C$ so that 
		$KB_{i} \subset \tilde C B_{j} \subset k B_{j}$.\\
	V.\,	If $u \in kB_{j}$, we have
		$ |\pi^{\perp}(u)- A_{j}(\pi(u))| < 2d(u,P_{j})$.
		We recall that $P_{j}$ is the graph of the affine map $A_{j}$ 
		(cf. Definition \ref{3.12.2013.10} and Lemma \ref{AiLipschitz}).
		\begin{proof}
			We set $\hat P_{0}:= P_{0} + A_{j}(\pi(u))$ and 
			$v:= \pi(u) + A_{j}(\pi(u)) = \pi_{\hat P_{0}}(u)$.
			Remark \ref{24.04.2013.1} implies
			\[|\pi_{P_j}(u) - v| = |\pi_{P_j-v}(u-v)- \pi_{\hat P_{0}-v}(u-v)|
				 \le |u-v| \ \varangle(P_{j},P_{0}). \]
			Using this and $\varangle(P_{j},P_{0}) \le \alpha < \frac{1}{2}$ 
			(cf. Definition \ref{3.12.2013.10})
			we obtain
			$|u-v| < d(u,P_{j}) + \frac{1}{2}|u-v|$
			and hence
			$|\pi^{\perp}(u) - A_{j}(\pi(u))|=|u-v| < 2 d(u,P_{j})$. 
		\end{proof}
	If $ u \in \mathcal{Z}$, the definition of $A$ (see page \pageref{04.11.2013.1})
	yields $d(u,\pi(u)+ A(\pi(u)))=0$.
	Using Lemma \ref{15.10.2013.1} and Definition \ref{04.11.2013.1}, we get
	\[\int_{B(x,t)\setminus G} d(u,\pi(u)+ A(\pi(u))) \dd \mu(u) 
		\le \int_{B(x,t)\setminus (G \cup \mathcal{Z})} \sum_{j\in I_{12}}\phi_{j}(\pi(u)) 
		\left|\pi^{\perp}(u) - A_{j}(\pi(u))\right| \dd \mu(u). \]
	Using I to V we obtain
	\[\int_{B(x,t)\setminus G} d(u,\pi(u)+ A(\pi(u))) \dd \mu(u) \le
		2 \sum_{i \in I(x,t)}\sum_{j\in I_{12}}  \phi_{i,j}  t_{j}^{\N+1} 
			\frac{1}{t_{j}^{\N}} \int_{kB_{j}}
		 	\frac{d\left(u,P_{j}\right)}{t_{j}} \dd \mu(u).\]
	Now we get the statement by using the following five results.\\
	VI.\,   Lemma \ref{lem3.12} and the definition of $S_{total}$ imply
		$\beta_{1;k}^{P_{j}}(B_{j}) \le 2\varepsilon$.\\
	VII.\,  Let $i \in I(x,t)$ and $j \in I_{12}$. If $\phi_{i,j} \neq 0$, we conclude that 
		$ 3R_{i}\cap3R_{j} \neq \emptyset$. 
		Hence, with Lemma \ref{lem3.11} (iii) and Lemma \ref{vor3.12}, we deduce
		$2t_{j}=\diam B_{j} \le 1000 \diam R_{i}$.\\
	VIII.\, For $i \in I(x,t)$, we have with Lemma \ref{lem3.11} (iv) that 
		$ \sum_{j\in I_{12}} \phi_{i,j} \le (180)^{\N}$.\\
	IX.\,   For $i \in I(x,t)$, there exists some $y \in B(x,t) \cap (F\setminus G)$ with 
		$ \pi(y) \in 3R_{i}$. We obtain with Lemma
		\ref{rem3.10}, Lemma \ref{lem3.15} and our assumption $t \ge \frac{d(x)}{10}$ that
		$10\diam R_{i} \le d(x)+d(x,y) \le 11t$.\\
	X.\,    Let $ i \in I(x,t)$. With XI we obtain 
		$ \diam R_{i}< 2t$ and, because $(3R_{i} \times P_{0}^{\perp}) \cap B(x,t) \neq \emptyset$,
		we get $ R_{i} \subset B(\pi(x),t+\diam 3R_{i})\cap P_{0}\subset B(\pi(x),7t)\cap P_{0}$.
		Moreover, with Lemma \ref{inneredisjunkt} (ii), the primitive cells $R_{i}$ have disjoint interior
		and hence we get with Lemma \ref{24.10.12.1}
		(we recall that $\vol$ denotes the volume of the $n$-dimensional unit sphere)
		\begin{align*}\sum_{i \in I(x,t)} (
			\diam R_{i})^{\N} 
			& \le \sqrt{\N}^{\N} \cH^{\N}(B(\pi(x),7t)\cap P_{0})
			 = \sqrt{\N}^{\N} \vol (7t)^{\N}.
		\end{align*}
\end{proof}

\begin{dfn} \label{DefinitionvonFtilde}
	We define
	$\tilde{F} := \big\{ x \in F \setminus G \ | \ d(x,\pi(x) + A(\pi(x))) \le \varepsilon^{\frac{1}{2}} d(x) \big\}$.
\end{dfn}

\begin{thm} \label{thm3.18}
	Let $0<\alpha \le \frac{1}{4}$. 
	There exists some 
	$\hat \varepsilon = \hat \varepsilon (\n,\N,C_{0})\le \frac{1}{4}$ and some $\tilde k \ge 4$ so that, if
	$\eta < 2 \hat \varepsilon$ and $k \ge \tilde k$,
	there exists some constant $C_{5} = C_{5}(\n,\N,\K,\p,C_{0})$ so that,
	for all $\varepsilon \in [\frac{\eta}{2},\hat \varepsilon)$, we have
	$ \mu(F \setminus \tilde{F}) \le C_{5} \varepsilon^{\frac{1}{2}}$. 
\end{thm}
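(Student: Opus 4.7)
I would split $F \setminus \tilde{F}$ into the exceptional set $G$ of Lemma~\ref{lem3.14} and the pointwise bad set
\[ B^{*} := \{ x \in F \setminus G : d(x,\pi(x)+A(\pi(x))) > \varepsilon^{1/2} d(x) \}. \]
Lemma~\ref{lem3.14} immediately gives $\mu(G) \le C \eta \le 2 C \varepsilon \le 2 C \varepsilon^{1/2}$ (since the smallness conditions place $\varepsilon$ well below $1$), so the task reduces to bounding $\mu(B^{*})$. Observe that $B^{*} \cap \mathcal{Z} = \emptyset$: for $x \in \mathcal{Z}$ the definition of $A$ forces $\pi(x) + A(\pi(x)) = x$, and simultaneously $d(x) = 0$ by Lemma~\ref{7.2.10;1}, so the strict inequality defining $B^{*}$ cannot hold.

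Since $B^{*} \subset F \setminus G \setminus \mathcal{Z}$, the defining conditions of $G$ supply for each $x \in B^{*}$ an index $i = i(x) \in I_{12}$ with $\pi(x) \in 3 R_{i}$ and $x \in K B_{i}$, whence
\[ B^{*} \subset \bigcup_{i \in I_{12}} \bigl( K B_{i} \cap \pi^{-1}(3 R_{i}) \bigr). \]
For $x$ in the $i$-th piece I claim $d(x)$ and $\diam R_{i}$ are comparable with constants depending only on $\N$ and $K$. Indeed, Lemma~\ref{lem3.15} gives $(20K)^{-1} d(x) \le D(\pi(x)) \le d(x)$; Lemma~\ref{rem3.10}(i) applied at $\pi(x) \in R_{j}$ (the cube containing $\pi(x)$) identifies $D(\pi(x))$ up to constants with $\diam R_{j}$; and since $\pi(x) \in 10R_{i} \cap 10R_{j}$, Lemma~\ref{rem3.10}(iii) forces $\diam R_{j}$ and $\diam R_{i}$ to be comparable. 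Thus $d(x) \ge c\, \diam R_{i}$ on the relevant set.

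Fix $i \in I_{12}$ and apply Chebyshev's inequality on $K B_{i} \cap \pi^{-1}(3 R_{i})$: since $B^{*} \cap G = \emptyset$ and $d(x,\pi(x)+A(\pi(x))) > \varepsilon^{1/2} d(x) \ge c \varepsilon^{1/2} \diam R_{i}$ on $B^{*} \cap K B_{i} \cap \pi^{-1}(3 R_{i})$,
\[ \mu\bigl( B^{*} \cap K B_{i} \cap \pi^{-1}(3 R_{i}) \bigr)\, c \varepsilon^{1/2} \diam R_{i} \le \int_{K B_{i} \setminus G} d(u,\pi(u)+A(\pi(u))) \dd \mu(u). \]
Lemma~\ref{lem3.16} is then applicable at the centre $X_{i}$ of $B_{i}$ with scale $t = K t_{i}$: from $(X_{i},t_{i}) \in S$ combined with Lemma~\ref{7.2.10;1} I have $t_{i} \ge h(X_{i}) \ge d(X_{i})$, so $K t_{i} \ge d(X_{i})/10$. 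Together with $t_{i} \le 200 \diam R_{i}$ from Lemma~\ref{vor3.12} this yields
\[ \int_{K B_{i} \setminus G} d(u,\pi(u)+A(\pi(u))) \dd \mu(u) \le C \varepsilon (K t_{i})^{\N+1} \le C \varepsilon (\diam R_{i})^{\N+1}, \]
and dividing by $c \varepsilon^{1/2} \diam R_{i}$ gives $\mu(B^{*} \cap K B_{i} \cap \pi^{-1}(3 R_{i})) \le C \varepsilon^{1/2} (\diam R_{i})^{\N}$.

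To conclude I sum over $i \in I_{12}$ and use $\sum_{i \in I_{12}} (\diam R_{i})^{\N} \le C(\N)$ (estimate \eqref{2.1.10;101} of Lemma~\ref{06.12.2013.1}), yielding $\mu(B^{*}) \le C \varepsilon^{1/2}$. Combined with the estimate for $\mu(G)$ this proves the theorem with $C_{5} = C_{5}(\n, \N, \K, \p, C_{0})$. The main bookkeeping obstacle is the chain of comparisons $d(x) \sim D(\pi(x)) \sim \diam R_{j} \sim \diam R_{i}$ in the second paragraph, which ties together Lemma~\ref{lem3.15} with parts (i) and (iii) of Lemma~\ref{rem3.10}; everything else is a clean application of Chebyshev and the $L^{1}$-type bound of Lemma~\ref{lem3.16}, with the final summability coming essentially from the disjointness of the Whitney-type cubes $R_{i}$.
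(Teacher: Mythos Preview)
Your proof is correct and follows a genuinely different route from the paper. The paper covers $F\setminus(\tilde F\cup G)$ by Besicovitch families of balls $B\bigl(x,\tfrac{d(x)}{10}\bigr)$ with $x\in F\setminus(\tilde F\cup G)$; the main work then lies in showing $\sum_{B\in\mathcal B_m}(\diam B)^{\N}\le C$, which is done via a recursive selection argument on projections (the paper's intermediate results IV and V), after which a Chebyshev step against Lemma~\ref{lem3.16} on each ball finishes as you do.

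Your argument replaces this Besicovitch layer by the Whitney cylinders $KB_i\cap\pi^{-1}(3R_i)$, so that the required summability $\sum_{i\in I_{12}}(\diam R_i)^{\N}\le C$ is already available from \eqref{2.1.10;101} and no new covering argument is needed. The comparison $d(x)\sim\diam R_i$ via Lemma~\ref{lem3.15} together with Lemma~\ref{rem3.10}(i),(iii) is the only extra bookkeeping you incur, and it is straightforward. The check that Lemma~\ref{lem3.16} applies at $X_i$ with radius $Kt_i$ (from $(X_i,t_i)\in S$ one has $d(X_i)\le t_i\le Kt_i$) is also valid. Overall your approach is shorter and reuses the Whitney structure already set up, at the minor cost of invoking the summability estimate \eqref{2.1.10;101}, whose proof, however, is elementary and independent of anything in the present section.
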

\begin{proof}
	Let $0<\alpha \le \frac{1}{4}$. We choose some $\varepsilon$ with 
	$\eta \le 2 \varepsilon < 2 \hat \varepsilon:= \min\{2\tilde \varepsilon, 2\bar \varepsilon,\frac{1}{2}\}$ 
	and some $k \ge \tilde k$ 
	where $\tilde \varepsilon$ is given by Lemma \ref{lem3.14} and 
	$\bar \varepsilon$ and $\tilde k$ are given by Lemma \ref{lem3.16}.

	At first, we prove some intermediate results:\\
	I.\,	We have $\mathcal{Z} \subset \tilde F$ because for $ x \in \mathcal{Z}$ 
		the definition of $A$ on $\mathcal{Z}$ (see Definition \pageref{04.11.2013.1}) implies that
		$ d(x,\pi(x)+A(\pi(x)))=d(x,x)=0$.\\
	II.\,	If $x \in F \setminus(\tilde{F} \cup G)$, we conclude with I that $x \notin \mathcal{Z}$ 
		and, with Lemma \ref{rem3.8}, we deduce
		$d(x) \neq 0$. So 
		$\mathcal{G} = \left\{ B\left(x,\frac{d(x)}{10}\right) \Big| x \in F \setminus(\tilde{F} \cup G) \right\}$
		is a set of nondegenerate balls.
		For $x \in F \subset B(0,5)$, we have $ d(x) \le 60 $ (see Lemma \ref{mengebeschraenkt})
		so that we can apply the Besicovitch's covering theorem \cite[1.5.2, Thm. 2]{Evans} to $\mathcal{G}$
		and get $N_{0}=N_{0}(\n)$ families 
		$\mathcal{B}_{m} \subset \mathcal{G}, m=1,...,N_{0}$
		of disjoint balls with
		\[ F \setminus(\tilde{F} \cup G) \subset \bigcup_{m=1}^{N_{0}} \bigcup_{B \in \mathcal{B}_{m}}B.\]
	III.\,	Since $d$ is 1-Lipschitz (Lemma \ref{rem3.7}), for all $u \in B\big(x,\frac{d(x)}{10}\big)$
		$d(x)-d(u) \le d(x,u) \le \frac{d(x)}{10}$
		and hence
		$\frac{1}{d(u)} \le \frac{10}{9}\frac{1}{d(x)} < \frac{2}{d(x)}$.\\
	IV.\,	Let $1 \le m \le N_{0}$ and
		let $B_{x} = B\left(x,\frac{d(x)}{10}\right)$ and $B_{y} = B\left(y,\frac{d(y)}{10}\right)$ 
		be two balls in 
		$\mathcal{B}_{m}$. Then we either have
		\begin{enumerate}[a)]
			\item $\pi\left(\frac{1}{40K}B_{x}\right) \cap \pi\left(\frac{1}{40K}B_{y}\right) = \emptyset $ 
			or
			\item if $2d(x) \ge d(y)$, we have $B_{y} \subset 200 B_{x}$ 
			and $ \diam B_{y} > (40K)^{-1} \diam B_{x}$,
		\end{enumerate}
		where $K$ is the constant from page \pageref{WahlvonK}.
		\begin{proof}
		Let $\pi\left(\frac{1}{40K}B_{x}\right) \cap \pi\bigl(\frac{1}{40K}B_{y}\bigr) \neq \emptyset $ 
		and $2d(x) \ge d(y)$. We deduce with Lemma \ref{lem3.9}
		$d(x,y) <19 d(x)$, which implies
		$B_{y} \subset B\big(x,19d(x)+\textstyle{\frac{d(y)}{10}}\big) = 200 B_{x}$.
		With Lemma \ref{lem3.15}, we get
		$ \frac{d(x)}{20K} \le D(\pi(y))+d(\pi(x),\pi(y)) < d(y) + \frac{d(x)}{40K},$
		and hence $ d(y) > (40K)^{-1}d(x)$.
		All in all, we have proven that either case a) or case b) is true.
		\end{proof} \noindent
	V.\,	There exists some constant $C=C(\N)$ so that
		$\sum_{B \in \mathcal{B}_{m}} (\diam B)^{\N} \le C$ for all $1\le m \le N_{0}$.
		\begin{proof}
			Let $1 \le m \le N_{0}$.
			We recursively construct for every $m$ some sequence of numbers, 
			some sequence of balls and some sequence of sets.
			At first, we define the initial elements. Let $d_{m}^{1} := \sup_{B \in \mathcal{B}_{m}} \diam B$.
			We have $d_{m}^{1} < \infty$ because, for all $x \in F \subset B(0,5)$, we have with 
			Lemma \ref{mengebeschraenkt} that
			$ d(x) \le 60$. 
			Now we choose $B_{m}^{1} \in \mathcal{B}_{m}$ with $ \diam B_{m}^{1} \ge \frac{d_{m}^{1}}{2}$ 
			and define
			\[ \mathcal{B}_{m}^{1} := \left\{ B \in \mathcal{B}_{m} 
				\Big| \pi\left(\textstyle{\frac{1}{40K}}B_{m}^{1}\right) \cap \pi\left(\textstyle{\frac{1}{40K}}B\right) \neq \emptyset \right\}.\]
			We continue this sequences recursively. We set
			$d_{m}^{i+1}= \sup_{B^{'} \in  \mathcal{B}_{m} \setminus \bigcup_{j=1}^{i}\mathcal{B}_{m}^{j}} \diam B^{'}$,
			choose
			$B_{m}^{i+1} \in \mathcal{B}_{m} \setminus \bigcup_{j=1}^{i}\mathcal{B}_{m}^{j}$
			with $ \diam B_{m}^{i+1} \ge \frac{d_{m}^{i+1}}{2}$ and define
			\[ \mathcal{B}_{m}^{i+1} := \Bigg\{ B \in \mathcal{B}_{m} \setminus \bigcup_{j=1}^{i}\mathcal{B}_{m}^{j}
				\Big| \pi\left(\textstyle{\frac{1}{40K}}B_{m}^{i+1}\right) \cap \pi\left(\textstyle{\frac{1}{40K}}B\right) \neq \emptyset \Bigg\}.\]
			If there exists some $l \in \mathbb{N}$ so that eventually
			$\mathcal{B}_{m} \setminus \bigcup_{j=1}^{l} \mathcal{B}_{m}^{j} = \emptyset$, 
			we set for all $i \ge l$
			$\mathcal{B}_{m}^{i} :=  \emptyset$, and interrupt the sequences $(d_{m}^{i})$ and $(B_{m}^{i})$.
			We have the following results:\\
			(i)\, For all $l \in \mathbb{N}$ and 
			$B_{m}^{l}=B\big(x_{m}^{l},\frac{d(x_{m}^{l})}{10}\big)$, 
			we have with Lemma \ref{mengebeschraenkt} and
			$x_{m}^{l} \in F \subset B(0,5)$ that $\frac{d(x_{m}^{l})}{10} \le 6$. Hence we get
			$B_{m}^{l} \subset B(0,11)$.\\
			(ii)\, For all $1 \le m \le N_{0}$, we have
			$\bigcup_{i=1}^{\infty}\mathcal{B}_{m}^{i} = \mathcal{B}_{m}$.\\
			\textit{Proof.}
				If there exist only finitely many $d_{m}^{l}$, the construction
				implies $\mathcal{B}_{m} \subset \bigcup_{j=1}^{\infty} \mathcal{B}_{m}^{j}$.\\
				Now we assume that there exist infinitely many $d_{m}^{l}$.
				Since $\mathcal{B}_{m}$ is a family of disjoint balls, the set
				$\{B_{m}^{l} |l \in \mathbb{N}\}$ is also a family of disjoint balls.
				Due to (i), all of those balls are contained in $B(0,11)$. If there exists some $c >0$ 
				with $\diam B_{m}^{l} > c$ for all $l \in \mathbb{N}$,
				there can not be infinitely many of such balls. Hence we deduce
				$\diam B_{m}^{l} \rightarrow 0$ if $l \rightarrow \infty.$
				Let $B \in \mathcal{B}_{m}$. If
				$B \notin  \bigcup_{i=1}^{\infty}\mathcal{B}_{m}^{i}$,
				we obtain $2\diam B_{m}^{l} \ge d_{m}^{l} \ge \diam B$ for all $l \in \mathbb{N}$
				where we used the definition of $d_{m}^{l}$.
				This is in contradiction to $\diam B_{m}^{l} \rightarrow 0$. So we get
				$B \in \bigcup_{i=1}^{\infty}\mathcal{B}_{m}^{i}$.
				All in all, we have proven
				$ \bigcup_{i=1}^{\infty}\mathcal{B}_{m}^{i} \supset \mathcal{B}_{m}$.
				The inverse inclusion follows by definition of $\mathcal{B}_{m}^{i}$.
			\hfill$\square$\\
			(iii)\, Let $1 \le m \le N_{0}$, $l \in \mathbb{N}$ and 
			$B_{y}=B\left(y,\frac{d(y)}{10}\right) \in \mathcal{B}_{m}^{l}$,  
			$B_{m}^{l}=B\left(x_{m}^{l},\frac{d(x_{m}^{l})}{10}\right) \in \mathcal{B}_{m}^{l}$.
			We have
			$\pi\left(\frac{1}{40K}B_{m}^{l}\right) \cap \pi\left(\frac{1}{40K}B_{y}\right) \neq \emptyset$
			and 
			$2d(x_{m}^{l}) = 10 \diam B_{m}^{l} \ge 10 \frac{d_{m}^{l}}{2} \ge 10 \frac{ \diam B_{y}}{2} = d(y)$.
			Hence IV implies $B_{y} \subset 200B_{m}^{l}$ and 
			$\diam B_{y} > (40K)^{-1} \diam B_{m}^{l}$.
			The balls in $\mathcal{B}_{m}^{l}$ are disjoint, so, with Lemma \ref{22.2.2012.1} 
			($s=\frac{\diam B_{m}^{l}}{80K}$, $r=200\frac{\diam B_{m}^{l}}{2}$),
			we deduce $\# \mathcal{B}_{m}^{l} \le (200 \cdot 80K)^{\n}$.\\
			(iv)\,	$\{\frac{1}{40K}B_{m}^{l}\}_{l\in \mathbb{N}}$ is a family of disjoint balls and
			with (i) we get $\pi\left(\textstyle{\frac{1}{40K}}B_{m}^{l}\right) \subset \pi(B(0,11))$
			for all $l \in \mathbb{N}$.
			Hence we obtain
			$\sum_{l=1}^{\infty} \left(\diam \pi\left( \textstyle{\frac{1}{40K}}B_{m}^{l} \right)\right)^{\N}
				 \le \frac{2^{\N}}{\vol} \cH^{\N}\left(\pi\left( B(0,11) \right)\right)
				 = 22^{\N}.$
				
			Now we are able to prove V by using (ii),(iii) and (iv):
			\[\sum_{B \in \mathcal{B}_{m}} \left(\diam B \right)^{\N}
				 \le \sum_{l=1}^{\infty} \sum_{B \in \mathcal{B}_{m}^{l}} \left(\textstyle d_{m}^{l}\right)^{\N}
				 = C(\N) \sum_{l=1}^{\infty} \left(\diam \pi\left( \textstyle \frac{1}{40K}B_{m}^{l} \right)\right)^{\N} 
				 \le C(\N).\]
		\end{proof}
	Finally, we can finish the proof of Theorem \ref{thm3.18}.
	Let $p_{B}$ denote the centre of some ball $B$.
	Using the definition of $ \tilde{F}$ and Lemma \ref{lem3.16}, there exists some constant $C=C(\N)$ 
	so that we obtain
	\begin{align*}
		\varepsilon^{\frac{1}{2}} \mu(F \setminus(\tilde F \cup G)) 
		& \stackrel{\hphantom{\text{III}}}{<} 
			\int_{F \setminus(\tilde F \cup G)}\frac{d(u,\pi(u) + A(\pi(u)))}{d(u)} \dd \mu(u)\\ \displaybreak[1]
		&  \stackrel{\substack{{\hphantom{\text{III}}}\\{\text{II}}}}{\le} 
			\sum_{m=1}^{N_{0}} \sum_{B \in \mathcal{B}_{m}} 
			\int_{B \setminus (\tilde F \cup G)}\frac{d(u,\pi(u) + A(\pi(u)))}{d(u)} \dd \mu(u)\\ \displaybreak[1]
		& \stackrel{\text{III}}{<} \sum_{m=1}^{N_{0}} \sum_{B \in \mathcal{B}_{m}} \frac{2}{d(p_{B})}
			C \varepsilon \left(\frac{\diam B}{2}\right)^{\N+1}\\
		& \stackrel{\substack{{\hphantom{\text{III}}}\\{\text{V}}}}{\le} 
			C(\n,\N) \varepsilon.
	\end{align*}
	This leads to 
	$\mu(F \setminus(\tilde F \cup G)) \le C(\n,\N) \varepsilon^{\frac{1}{2}}$.
	With $\eta < 2\varepsilon \le \varepsilon^{\frac{1}{2}}$ and Lemma \ref{lem3.14}
	the assertion holds.
\end{proof}
\subsection{\texorpdfstring{$F_1$ is small}{F1 is small}}
Now we are able to estimate $\mu(F_{1})$. 
We recall that $\eta$ and $k$ are fixed constants (cf. the first lines of section \ref{04.02.2014.1}),
and that $F_{1}$ depends on the choice of $\alpha, \varepsilon >0$ (cf.~Definition \ref{def3.2}).
\begin{thm}
	Let $0 < \alpha \le \frac{1}{4}$. There exist some $\varepsilon^{*}=\varepsilon^{*}(\n,\N,C_{0})$
	and some $\tilde k \ge 4$ so that, if $\eta < 2 \varepsilon^{*}$ and $k \ge \tilde k$, for all
	$\varepsilon \in [\frac{\eta}{2},\varepsilon^{*})$, we have $\mu(F_{1}) < 10^{-6}$.
\end{thm}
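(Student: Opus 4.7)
My approach is to combine Theorem~\ref{thm3.18} with a Besicovitch covering argument that exploits the very small choice of $\delta$ made in~\eqref{Wahlvondelta}. Since $\mu(F\setminus\tilde F)\le C_{5}\varepsilon^{1/2}$ by Theorem~\ref{thm3.18}, the plan is to reduce to $F_{1}\cap\tilde F$ and to use the Lipschitz graph structure of $A$ available there. For each $x\in F_{1}\cap\tilde F$ the definition of $F_{1}$ (Definition~\ref{def3.2}) supplies $y_{x}\in F$ and $\tau_{x}\in[h(x)/5,h(x)/2]$ with $d(x,y_{x})\le \tau_{x}/2$ and $\delta(B(y_{x},\tau_{x}))\le\delta$. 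The triangle inequality gives $B(x,\tau_{x}/2)\subset B(y_{x},\tau_{x})$, hence the crucial density bound
\[\mu(B(x,\tau_{x}/2))\le \delta\,\tau_{x}^{\N}.\]
I would then apply Besicovitch's covering theorem to the family $\{B(x,\tau_{x}/2):x\in F_{1}\cap\tilde F\}$, producing $N_{0}$ countable subfamilies of pairwise disjoint balls whose union covers $F_{1}\cap\tilde F$, and sum up to obtain
\[\mu(F_{1}\cap\tilde F)\le N_{0}\,\delta\sum_{i}\tau_{i}^{\N}\]
for a single disjoint subfamily $\{(x_{i},\tau_{i})\}_{i}$.

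The hard part will be controlling $\sum_{i}\tau_{i}^{\N}$, which does \emph{not} follow from disjointness in $\mathbb{R}^{\n}$ alone since $\N<\n$. My plan is to push everything down to $P_{0}$ via the Lipschitz graph of $A$. Since $x_{i}\in\tilde F$, Definition~\ref{DefinitionvonFtilde} together with Lemma~\ref{7.2.10;1} and $\tau_{i}\ge h(x_{i})/5$ yield
\[d(x_{i},\pi(x_{i})+A(\pi(x_{i})))\le \varepsilon^{1/2}d(x_{i})\le \varepsilon^{1/2}h(x_{i})\le 5\varepsilon^{1/2}\tau_{i},\]
so each $x_{i}$ lies very close to the graph $\Gamma$ of $A$. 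Writing $\gamma_{i}:=\pi(x_{i})+A(\pi(x_{i}))$ and using the disjointness $d(x_{i},x_{j})\ge (\tau_{i}+\tau_{j})/2$, the triangle inequality gives $d(\gamma_{i},\gamma_{j})\ge (\tau_{i}+\tau_{j})/2-5\varepsilon^{1/2}(\tau_{i}+\tau_{j})$; combined with the Pythagorean estimate $d(\gamma_{i},\gamma_{j})\le\sqrt{1+9\alpha^{2}}\,d(\pi(x_{i}),\pi(x_{j}))$, coming from the $3\alpha$-Lipschitz bound of Lemma~\ref{ALipschitz}, this yields, for $\varepsilon^{*}$ small enough and $\alpha\le 1/4$, the estimate $d(\pi(x_{i}),\pi(x_{j}))\ge (\tau_{i}+\tau_{j})/8$. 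Hence the projected balls $B(\pi(x_{i}),\tau_{i}/16)\subset P_{0}$ are pairwise disjoint in $P_{0}$, and as $\pi(x_{i})\in B(0,5)\cap P_{0}$ and $\tau_{i}/16\le 50/16$, they all sit inside $B(0,10)\cap P_{0}$. Comparing $\N$-dimensional Hausdorff measures then gives $\sum_{i}\tau_{i}^{\N}\le C(\N)$.

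Combining the two estimates and inserting the bound $\delta\le 10^{-10}/(600^{\N}N_{0})$ from~\eqref{Wahlvondelta} yields $\mu(F_{1}\cap\tilde F)\le 10^{-10}\cdot(C(\N)/600^{\N})\le 10^{-10}$, hence
\[\mu(F_{1})\le \mu(F_{1}\cap\tilde F)+\mu(F\setminus\tilde F)\le 10^{-10}+C_{5}\varepsilon^{1/2}.\]
Choosing $\varepsilon^{*}$ to be the minimum of the threshold of Theorem~\ref{thm3.18} and of $(10^{-7}/C_{5})^{2}$, and $\tilde k$ large enough to apply all preceding lemmas of Chapter~\ref{construction}, then gives $\mu(F_{1})<10^{-6}$. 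The only genuinely delicate step is the projection argument in the second paragraph: it converts disjointness of balls in $\mathbb{R}^{\n}$ into disjointness of $\N$-dimensional balls in $P_{0}$, and this is exactly what lets the dimensionless smallness of $\delta$ absorb the combinatorial factor $\sum_{i}\tau_{i}^{\N}$; it is also the unique place where the reduction to $\tilde F$ (rather than to $F_{1}$ itself) is essential, since without the graph one has no means of comparing $\N$-dimensional volumes in $P_{0}$ to radii in $\mathbb{R}^{\n}$.
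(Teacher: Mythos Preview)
Your argument is correct and follows essentially the same strategy as the paper: split $F_{1}$ into $F_{1}\cap\tilde F$ and $F\setminus\tilde F$, handle the latter by Theorem~\ref{thm3.18}, and for the former use Besicovitch together with the low-density bound $\mu(B)\le\delta\,\tau^{\N}$ from the definition of $F_{1}$, then control $\sum_i\tau_i^{\N}$ via the Lipschitz graph of $A$. The only difference is in how that last volume bound is obtained. The paper uses balls $B(x,h(x)/10)$ and argues that each such ball contains a piece of the graph $\tilde A(U_{12})$ of $\cH^{\N}$-measure comparable to $(\diam B)^{\N}$; disjointness of the balls in $\R^{\n}$ then makes these graph pieces disjoint and bounds their total measure by $\cH^{\N}(\tilde A(U_{12}))$. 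You instead project the centres to $P_{0}$ and show directly that the balls $B(\pi(x_i),\tau_i/16)\subset P_{0}$ are pairwise disjoint, which is an equally valid (and slightly more elementary) way of extracting an $\N$-dimensional packing bound from disjointness near a Lipschitz graph. Both routes yield $\sum_i\tau_i^{\N}\le C(\N)$ and lead to the same final estimate.
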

\begin{proof}
	Let $0<\alpha \le \frac{1}{4}$ and let $\hat \varepsilon$, $C_{5}$ and $\tilde k$ 
	be the constants given by Theorem \ref{thm3.18}.
	We set $\varepsilon^{*}:= \min\big\{\hat \varepsilon, \frac{10^{-14}}{C_{5}^2}\big\}$
	and choose some $k \ge \tilde k$ and some $\varepsilon \in [\frac{\eta}{2},\varepsilon^{*})$.
	At first, we prove some intermediate results:\\
		I.\, 	Let 
		$\mathcal{G} = \left\{ B\big(x,\frac{h(x)}{10}\big) \Big| x \in F_{1} \cap \tilde F \right\}$.
		This is a set of nondegenerate balls because $\mathcal{Z}\cap F_{1} = \emptyset$ 
		and, by definition of $h(\cdot)$ (see page \pageref{Definitionvonh}), we get $h(x) \le 50$ for all $x \in F$.
		With Besicovitch's covering theorem \cite[1.5.2, Thm. 2]{Evans}, there exist $N_{0}=N_{0}(\n)$ 
		families $\mathcal{B}_{m} \subset \mathcal{G}$, $m=1,...,N_{0}$, 
		containing countably many disjoint balls with
		\[ F_{1} \cap \tilde F \subset \bigcup_{m=1}^{N_{0}} \bigcup_{B \in \mathcal{B}_{m}}B.\]
		II.\,	Let $1 \le m \le N_{0}$ and $B=B\big(x,\frac{h(x)}{10}\big)$
			where $x \in F_{1} \cap \tilde F$.
			Due to the definition of $F_{1}$, there exists some $y \in F$ and some 
			$\tau \in \left[ \frac{h(x)}{5}, \frac{h(x)}{2} \right]$
			with $d(x,y) \le \frac{\tau}{2}$ and $ \delta(B(y,\tau)) \le \delta$.
			For any $z \in B$, we get
			$d(z,y) \le \frac{h(x)}{10} + \frac{\tau}{2} \le \tau$.
			Hence we obtain $B \subset B(y,\tau)$ and conclude
			$\mu(B) \le \delta \tau^{\N} < 3^{\N}  \delta (\diam B)^{\N}$.\\
		III.\,
		For all $1 \le m \le N_{0}$, we have $\sum_{B \in \mathcal{B}_{m}} (\diam B)^{\N} \le 192^{\N}$.
	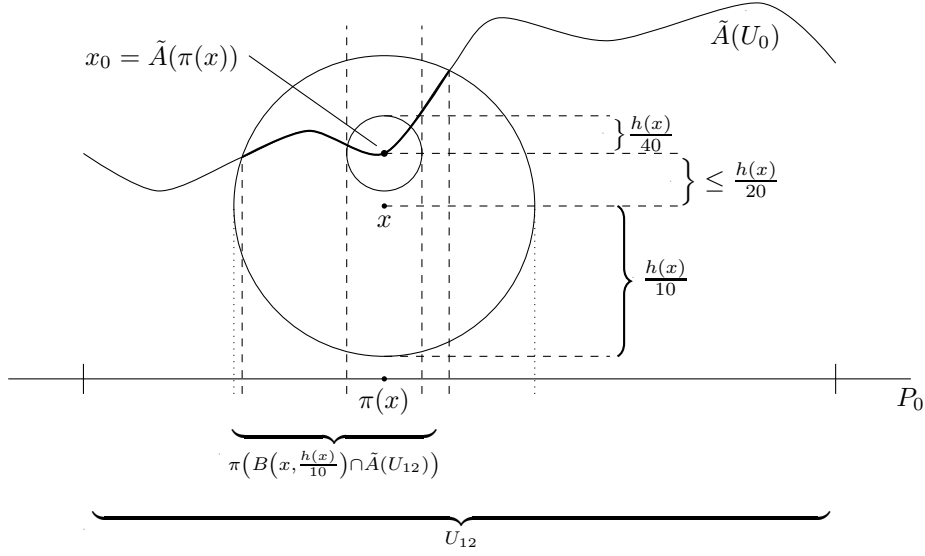
\begin{figure}[ht]
	\begin{center}
	\begin{tikzpicture}[scale=1]
		\draw[-] (3,3)  -- (15,3)node [below] {$P_{0}$};
		\draw[-] (4,2.8)  -- (4,3.2);
		\draw[-] (14,2.8)  -- (14,3.2);
		\path [draw] (8,5.3) circle (2);
		\path [fill] (8,5.3) circle (1pt)node [below] {$x$};
		\draw[dashed] (8,3.3)  -- (11,3.3);
		\draw[decoration={brace,amplitude=5},decorate,thick] (11.1,5.3) -- (11.1,3.3);
		\path [fill] (11.3,4.35) circle (0pt)node [right] {$ \frac{h(x)}{10}$};

		\path [fill] (8,6) circle (1.3pt);
		\draw [-] (7.9,6.1) -- (6.2,7.3) node [left] {$x_{0}=\tilde A (\pi(x))$};
		\path [draw] (8,6) circle (0.5);
		\path [fill] (8,3) circle (1pt) node [below] {$\pi(x)$};
		\draw plot [smooth] coordinates {(4,6) (5,5.5) (6.11,5.95) (7,6.3) (8,6) (8.862,7.1)  (9.5,7.8) (11,7.5) (13,8) (14,7.2)};
		\path [fill] (12.8,7.9) circle (0pt)node [below] {$\tilde A(U_0)$};
		\draw[thick] plot [smooth] coordinates {(6.11,5.95) (7,6.3) (8,6) (8.862,7.1)};
		\path [fill] (4,1) circle (0pt)node [right] {$\underbrace{\hspace{9.8cm}}_{U_{12}}$};
		\draw[dotted] (6,2.8)  -- (6,5.3);
		\draw[dotted] (10,2.8)  -- (10,5.3);
		\draw[dashed] (7.5,2.8)  -- (7.5,7.7);
		\draw[dashed] (8.5,2.8)  -- (8.5,7.7);

		\draw[dashed] (6.11,2.8)  -- (6.11,5.95);
		\draw[dashed] (8.862,2.8)  -- (8.862,7.1);
		\path [fill] (5.8,2) circle (0pt)node [right] {$\underbrace{\hspace{2.7cm}}_{\pi\left( B\left(x,\frac{h(x)}{10} \right) \cap \tilde A(U_{12}) \right)}$};

		\draw[dashed] (8,6)  -- (12,6);
		\draw[dashed] (8,5.3)  -- (12,5.3);
		\path [fill] (11.8,5.63) circle (0pt)node [right] {$\Big\} \le \frac{h(x)}{20}$};
		\draw[dashed] (8,6.5)  -- (11,6.5);
		\path [fill] (10.9,6.27) circle (0pt)node [right] {$\big\} \frac{h(x)}{40}$};
	\end{tikzpicture}
	\end{center}
	\caption[bla]{$\pi\left( B\left(x_{0},\frac{h(x)}{40} \right) \right) \subset\pi\left( B\left(x,\frac{h(x)}{10} \right) \cap \tilde A(U_{12}) \right)$}
	\end{figure}
		\begin{proof}
			We define the function
			$\tilde{A}: U_{12} \rightarrow \mathbb{R}^{\n}, u \mapsto u + A(u)$,
			where $U_{12}=B(0,12) \cap P_{0}$.
			$\tilde{A}$ is Lipschitz continuous with Lipschitz constant less than $2$ 
			because $A$ is defined on $U_{12}$ (see page \pageref{Aeindeutigdefiniert}), 
			$3\alpha$-Lipschitz continuous (see Lemma \ref{ALipschitz}) 
			and $\alpha \le \frac{1}{4}$. 
			Let $ B=B\left(x,\frac{h(x)}{10}\right) \in \mathcal{B}_{m}$. 
			We have $F \subset B(0,5)$ (see (A) on page \pageref{Grundeigenschaften})
			and so $\pi(F) \subset P_{0} \cap B(0,5)$ because $\pi$ is the orthogonal projection
			on $P_{0}$ and $0 \in P_{0}$.
			With the definition of $\tilde F$, Lemma \ref{rem3.8} and 
			$\varepsilon^{\frac{1}{2}}<\frac{1}{20}$, we obtain $d(x,x_{0}) < \frac{h(x)}{20}$
			where $x_{0} := \tilde A(\pi(x))$.
			Let $z \in \pi\left( B\left(x_{0},\frac{h(x)}{40} \right) \right) \subset U_{12}$.
			Using triangle inequality with the point $\tilde A(\pi(x_{0}))=x_{0}$ and
			$\tilde A$ is $2$-Lipschitz, we get
			$d(\tilde A(z),x)\le \frac{h(x)}{10}$. This implies 
			$\tilde A (\pi(B(x_{0},\frac{h(x)}{40}))) \subset B \cap \tilde A (U_{12})$, and hence
			we gain $\pi\left( B\left(x_{0},\frac{h(x)}{40} \right) \right) 
				\subset\pi\left( B \cap \tilde A(U_{12}) \right)$.
			Now we have with  \cite[2.4.1, Thm. 1]{Evans}
			\begin{align}
				\frac{\vol}{8^{\N}} \left( \diam B \right)^{\N} 
				 = \vol \left( \textstyle \frac{h(x)}{40} \right)^{\N}
				 = \cH^{\N} \left( \textstyle \pi\left( B\left(x_{0},\frac{h(x)}{40} \right) \right) \right)
				& \le \cH^{\N} (B \cap \tilde A(U_{12})). \label{1.2.10;1}
			\end{align}
			The balls in $\mathcal{B}_{m}$ are disjoint, so we conclude using \cite[2.4.1, Thm. 1]{Evans}
			for the last estimate
			\[ \sum_{B \in \mathcal{B}_{m}} (\diam B)^{\N} \stackrel{\eqref{1.2.10;1}}{\le} 
				\frac{8^{\N}}{\vol} \sum_{B \in \mathcal{B}_{m}} \mathcal{H}^{\N}(B \cap \tilde{A}(U_{12})) 
				\le \frac{8^{\N}}{\vol} \mathcal{H}^{\N}(\tilde{A}(U_{12})) \le 192^{\N}.\]
		\end{proof}
	Now we have
	$\mu(F_{1} \cap \tilde F) \stackrel{\text{I}}{\le} \sum_{m=1}^{N_{0}} \sum_{B \in \mathcal{B}_{m}} \mu(B) 
		 \stackrel{\text{II, III}}{\le} \delta N_{0} \cdot 576^{\N}.$
	Since $\delta \le \frac{10^{-10}}{600^{\N} N_{0}}$ (see \eqref{Wahlvondelta} on page \pageref{Wahlvondelta}) 
	and $\varepsilon^{\frac{1}{2}} < \frac{10^{-7}}{C_{5}}$, we deduce together with Theorem \ref{thm3.18} that
	$\mu(F_{1}) < 10^{-6}$.
\end{proof}

\subsection{\texorpdfstring{$F_2$ is small}{F2 is small}}\label{F2issmall}
We recall that $0< \eta \le 2^{-(\N+1)}$ and $k \ge 1$ are fixed constants (cf. the first lines of 
section \ref{04.02.2014.1}) and that $F_{2}$ depends on the choice of $\alpha, \varepsilon >0$
(cf.~Definition \ref{def3.2}).
\begin{thm}
	Let $\alpha, \varepsilon > 0$. There exists some constant $C=C(\n,\N,\K,\p,C_{0},k)$ so that,
	if $\eta \le \frac{\varepsilon^{\p}}{C}10^{-6}$, we have $\mu(F_{2}) \le 10^{-6}$.
\end{thm}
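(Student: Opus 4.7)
The plan is to use the pointwise lower bound $\beta_{1;k}(y_x,\tau_x)\ge\varepsilon$ witnessed by every $x\in F_2$ together with Theorem \ref{lem2.5} to convert this into a lower bound on a localised integral Menger curvature, and then cover $F_2$ by balls in a controlled way so that summing these local integrals yields $C\,\mathcal M_{\K^{\p}}(\mu)\le C\eta$ via a bounded-overlap argument.

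First, for each $x\in F_2$, the definition supplies a ball $B(y_x,\tau_x)$ with $y_x\in F$, $\tau_x\in[h(x)/5,h(x)/2]$, $d(x,y_x)\le\tau_x/2$, and $\beta_{1;k}(y_x,\tau_x)\ge\varepsilon$. Since $x\notin F_1$, we additionally have $\delta(B(y_x,\tau_x))>\delta$. The radii are bounded ($h\le 50$), so I apply Besicovitch's covering theorem to the family $\{B(y_x,\tau_x):x\in F_2\}$ to obtain $N_0=N_0(\n)$ subfamilies $\mathcal B_m$ of pairwise disjoint balls $B_i=B(y_i,\tau_i)$ whose union covers $F_2$.

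For every such selected ball I invoke Theorem \ref{lem2.5} with $x=y=y_i$, $k_0=1$ and $\lambda=\delta$ (the latter is legitimate by the $F_1$-exclusion), yielding constants $k_1$ and $C$ and the estimate $\beta_{\p;k}(y_i,\tau_i)^{\p}\le C\,\mathcal M_{\K^{\p};k_1}(y_i,\tau_i)/\tau_i^{\N}$. H\"older's inequality combined with the upper Ahlfors bound (B) gives $\beta_{1;k}\le C(C_0,k,\p)\,\beta_{\p;k}$, so that
\[
\varepsilon^{\p}\tau_i^{\N}\le\beta_{1;k}(y_i,\tau_i)^{\p}\tau_i^{\N}\le C\,\mathcal M_{\K^{\p};k_1}(y_i,\tau_i).
\]
It remains to show $\sum_i\mathcal M_{\K^{\p};k_1}(y_i,\tau_i)\le C\,\mathcal M_{\K^{\p}}(\mu)$, and this is the main technical point. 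Fix a tuple $(x_0,\dots,x_{\N+1})\in\mathcal O_{k_1}(y_i,\tau_i)$. The membership forces $\tau_i\in[d(x_0,x_1)/(2k_1),\,k_1 d(x_0,x_1)]$ and $y_i\in B(x_0,k_1^{2}d(x_0,x_1))$. Within each family $\mathcal B_m$ the balls $B(y_i,\tau_i/2)$ are disjoint, so a fortiori the smaller concentric balls of radius $d(x_0,x_1)/(4k_1)\le\tau_i/2$ are disjoint, yet all lie in $B(x_0,2k_1^{2}d(x_0,x_1))$; a volume-packing estimate bounds the number of such indices $i$ in one family by $C(k_1,\N,\n)$, and hence across all $N_0$ families by $N_0\,C(k_1,\N,\n)$.

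Summing and integrating, $\sum_i\mathcal M_{\K^{\p};k_1}(y_i,\tau_i)\le N_0\,C(k_1,\N,\n)\,\mathcal M_{\K^{\p}}(\mu)\le C\eta$ by (C). Combining with (B) I conclude
\[
\mu(F_2)\le\sum_{m=1}^{N_0}\sum_{B_i\in\mathcal B_m}\mu(B_i)\le C_0\,2^{\N}\sum_i\tau_i^{\N}\le C\,\varepsilon^{-\p}\sum_i\mathcal M_{\K^{\p};k_1}(y_i,\tau_i)\le C\,\eta\,\varepsilon^{-\p},
\]
for a constant $C=C(\n,\N,\K,\p,C_0,k)$. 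Choosing this $C$ as the constant in the hypothesis $\eta\le\varepsilon^{\p}10^{-6}/C$ yields $\mu(F_2)\le 10^{-6}$. The only nontrivial ingredient is the bounded-overlap count, which reduces, as indicated, to a standard packing estimate after exploiting the disjointness provided by Besicovitch and the two-sided control on $\tau_i$ enforced by membership in $\mathcal O_{k_1}(y_i,\tau_i)$.
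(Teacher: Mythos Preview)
Your argument is essentially correct and reaches the same bound $\mu(F_2)\le C\eta\varepsilon^{-\p}$, but it takes a different and somewhat longer route than the paper. The paper does not cover $F_2$ by balls at all: instead, for each $x\in F_2$ it transfers the witness $(y,\tau)$ to the pair $(x,t)$ with $t\in(h(x),2h(x))$ via the inclusion $B(y,\tau)\subset B(x,t)$, obtaining $\beta_{1;k}(x,t)\ge 10^{-(\N+1)}\varepsilon$ and $\tilde\delta_{1}(B(x,t))\ge\delta/4^{\N}$, and then plugs this directly into Corollary~\ref{thm2.4} to get
\[
\mathcal{M}_{\K^{\p}}(\mu)\ \ge\ C^{-1}\int_{F_2}\int_{h(x)}^{2h(x)}\beta_{1;k}(x,t)^{\p}\,\frac{\dd t}{t}\,\dd\mu(x)\ \ge\ C^{-1}\varepsilon^{\p}\,\mu(F_2).
\]
In effect the Fubini/overlap work you do by hand has already been packaged once and for all inside Theorem~\ref{13.11.2014.4}/Corollary~\ref{thm2.4}, so the paper's proof is just three lines. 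Your approach instead invokes only the pointwise Theorem~\ref{lem2.5} and recovers the summability by a Besicovitch plus packing argument; this is perfectly valid and perhaps more self-contained, but it duplicates effort.

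One small slip to flag: Besicovitch's theorem (in the Evans formulation the paper uses) covers the set of \emph{centres}, so applying it to $\{B(y_x,\tau_x):x\in F_2\}$ yields a cover of $\{y_x\}$, not of $F_2$. The fix is immediate: work instead with $\{B(x,2\tau_x):x\in F_2\}$, note that $B(y_x,\tau_x)\subset B(x,2\tau_x)$ since $d(x,y_x)\le\tau_x/2$, and run the same overlap count (the disjointness of the selected $B(x_i,2\tau_{x_i})$ still forces the smaller balls $B(y_{x_i},\tau_{x_i})$ to be disjoint, so your packing estimate goes through unchanged up to constants).
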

\begin{proof}
	Let $x \in F_{2}$ and $ t \in (h(x),2h(x))$. 
	It follows that $x \notin F_{1} \cup \mathcal{Z}$ and hence,
	for all $ y \in F$ and for all $ \tau \in \left[ \frac{h(x)}{5},\frac{h(x)}{2} \right]$ 
	with $d(x,y) \le \frac{\tau}{2}$, we obtain
	$\delta(B(y,\tau)) > \delta$. So, in particular, we get
	$\delta\big(B\big( x,\frac{h(x)}{2}\big) \big) > \delta$ for $x=y$ and $\tau = \frac{h(x)}{2}$.
	If $k_{0}=1$, this implies
	$\tilde{\delta}_{k_{0}}(B(x,t))  \ge \delta(B(x,t)) > \frac{\delta}{4^{\N}}$,
	where we used $\frac{h(x)}{2} < t < 2 h(x)$.
	Let $(y,\tau)$ as in the definition of $F_{2}$. 
	Then we have
	$ d(x,y) + \tau < 2\tau \le h(x) < t$
	and hence $B(y,\tau) \subset B(x,t)$. We conclude
	$\beta_{1;k}(x,t) \ge \left( \frac{\tau}{t} \right)^{\N+1} \beta_{1;k}(y,\tau)
		\ge \frac{\varepsilon}{10^{\N+1}}$.
	Now, with Corollary \ref{thm2.4} ($\lambda=\frac{\delta}{4^{\N}}$, $k_{0}=1$), there exists some 
	constant $C=C(\n,\N,\K,\p,C_{0},k)$ so that
	\begin{align*}
		\mathcal{M}_{\K^\p}(\mu) 
		& \ge \frac{1}{C} \int_{F_{2}} \int_{h(x)}^{2h(x)} 
			\beta_{1;k}(x,t)^{\p}
			\Eins_{\{ \tilde{\delta}_{k_{0}}(B(x,t))\ge \frac{\delta}{4^{\N}} \}}
			\frac{\mathrm{d}t}{t} \mathrm{d}\mu(x)  \\
		& \ge \frac{1}{C} \int_{F_{2}}\int_{h(x)}^{2h(x)} 
			\left( \frac{\varepsilon}{10^{\N+1}} \right)^{\p} \frac{\mathrm{d}t}{t} \mathrm{d}\mu(x)  \\
		& \ge \frac{1}{C} \left( \frac{\varepsilon}{10^{\N+1}} \right)^{\p} \mu(F_{2}) \ln(2).
	\end{align*}
	Finally, using the previous inequality, condition (C) from page \pageref{Grundeigenschaften} and
	$\eta \le \frac{\ln(2)}{10^{\p(\N+1)}C}\varepsilon^{\p}10^{-6}$, we get the assertion.
\end{proof}
\subsection{\texorpdfstring{$F_3$ is small}{F3 is small}}\label{F3issmall}
We mention for review that $\tilde F$ is defined on page \pageref{DefinitionvonFtilde} and set
\[ \tilde{\tilde{F}} :=  \left\{x \in \tilde{F} \ \Big| \  
	\mu(\tilde{F} \cap B(x,t)) \ge \frac{99}{100}  \mu(F \cap B(x,t)) \text{ for all } t \in (0,2) \right\}.\]

\begin{lem}\label{lem5.8;1}
	Let $0<\alpha \le \frac{1}{4}$. 
	There exists some 
	$\hat \varepsilon = \hat \varepsilon (\n,\N,C_{0})\le \frac{1}{4}$ and some $\tilde k \ge 4$ so that, if
	$\eta < 2 \hat \varepsilon$ and $k \ge \tilde k$,
	there exists some constant $C=C(\n,\N,\K,\p,C_{0})$ so that,
	for all $\varepsilon \in [\frac{\eta}{2},\hat \varepsilon)$, we have 
	$\mu(F \setminus \tilde{\tilde{F}}) \le C \varepsilon^{\frac{1}{2}}$.
\end{lem}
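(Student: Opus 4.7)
The plan is to decompose
\[
F \setminus \tilde{\tilde F} \;=\; (F \setminus \tilde F) \;\cup\; (\tilde F \setminus \tilde{\tilde F})
\]
and control each piece separately. The first piece is handled directly by Theorem \ref{thm3.18}, which provides $\hat\varepsilon$, $\tilde k$ and $C_5$ such that $\mu(F \setminus \tilde F) \le C_5 \varepsilon^{1/2}$. The entire burden is therefore shifted to bounding $\mu(\tilde F \setminus \tilde{\tilde F})$ by a constant multiple of $\mu(F \setminus \tilde F)$, which is a standard Besicovitch covering argument on a weak-type one set.

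Concretely, for each $x \in \tilde F \setminus \tilde{\tilde F}$ the definition of $\tilde{\tilde F}$ produces some radius $t_x \in (0,2)$ with
\[
 \mu(\tilde F \cap B(x,t_x)) \;<\; \tfrac{99}{100}\,\mu(F \cap B(x,t_x)),
\]
which I rewrite as
\[
 \mu((F \setminus \tilde F) \cap B(x,t_x)) \;>\; \tfrac{1}{100}\, \mu(F \cap B(x,t_x)) \;\ge\; \tfrac{1}{100}\, \mu(\tilde F \cap B(x,t_x)).
\]
The family $\{B(x,t_x)\}_{x \in \tilde F \setminus \tilde{\tilde F}}$ consists of closed balls with centres in the bounded set $F \subset B(0,5)$ and uniformly bounded diameters (at most $4$). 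Besicovitch's covering theorem \cite[1.5.2, Thm.~2]{Evans} then yields $N_0 = N_0(\n)$ countable subfamilies $\mathcal{B}_1, \dots, \mathcal{B}_{N_0}$ of pairwise disjoint balls whose union covers $\tilde F \setminus \tilde{\tilde F}$.

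Combining the two inequalities and using disjointness within each $\mathcal{B}_m$ gives
\[
 \mu(\tilde F \setminus \tilde{\tilde F}) \;\le\; \sum_{m=1}^{N_0} \sum_{B \in \mathcal{B}_m} \mu(\tilde F \cap B) \;<\; 100 \sum_{m=1}^{N_0} \sum_{B \in \mathcal{B}_m} \mu((F \setminus \tilde F) \cap B) \;\le\; 100\, N_0 \, \mu(F \setminus \tilde F).
\]
Applying Theorem \ref{thm3.18} to the right-hand side and adding back $\mu(F\setminus\tilde F)$ yields $\mu(F \setminus \tilde{\tilde F}) \le (100 N_0 + 1) C_5\, \varepsilon^{1/2}$, which has the required form with $C = C(\n,\N,\K,\p,C_0)$. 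There is no real obstacle here: the only point needing mild care is the verification that the balls $B(x,t_x)$ satisfy the hypotheses of Besicovitch's theorem (boundedness of centres and of diameters), both of which are immediate from $F \subset B(0,5)$ and $t_x < 2$.
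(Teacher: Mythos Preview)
Your proof is correct and follows essentially the same approach as the paper: decompose $F\setminus\tilde{\tilde F}$ into $F\setminus\tilde F$ (handled by Theorem~\ref{thm3.18}) and $\tilde F\setminus\tilde{\tilde F}$, then use Besicovitch's covering theorem on the balls $B(x,t_x)$ witnessing failure of the $\tfrac{99}{100}$-density condition to bound $\mu(\tilde F\setminus\tilde{\tilde F})$ by a dimensional constant times $\mu(F\setminus\tilde F)$. The only cosmetic difference is that the paper rewrites the density failure as $\mu(\tilde F\cap B)\le 99\,\mu((F\setminus\tilde F)\cap B)$ (yielding the constant $99N_0$), whereas you go via $\mu((F\setminus\tilde F)\cap B)>\tfrac{1}{100}\mu(F\cap B)$ (yielding $100N_0$); both are valid.
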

\begin{proof}
	Let $0<\alpha \le \frac{1}{4}$ and choose $\hat \varepsilon$, $\tilde k$ to be the constants given by 
	Theorem \ref{thm3.18} and let $k \ge \tilde k$, $\eta \le 2 \varepsilon < 2 \hat \varepsilon$.
	Due to Theorem \ref{thm3.18}, we only have to consider $\mu(\tilde{F} \setminus  \tilde{\tilde{F}})$. 
	For all $ x \in \tilde{F} \setminus \tilde{\tilde{F}}$ using the definition of $\tilde F$, 
	there exists some $t_{x} \in (0,2)$ with
	$\mu(\tilde F \cap B(x,t_{x})) \le 99 \mu((F \setminus \tilde F)\cap B(x,t_{x}))$.
	Hence $\tilde{F} \setminus \tilde{\tilde{F}}$ is covered by balls $B(x,t_{x})$ with centre in 
	$\tilde{F} \setminus \tilde{\tilde{F}}$.
	So with Besicovitch's covering theorem \cite[1.5.2, Thm. 2]{Evans} 
	there exist $N_{0}=N_{0}(\n)$ families $\mathcal{B}_{m}$,
	$m = 1,.. , N_{0}$, of disjoint balls $B(x,t_{x})$ so that
	\[\mu(\tilde{F} \setminus \tilde{\tilde{F}}) 
		\le \sum_{m=1}^{N_{0}} \sum_{B \in \mathcal{B}_{m}} \mu(  \tilde{F} \cap B)
		 \le
			 99 \sum_{m=1}^{N_{0}} \sum_{B \in \mathcal{B}_{m}} \mu(  (F \setminus \tilde{F}) \cap B)
		 \le 99 N_{0} \ \mu(F \setminus \tilde{F}),\]
	and with Theorem \ref{thm3.18} the assertion holds.
\end{proof}

\begin{lem} \label{lem5.6}
	Let $\theta,\alpha > 0$. There exist some constant $C=C(\n,\N,C_{0},\theta) > 1$ and some constant
	$\varepsilon_{0}=\varepsilon_{0}(\n,\N,C_{0},\theta) > 0$ so that, if
	$\eta < 2 \varepsilon_{0}$ and $k \ge 4$, we have for all 
	$\varepsilon \in [\frac{\eta}{2},\varepsilon_{0})$ 
	that the following is true. If $(x,t) \in S$ and $100 t \ge \theta$, then we have
	$\varangle(P_{(x,t)},P_{0}) \le C \varepsilon$.
\end{lem}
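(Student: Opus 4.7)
The plan is to combine the two available $\beta$-bounds---the local one $\beta_{1;k}^{P_{(x,t)}}(x,t) \le 2\varepsilon$ available because $(x,t)\in S \subset S_{total}$, and the global one $\beta_{1;k}^{P_{0}}(0,5) \le \eta \le 2\varepsilon$ granted by condition (D)---to produce an $(\N+1)$-tuple of points in $F$ that is simultaneously close to $P_{(x,t)}$ and to $P_{0}$ and spans a nondegenerate simplex, so that Lemma \ref{21.11.11.2} applies. The lower bound $t \ge \theta/100$ is used only to absorb the scale mismatch between the radii $t$ and $5$ into the constant $C(\n,\N,C_{0},\theta)$.

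First, I would apply Corollary \ref{04.09.12.1} with $\lambda=\delta/2$ and $\Upsilon=\R^{\n}$ to the ball $B(x,t)$ (using $\delta(B(x,t))\ge \delta/2$ from the definition of $S_{total}$), obtaining constants $C_{1},C_{2}$ depending only on $\n,\N,C_{0}$ and an $(\N,10\N t/C_{1})$-simplex $T=\Delta(x_{0},\dots,x_{\N})\in F\cap B(x,t)$ with $\mu(B_{i}\cap B(x,t))\ge t^{\N}/C_{2}$ for every $B_{i}:=B(x_{i},t/C_{1})$. Since $B_{i}\subset B(x,kt)$ (as $k\ge 4$) and $\supp(\mu)\subset B(0,5)$ by (A), the two $\beta$-bounds give
\[
\int_{B_{i}} d(z,P_{(x,t)})\,\dd\mu(z) \le 2\varepsilon\, t^{\N+1}, \qquad \int_{B_{i}} d(z,P_{0})\,\dd\mu(z) \le 5^{\N+1}\eta \le 2\cdot 5^{\N+1}\varepsilon.
\]
Dividing by $\mu(B_{i})\ge t^{\N}/C_{2}$ and using $t\ge \theta/100$ (so $1\le (100/\theta)\,t$), the average of $d(\,\cdot\,,P_{(x,t)})+d(\,\cdot\,,P_{0})$ over $B_{i}$ is at most $C_{3}(\n,\N,C_{0},\theta)\,\varepsilon\, t$; Chebyshev's inequality then produces $z_{i}\in B_{i}$ with $d(z_{i},P_{(x,t)})+d(z_{i},P_{0})\le 2C_{3}\,\varepsilon\, t$.

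Then I would set $y_{i}:=\pi_{P_{(x,t)}}(z_{i})\in P_{(x,t)}$, so that $d(y_{i},x_{i})\le t/C_{1}+2C_{3}\varepsilon\, t$. Choosing $\varepsilon_{0}(\n,\N,C_{0},\theta)$ so small that $2C_{3}\varepsilon_{0}\le 1/C_{1}$, an iterated application of Lemma \ref{17.11.11.2} shows that $\Delta(y_{0},\dots,y_{\N})\subset P_{(x,t)}$ is an $(\N,(8\N-2)t/C_{1})$-simplex contained in $B(x,3t)$, while $d(y_{i},P_{0})\le d(y_{i},z_{i})+d(z_{i},P_{0})\le 4C_{3}\,\varepsilon\, t$. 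Finally, Lemma \ref{21.11.11.2} applied with $P_{1}=P_{(x,t)}$, $P_{2}=P_{0}$, $C=C_{1}/(8\N-2)$, $\hat C=3$ and $\sigma=4C_{3}\varepsilon$ (after further shrinking $\varepsilon_{0}$ to meet the smallness hypothesis on $\sigma$ in that lemma) delivers $\varangle(P_{(x,t)},P_{0})\le C(\n,\N,C_{0},\theta)\,\varepsilon$, as required. The main ``obstacle'' is really just careful bookkeeping: the global $\beta$-bound lives at scale $5$, not $t$, and one has to track how the factor $(100/\theta)^{\N}$ arising from $t^{-\N}$ gets absorbed into $C$; no new geometric idea is needed beyond extracting simultaneously $P_{(x,t)}$- and $P_{0}$-close points at the vertices of the nondegenerate simplex provided by Corollary \ref{04.09.12.1}.
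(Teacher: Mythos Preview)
Your argument is correct and is essentially the paper's proof unpacked: the paper first notes that $\beta_{1;k}^{P_{0}}(x,t)\le (500/\theta)^{\N+1}\,2\varepsilon$ (from (A), (D) and $t\ge\theta/100$) and then applies Lemma~\ref{lem2.6} with $\xi=2(500/\theta)^{\N+1}$, whereas you reproduce the proof of Lemma~\ref{lem2.6} inline (Corollary~\ref{04.09.12.1} $+$ Chebyshev $+$ projection $+$ Lemma~\ref{17.11.11.2} $+$ Lemma~\ref{21.11.11.2}). The only cosmetic slip is that the $\theta$-dependence entering the constant is $(100/\theta)^{\N+1}$ rather than $(100/\theta)^{\N}$, but this does not affect the argument.
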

\begin{proof}
	Let $\theta, \alpha > 0$, $k\ge 4$ and $\eta < 2 \varepsilon < 2\varepsilon_{0}$ where the constant 
	$\varepsilon_{0}$ is given by Lemma \ref{lem2.6}.
	Let $t \ge \frac{\theta}{100}$ and $(x,t) \in S$.
	We get with (A) and (D) (see page \pageref{Grundeigenschaften})
	$\beta_{1;k}^{P_{0}}(x,t) \le \left( \frac{500}{\theta} \right)^{\N+1} 2\varepsilon$.
	Furthermore, we have with Definition \ref{12.07.13.1} that $ \beta_{1;k}^{P_{(x,t)}}(x,t) \le 2 \varepsilon$
	and with $(x,t) \in S \subset S_{total}$ we obtain $ \delta(B(x,t)) \ge \frac{\delta}{2}$.
	Now, with Lemma \ref{lem2.6} 
	($y=x$, $c=1$, $\xi=2\left( \frac{500}{\theta} \right)^{\N+1}$, $t_{x}=t_{y}=t$, $\lambda=\frac{\delta}{2}$),
	there exists some
	constant $C_{3}=C_{3}(\n,\N,C_{0},\theta)$ so that 
	$\varangle(P_{(x,t)},P_{0})  \le C_{3}\varepsilon$. 
\end{proof}

\begin{lem}\label{25.09.2014.2}
	Let $\theta, \alpha >0$. If $k \ge 400$,  there exists some constant 
	$ \varepsilon^{*} = \varepsilon^{*}(\n,\N,C_{0},\alpha,\theta)$ so that, if $\eta < 2 \varepsilon^{*}$,
	we have for all $\varepsilon \in [\frac{\eta}{2},\varepsilon^{*})$ that 
	for all $x \in F_{3}$ we have $h(x) < \frac{\theta}{100}$.
\end{lem}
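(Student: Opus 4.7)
The plan is to argue by contradiction: assume some $x \in F_{3}$ satisfies $h(x) \ge \theta/100$ and derive a clash between two estimates on the angle $\varangle(P_{(x,t)},P_{0})$ for the choice $t := h(x)$. The key is that one lower bound on this angle comes from the defining property of $F_{3}$, while an upper bound comes from the fact that any plane approximating the support on a ball of radius bounded below by a positive constant is forced to be nearly parallel to $P_{0}$ (because it is close to the global approximating plane $P_{0}$ from condition (D)).

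First I would observe that Lemma \ref{rem3.1}(i) forces $h(x) \le 40$ for every $x \in F$, so the assumption $\theta/100 \le h(x)$ automatically gives $h(x) \in [\theta/100, 40]$. In particular $t := h(x)$ satisfies $(x,t) \in S$ (again by Lemma \ref{rem3.1}(i)) and meets the hypotheses $h(x) \le t \le \min\{100 h(x),49\}$ of Lemma \ref{rem3.3}. Provided $\varepsilon^{*}$ is chosen below the threshold $\bar\varepsilon(\n,\N,C_{0},\alpha)$ of Lemma \ref{rem3.3} (and $k$ is taken large enough for that lemma to apply), I obtain
\[
\varangle(P_{(x,t)},P_{0}) > \tfrac{1}{2}\alpha.
\]

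On the other hand, the lower bound $100\,t = 100\,h(x) \ge \theta$ lets me invoke Lemma \ref{lem5.6} (with parameter $\theta$), yielding a constant $C = C(\n,\N,C_{0},\theta) \ge 1$ and a threshold $\varepsilon_{0} = \varepsilon_{0}(\n,\N,C_{0},\theta)$ such that, whenever $\varepsilon < \varepsilon_{0}$,
\[
\varangle(P_{(x,t)},P_{0}) \le C\,\varepsilon.
\]

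Combining the two inequalities gives $\tfrac{1}{2}\alpha < C\varepsilon$, so it is enough to set
\[
\varepsilon^{*} := \min\Bigl\{\,\bar\varepsilon(\n,\N,C_{0},\alpha),\ \varepsilon_{0}(\n,\N,C_{0},\theta),\ \tfrac{\alpha}{2C(\n,\N,C_{0},\theta)}\Bigr\},
\]
to reach a contradiction for every $\varepsilon \in [\eta/2,\varepsilon^{*})$, proving $h(x)<\theta/100$. The only mildly delicate point is matching the thresholds on $k$ and $\varepsilon$ across Lemma \ref{rem3.3} and Lemma \ref{lem5.6}, but both of these are harmless: they only tighten the choice of $\varepsilon^{*}$ (which is allowed to depend on $\alpha$ and $\theta$), and the constant $C$ in the upper bound on the angle depends on $\theta$ precisely because the lower bound $t \ge \theta/100$ is what keeps $\beta_{1;k}^{P_{0}}(x,t)$ under control via (A) and (D).
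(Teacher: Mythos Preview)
Your proof is correct and follows the same approach as the paper: apply Lemma~\ref{rem3.3} at $t=h(x)$ to obtain $\varangle(P_{(x,h(x))},P_{0})>\tfrac{1}{2}\alpha$, then invoke Lemma~\ref{lem5.6} in contrapositive form (with $\varepsilon^{*}\le\alpha/(2C)$) to conclude $h(x)<\theta/100$. The paper's argument is simply a terser rendition of yours, and your explicit verification that $t=h(x)\le 40<49$ (so that the hypothesis $t\le\min\{100h(x),49\}$ of Lemma~\ref{rem3.3} is met) is a welcome bit of care.
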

\begin{proof}
	Let $\theta, \alpha > 0$ and $k \ge 400$.
	We set $\varepsilon^{*}:= \min \{\bar \varepsilon, \varepsilon_{0},\frac{\alpha}{2C}\}$ where 
	$\bar \varepsilon$ is given by Lemma \ref{rem3.3} and $\varepsilon_{0}$ as well as 
	$C$ are given by Lemma \ref{lem5.6}.
	Let $\eta \le 2 \varepsilon < 2 \varepsilon^{*}$ and $x \in F_{3}$.
	With Lemma \ref{rem3.1} (i), we have $(x,h(x)) \in S$ and,
	with Lemma \ref{rem3.3}, we get $\varangle(P_{(x,h(x))},P_{0}) > \frac{1}{2} \alpha$.
	Hence we obtain $h(x) < \frac{\theta}{100}$ with Lemma \ref{lem5.6}.
\end{proof}

\begin{lem}\label{25.09.2014.1}
	Let $p=2$.
	There exists some $\hat k \ge 400$, some $\tilde \alpha =\tilde \alpha (\N) > 0$
	and some $\hat \theta = \hat \theta(\n,\N,C_{0}) \in (0,1)$
	so that for all $\alpha \in (0, \tilde \alpha]$ and $\theta \in (0,\hat \theta]$ there exists some
	$\hat \varepsilon = \hat \varepsilon(\n,\N,C_{0},\alpha,\theta)$ 
	so that, if $k \ge \hat k$ and $\eta < \hat \varepsilon^{2}$, we have for all
	$\varepsilon \in [\sqrt{\eta},\hat \varepsilon)$ that
	there exists some set $H_{\theta} \subset U_{6}$ and some constant
	$C=C(\n,\N,\K,C_{0},k)$ with
	$\cH^{\N}(U_{6} \setminus H_{\theta})<C \left(\frac{\varepsilon}{\theta^{\N+1}\alpha}\right)^{2}$ and, 
	for all $x \in F_{3} \cap \tilde{\tilde{F}}$, we have $d(\pi(x),H_{\theta}) > h(x)$.
\end{lem}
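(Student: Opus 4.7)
The set $H_\theta \subset U_6$ will be the one produced by Theorem~\ref{2.9.2014.1} applied to $\f = A$. By Lemma~\ref{ALipschitz} we have $\Lip_A \le 3\alpha$, and after multiplying by a smooth cutoff supported in $U_{12}$ (without changing $\Lip_A$ by more than a universal factor) the hypothesis that $\f$ has compact support is met. For $\p=2$ the conclusions of Theorem~\ref{2.9.2014.1} and Theorem~\ref{thm4.1} combine to give
\[
\cH^{\N}(U_6 \setminus H_\theta) \;\le\; \frac{C}{\theta^{2(\N+1)}(3\alpha)^{2}} \int_{U_{10}}\!\!\int_0^{2}\gamma(q,t)^{2}\,\frac{\dd t}{t}\,\dd\cH^{\N}(q) \;\le\; C\Bigl(\tfrac{\varepsilon}{\theta^{\N+1}\alpha}\Bigr)^{\!2},
\]
which is the required measure estimate; this is the only place where the hypothesis $\p=2$ is essential, as flagged at the beginning of section~\ref{04.02.2014.1}.

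The separation $d(\pi(x),H_\theta) > h(x)$ for $x \in F_3 \cap \tilde{\tilde{F}}$ will be proved by contradiction. Assume $y \in H_\theta$ satisfies $d(y,\pi(x)) \le h(x)$. Lemma~\ref{25.09.2014.2}, applied with $\hat\theta$ chosen sufficiently small, forces $h(x) < \theta/100 \le \theta$; Theorem~\ref{2.9.2014.1} then supplies an affine map $a_y: P_0 \to P_0^\perp$ with $\|A-a_y\|_{L^\infty(B(y,r)\cap P_0, P_0^\perp)} \le 3\hat C\, r\, \theta\, \alpha$ for every $r \le \theta$.

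Take the witnesses $(y_0,\tau)$ of $x \in F_3$, so that $\tau \in [h(x)/5, h(x)/2]$, $d(x,y_0) \le \tau/2$, and every plane $P$ with $\beta_{1;k}^{P}(y_0,\tau) \le \varepsilon$ satisfies $\varangle(P,P_0) \ge 3\alpha/4$. My aim is to exhibit a plane $Q$ with $\beta_{1;k}^{Q}(y_0,\tau) \le \varepsilon$ and $\varangle(Q,P_0) < 3\alpha/4$, contradicting the above. The candidate is $Q = G(a_y)$. The $\beta$-bound comes from splitting $B(y_0, k\tau)$ into its $\tilde F$-part---which by the $\tilde{\tilde{F}}$-property carries at least $99\%$ of the $\mu$-mass of $B(y_0,k\tau)$ and whose points lie within $\varepsilon^{1/2} d(\cdot) \lesssim \varepsilon^{1/2}\tau$ of the graph of $A$ (Definition~\ref{DefinitionvonFtilde}), which in turn lies within $C_k \tau\theta\alpha$ of $Q$ via the $L^\infty$ estimate---and its $1\%$-complement, where a crude Lipschitz bound on $a_y$ together with the closeness of $y_0$ to $Q$ via any nearby $\tilde F$ point contributes an acceptable error. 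Together these yield $\beta_{1;k}^{Q}(y_0,\tau) \lesssim \varepsilon^{1/2} + \theta\alpha$.

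The main obstacle is to upgrade this bound to $\le \varepsilon$ and to obtain the angle bound $\varangle(Q,P_0) < 3\alpha/4$; a crude bound via Lemma~\ref{lem5.2} only gives $|Da_y| \lesssim \Lip_A \sim \alpha$, which is not strictly less than $3\alpha/4$. To sharpen both estimates I would exploit that $a_y$ approximates $A$ simultaneously at \emph{all} scales $r \le \theta$, combined with hypothesis~(D) (giving $\beta_{1;k}^{P_0}(0,5) \le \eta \le \varepsilon^{2}$): comparing $G(a_y)$ with $P_0$ at the macroscopic scale $r \sim 1$ via Lemma~\ref{lem2.6} and iterating down to scale $\tau$ using Corollary~\ref{thm2.4} should pin the slope of $a_y$ down to $C\varepsilon$. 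Taking $\hat\varepsilon \ll \alpha$ and $\hat\theta$ small enough then makes both $\beta_{1;k}^{Q}(y_0,\tau) < \varepsilon$ and $\varangle(Q,P_0) < 3\alpha/4$, producing the desired contradiction.
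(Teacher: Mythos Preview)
Your construction of $H_\theta$ and the measure estimate $\cH^{\N}(U_6\setminus H_\theta)\le C(\varepsilon/(\theta^{\N+1}\alpha))^2$ are correct and match the paper's argument exactly: combine Theorem~\ref{2.9.2014.1} (with $\f=A$, $\p=2$) with Theorem~\ref{thm4.1}.

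The separation argument, however, has a real gap that cannot be repaired along the line you sketch. You want to exhibit a plane $Q=G(a_y)$ with $\beta_{1;k}^{Q}(y_0,\tau)\le\varepsilon$ and $\varangle(Q,P_0)<3\alpha/4$, contradicting the $F_3$ condition directly. But your own bound $\beta_{1;k}^{Q}(y_0,\tau)\lesssim\varepsilon^{1/2}+\theta\alpha$ is sharp: the $\varepsilon^{1/2}$ comes from the Definition~\ref{DefinitionvonFtilde} of $\tilde F$ (points are only within $\varepsilon^{1/2}d(\cdot)$ of the graph of $A$), and no iteration can improve this to $\varepsilon$. Your proposed sharpening via a macroscopic comparison at $r\sim 1$ is not available either, since Theorem~\ref{2.9.2014.1} only gives $\|A-a_y\|_{L^\infty(B(y,r))}\le\hat C r\theta\Lip_A$ for $r\le\theta<1$, not for $r\sim 1$; there is no control of $a_y$ at scale $1$ from that theorem.

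The paper sidesteps this barrier by never attempting to verify $\beta^Q\le\varepsilon$. Instead it works with the planes $P_{(x,t)}$ that \emph{already} satisfy $\beta_{1;k}^{P_{(x,t)}}(x,t)\le 2\varepsilon$ because $(x,t)\in S$ for every $t\in[h(x),\theta/100]$ (Lemma~\ref{rem3.1}). The key step is an angle comparison: since $x\in\tilde{\tilde F}$, Corollary~\ref{04.09.12.1} yields a nondegenerate simplex with vertices in $\tilde F\cap B(x,t)$; those vertices are simultaneously close to $P_{(x,t)}$ (by $\beta\le 2\varepsilon$) and close to $P_{\pi(x)}:=G(a_{\pi(x)})$ (by the $\tilde F$ bound plus the $L^\infty$ estimate from Theorem~\ref{2.9.2014.1}), so two applications of Lemma~\ref{21.11.11.2} give $\varangle(P_{(x,t)},P_{\pi(x)})\le\alpha/200$ for \emph{every} $t\in[h(x),\theta/100]$. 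Applying this at both endpoints yields $\varangle(P_{(x,h(x))},P_{(x,\theta/100)})\le\alpha/100$; Lemma~\ref{lem5.6} gives $\varangle(P_{(x,\theta/100)},P_0)\le C\varepsilon$, hence $\varangle(P_{(x,h(x))},P_0)\le\alpha/50$. This contradicts Lemma~\ref{rem3.3}, which for $x\in F_3$ forces $\varangle(P_{(x,h(x))},P_0)>\alpha/2$. The point is that the contradiction is with Lemma~\ref{rem3.3}, not with the raw $F_3$ definition, and this avoids ever needing a $\beta\le\varepsilon$ bound for a new plane.
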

\begin{proof}
	Let $\tilde k$ and $\tilde \alpha(\N)$ be the thresholds given by Theorem \ref{thm4.1} and let
	$\hat C=\hat C(\n,\N)$ be the constant given by Theorem \ref{2.9.2014.1}.
	Moreover, let $C_{1}=C_{1}(\n,\N,C_{0})$ and $C_{2}=C_{2}(\n,\N,C_{0})$ be the constants given by 
	Corollary \ref{04.09.12.1} applied with $\lambda = \frac{\delta}{4}$, and $\delta=\delta(\n,\N)$ is the value fixed on
	page \pageref{Wahlvondelta}. We set 
	$\hat \theta :=\frac{1}{400}\left[ 18\N(10^{\N}+1)\left(\frac{C_{1}}{4}\right)^{\N+1} \hat C \right]^{-1}$
	and choose $\theta \in (0,\hat \theta]$.
	Let $\alpha \in (0, \tilde \alpha]$, and let 
	$\bar \varepsilon_{1} = \bar \varepsilon(\n,\N,C_{0},\alpha)$, 
	$\bar \varepsilon_{2} = \bar \varepsilon(\n,\N,C_{0},\alpha)$, 
	$\tilde \varepsilon=\tilde \varepsilon(\n,\N,C_{0},\alpha)$, 
	$\varepsilon_{0}=\varepsilon_{0}(\n,\N,C_{0},\theta)$,
	$\varepsilon^{*}=\varepsilon^{*}(\n,\N,C_{0},\alpha,\theta)$
	be the thresholds given by Lemma \ref{rem3.3}, \ref{AstetigaufU0}, Theorem \ref{thm4.1}, Lemma \ref{lem5.6} and
	Lemma \ref{25.09.2014.2} respectively.
	Finally, let $C$ be the constant from Lemma \ref{lem5.6}.
	We set
	$\hat \varepsilon:= \min\left\{ \bar \varepsilon_{1},\bar \varepsilon_{2},\tilde \varepsilon, 
		\varepsilon_{0}, \varepsilon^{*}, (\hat C \theta \alpha)^{2},
		\frac{\alpha}{400} \left[4\N(10^{\N}+1)\left(\frac{C_{1}}{4}\right)^{\N+1}2C_{2}\right]^{-1},
		\frac{\alpha}{100C}\right\}$
 	and assume that $k \ge \hat k:=\max\{\tilde k,400\}$ and $\eta \le {\hat \varepsilon}^{2}$. 
	Now let $\varepsilon > 0$ with
	$\eta \le \varepsilon^{2} < {\hat \varepsilon}^{2}$.

	Until now, we defined the map $A$ only on $U_{12}=B(0,12)\cap P_{0}$
	(see page \pageref{Aeindeutigdefiniert}).
	Furthermore, we have shown that $A$ is Lipschitz continuous with Lipschitz constant
	$3\alpha$ (see Lemma \ref{ALipschitz} on page \pageref{ALipschitz}).
	With Lemma \ref{8.11.12.1}, an application of Kirszbraun's Theorem,
	there exists an extension $\tilde A : P_{0} \to \R^{\n}$ of $A$ with compact support, the same Lipschitz 
	constant $3\alpha$ and $A = \tilde{A}$ on $U_{12}$.
	If one wants to omit Zorn's lemma, used for the proof of Lemma \ref{8.11.12.1}, 
	one can get the same result with a slightly larger Lipschitz constant (see the
	remark after Lemma \ref{8.11.12.1} for details).
	\label{FortsetzungvonA}
	We denote this extension of $A$ also by $A$.

	Using Theorem \ref{2.9.2014.1} with $g = A$, $\p=2$ and Theorem \ref{thm4.1}, there exist some set 
	$H_{\theta} \subset U_{6}$ and some constant $C=C(\n,\N,\K,C_{0},k)$ with
	$\cH^{\N}(U_{6} \setminus H_{\theta}) 
		\le \frac{C(\N)}{\theta^{2(\N+1)} \Lip_{A}^{2}} C \varepsilon^{2}$.
	Furthermore, we get for all $y \in P_{0}$ some affine map $a_{y}:P_{0} \to P_{0}^{\perp}$ so that,
	if $r \le \theta$ and $B(y,r) \cap H_{\theta} \neq \emptyset$, we have
	$\|A-a_{y}\|_{L^{\infty}(B(y,r)\cap P_{0},P_{0}^{\perp})} \le \hat C r \theta \Lip_{A}$.
	We recall that $\Lip_{A}=3 \alpha$ (cf. Lemma \ref{ALipschitz}).
	For $x \in F_{3} \cap \tilde{\tilde F} \subset F_{3} \cap \tilde F$, 
	we have with the previous lemma that $h(x) < \frac{\theta}{100}$.
	Let $t \in [h(x),\frac{\theta}{100}]$.
	If $x' \in B(x,2t) \cap \tilde{F}$, we obtain with Lemma \ref{rem3.8} and the definition of $\tilde F$
	$d(x',\pi(x')+ A(\pi(x')))  \le \varepsilon^{\frac{1}{2}} \left( d(x) + d(x,x') \right)
		 \le 3 \varepsilon^{\frac{1}{2}}t$.
	Let $P_{\pi(x)}$ denote the $\N$-dimensional plane, which is the graph of the affine map $a_{\pi(x)}$.
	Now we assume, contrary to the statement of this lemma, that $d(\pi(x),H_{\theta}) \le h(x)$. This implies
	$\pi(B(x,2t)) \cap H_{\theta} \neq \emptyset$, and so we have 
	$d(\pi(x')+ A(\pi(x')),P_{\pi(x)})
		 \le \|A-a_{\pi(x)}\|_{L^{\infty}(B(\pi(x),2t)\cap P_{0},P_{0}^{\perp})}
		 \le 6 \hat C \theta \alpha t$ for all $x' \in B(x,2t) \cap \tilde F$.
	We combine those estimates and obtain using $ 3\varepsilon^{\frac{1}{2}} \le 3\hat C \theta \alpha$
	\begin{align}
		d(x',P_{\pi(x)}) & \le d(x',\pi(x')+A(\pi(x'))) + d(\pi(x')+ A(\pi(x')),P_{\pi(x)}) 
		 \le 9 \hat C \theta \alpha t. \label{19.1.10;11}
	\end{align}
	Since $h(x) \le t$, we get $(x,t) \in S \subset S_{total}$ with Lemma \ref{rem3.1} (i) so that
	we have $\delta(B(x,t)) \ge \frac{\delta}{2}$. If $x \in \tilde{\tilde{F}}$, this estimate and
	the definition of $\tilde{\tilde{F}}$ implies $\delta(\tilde{F} \cap B(x,t)) > \frac{1}{4} \delta$.
	
	Now we apply Corollary \ref{04.09.12.1} ($\Upsilon=\tilde F$, $\lambda = \frac{\delta}{4}$),
	and so there exist constants $C_{1}(\n,\N,C_{0})$, $C_{2}(\n,\N,C_{0})$ 
	and an $(\N,10\N \frac{t}{C_1})$-simplex 
	$T=\Delta(x_0,\dots,x_{\N}) \in F \cap B(x,t) \cap \tilde F$  
	so that \mbox{$\mu(\tilde B_{i}) \ge  \frac{t^{\N}}{C_{2}}$} for all $i \in \{0,\dots, \N \}$
	where $\tilde B_{i}:=B\left(x_i,\frac{t}{C_1}\right) \cap B(x,t) \cap \tilde F$.
	With $(x,t) \in S \subset S_{total}$, we get for all $i\in \{0,\dots,\N\}$
	\begin{align*}
		\frac{1}{\mu(\tilde B_{i})} \int_{\tilde B_{i}} d(z,P_{(x,t)}) \dd \mu(z) 
		& \le C_{2} t \beta_{1;k}^{P_{(x,t)}}(x,t) 
		 \le 2C_{2} t  \varepsilon.
	\end{align*}
	This implies for all $i\in \{0,\dots,\N\}$ the existence of $y_{i} \in \tilde B_{i}$ with
	$d(y_{i},P_{(x,t)}) \le 2C_{2}t \varepsilon$.
	With Lemma \ref{17.11.11.2}, we deduce that $S:=\Delta(y_{0},\dots,y_{\N}) \subset B(x,t)$ is 
	an $(\N,8\N\frac{t}{C_{1}})$-simplex.
	Next, we apply Lemma \ref{21.11.11.2} ($m=\N$, $C=\frac{C_{1}}{8\N}$,$\hat C =1$, $\sigma=2C_{2} \varepsilon$) and get
	$\varangle(P_{(x,t)},P_{y_{0},\dots,y_{\N}}) \le\frac{\alpha}{400}$.
	We have $y_{i} \in \tilde B_{i} \subset B(x,2t) \cap \tilde F$ and hence 
	we get with \eqref{19.1.10;11} and Lemma \ref{21.11.11.2}
	($C=\frac{C_{1}}{8\N}$, $\hat C = 1$, $\sigma= 9 \hat C \theta \alpha$)
	$\varangle(P_{y_{0},\dots,y_{\N}},P_{\pi(x)}) \le \frac{\alpha}{400}$.
	We combine those two angel estimates and conclude 
	$ \varangle(P_{(x,t)},P_{\pi(x)}) \le \frac{\alpha}{200}$,
	which is true for all $x \in F_{3} \cap \tilde{\tilde F}$ with $d(\pi(x),H_{\theta}) \le h(x)$ 
	and all $t \in [h(x),\frac{\theta}{100}]$.
	Now we use this result for $t=h(x)$ and for $t= \frac{\theta}{100}$ and obtain
	$\varangle(P_{(x,h(x))},P_{(x,\frac{\theta}{100})}) \le \frac{\alpha}{100}$.
	Together with Lemma \ref{lem5.6} we get $\varangle(P_{(x,h(x))},P_{0}) \le \frac{\alpha}{50}$.
	This is in contradiction to Lemma \ref{rem3.3} hence our assumption that
	$d(\pi(x),H_{\theta}) \le h(x)$ is invalid for all $x \in F_{3} \cap \tilde{\tilde{F}}$.
\end{proof}

\begin{thm}
	Let $p =2$.
	There exists some constants $\bar {\bar k} \ge 4$, 
	$0 < \bar {\bar \alpha} = \bar {\bar \alpha}(\N) < \frac{1}{6}$ and 
	$0 < \bar { \bar \theta} = \bar {\bar \theta}(\n,\N,C_{0})$ so that, for all 
	$\alpha \in (0,\bar {\bar \alpha}]$ and all $\theta \in (0,\bar {\bar \theta}]$,
	there exists some
	$0<\bar {\bar \varepsilon}=\bar {\bar \varepsilon}(\n,\N,C_{0},\alpha,\theta)< \frac{1}{8}$ so that,
	if $ k \ge \bar {\bar k}$ and $\eta < \bar {\bar \varepsilon}^{2}$, we obtain for all 
	$\varepsilon \in [\sqrt{\eta},\bar{\bar \varepsilon})$ 
	\[ \mu(F_{3}) \le 10^{-6}.\]
\end{thm}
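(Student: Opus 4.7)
Set $E := F_{3} \cap \tilde{\tilde{F}}$ and split $\mu(F_{3}) \le \mu(E) + \mu(F \setminus \tilde{\tilde{F}})$; the second term is already controlled by Lemma \ref{lem5.8;1}. To bound $\mu(E)$, I plan to exploit Lemma \ref{25.09.2014.1}: each $x \in E$ satisfies $d(\pi(x), H_{\theta}) > h(x) > 0$, so the projected closed ball $B(\pi(x), h(x)) \cap P_{0}$ lies in $U_{6} \setminus H_{\theta}$, a set of small $\cH^{\N}$-measure. The goal is to transfer this $\cH^{\N}$-bound on $P_{0}$ into a $\mu$-bound on $E \subset \mathbb{R}^{\n}$, using that $E \subset \tilde{F}$ lies close to the graph of the Lipschitz function $A$.

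The first step is a Vitali-type $5r$ covering argument in $P_{0}$ applied to $\{B(\pi(x), h(x)) : x \in E\}$ (the greedy selection ordered by the radii $h(x)$ is possible since $h(x) \le \theta/100$ by Lemma \ref{25.09.2014.2}). This produces a countable pairwise disjoint subfamily $\{B_{j} = B(\pi(x_{j}), h(x_{j}))\}$ with $x_{j} \in E$ enjoying the refined property that every $B(\pi(x), h(x))$, $x \in E$, meets some $B_{j}$ with $h(x_{j}) \ge h(x)/2$. Since each $B_{j}$ is contained in $U_{6} \setminus H_{\theta}$,
\begin{align*}
 \vol \sum_{j} h(x_{j})^{\N} \;=\; \sum_{j} \cH^{\N}(B_{j}) \;\le\; \cH^{\N}(U_{6} \setminus H_{\theta}) \;\le\; C\Bigl(\frac{\varepsilon}{\theta^{\N+1}\alpha}\Bigr)^{2}.
\end{align*}

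The second step lifts this cover from $P_{0}$ to $\mathbb{R}^{\n}$. For $x \in E$ whose ball meets $B_{j}$ with $h(x) \le 2 h(x_{j})$, the triangle inequality yields $d(\pi(x), \pi(x_{j})) \le h(x) + h(x_{j}) \le 3 h(x_{j})$; combining this with the defining estimate $d(y, \pi(y)+A(\pi(y))) \le \varepsilon^{\frac{1}{2}} h(y)$ for $y \in \tilde{F}$ (applied at $y = x$ and $y = x_{j}$) and the $3\alpha$-Lipschitz continuity of $A$ on $U_{12}$ (Lemma \ref{ALipschitz}) gives
\begin{align*}
 d(x, x_{j}) \;\le\; \varepsilon^{\frac{1}{2}} h(x) + (1+3\alpha)\cdot 3 h(x_{j}) + \varepsilon^{\frac{1}{2}} h(x_{j}) \;\le\; 10\, h(x_{j})
\end{align*}
as soon as $\alpha \le 1/6$ and $\varepsilon \le 1$. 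Thus $E \subset \bigcup_{j} B(x_{j}, 10\,h(x_{j}))$, and upper Ahlfors regularity (B) gives $\mu(E) \le \sum_{j} C_{0}(20 h(x_{j}))^{\N} \le C'(\varepsilon/(\theta^{\N+1}\alpha))^{2}$ with $C' = C'(\n,\N,\K,C_{0},k)$.

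Finally I fix the parameters in this order: choose $\bar{\bar{k}}$ and $\bar{\bar{\alpha}}(\N) \le 1/6$ to meet all thresholds inherited from Lemmas \ref{ALipschitz}, \ref{lem5.8;1}, \ref{25.09.2014.1} and \ref{25.09.2014.2}; then pick $\bar{\bar{\theta}}(\n,\N,C_{0})$ below the thresholds of those lemmas; finally take $\bar{\bar{\varepsilon}}(\n,\N,C_{0},\alpha,\theta) < 1/8$ small enough that both $C'(\bar{\bar{\varepsilon}}/(\theta^{\N+1}\alpha))^{2} \le \tfrac{1}{2}\cdot 10^{-6}$ and $C_{5}\bar{\bar{\varepsilon}}^{\frac{1}{2}} \le \tfrac{1}{2}\cdot 10^{-6}$. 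The main subtlety is the covering step: a plain Besicovitch decomposition in $P_{0}$ provides disjoint balls covering $\pi(E)$ but gives no control of $h(y)$ relative to the radius of its covering ball, so the graph-thickness term $\varepsilon^{\frac{1}{2}} h(y)$ cannot be absorbed; the ordered greedy $5r$ selection is exactly what produces the crucial ratio $h(x_{j}) \ge h(x)/2$ and thereby converts the $\cH^{\N}$-estimate in $P_{0}$ into the desired $\mu$-estimate in $\mathbb{R}^{\n}$.
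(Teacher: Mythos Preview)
Your proof is correct and shares the same architecture as the paper's: split $\mu(F_{3})\le\mu(F_{3}\cap\tilde{\tilde F})+\mu(F\setminus\tilde{\tilde F})$, invoke Lemma~\ref{lem5.8;1} for the second term, and for the first term use Lemmas~\ref{25.09.2014.2} and~\ref{25.09.2014.1} to trap the relevant balls inside $U_{6}\setminus H_{\theta}$ and then convert via property~(B).

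The covering step is where you diverge. The paper applies Besicovitch in the ambient space $\mathbb{R}^{\n}$ to the balls $B(x,2h(x))$, $x\in F_{3}\cap\tilde{\tilde F}$, and then argues that for disjoint balls $B_{1},B_{2}$ in the same Besicovitch family the \emph{projections} $\pi(\tfrac12 B_{1})$ and $\pi(\tfrac12 B_{2})$ are disjoint in $P_{0}$, using the $\tilde F$-closeness to the graph of $A$ and the $3\alpha$-Lipschitz bound; this lets one sum the projected areas inside $U_{6}\setminus H_{\theta}$. You instead run Vitali's greedy $5r$-selection directly in $P_{0}$ on the balls $B(\pi(x),h(x))$, harvest the radius comparison $h(x_{j})\ge h(x)/2$, and then lift the covering back to $\mathbb{R}^{\n}$ by the same graph-closeness. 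Both directions exploit the identical geometric input; your route avoids the $N_{0}(\n)$ Besicovitch families and is a shade more streamlined, while the paper's route makes the disjointness of projections explicit. Your closing remark about why Besicovitch in $P_{0}$ alone would be insufficient is accurate, though note that the paper never runs Besicovitch in $P_{0}$---it runs it in $\mathbb{R}^{\n}$ and then projects.
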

\begin{proof}
	Let $\bar {\bar k}$ be the maximum and $\bar {\bar \alpha}<\frac{1}{6}$ be the minimum of all thresholds for
	$k$ and $\alpha$ given by Lemma \ref{ALipschitz}, \ref{lem5.8;1}, \ref{25.09.2014.2} and \ref{25.09.2014.1}. 
	Furthermore, we set $\bar {\bar \theta}:=\hat \theta$, where $\hat \theta=\hat \theta(\n,\N,C_{0})$ 
	is given by Lemma \ref{25.09.2014.1}. 
	Let $0 < \alpha \le \bar {\bar \alpha}$ and $0 < \theta \le \bar {\bar \theta}$. We define 
	$\bar {\bar \varepsilon}=\bar {\bar \varepsilon}(\n,\N,C_{0},\alpha,\theta)$ 
	as the minimum of $\frac{1}{16}$, a small constant depending on $\n,\N,\K,C_{0},\alpha, \theta$ given by 
	the last lines of this proof, and of all upper bounds for $\varepsilon$ stated in 
	Lemma \ref{ALipschitz}, \ref{lem5.8;1}, \ref{25.09.2014.2} and \ref{25.09.2014.1}.
	Let $k \ge \bar {\bar k}$ and $\eta \le \varepsilon^{2} < \bar { \bar \varepsilon}^{2}$.
	We have
	$ \mu(F_{3}) \le \mu(F_{3} \cap \tilde{\tilde{F}}) + \mu(F_{3}\setminus \tilde{\tilde{F}})$.
	With Lemma \ref{lem5.8;1} ($p=2$), there exists some constant $C=C(\n,\N,\K,C_{0})$ so that
	$\mu(F_{3}\setminus \tilde{\tilde{F}}) \le \mu(F\setminus \tilde{\tilde{F}}) 
		\le C \varepsilon^{\frac{1}{2}}$.
	Hence we only have to consider $\mu(F_{3} \cap \tilde{\tilde{F}})$.
	We set
	$\mathcal{G} := \left\{B(x,2h(x)) | x \in F_{3} \cap \tilde{\tilde{F}}) \right\}$.
	This is a set of nondegenerate balls because $x \in F_{3} \subset F \setminus \mathcal{Z}$. 
	Furthermore, we have $h(x) \le 50$ for all $x \in F$ (see Definition of $h$ on page \pageref{Definitionvonh}).
	With Besicovitch's covering theorem \cite[1.5.2, Thm. 2]{Evans} there exist
	$N_{0}$ families $\mathcal{B}_{l} \subset \mathcal{G}$, $l=1,...,N_{0}$, of disjoint balls such that
	we conclude with property (B) from page \pageref{Grundeigenschaften}
	\begin{align*}
		\mu(F_{3} \cap \tilde{\tilde{F}}) 
		& \le \sum_{l=1}^{N_{0}} \sum_{B \in \mathcal{B}_{l}} \mu(B \cap \tilde{\tilde{F}})
		 \stackrel{\text{(B)}}{\le} C_{0} \sum_{l=1}^{N_{0}} \sum_{B \in \mathcal{B}_{l}} (\diam B)^{\N}.
	\end{align*}
	Let $ 1 \le l \le N_{0}$ and let
	$B_{1} = B(x_{1},2h(x_{1})), B_{2} = B(x_{2},2h(x_{2})) \in \mathcal{B}_{l}$ with $B_{1}\neq B_{2}$.
	Since the balls in $\mathcal{B}_{l}$ are disjoint, we deduce 
	$2h(x_{1})+2h(x_{2}) \le d(x_{1},x_{2})$ and, because of the definition of $\tilde F$ and
	Lemma \ref{rem3.8}, we get 
	$d(x_{i},\pi(x_{i})+A(\pi(x_{i}))) \le \varepsilon^{\frac{1}{2}}d(x_{i}) \le \varepsilon^{\frac{1}{2}}h(x_{i})$
	for $i=1,2$.
	Since $ \varepsilon^{\frac{1}{2}} < \frac{1}{4}$, 
	$\alpha < \frac{1}{6}$ and  $A$ is $3\alpha$ Lipschitz continuous,
	the former two estimates imply $h(x_{1}) + h(x_{2}) < d(\pi(x_{1}),\pi(x_{2}))$.
	Thus $\pi(\frac{1}{2}B_{1})$ and $\pi(\frac{1}{2}B_{2})$ are disjoint. 
	We have $x_{i} \in \left( \tilde{\tilde{F}} \cap F_{3}\right) \subset F \subset B(0,5)$ 
	for $i=1,2$. With Lemma \ref{25.09.2014.2}, we conclude 
	that $h(x_{i}) \le \frac{\theta}{100} < \frac{1}{2}$.
	This implies $\pi(\frac{1}{2}B_{i}) \subset U_{6}$.
	Using Lemma \ref{25.09.2014.1}, there exists some set $H_{\theta} \subset U_{6}$
	and some constant $C=C(\n,\N,\K,C_{0},k)$ with
	$\cH^{\N}(U_{6} \setminus H_{\theta})<C \left(\frac{\varepsilon}{\theta^{\N+1}\alpha}\right)^{2}$ so that
	$d(\pi(x),H_{\theta}) > h(x)$ for all $x \in F_{3} \cap \tilde{\tilde{F}}$,
	in particular for $x=x_{i}$.
	We conclude that
	$ \pi(\frac{1}{2}B_{i}) \cap H_{\theta} = \emptyset$, and hence
		\[\sum_{B \in \mathcal{B}_{l}} (\diam B)^{\N} 
		= 4^{\N}\sum_{B \in \mathcal{B}_{l}} \left({\textstyle\frac{1}{2}}\diam \pi\left({\textstyle\frac{1}{2}}B\right)\right)^{\N}
		= 4^{\N}\sum_{B \in \mathcal{B}_{l}} \frac{1}{\vol} \cH^{\N}\left(\pi\left({\textstyle\frac{1}{2}}B\right)\right)
		\le \frac{4^{\N}}{\vol} \cH^{\N}(U_{6} \setminus H_{\theta}).\]
	Now we obtain
	\[ \mu(F_{3} \cap \tilde{\tilde{F}}) \le C_{0}N_{0} \frac{4^{\N}}{\vol} \cH^{\N}(U_{6} \setminus H_{\theta})
		\le C \left(\frac{\varepsilon}{\theta^{\N+1}\alpha}\right)^{2}.\]
	and we have already shown that $\mu(F_{3} \setminus \tilde{\tilde{F}}) \le C \varepsilon^{\frac{1}{2}}$.
	Using $\varepsilon < \bar{\bar \varepsilon}$, we finally get
	\mbox{$\mu(F_{3}) < 10^{-6}$}.
\end{proof}

\renewcommand{\theequation}{\Alph{section}.\arabic{equation}}
\appendix
\section{Measuretheoretical statements}
\begin{lem} \label{22.2.2012.1}
	Let $\mathcal{E}$ be a set of disjoint balls (open or closed) in $\R^{\n}$ with radius
	equal or larger then $s \in (0,\infty)$ and $B \subset B(x,r)$ 
	for all $B \in \mathcal{E}$.
	Then $\mathcal{E}$ is a finite set with $\# \mathcal{E} \le \left(\frac{r}{s}\right)^\n$.
\end{lem}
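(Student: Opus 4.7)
The plan is to use a straightforward volume-comparison argument based on Lebesgue (or equivalently $\cH^{\n}$) measure in $\R^{\n}$. Each ball $B \in \mathcal{E}$ has radius at least $s$, so $\cH^{\n}(B) \ge \vo{\n} s^{\n}$, where $\vo{\n}$ denotes the volume of the $\n$-dimensional unit ball. Since all balls of $\mathcal{E}$ are contained in $B(x,r)$ and are pairwise disjoint (up to a null set, in the case that some are closed and some open, which does not affect the $\cH^{\n}$-measure), their total volume is bounded above by $\cH^{\n}(B(x,r)) = \vo{\n} r^{\n}$.

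Concretely, I would first fix any finite subfamily $\{B_{1}, \dots, B_{L}\} \subset \mathcal{E}$ with $L \in \mathbb{N}$. Using disjointness and monotonicity of $\cH^{\n}$, I would write
\[
L \ \vo{\n} s^{\n} \le \sum_{i=1}^{L} \cH^{\n}(B_{i}) = \cH^{\n}\!\Bigl(\bigcup_{i=1}^{L} B_{i}\Bigr) \le \cH^{\n}(B(x,r)) = \vo{\n} r^{\n},
\]
so that $L \le (r/s)^{\n}$. Since this bound holds uniformly for every finite subfamily, $\mathcal{E}$ itself must be finite with $\#\mathcal{E} \le (r/s)^{\n}$.

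There is no real obstacle here; the only minor point to address is that the balls are allowed to be either open or closed. This is harmless because the boundary of any ball in $\R^{\n}$ has $\cH^{\n}$-measure zero, so the identity $\cH^{\n}(B) = \vo{\n}(\operatorname{rad} B)^{\n}$ holds in both cases, and disjointness of the interiors (which is implied by disjointness of the balls themselves) is all that is required for the additivity step. Thus the volume comparison goes through without change, yielding the claimed bound.
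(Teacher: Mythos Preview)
Your proof is correct and follows essentially the same volume-comparison argument as the paper: pick finitely many balls, bound their total measure below by $L\,\vo{\n}s^{\n}$ and above by $\vo{\n}r^{\n}$, and conclude $L \le (r/s)^{\n}$. The paper uses Lebesgue measure $\mathcal{L}^{\n}$ rather than $\cH^{\n}$, but the reasoning is identical.
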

\begin{proof}
	Choose $l$ different balls $B_{l} \in \mathcal{E}$
	and let $\vo{\n}$ be the volume of the $\n$-dimensional unit sphere.
	We have
	$l s^{\n}\vo{\n} \le \sum_{i=1}^{l}\mathcal{L}^{\n}\left(B_{i}\right) 
		\le \mathcal{L}^{\n}(B(x,r)) 
		 = \vo{\n}(r)^{\n}$.
	This implies $l \le \left(\frac{r}{s}\right)^{\n}$ and hence $\# \mathcal{E} \le \left(\frac{r}{s}\right)^{\n}$.
\end{proof}

\begin{lem} \label{23.2.2012.1}
	Let $ s>0$ and $B(x,r)$ be an open or closed ball in $\R^m$ with $s < r$. There exists some 
	family  $\mathcal{E} $ of disjoint closed balls with
	$\diam B = 2s$ for all $B \in \mathcal{E}$,
	$ B(x,r) \subset \bigcup_{B \in \mathcal{E}} 5B$
	and
	$\# \mathcal{E}\le \left(\frac{2r}{s}\right)^m$.
\end{lem}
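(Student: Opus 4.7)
The plan is to construct $\mathcal{E}$ as a maximal disjoint subfamily of the collection
\[
\mathcal{F} := \{ B(y,s) \mid y \in B(x,r) \}
\]
of closed balls of radius $s$ centred in $B(x,r)$, and then derive the covering property from maximality and the cardinality bound from the previous Lemma \ref{22.2.2012.1}.

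First I would note that any disjoint subfamily of $\mathcal{F}$ consists of balls contained in $B(x,r+s) \subset B(x,2r)$ (using $s<r$), so by Lemma \ref{22.2.2012.1} applied with $r$ replaced by $2r$ the cardinality of such a subfamily is at most $(2r/s)^m$, which is finite. Hence a maximal disjoint subfamily $\mathcal{E} \subset \mathcal{F}$ exists (no Zorn needed — simply pick disjoint balls one by one; the process must terminate after at most $(2r/s)^m$ steps), and this same bound gives $\#\mathcal{E} \le (2r/s)^m$.

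Next I would verify the covering property: for any $z \in B(x,r)$, the ball $B(z,s)$ belongs to $\mathcal{F}$, so by maximality it cannot be disjoint from every member of $\mathcal{E}$. Hence there exists $B(y,s) \in \mathcal{E}$ with $B(z,s) \cap B(y,s) \ne \emptyset$, which forces $d(y,z) \le 2s$. Then
\[
d(y,z) \le 2s < 5s,
\]
so $z \in B(y,5s) = 5\,B(y,s)$. This shows $B(x,r) \subset \bigcup_{B \in \mathcal{E}} 5B$.

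There is no real obstacle here — the only mildly delicate point is ensuring that a maximal disjoint subfamily exists without invoking Zorn's lemma, which is handled by the a priori cardinality bound from Lemma \ref{22.2.2012.1} forcing the greedy construction to terminate. All three claimed properties (equal diameter $2s$, disjointness, covering, cardinality bound) then follow immediately from the construction.
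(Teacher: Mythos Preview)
Your proof is correct and follows essentially the same approach as the paper: the paper defines the same family $\mathcal{F}$, invokes Vitali's covering theorem to obtain the disjoint subfamily $\mathcal{E}$ with the $5B$-covering property, and then applies Lemma \ref{22.2.2012.1} with the containment $B \subset B(x,2r)$ to get the cardinality bound. You have simply unpacked the Vitali step by hand (maximal disjoint subfamily plus the $d(y,z)\le 2s$ argument), which is entirely equivalent.
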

\begin{proof}
	Set $ \mathcal{F} = \left\{ B(y,s) | y \in B(x,r) \right\}$. With Vitali's covering theorem \cite[1.5.1, Thm 1]{Evans}
	there exists a countable family $\mathcal{E}$ of disjoint balls in $\mathcal{F}$ such that
	$B(x,r) \subset \bigcup_{B \in \mathcal{E}}5B$.
	Due to $s<r$, we get $ B \subset B(x,2r) $ for all $B \in \mathcal E$ and hence
	Lemma \ref{22.2.2012.1} implies $\# \mathcal{E} \le \left(\frac{2r}{s}\right)^m$.
\end{proof}

\begin{lem}\label{30.08.11.1}
	Let $A \subset \R^{\n}$ be a closed set with $\nu(A)>0$, where $\nu$ is some outer measure on $\R^n$. 
	There exists some $x \in A$ so that
	$\nu(B(x,h))>0$ for all $h > 0$.
\end{lem}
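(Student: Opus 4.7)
The plan is to argue by contradiction using the countable subadditivity of the outer measure together with the Lindel\"of property of $\R^{\n}$.

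First I would suppose, for the sake of contradiction, that for every $x \in A$ there exists some radius $h_x > 0$ such that $\nu(B(x,h_x)) = 0$. The family of open balls $\{ \mathring{B}(x, h_x) \}_{x \in A}$ then forms an open cover of $A$. Since $\R^{\n}$ is second-countable, so is its subspace $A$, and hence the open cover $\{\mathring{B}(x,h_x)\}_{x\in A}$ admits a countable subcover $\{\mathring{B}(x_i, h_{x_i})\}_{i \in \mathbb{N}}$ with $x_i \in A$, so that $A \subset \bigcup_{i \in \mathbb{N}} B(x_i, h_{x_i})$.

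Then I would use monotonicity and countable subadditivity of the outer measure $\nu$ to conclude
\[
\nu(A) \le \sum_{i \in \mathbb{N}} \nu(B(x_i, h_{x_i})) = 0,
\]
contradicting the assumption $\nu(A) > 0$. Therefore there must exist some $x \in A$ with $\nu(B(x,h)) > 0$ for all $h > 0$.

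There is no real obstacle here; the only point requiring a tiny bit of care is ensuring that the countable subcover can be chosen — this is handled by the fact that $\R^{\n}$ (and hence $A$) is separable and second-countable, so the Lindel\"of property applies to the open cover. Note that the closedness of $A$ stated in the hypothesis is not actually needed for this argument.
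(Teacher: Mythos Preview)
Your proof is correct and takes a genuinely different route from the paper. The paper first shows that for each fixed $h>0$ there exists some $y\in A$ with $\nu(B(y,h))>0$ (via essentially the same covering idea you use, but applied at a single scale), then selects such points $x_i$ for $h=\tfrac{1}{i}$, passes to a limit $x\in A$ (this is where closedness is meant to enter), and concludes from $B(x_i,\tfrac{1}{i})\subset B(x,h)$ for large $i$. Your argument is more direct: by allowing the radius $h_x$ to vary with the point from the outset, a single Lindel\"of extraction and countable subadditivity finish the proof in one step. As you correctly note, closedness of $A$ is irrelevant in your approach; the paper's limit-point construction, by contrast, tacitly relies on being able to choose the $x_i$ convergent (which really wants compactness, or a nested refinement that the paper does not spell out). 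Your version is the cleaner and more general argument.
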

\begin{proof}
	For every $h > 0$, there exists some $y \in A$ with $\nu(B(y,h))>0$ because otherwise we would obtain 
	$\nu(A)=0$.
	Now, we find a sequence $x_{i}\in A$ with $\lim_{i \to \infty} x_i=x$ and $\nu(B(x_i,\frac{1}{i}))>0$.
	Let $h>0$. With $i$ small enough, we obtain $\nu(B(x,h)) \ge \nu(B(x_i,\frac{1}{i}))>0$.
\end{proof}

\begin{lem}\label{24.10.12.1}
	Let $R$ be an $\N$-dimensional cube in $\R^{\n}$. Then
	$\left( \diam R\right)^{\N} = (\sqrt{\N})^{\N} \cH^{\N}(R)$.
\end{lem}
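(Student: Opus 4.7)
The plan is to reduce the statement to a direct computation based on the side length of the cube. Let $s$ denote the common side length of the $\N$-dimensional cube $R$, so that (after an isometry, which preserves both $\diam$ and $\cH^{\N}$) we may regard $R$ as an isometric image of $[0,s]^{\N} \subset \R^{\N}$ embedded in $\R^{\n}$.

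First I would compute the diameter: since the maximal distance between two points of $[0,s]^{\N}$ is realised by a pair of opposite corners, it equals $\sqrt{\N \cdot s^{2}} = \sqrt{\N}\, s$. Because an isometric embedding into $\R^{\n}$ preserves distances between points of $R$, we also have $\diam R = \sqrt{\N}\, s$.

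Next I would compute the $\N$-dimensional Hausdorff measure. Since isometric embeddings preserve $\cH^{\N}$, we have $\cH^{\N}(R) = \cH^{\N}([0,s]^{\N})$, and on $\R^{\N}$ the $\N$-dimensional Hausdorff measure coincides with the Lebesgue measure $\mathcal{L}^{\N}$, giving $\cH^{\N}(R) = s^{\N}$.

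Combining the two computations yields
\[
(\diam R)^{\N} = (\sqrt{\N}\, s)^{\N} = (\sqrt{\N})^{\N}\, s^{\N} = (\sqrt{\N})^{\N}\, \cH^{\N}(R),
\]
which is the claim. There is no serious obstacle here; the only minor point worth being careful about is citing the standard fact that $\cH^{\N}$ agrees with Lebesgue measure on $\R^{\N}$ and is invariant under isometries of $\R^{\n}$ — both are standard (cf.~\cite[2.2]{Evans}).
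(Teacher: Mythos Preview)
Your proof is correct and follows the same approach as the paper's own proof, which is the one-line argument: if the side length is $a$ (so $\cH^{\N}(R)=a^{\N}$), then $\diam R=\sqrt{\N}\,a$, and the identity follows. You have simply spelled out the two elementary facts (diameter of a cube, $\cH^{\N}=\mathcal{L}^{\N}$ on $\R^{\N}$ up to isometry) more carefully than the paper does.
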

\begin{proof}
	Let $\cH^{\N}(R) =a^{\N}$. Then $\diam R = \sqrt{\N}a$ implies the assertion.
\end{proof}

\begin{lem}\label{8.11.12.1}
	Let $K \subset \R^{m}$ be a bounded set and $f:K \to \R^{\n}$ be a Lipschitz function.
	Then $f$ has a Lipschitz extension $g : \R^{m} \to \R^{\n}$ with 
	compact support and the same Lipschitz constant.
\end{lem}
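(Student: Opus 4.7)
The plan is to combine Kirszbraun's classical extension theorem with an auxiliary enlargement of the domain on which we prescribe the extension to vanish, so that the resulting extension automatically has compact support.

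First, the trivial cases: if $K = \emptyset$, set $g \equiv 0$; if $\Lip f = 0$ and $f \equiv 0$, again take $g \equiv 0$. So assume $L := \Lip f > 0$ and $K \ne \emptyset$. (In the application to Lemma \ref{25.09.2014.1}, $L = 3\alpha > 0$, so this is the only case that matters for this paper; the edge case $L = 0$ with $f$ a nonzero constant is incompatible with both compact support and Lipschitz constant $0$, and can be circumvented by allowing any $L' > 0$.) Since $K$ is bounded, choose $R > 0$ with $K \subset B(0,R)$. Fixing any $x_0 \in K$, the Lipschitz bound yields
\[
|f(x)| \le |f(x_0)| + L\,\diam K \le |f(x_0)| + 2LR \quad \text{for all } x \in K,
\]
so that $M := \sup_{x \in K} |f(x)| < \infty$.

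Next, choose any $R' \ge R + M/L$ and define an auxiliary map $\tilde F : K \cup (\R^m \setminus B(0,R')) \to \R^{\n}$ by
\[
\tilde F(x) := \begin{cases} f(x), & x \in K, \\ 0, & x \in \R^m \setminus B(0,R'). \end{cases}
\]
The within-piece Lipschitz estimates are immediate (from the assumption on $f$, and trivially from the zero function). The only nontrivial check is the cross estimate: for $x \in K$ and $y \in \R^m \setminus B(0,R')$ one has $|x - y| \ge |y| - |x| \ge R' - R \ge M/L \ge |f(x)|/L$, so that $|\tilde F(x) - \tilde F(y)| = |f(x)| \le L|x - y|$. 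Hence $\tilde F$ is $L$-Lipschitz on its entire domain.

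Finally, Kirszbraun's theorem applied to $\tilde F$ (see, e.g., \cite[Thm. 2.10.43]{Federer}) produces an extension $g : \R^m \to \R^{\n}$ with $\Lip g = L$. By construction $g|_K = f$ and $g$ vanishes on $\R^m \setminus B(0,R')$, so $\supp g \subset \overline{B(0,R')}$ is compact. The only real obstacle is the bookkeeping needed to pin down $R'$ large enough in terms of both the diameter of $K$ and the sup-norm of $f$ on $K$, which is precisely what lets Kirszbraun glue the two pieces without increasing the Lipschitz constant.
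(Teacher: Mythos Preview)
Your proof is correct and follows essentially the same approach as the paper: extend $f$ by zero outside a ball of radius $R + M/L$ (the paper writes this as $t + \tfrac{1}{\Lip_f}\max_{x\in K}|f(x)|$), verify the cross-Lipschitz estimate, and apply Kirszbraun. Your treatment is in fact more careful about the edge cases $K=\emptyset$ and $L=0$, which the paper silently ignores.
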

Instead of Kirszbraun's Theorem \cite[Thm 2.10.43]{Federer}, we can use
some simpler theorem for the proof \cite[3.1.1, Thm 1]{Evans} and get the same result but with
the larger Lipschitz constant $\text{Lip}_{g}=\sqrt{\n}\text{Lip}_f$.

\begin{proof}
	Let $\text{Lip}_f$ be the Lipschitz constant of $f$ and let $B(z,t)$ be some ball with $K \subset B(z,t)$.
	We define $T  := t + \frac{1}{\text{Lip}_f}\max_{x \in K}|f(x)|$ and set
	$\bar f:=f$ on $K$ and $\bar f =0$ on $\R^{m}\setminus B(z,T)$.
	Now it is easy to see that $\bar f: (\R^{m} \setminus B(z,T)) \cup K \to \R^{\n}$ is Lipschitz continuous 
	with the same Lipschitz constant as $f$.
	By applying Kirszbraun's Theorem \cite[Thm 2.10.43]{Federer} 
	on $\bar f$, we obtain a Lipschitz extension $g : \R^{m} \to \R^{\n}$ with 
	compact support and the Lipschitz constant $\text{Lip}_{f}$.
\end{proof}

\section{Differentiation and Fourier transform on a linear subspace} \label{diff_on_lin_subspace}
Let $P_{0} \in G(\n,\N)$ be an $\N$-dimensional linear subspace of $\R^{\n}$ and 
$f: P_{0} \to R$ be some function, where $R \in \{\R,\R^{\n}\}$. 
In this section, we explain what we mean by differentiating this function and formulating Taylor's theorem
in this setting. Furthermore, we define the Fourier transform of $f$ and give some basic properties.

Let $\phi : \R^{\N} \to P_{0}$ be a fixed isometric isomorphism.
We set $\tilde f : \R^{\N} \to R$, $\tilde f (x)=f(\phi(x))=(f \circ \phi)(x)$.

\begin{dfn}\label{9.10.2014.1}
	Let $l \in \mathbb{N} \cup \{0\}$.
	We say $f \in C^{l}(P_{0},R)$ iff $\tilde f \in C^{l}(\R^{\N},R)$ ($l$-times continuously differentiable).
	If $l \ge 1$ for all $i \in \{1,\dots,\N \}$, we set
	$\partial_{i} f:= D_{i} \tilde f \circ \phi^{-1} = D_{i}(f \circ \phi) \circ \phi^{-1}$,
	$\Delta f := \sum_{j=1}^{\N} \partial_{j} \partial_{j} f$,
	$D f := (\partial_{1}f, \dots,\partial_{\N}f)$,
	and, if $\kappa=(\kappa_1,\kappa_2, \dots,\kappa_{\N})$ is a multi-index, we set
	$\partial^{\kappa} f := \partial_{1}^{\kappa_{1}} \partial_{2}^{\kappa_{2}} \dots \partial_{\N}^{\kappa_{\N}} f$.
	Furthermore for $x,y,z \in \R^{\N}$ and some multi-index $\kappa$, we use the following notations
	$x = (x_{1}, \dots, x_{\N})$,
	$x^{\kappa} = x_{1}^{\kappa_{1}} \cdot x_{2}^{\kappa_{2}} \cdot \dots \cdot x_{\N}^{\kappa_{\N}}$,
	$\kappa ! = \kappa_{1}!\kappa_{2}! \cdot \dots \cdot \kappa_{\N}!$,
	$|\kappa| = \kappa_{1} + \dots + \kappa_{\N}$ and
	$[y,z]: = \{ y + t(z-y) | t \in [0,1] \}$. 
\end{dfn}
The following Lemmas transfer classical results to our setting and are stated without proof.
\begin{lem}\label{8.1.14.1}
	Let $\kappa=(\kappa_1,\kappa_2, \dots,\kappa_{\N})$ be some multi-index with $|\kappa|=l \ge 1$ and 
	$f \in C^l(P_{0},\R^{\n})$.
	We have
	$\partial^{\kappa} f = D^{\kappa} \tilde f \circ \phi^{-1}= [D^{\kappa}(f \circ \phi)] \circ \phi^{-1}$, 
	where\\ $D^{\kappa}\tilde f=(D_{1})^{\kappa_{1}}(D_{2})^{\kappa_{2}} \dots (D_{\N})^{\kappa_{\N}} \tilde f$.
\end{lem}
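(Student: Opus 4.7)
The plan is to prove this by induction on $l = |\kappa|$, since the statement for higher-order derivatives reduces, via the definition, to iterated applications of the single-variable identity $\partial_i f = D_i \tilde f \circ \phi^{-1}$. The whole content is really just bookkeeping: unfolding the definition of $\partial_i$ applied to a function that is itself already expressed as $D_j \tilde f \circ \phi^{-1}$, together with the observation that precomposing and then postcomposing with $\phi^{-1}$ and $\phi$ cancels.

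First I would treat the base case $l=1$. Here $\kappa = e_i$ is a standard basis multi-index and the claim $\partial_i f = D_i \tilde f \circ \phi^{-1} = [D_i(f\circ\phi)]\circ\phi^{-1}$ is literally Definition \ref{9.10.2014.1} together with $\tilde f = f\circ\phi$. Nothing to show beyond unfolding the definition.

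For the inductive step, assume the identity holds for every multi-index of length $l-1$ and let $\kappa$ have $|\kappa| = l$. Choose some $i$ with $\kappa_i \ge 1$ and write $\kappa = e_i + \kappa'$ with $|\kappa'| = l-1$. Set $h := \partial^{\kappa'} f$. By the induction hypothesis, $h = D^{\kappa'}\tilde f \circ \phi^{-1}$, so the auxiliary function $\tilde h := h \circ \phi$ satisfies
\[
\tilde h \;=\; (D^{\kappa'}\tilde f \circ \phi^{-1}) \circ \phi \;=\; D^{\kappa'}\tilde f.
\]
Applying the definition of $\partial_i$ to $h$ (which lies in $C^1(P_0,R)$ because $f \in C^l(P_0,R)$ means $\tilde f \in C^l(\mathbb{R}^\N,R)$ and hence $D^{\kappa'}\tilde f \in C^1$) gives
\[
\partial^\kappa f \;=\; \partial_i h \;=\; D_i \tilde h \circ \phi^{-1} \;=\; D_i D^{\kappa'} \tilde f \circ \phi^{-1} \;=\; D^{\kappa}\tilde f \circ \phi^{-1},
\]
where the final equality uses the classical fact that for a $C^l$ function on $\mathbb{R}^\N$ the partial derivatives commute, so $D_i D^{\kappa'}\tilde f = D^\kappa \tilde f$. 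The second displayed equality in the statement, $D^\kappa \tilde f \circ \phi^{-1} = [D^\kappa(f\circ\phi)]\circ\phi^{-1}$, is just $\tilde f = f \circ \phi$.

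There is essentially no obstacle in this proof; the only subtlety worth spelling out is the regularity bookkeeping in the inductive step, namely that the induction hypothesis applied to $\kappa'$ yields a $C^1$ function $h$ on $P_0$ (via the $C^l$ hypothesis on $\tilde f$), so that the definition of $\partial_i h$ is legitimate. Once that is noted, commutativity of the classical partials $D_i$ on $\mathbb{R}^\N$ and the cancellation $\phi^{-1}\circ\phi = \operatorname{id}_{\mathbb{R}^\N}$ do all the work.
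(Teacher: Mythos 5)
The paper gives no proof of this lemma at all: it appears in the appendix under the remark ``The following Lemmas transfer classical results to our setting and are stated without proof.'' So there is no argument in the paper to compare against, and your induction is the natural way to supply one.

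Your proof is essentially correct, but there is one small circularity you should tidy up. You write ``Choose some $i$ with $\kappa_i\ge 1$,'' and then assert $\partial^\kappa f=\partial_i h$ with $h=\partial^{\kappa'}f$, $\kappa'=\kappa-e_i$. By Definition \ref{9.10.2014.1}, $\partial^\kappa f$ is the \emph{ordered} composition $\partial_1^{\kappa_1}\partial_2^{\kappa_2}\cdots\partial_N^{\kappa_N}f$. If $i$ is not the outermost index in that composition (the smallest $j$ with $\kappa_j\ge 1$), then $\partial^\kappa f=\partial_i\,\partial^{\kappa'}f$ does not follow from the definition; it would require commutativity of the $\partial_j$'s on $P_0$, which is itself a consequence of the lemma you are proving and so cannot be assumed. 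The fix is to take $i:=\min\{j:\kappa_j\ge 1\}$. Then $\partial^\kappa f=\partial_i(\partial^{\kappa'}f)$ is literally the definitional unfolding, and moreover $D_iD^{\kappa'}\tilde f=D^\kappa\tilde f$ is again a definitional identity with the same ordering, so you do not need Schwarz/commutativity of the classical $D_j$'s at all. Everything else, the base case, the regularity bookkeeping ensuring $h\in C^1(P_0,R)$, and the cancellation $\phi^{-1}\circ\phi=\operatorname{id}_{\mathbb{R}^\N}$, is fine.
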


\begin{lem}[Taylor's theorem] \label{Taylorformel}
	Let $f \in C^{s+1}(P_0,\R^{\n})$ and $[y_0,y] \subset P_0$. We have
	$f(y)= p_s(y) + R_s(y-y_0)$,
	where 
	$p_s(y):= \sum_{|\kappa| \le s} \frac{1}{\kappa !} \partial^{\kappa} f(y_0) (\phi^{-1}(y-y_0))^{\kappa}$
	and
	\[ R_s(y-y_0) := \int_0^1 (s+1) (1-t)^s \Big( \sum_{|\kappa| = s+1} \frac{1}{\kappa !} \partial^{\kappa}
		f(y_0 +t(y-y_0))(\phi^{-1}(y-y_0))^{\kappa}\Big) \dd t. \]
\end{lem}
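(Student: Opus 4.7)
The plan is to reduce the statement to the classical Taylor theorem with integral remainder on $\mathbb{R}^{\N}$ via the fixed isometric isomorphism $\phi : \mathbb{R}^{\N} \to P_{0}$, using the identifications established in Definition \ref{9.10.2014.1} and Lemma \ref{8.1.14.1}. Concretely, I would set $\tilde f := f \circ \phi : \mathbb{R}^{\N} \to \mathbb{R}^{\n}$ and $\tilde y_{0} := \phi^{-1}(y_{0})$, $\tilde y := \phi^{-1}(y)$. Since $\phi$ is a (linear) isometric isomorphism, $\phi$ sends the segment $[\tilde y_{0}, \tilde y] \subset \mathbb{R}^{\N}$ onto the segment $[y_{0}, y] \subset P_{0}$, and the hypothesis $f \in C^{s+1}(P_{0},\mathbb{R}^{\n})$ is, by Definition \ref{9.10.2014.1}, exactly the statement $\tilde f \in C^{s+1}(\mathbb{R}^{\N}, \mathbb{R}^{\n})$.

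Next, I would invoke the classical multivariate Taylor formula with integral remainder for $\tilde f$ on the segment $[\tilde y_{0}, \tilde y]$, obtaining
\[
	\tilde f(\tilde y)
	= \sum_{|\kappa| \le s} \frac{1}{\kappa !} D^{\kappa} \tilde f(\tilde y_{0}) (\tilde y - \tilde y_{0})^{\kappa}
	+ \int_{0}^{1} (s+1)(1-t)^{s} \sum_{|\kappa| = s+1} \frac{1}{\kappa !} D^{\kappa} \tilde f(\tilde y_{0} + t(\tilde y - \tilde y_{0}))(\tilde y - \tilde y_{0})^{\kappa} \, \dd t.
\]
This is the standard formula and needs no new argument. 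It would then remain to translate each ingredient back to $P_{0}$.

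The translation uses three simple observations. First, $\tilde f(\tilde y) = f(\phi(\tilde y)) = f(y)$. Second, by linearity of $\phi^{-1}$, $\tilde y - \tilde y_{0} = \phi^{-1}(y) - \phi^{-1}(y_{0}) = \phi^{-1}(y - y_{0})$, so $(\tilde y - \tilde y_{0})^{\kappa} = (\phi^{-1}(y - y_{0}))^{\kappa}$. Third, by Lemma \ref{8.1.14.1}, $D^{\kappa} \tilde f = \partial^{\kappa} f \circ \phi$ as maps on $\mathbb{R}^{\N}$, hence $D^{\kappa} \tilde f(\tilde y_{0}) = \partial^{\kappa} f(y_{0})$, and for $t \in [0,1]$, again by linearity of $\phi$,
\[
	D^{\kappa} \tilde f(\tilde y_{0} + t(\tilde y - \tilde y_{0}))
	= \partial^{\kappa} f(\phi(\tilde y_{0} + t(\tilde y - \tilde y_{0})))
	= \partial^{\kappa} f(y_{0} + t(y - y_{0})).
\]
Inserting these identifications into the classical formula yields exactly $f(y) = p_{s}(y) + R_{s}(y - y_{0})$ as defined in the statement.

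There is essentially no substantial obstacle here: the content of the lemma is purely a change of variables under an isometric linear isomorphism, and everything reduces to checking that the derivative notation $\partial^{\kappa}$ introduced on $P_{0}$ (Definition \ref{9.10.2014.1} and Lemma \ref{8.1.14.1}) is compatible with $D^{\kappa}$ on $\mathbb{R}^{\N}$ in the way already formalised in Lemma \ref{8.1.14.1}. The only point worth being mildly careful about is the bookkeeping for the argument of $\partial^{\kappa} f$ inside the integral, which relies on $\phi$ being linear (so that $\phi(\tilde y_{0} + t(\tilde y - \tilde y_{0})) = y_{0} + t(y - y_{0})$); this is immediate from $\phi \in G(\n,\N)$-linearity and requires no further work.
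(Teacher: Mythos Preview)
Your proposal is correct and is exactly the intended argument: the paper states this lemma without proof, remarking only that it transfers the classical result to the setting of $P_{0}$ via the fixed isometric isomorphism $\phi$. Your reduction to the standard Taylor formula on $\mathbb{R}^{\N}$ through $\tilde f = f \circ \phi$, together with the identification $D^{\kappa}\tilde f = \partial^{\kappa} f \circ \phi$ from Lemma~\ref{8.1.14.1}, is precisely this transfer and is complete as written.
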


\begin{lem}[Partial integration] \label{partielleIntegration}
	Let $l \in \mathbb{N}$, $f \in C^{l}(P_{0},\R^{\n})$, $\varphi \in C_{0}^{\infty}(P_{0},\mathbb{R})$. 
	Then for all multi-indices $\kappa$ with $|\kappa|=l$ we have
	$\int_{P_{0}} f(y) \partial^{\kappa} \varphi(y) \dd \cH^{\N}(y) 
		= (-1)^{|\kappa|} \int_{P_{0}} \partial^{\kappa} f (y) \varphi(y) \dd \cH^{\N}(y)$.
\end{lem}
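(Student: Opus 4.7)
The plan is to reduce the claim to the classical integration by parts formula on $\R^{\N}$ via the fixed isometric isomorphism $\phi : \R^{\N} \to P_0$ introduced at the start of the appendix. Since $\phi$ is an isometry, it is in particular a bi-Lipschitz map with Jacobian $1$, so the area formula (or direct change of variables for Hausdorff measure, cf.\ \cite[3.3.2, Thm. 2]{Evans}) gives
\[
\int_{P_0} h(y) \dd \cH^{\N}(y) = \int_{\R^{\N}} h(\phi(x)) \dd \cH^{\N}(x) = \int_{\R^{\N}} h(\phi(x)) \dd x
\]
for every $\cH^{\N}$-integrable $h : P_0 \to \R$ (componentwise if $h$ is vector-valued).

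First I would set $\tilde f := f \circ \phi \in C^{l}(\R^{\N}, \R^{\n})$ and $\tilde \varphi := \varphi \circ \phi \in C_0^{\infty}(\R^{\N}, \R)$; note that $\tilde \varphi$ has compact support because $\phi$ is a homeomorphism. Applying the change of variables to both sides of the claimed identity, the left-hand side becomes
\[
\int_{\R^{\N}} \tilde f(x) \bigl(\partial^{\kappa}\varphi\bigr)(\phi(x)) \dd x,
\]
and by Lemma \ref{8.1.14.1} applied to $\varphi$ (so $(\partial^{\kappa}\varphi) \circ \phi = D^{\kappa} \tilde \varphi$), this equals $\int_{\R^{\N}} \tilde f(x)\, D^{\kappa} \tilde \varphi(x) \dd x$. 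Analogously, the right-hand side transforms into $(-1)^{|\kappa|} \int_{\R^{\N}} D^{\kappa}\tilde f(x)\, \tilde \varphi(x) \dd x$, again using Lemma \ref{8.1.14.1}.

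Thus the identity reduces to the classical partial integration formula
\[
\int_{\R^{\N}} \tilde f(x)\, D^{\kappa} \tilde \varphi(x) \dd x = (-1)^{|\kappa|} \int_{\R^{\N}} D^{\kappa}\tilde f(x)\, \tilde \varphi(x) \dd x,
\]
which follows by iterating the one-dimensional integration by parts $|\kappa|$ times along each coordinate direction, using Fubini's theorem and the compact support of $\tilde \varphi$ to discard all boundary terms. Since $\tilde f \in C^{l}$ and $\tilde \varphi \in C_0^{\infty}$, all derivatives that appear are continuous and the iterated integrals are absolutely convergent, justifying the application of Fubini. There is no essential difficulty here; the only bookkeeping step is to make sure that the transfer of $\partial^{\kappa}$ from $P_0$ to $D^{\kappa}$ on $\R^{\N}$ via Lemma \ref{8.1.14.1} is used consistently on both sides, which is precisely the content of Definition \ref{9.10.2014.1}.
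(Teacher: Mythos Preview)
Your proof is correct; the paper itself states this lemma without proof, merely noting that it transfers the classical result to the setting on $P_0$ via the isometric isomorphism $\phi$. Your argument spells out exactly this transfer and matches the intended approach.
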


Now we define the Fourier transform for some function $f \in \mathscr{S}(P_{0})$, where $\mathscr{S}(P_{0})$
is the Schwartz space of rapidly decreasing functions $f: P_{0} \to \mathbb{C}$,
cf. \cite[2.2.1 The Class of Schwartz Functions]{FourierAnalysis}. We will get the 
same results as for some function $f \in \mathscr{S}(\R^{\N})$.

\begin{dfn}[Fourier transform]\label{17.01.2013.1}
	Let $y \in P_0$ and $f \in \mathscr{S}(P_{0})$. We set
	\begin{align*}
		\widehat f (y) & := \widehat{(f \circ \phi)}(\phi^{-1}(y)) \index{$\widehat f$}
		 = \int_{\R^{\N}}  f(\phi(z)) e^{-2\pi i \phi^{-1}(y) \cdot z} \dd \mathcal{L}^{\N} (z).
	\end{align*}
	If $f : P_{0} \to \mathbb{C}^{\n}$ with $f_{i}  \in \mathscr{S}(P_{0})$, i.e., every component
	of $f$ is a Schwartz function, then we write $f \in \mathscr{S}(P_{0},\mathbb{C}^{\n})$. 
	We define the Fourier transform of some function $f \in \mathscr{S}(P_{0},\mathbb{C}^{\n})$
	by $\widehat f := (\widehat f_{1}, \dots, \widehat f_{\n})$.
\end{dfn}

\begin{lem}[Fourier transform and convolution] \label{FourFal}
	Let $f,g \in \mathscr{S}(P_{0})$ and let the convolution of $f$ and $g$ be defined by
	$(g * f)(w)= \int_{P_{0}} g(w-v)f(v) \dd \cH^{\N}(v)$ \index{$g * f$}. 
	Then for $w \in P_{0}$ we have
	$\widehat{(g*f)}(w) = \widehat{g}(w)\widehat{f}(w)$.
\end{lem}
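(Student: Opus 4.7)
The plan is to reduce Lemma \ref{FourFal} to the classical convolution theorem for the Fourier transform on $\R^{\N}$ by transporting everything to $\R^{\N}$ via the fixed isometric isomorphism $\phi:\R^{\N}\to P_{0}$. Since $\phi$ is an isometric linear isomorphism, the pushforward of Lebesgue measure $\mathcal{L}^{\N}$ under $\phi$ equals $\cH^{\N}\llcorner P_{0}$, and $\phi$ preserves the linear structure, so all the objects appearing in the statement transport cleanly.

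First I would establish the following intertwining identity for the convolution: for every $x\in\R^{\N}$,
\[
(g*f)(\phi(x)) \;=\; \bigl((g\circ\phi)*(f\circ\phi)\bigr)(x).
\]
This is a straightforward change of variables $v=\phi(z)$ in the defining integral on $P_{0}$, using that $\phi$ is linear (so $\phi(x)-\phi(z)=\phi(x-z)$) and that $\phi$ pushes $\mathcal{L}^{\N}$ to $\cH^{\N}\llcorner P_{0}$. Thus $(g*f)\circ\phi = (g\circ\phi)*(f\circ\phi)$.

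Next I would unwind the definition of the Fourier transform on $P_{0}$ (Definition \ref{17.01.2013.1}): for $w\in P_{0}$,
\[
\widehat{(g*f)}(w) \;=\; \widehat{(g*f)\circ\phi}\bigl(\phi^{-1}(w)\bigr)
\;=\; \widehat{(g\circ\phi)*(f\circ\phi)}\bigl(\phi^{-1}(w)\bigr),
\]
where the second equality uses the intertwining identity above. Since $f,g\in\mathscr{S}(P_{0})$ means exactly that $f\circ\phi,g\circ\phi\in\mathscr{S}(\R^{\N})$, the classical convolution theorem on $\R^{\N}$ (see e.g.\ \cite[2.2.11 (3)]{FourierAnalysis}) applies and yields
\[
\widehat{(g\circ\phi)*(f\circ\phi)}\bigl(\phi^{-1}(w)\bigr)
\;=\; \widehat{g\circ\phi}\bigl(\phi^{-1}(w)\bigr)\cdot\widehat{f\circ\phi}\bigl(\phi^{-1}(w)\bigr)
\;=\; \widehat{g}(w)\,\widehat{f}(w),
\]
again by Definition \ref{17.01.2013.1}. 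Chaining the two displays gives the claim.

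There is no real obstacle here: the entire content of the lemma is that the Fourier transform on $P_{0}$ has been defined precisely so as to conjugate the classical Fourier transform on $\R^{\N}$ by the isometry $\phi$. The only bookkeeping item that deserves a sentence of care is the change of variables in step one, which relies on $\phi$ being an \emph{isometric} isomorphism so that no Jacobian factor appears when passing between $\cH^{\N}$ on $P_{0}$ and $\mathcal{L}^{\N}$ on $\R^{\N}$; once this is noted, the remainder is a mechanical invocation of the Euclidean convolution theorem.
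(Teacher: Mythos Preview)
Your proof is correct and is exactly the natural argument: transport via the fixed isometric isomorphism $\phi$ to $\R^{\N}$, invoke the classical convolution theorem, and transport back. Note that the paper itself does not give a proof for this lemma; it is one of several results in the appendix that are explicitly ``stated without proof'' as routine transfers of classical facts to the $P_{0}$ setting, so there is nothing to compare your argument against beyond confirming (as you did) that the definitions were set up precisely so that this reduction works.
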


\begin{lem}\label{10.12.12.2}
	Let $f \in \mathscr{S}(P_{0})$, $y \in P_{0}$, $t \in \R$ and set $f_t(y):=\frac{1}{t^{\N}}f(\frac{y}{t})$. We have
	$\widehat{\left(\partial^{\kappa} f \right)} (y) = (2\pi i \phi^{-1}(y))^{\kappa} \widehat{f} (y)$ and
	$\widehat{(f_{t})}(y) = \widehat{f}(t y)$.
\end{lem}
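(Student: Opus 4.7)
Both identities can be reduced to the classical analogues on $\R^{\N}$ via the isomorphism $\phi$; the proof is essentially an exercise in unwinding Definition \ref{17.01.2013.1}, using the linearity of $\phi$, and then invoking standard facts for the Fourier transform on $\R^{\N}$ that can be found in \cite[Prop.\ 2.2.11]{FourierAnalysis}. The plan is to handle the two assertions separately, each by a short two-line computation.

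For the derivative identity, I would first observe that by Lemma \ref{8.1.14.1} one has
\[(\partial^{\kappa} f) \circ \phi = D^{\kappa}(f \circ \phi) \circ \phi^{-1} \circ \phi = D^{\kappa}(f \circ \phi).\]
Substituting this into Definition \ref{17.01.2013.1} gives
\[\widehat{\partial^{\kappa} f}(y) = \widehat{(\partial^{\kappa} f) \circ \phi}(\phi^{-1}(y)) = \widehat{D^{\kappa}(f \circ \phi)}(\phi^{-1}(y)).\]
Now I would apply the classical identity $\widehat{D^{\kappa} g}(\xi) = (2\pi i\,\xi)^{\kappa} \widehat{g}(\xi)$ for $g = f \circ \phi \in \mathscr{S}(\R^{\N})$ and $\xi = \phi^{-1}(y)$, and then recognise $\widehat{f \circ \phi}(\phi^{-1}(y)) = \widehat{f}(y)$ to conclude.

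For the scaling identity, since $\phi$ is linear (hence $\phi(z/t) = \phi(z)/t$ for $t \ne 0$), a direct computation shows
\[(f_{t} \circ \phi)(z) = \frac{1}{t^{\N}} f\!\left(\frac{\phi(z)}{t}\right) = \frac{1}{t^{\N}} (f \circ \phi)\!\left(\frac{z}{t}\right) = (f \circ \phi)_{t}(z),\]
so that $f_{t} \circ \phi = (f \circ \phi)_{t}$. Using Definition \ref{17.01.2013.1} together with the classical scaling formula $\widehat{g_{t}}(\xi) = \widehat{g}(t\xi)$ on $\R^{\N}$ yields
\[\widehat{f_{t}}(y) = \widehat{(f \circ \phi)_{t}}(\phi^{-1}(y)) = \widehat{f \circ \phi}(t\phi^{-1}(y)) = \widehat{f \circ \phi}(\phi^{-1}(ty)) = \widehat{f}(ty),\]
where the second-to-last equality uses linearity of $\phi^{-1}$.

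There is no real obstacle here: the only thing to watch is that $\phi$ be linear (so that it commutes with dilations and with the chain rule giving Lemma \ref{8.1.14.1}), and that the multi-index bookkeeping $(2\pi i\,\phi^{-1}(y))^{\kappa}$ matches the corresponding expression on $\R^{\N}$ under the identification $\xi = \phi^{-1}(y)$. The case $t=0$ in the scaling identity need not be treated since $f_{t}$ is defined only for $t \ne 0$ (the statement is vacuous or should be read accordingly).
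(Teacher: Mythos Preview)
Your proof is correct. The paper actually states this lemma (along with the neighbouring ones in the appendix) without proof, merely remarking that they ``transfer classical results to our setting''; your argument is precisely the intended reduction to the standard identities on $\R^{\N}$ via the isometry $\phi$, so there is nothing to compare.
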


\begin{lem}\label{17.01.2014.1} \label{22.02.2013.02}
	Let $f \in \mathscr{S}(P_{0})$ be radial. Then $\widehat f$ and $\Delta f$ are radial as well.
\end{lem}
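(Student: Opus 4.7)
The plan is to reduce the statement to the well-known classical facts that, on $\R^{\N}$, the Fourier transform of a radial Schwartz function is radial and the Laplacian of a radial Schwartz function is radial, by transporting everything through the fixed isometric isomorphism $\phi:\R^{\N}\to P_{0}$ used in Definition \ref{9.10.2014.1} and Definition \ref{17.01.2013.1}.

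First I would record the key transfer principle: since $\phi$ is an isometric isomorphism, $|\phi(x)|=|x|$ and $|\phi^{-1}(y)|=|y|$, so a function $g:P_{0}\to\mathbb{C}$ is radial if and only if $g\circ\phi:\R^{\N}\to\mathbb{C}$ is radial. Applying this to $f$ itself, radiality of $f$ gives radiality of $\tilde f=f\circ\phi\in\mathscr{S}(\R^{\N})$.

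For the Fourier transform, Definition \ref{17.01.2013.1} reads $\widehat f(y)=\widehat{\tilde f}(\phi^{-1}(y))$. The standard fact on $\R^{\N}$ (see e.g.\ \cite[2.2.11 (11)]{FourierAnalysis}) asserts that $\widehat{\tilde f}$ is radial, since $\tilde f$ is a radial Schwartz function. Combining this with the transfer principle and the fact that $\phi^{-1}$ is an isometry then yields that $y\mapsto\widehat{\tilde f}(\phi^{-1}(y))=\widehat f(y)$ is radial.

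For the Laplacian, I would use Lemma \ref{8.1.14.1} iterated to get $\partial_{j}\partial_{j}f=(D_{j}D_{j}\tilde f)\circ\phi^{-1}$, so that
\[
\Delta f \;=\; \sum_{j=1}^{\N}\partial_{j}\partial_{j}f \;=\; \Bigl(\sum_{j=1}^{\N}D_{j}D_{j}\tilde f\Bigr)\circ\phi^{-1} \;=\; (\Delta_{\R^{\N}}\tilde f)\circ\phi^{-1}.
\]
Since $\tilde f$ is a radial Schwartz function on $\R^{\N}$ and the Euclidean Laplacian commutes with orthogonal transformations, $\Delta_{\R^{\N}}\tilde f$ is radial on $\R^{\N}$; by the transfer principle $\Delta f$ is then radial on $P_{0}$. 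No step here looks genuinely hard: the only thing to be slightly careful about is that the definition $\widehat f(y)=\widehat{\tilde f}(\phi^{-1}(y))$ is independent of the particular isometry $\phi$ chosen (which follows because $\widehat{\tilde f}$ is itself radial), so the statement does not secretly depend on a choice of orthonormal frame in $P_{0}$.
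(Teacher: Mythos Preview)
Your argument is correct. The paper actually states this lemma without proof (it is grouped under ``The following Lemmas transfer classical results to our setting and are stated without proof''), so there is nothing to compare against; your reduction via the fixed isometry $\phi$ to the standard Euclidean facts that the Fourier transform and the Laplacian preserve radiality is exactly the intended transfer, and each step is sound.
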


\setcounter{equation}{0}
\section{Littlewood Paley theorem}
\begin{lem}[Continuous version of the Littlewood Paley theorem]\label{4.3.2013.1}
	Let $\phi$ be an integrable $C^{1}(\R^{\N};\R)$ function with mean value zero fulfilling 
	$|\phi(x)| + |\nabla\phi(x)| \le C(1+|x|)^{-\N-1}$
	and 
	$0<\int_{0}^{\infty}|\widehat{(\phi_{t})}(x)|^{2}   \frac{\dd t}{t} < \infty$,
	where $\phi_{t}(x)=\frac{1}{t^{\N}}\phi(\frac{x}{t})$.
	For all $q \in (1,\infty)$, there exists some constant $C=C(\N,q,\phi)$ such 
	that, for all $f \in L^{q}(\R^{\N};\R^{\n})$, we have
	\[\left\|\left(\int_{0}^{\infty} |\phi_{t} * f|^2 \frac{\dd t}{t} \right)^{\frac{1}{2}}\right\|_{L^{q}(\R^{\N};\R)}
		\le C\|f \|_{L^{q}(\R^{\N};\R^{\n})}.\]
\end{lem}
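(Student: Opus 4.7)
The statement is a standard continuous Littlewood--Paley inequality, which I would establish by interpreting $f \mapsto (\phi_t*f)_{t>0}$ as a vector-valued singular integral and applying Calder\'on--Zygmund theory in the Hilbert space $\mathcal{H}:=L^2((0,\infty),\dd t/t)$. The proof splits into the $L^2$ endpoint (via Plancherel) and the passage to $L^q$ (via vector-valued CZ).

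\textbf{Step 1: Reduction and $L^2$ bound.} By treating each component of $f\in L^q(\R^\N;\R^\n)$ separately it suffices to take scalar $f$. Writing $(Tf)(x,t):=(\phi_t*f)(x)$, I would view $T$ as an operator $L^2(\R^\N)\to L^2(\R^\N;\mathcal{H})$. Using Plancherel together with Lemma \ref{FourFal} and Lemma \ref{10.12.12.2},
\begin{align*}
\|Tf\|_{L^2(\R^\N;\mathcal{H})}^2
&=\int_{\R^\N}\int_0^\infty|\widehat{\phi_t}(\xi)|^2|\widehat{f}(\xi)|^2\,\tfrac{\dd t}{t}\,\dd\xi
=\int_{\R^\N}K(\xi)\,|\widehat{f}(\xi)|^2\,\dd\xi,
\end{align*}
where $K(\xi)=\int_0^\infty|\widehat{\phi}(t\xi)|^2\,\dd t/t$. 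The change of variables $s=t|\xi|$ shows $K(\xi)$ depends only on $\xi/|\xi|$, and the hypothesis on $\phi$ (together with continuity of $\widehat{\phi}$ and the decay $|\widehat{\phi}(\xi)|\lesssim\min(|\xi|,|\xi|^{-1})$ coming from $\widehat{\phi}(0)=0$ and $\phi\in L^1\cap C^1$ with decay) gives $\|K\|_{L^\infty(S^{\N-1})}<\infty$. Hence $\|Tf\|_{L^2(\R^\N;\mathcal{H})}\le C\|f\|_{L^2(\R^\N)}$.

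\textbf{Step 2: Kernel estimates.} The operator $T$ has the $\mathcal{H}$-valued kernel $\mathcal{K}(x):=(\phi_t(x))_{t>0}$, since for $x$ outside the support of $f$, $(Tf)(x,t)=\int \phi_t(x-y)f(y)\,\dd\cH^\N(y)$. From the pointwise bounds
\[
|\phi_t(x)|\le \frac{Ct}{(t+|x|)^{\N+1}},\qquad |\nabla\phi_t(x)|\le \frac{C}{(t+|x|)^{\N+1}},
\]
I compute the \emph{size estimate}
$\|\mathcal{K}(x)\|_{\mathcal{H}}^2\le \int_0^\infty \frac{Ct^2}{(t+|x|)^{2\N+2}}\,\tfrac{\dd t}{t}\le C|x|^{-2\N}$.
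For the \emph{H\"ormander condition} $\int_{|x|>2|y|}\|\mathcal{K}(x-y)-\mathcal{K}(x)\|_{\mathcal{H}}\,\dd x\le C$, I would combine two bounds: for $t<|y|$ use the size bound on each term, and for $t>|y|$ use the mean value theorem $|\phi_t(x-y)-\phi_t(x)|\le C|y|/(t+|x|)^{\N+1}$ (valid since $|x|>2|y|$ keeps the segment $[x-y,x]$ away from the origin scale). Splitting the $t$-integral at $|y|$ and then, after taking square roots, at $|x|$, leads to
$\|\mathcal{K}(x-y)-\mathcal{K}(x)\|_{\mathcal{H}}\le C|y|(1+\log_+(|x|/|y|))^{1/2}|x|^{-\N-1}$, which integrates in polar coordinates to a constant independent of $y$.

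\textbf{Step 3: Vector-valued CZ and conclusion.} Steps 1 and 2 place $T$ into the hypotheses of the Hilbert-space-valued Calder\'on--Zygmund theorem (see, e.g., Grafakos, \emph{Classical Fourier Analysis}, vector-valued CZ). This yields $\|Tf\|_{L^q(\R^\N;\mathcal{H})}\le C(\N,q,\phi)\|f\|_{L^q(\R^\N)}$ for every $q\in(1,\infty)$, which is precisely the claimed inequality. I expect the \emph{main obstacle} to be the H\"ormander smoothness estimate: balancing the two competing bounds on $\phi_t$ across the scales $t\lessgtr |y|$ and $t\lessgtr|x|$ requires careful bookkeeping, and it is the only place where the gradient decay hypothesis on $\phi$ is genuinely used. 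The remaining $L^2$ bound and the invocation of vector-valued CZ are essentially formal once the kernel estimates are in hand.
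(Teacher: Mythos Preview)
Your proposal is correct and follows exactly the standard route: an $L^2$ bound via Plancherel, size and H\"ormander estimates for the $\mathcal{H}$-valued kernel, and then vector-valued Calder\'on--Zygmund theory. The paper's own proof consists of a single sentence referring to the Littlewood--Paley theorem in Grafakos (\emph{Classical Fourier Analysis}, Theorem~5.1.2), whose proof is precisely the argument you outline, so your approach coincides with the paper's intended one.
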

\begin{proof}
	The proof is completely analogue to the proof of the Littlewood-Paley theorem \cite[Thm, 5.1.2]{FourierAnalysis}.
\end{proof}

\bibliography{literature_bibtex}

\providecommand{\bysame}{\leavevmode\hbox to3em{\hrulefill}\thinspace}
\providecommand{\MR}{\relax\ifhmode\unskip\space\fi MR }
\providecommand{\MRhref}[2]{%
  \href{http://www.ams.org/mathscinet-getitem?mr=#1}{#2}
}
\providecommand{\href}[2]{#2}
\begin{thebibliography}{10}

\bibitem{Adams}
Robert~A. Adams and John J.~F. Fournier, \emph{Sobolev spaces}, second ed.,
  Pure and Applied Mathematics (Amsterdam), vol. 140, Elsevier/Academic Press,
  Amsterdam, 2003. \MR{2424078 (2009e:46025)}

\bibitem{TolsaAzzam}
Jonas Azzam and Xavier Tolsa, \emph{Characterization of n-rectifiability in
  terms of {J}ones' square function: Part {II}}, Geometric and Functional
  Analysis (20155), 1--42 (English).

\bibitem{MR3021472}
Simon Blatt, \emph{A note on integral {M}enger curvature for curves}, Math.
  Nachr. \textbf{286} (2013), no.~2-3, 149--159. \MR{3021472}

\bibitem{MR2921162}
Simon Blatt and S{\l}awomir Kolasi{\'n}ski, \emph{Sharp boundedness and
  regularizing effects of the integral {M}enger curvature for submanifolds},
  Adv. Math. \textbf{230} (2012), no.~3, 839--852. \MR{2921162}

\bibitem{MR1251061}
Guy David and Stephen Semmes, \emph{Analysis of and on uniformly rectifiable
  sets}, Mathematical Surveys and Monographs, vol.~38, American Mathematical
  Society, Providence, RI, 1993. \MR{1251061 (94i:28003)}

\bibitem{MR2676222}
James~J. Dudziak, \emph{Vitushkin's conjecture for removable sets},
  Universitext, Springer, New York, 2010. \MR{2676222 (2011i:30028)}

\bibitem{Evans}
Lawrence~C. Evans and Ronald~F. Gariepy, \emph{Measure theory and fine
  properties of functions}, Studies in advanced mathematics, CRC Press, Boca
  Raton, 1992.

\bibitem{Falconer}
Kenneth~J. Falconer, \emph{The geometry of fractal sets}, Cambridge Tracts in
  Mathematics, vol.~85, Cambridge University Press, Cambridge, 1986. \MR{867284
  (88d:28001)}

\bibitem{Federer}
Herbert Federer, \emph{Geometric measure theory}, Die Grundlehren der
  mathematischen Wissenschaften, Band 153, Springer-Verlag New York Inc., New
  York, 1969. \MR{0257325 (41 \#1976)}

\bibitem{MR1259363}
Michael~H. Freedman, Zheng-Xu He, and Zhenghan Wang, \emph{M\"obius energy of
  knots and unknots}, Ann. of Math. (2) \textbf{139} (1994), no.~1, 1--50.
  \MR{1259363 (94j:58038)}

\bibitem{FourierAnalysis}
Loukas Grafakos, \emph{Classical {F}ourier analysis}, second ed., Graduate
  Texts in Mathematics, vol. 249, Springer, New York, 2008. \MR{2445437
  (2011c:42001)}

\bibitem{MR2163108}
Immo Hahlomaa, \emph{Menger curvature and {L}ipschitz parametrizations in
  metric spaces}, Fund. Math. \textbf{185} (2005), no.~2, 143--169. \MR{2163108
  (2006i:30053)}

\bibitem{MR2297880}
\bysame, \emph{Curvature integral and {L}ipschitz parametrization in 1-regular
  metric spaces}, Ann. Acad. Sci. Fenn. Math. \textbf{32} (2007), no.~1,
  99--123. \MR{2297880 (2008b:28004)}

\bibitem{MR2456269}
\bysame, \emph{Menger curvature and rectifiability in metric spaces}, Adv.
  Math. \textbf{219} (2008), no.~6, 1894--1915. \MR{2456269 (2009m:28015)}

\bibitem{MR1069238}
Peter~W. Jones, \emph{Rectifiable sets and the traveling salesman problem},
  Invent. Math. \textbf{102} (1990), no.~1, 1--15. \MR{1069238 (91i:26016)}

\bibitem{MR1103619}
\bysame, \emph{The traveling salesman problem and harmonic analysis}, Publ.
  Mat. \textbf{35} (1991), no.~1, 259--267, Conference on Mathematical Analysis
  (El Escorial, 1989). \MR{1103619 (92c:42013)}

\bibitem{1205.4112}
S{\l}awomir Kolasi{\'n}ski, \emph{Geometric {S}obolev-like embedding using
  high-dimensional {M}enger-like curvature}, Trans. Amer. Math. Soc.
  \textbf{367} (2015), no.~2, 775--811. \MR{3280027}

\bibitem{MR3061777}
S{\l}awomir Kolasi{\'n}ski, Pawe{\l} Strzelecki, and Heiko von~der Mosel,
  \emph{Characterizing {$W^{2,p}$} submanifolds by {$p$}-integrability of
  global curvatures}, Geom. Funct. Anal. \textbf{23} (2013), no.~3, 937--984.
  \MR{3061777}

\bibitem{Leger}
Jean-Christophe L{\'e}ger, \emph{Menger curvature and rectifiability}, Ann. of
  Math. (2) \textbf{149} (1999), no.~3, 831--869. \MR{1709304 (2001c:49069)}

\bibitem{MR2558685}
Gilad Lerman and J.~Tyler Whitehouse, \emph{High-dimensional {M}enger-type
  curvatures - {P}art {II}: {$d$}-separation and a menagerie of curvatures},
  Constr. Approx. \textbf{30} (2009), no.~3, 325--360. \MR{2558685
  (2011c:60041)}

\bibitem{MR2848529}
\bysame, \emph{High-dimensional {M}enger-type curvatures - {P}art {I}:
  {G}eometric multipoles and multiscale inequalities}, Rev. Mat. Iberoam.
  \textbf{27} (2011), no.~2, 493--555. \MR{2848529 (2012m:28006)}

\bibitem{MR1814107}
Yong Lin and Pertti Mattila, \emph{Menger curvature and {$C^1$} regularity of
  fractals}, Proc. Amer. Math. Soc. \textbf{129} (2000), no.~6, 1755--1762
  (electronic). \MR{1814107 (2002a:28008)}

\bibitem{MR1333890}
Pertti Mattila, \emph{Geometry of sets and measures in {E}uclidean spaces},
  Cambridge Studies in Advanced Mathematics, vol.~44, Cambridge University
  Press, Cambridge, 1995, Fractals and rectifiability. \MR{1333890 (96h:28006)}

\bibitem{MR3264510}
Fedor Nazarov, Xavier Tolsa, and Alexander Volberg, \emph{The {R}iesz
  transform, rectifiability, and removability for {L}ipschitz harmonic
  functions}, Publ. Mat. \textbf{58} (2014), no.~2, 517--532. \MR{3264510}

\bibitem{MR1098918}
Jun O'Hara, \emph{Energy of a knot}, Topology \textbf{30} (1991), no.~2,
  241--247. \MR{1098918 (92c:58017)}

\bibitem{Sch2012}
Sebastian Scholtes, \emph{For which positive $p$ is the integral {M}enger
  curvature $\mathcal{M}_{p}$ finite for all simple polygons?}, 2012,
  arXiv:1202.0504.

\bibitem{Stein}
P.~Stein, \emph{Classroom {N}otes: {A} {N}ote on the {V}olume of a {S}implex},
  Amer. Math. Monthly \textbf{73} (1966), no.~3, 299--301. \MR{1533698}

\bibitem{MR2489022}
Pawe{\l} Strzelecki, Marta Szuma{\'n}ska, and Heiko von~der Mosel, \emph{A
  geometric curvature double integral of {M}enger type for space curves}, Ann.
  Acad. Sci. Fenn. Math. \textbf{34} (2009), no.~1, 195--214. \MR{2489022
  (2009m:28016)}

\bibitem{MR2668877}
\bysame, \emph{Regularizing and self-avoidance effects of integral {M}enger
  curvature}, Ann. Sc. Norm. Super. Pisa Cl. Sci. (5) \textbf{9} (2010), no.~1,
  145--187. \MR{2668877 (2011j:28009)}

\bibitem{MR3091327}
\bysame, \emph{On some knot energies involving {M}enger curvature}, Topology
  Appl. \textbf{160} (2013), no.~13, 1507--1529. \MR{3091327}

\bibitem{SvdMsurface}
Pawe{\l} Strzelecki and Heiko von~der Mosel, \emph{Integral {M}enger curvature
  for surfaces}, Adv. Math. \textbf{226} (2011), no.~3, 2233--2304.
  \MR{2739778}

\bibitem{MR3105400}
\bysame, \emph{Menger curvature as a knot energy}, Phys. Rep. \textbf{530}
  (2013), no.~3, 257--290. \MR{3105400}

\bibitem{MR3078345}
\bysame, \emph{Tangent-point repulsive potentials for a class of non-smooth
  {$m$}-dimensional sets in {$\R^n$}. {P}art {I}: {S}moothing and
  self-avoidance effects}, J. Geom. Anal. \textbf{23} (2013), no.~3,
  1085--1139. \MR{3078345}

\bibitem{2015arXiv150101569T}
X.~{Tolsa}, \emph{Characterization of n-rectifiability in terms of {J}ones'
  square function: Part {I}}, ArXiv e-prints (2015), arXiv:1501.01569.

\bibitem{MR3154530}
Xavier Tolsa, \emph{Analytic capacity, the {C}auchy transform, and
  non-homogeneous {C}alder\'on-{Z}ygmund theory}, Progress in Mathematics, vol.
  307, Birkh\"auser/Springer, Cham, 2014. \MR{3154530}

\bibitem{1408.6979}
Xavier Tolsa, \emph{Rectifiable measures, square functions involving densities,
  and the cauchy transform}, 2014, arXiv:1408.6979.

\bibitem{1402.2799}
Xavier Tolsa and Tatiana Toro, \emph{Rectifiability via a square function and
  {P}reiss's theorem}, 2014, arXiv:1402.2799.

\end{thebibliography}
\bibliographystyle{amsplain}

\end{document}